\documentclass[11pt]{amsart}

\usepackage{amsmath,amssymb}
\usepackage{mathrsfs}

%% Language and font encodings
\usepackage[english]{babel}
\usepackage[utf8x]{inputenc}
\usepackage[T1]{fontenc}
\usepackage[new]{old-arrows}

%% Sets page size and margins
\usepackage[top=3cm,bottom=2cm,left=3cm,right=3cm,marginparwidth=1.75cm]{geometry}
\parskip4pt plus2pt minus2pt

%% Useful packages
\usepackage{graphicx}
\usepackage[colorinlistoftodos]{todonotes}
\usepackage[colorlinks=true, allcolors=blue]{hyperref}
\usepackage{cite}
\usepackage{enumitem}
\setlist[1]{itemsep=3pt}
\usepackage[all,cmtip]{xy}
\usepackage{tikz-cd}
\usepackage{cleveref}
\usepackage{autonum}

%%Commands

\newtheorem{theorem}{Theorem}
\numberwithin{theorem}{section}
\newtheorem{proposition}[theorem]{Proposition}
\newtheorem{corollary}[theorem]{Corollary}
\newtheorem{lemma}[theorem]{Lemma}

\theoremstyle{definition}
\newtheorem{definition}[theorem]{Definition}

\theoremstyle{remark}
\newtheorem{remark}[theorem]{Remark}
\newtheorem{example}[theorem]{Example}
% CLEVEREF
\crefname{equation}{}{}
\crefname{equation}{}{}
\crefname{figure}{Figure}{Figures}
\crefname{section}{Section}{Sections}
\crefname{lemma}{Lemma}{Lemmata}
\crefname{proposition}{Proposition}{Propositions}
\crefname{theorem}{Theorem}{Theorems}
\crefname{corollary}{Corollary}{Corollaries}
\crefname{definition}{Definition}{Definitions}
\crefname{remark}{Remark}{Remarks}
\crefname{proposition}{Proposition}{Proposition}
\crefname{corollary}{Corollary}{Corollaries}
\crefname{example}{Example}{Examples}
\crefname{conjecture}{Conjecture}{Conjectures}

%%%%% for making comments

%%%%proposed shortcuts

\newcommand{\R}{\mathbb{R}}
\newcommand{\C}{\mathbb{C}}
\newcommand{\N}{\mathbb{N}}
\newcommand{\ZZ}{\mathcal{Z}}
\newcommand{\ZZo}{\mathcal{Z}}
\newcommand{\KK}{\mathcal{K}}
\newcommand{\cP}{\mathcal{P}}

\newcommand{\dd}{\mathrm{d}}

\newcommand{\PP}{\mathbb{P}}
\newcommand{\Z}{\mathcal{Z}}

\newcommand{\K}{\mathcal{K}}

\newcommand{\EE}{\mathbb{E}}
\newcommand{\A}{\mathcal{A}}
\newcommand{\MV}{\mathrm{MV}}
\newcommand{\y}{Y}
\newcommand{\VZ}{\widehat{\mathcal{Z}}}
\newcommand{\VK}{\widehat{\mathcal{K}}}
\newcommand{\eps}{\varepsilon}
\newcommand{\st}{\ |\ }

\newcommand{\GZ}{\mathcal{G}}
\newcommand{\VGZ}{\widehat{\mathcal{G}}}
\newcommand{\GA}{\mathcal{G}}

\newcommand{\cM}{\mathcal{M}}
\newcommand{\cH}{\HH}

\newcommand{\mT}{\widetilde{T}}

\newcommand{\sg}[1]{\tfrac{1}{2}[- #1 , #1]}

\newcommand{\ExpZon}[1]{K({#1})}
\newcommand{\timeszon}{\wedge}
\newcommand{\wc}{\wedge_\C}

\DeclareMathOperator{\vol}{vol}
\DeclareMathOperator{\Span}{span}
\DeclareMathOperator*{\mean}{\mathbb{E}}

\DeclareMathOperator{\HH}{H}

\newcommand{\spanK}[1]{\langle #1\rangle}

\usepackage{youngtab}

%%%% for numbering only the equations that we refer to and at the same time for numbering them according to the section they belong to
%\usepackage{mathtools}
%\mathtoolsset{showonlyrefs,showmanualtags}
%Paul: i comented this out, because when I use it on my computer none of the equations gets labelled
\numberwithin{equation}{section}
%%%% shortcuts for the equations
\newcommand{\be}{\begin{equation}}
\newcommand{\ee}{\end{equation}}

%%%% Comments

\title[The zonoid algebra]{The zonoid algebra, generalized mixed volumes,\\
and random determinants}

\author{Paul Breiding}\thanks{Br: Max-Planck-Institute for Mathematics in the Sciences Leipzig, paul.breiding@mis.mpg.de. Funded by the Deutsche Forschungsgemeinschaft (DFG, German Research Foundation) -- Projektnummer 445466444}
\author{Peter B\"urgisser}\thanks{B\"u: Supported by the ERC under the European Union's Horizon 2020 research and innovation programme (grant agreement no. 787840).}
\author{Antonio Lerario}
\author{L\'eo Mathis}

\begin{document}

\begin{abstract}
We show that every multilinear map between Euclidean spaces induces a unique, continuous,
Minkowski multilinear map of the corresponding real cones of zonoids.
Applied to the wedge product of the exterior algebra of a Euclidean space,
this yields a multiplication of zonoids, defining the structure of a commutative, associative, and partially ordered ring,
which we call the {\em zonoid algebra}.
This framework gives a new perspective on classical objects in convex geometry, and it allows to introduce new functionals on zonoids, in particular generalizing the notion of mixed volume. We also analyze a similar construction based on the complex wedge product,
which leads to the new notion of {\em mixed~$J$--volume}.
These ideas connect to the theory of random determinants.
\end{abstract}

\maketitle

\section{Introduction}
This paper is at the interface of convex geometry and probability. It links the three topics zonoids,
mixed volumes and random determinants under a general new framework.

In convex geometry, sums of line segments are called \emph{zonotopes} and their limits
with respect to the Hausdorff metric are called \emph{zonoids}.
Zonoids form a special, yet fundamental class of convex bodies~\cite{bible}.
In functional analysis, probability, combinatorics, and algebraic geometry,
it is sometimes possible to reformulate questions in terms of convex bodies,
typically involving the notion of mixed volume.
In this paper we show that there is a richer multiplicative structure as soon as we confine
ourselves to zonoids.

Let $V$ be a Euclidean space of dimension~$m$ and~$\Z(V)$ denote the set of its
\emph{centrally symmetric zonoids}, which in the following we simply refer to as zonoids.
On~$\Z(V)$ we have the Minkowski addition and the multiplication with nonnegative reals,
and the \emph{Hausdorff distance} makes it a complete metric space.
We study more closely the \emph{tensor products} of zonoids.
More specifically,
if $V_1,\ldots,V_p$ are Euclidean spaces, we define a Minkowski multilinear map
\be\label{eq:def-ML-map}
 \Z(V_1)\times \cdots \times \Z(V_p)\to \Z(V_1\otimes \cdots \otimes V_p),
\ee
which is monotone with respect to inclusion and continuous (this is the same tensor product that was defined earlier by Aubrun and Lancien \cite{AubrunLancien}).
Based on the tensor product, we prove in \cref{thm:FTZC} that
each multilinear map between Euclidean spaces induces
a multilinear map of the corresponding spaces of zonoids.
The resulting  maps are again continuous and monotone with respect to inclusion.

A main contribution of this paper is to define and study the \emph{zonoid algebra}.
For this, we first pass from the cone of zonoids $\Z(V)$ to a vector space $\VZ(V)$,
which we call \emph{virtual zonoids}.
In simple terms, $\VZ(V)$ consists of formal differences of elements in $\Z(V)$, but this abstract vector space can be naturally embedded  into various classical vector spaces, e.g. in the space of measures, or the space of functions on the sphere, see \cref{sec:virtualzonoids,sec:VZandmeas}. In \cref{sec:FTZC} we introduce the zonoid algebra over $V$. As a vector space this is the direct sum of the vector spaces of virtual zonoids in all exterior powers of $V$:
$$
 \A(V) :=  \bigoplus_{k=0}^m \widehat{\Z}(\Lambda^k V).
$$
The multiplication in $\A(V)$ is the multilinear map on zonoids induced by the \emph{wedge products}
$\wedge: \Lambda^k(V)\times \Lambda^l(V)\to \Lambda^{k+l}(V), \, (v,w) \mapsto v\wedge w$
for $k+l\leq m$. Thus, if $K\in \Z(\Lambda^k(V))$ and $L\in \Z(\Lambda^l(V))$ are zonoids, we can \emph{multiply} them and get the zonoid $K\wedge L\in \Z(\Lambda^{k+l}V)$. In particular, the product $K_1\wedge \cdots\wedge K_p$ of $d$ zonoids $K_1,\ldots,K_d\in\Z(V)$ is a zonoid in $\Lambda^{d}V$.

The zonoid algebra $\A(V)$ has a rich structure: we prove in \cref{th:zonoidalgebra} that it is a graded,
commutative partially
ordered algebra.
While the Minkowski multilinear map~\eqref{eq:def-ML-map} is continuous, its extension to virtual zonoids requires careful topological considerations,
which we discuss in \cref{sec:continuity}.
We also introduce a subalgebra $\GA(V)$ (\cref{def:GA}), which we call \emph{Grassmann zonoid algebra},
and which plays a crucial role in the probabilistic intersection theory developed in our
follow-up work~\cite{BBML2021}.

The connection to the mixed volume enters via the concept of the \emph{length}
$\ell\colon \A(V) \to \R$, which is
a monotone, continuous linear functional; see \cref{mydef:length} and \cref{th:lengthLF}. In fact, the length of a zonoid is its first intrinsic volume.
More generally, we show in \cref{th:IVL} that if $K\in\Z(V)$ is a zonoid, then $\frac{1}{d!}\ell(K^{\wedge d})$
equals its $d$-th intrinsic volume.
Moreover, we show in \cref{thm:FTZCdet} that, if $K_1,\ldots,K_m$ are zonoids in~$V$, then their mixed volume satisfies
\be\label{MV_is_length}
 \MV(K_1,\ldots,K_m) = \frac{1}{m!}\, \ell(K_1\wedge \cdots \wedge K_m).
\ee
For higher degree zonoids, i.e., zonoids in $\Lambda^k(V)$ with $1< k < m$,
the length of their product therefore naturally defines a generalized mixed volume.

For zonoids in $\C^n\cong \R^{2n}$, we can repeat the same construction using the \emph{complex wedge product},
seen as
a real multilinear map, $\wc: \Lambda^k(\C^n)\times \Lambda^l(\C^n)\to \Lambda^{k+l}(\C^n)$.
In \cref{sec:mixed_J_volume} we define what we call the \emph{mixed $J$--volume}
\be\label{MV_J_is_length}
\MV^J(K_1,\ldots,K_n) := \frac{1}{n!}\,\ell(K_1\wc \cdots \wc K_n),
\ee
where $K_1,\ldots,K_n$ are zonoids in $\C^n\cong \R^{2n}$.
When the $K_i$ are real, i.e., contained in $\R^n\subseteq\C^n$, then
we retrieve the classical mixed volume $\MV$.
Furthermore, we prove in \cref{prop:volJisval} that the $J$--volume
$\vol^J_n(K):= \frac{1}{n!}\,\ell(K^{\wc n})$ can be extended to polytopes and
that it is a weakly continuous, translation invariant valuation.
By contrast, using a recent result by Wannerer \cite{Wannerer}, we prove in \cref{cor:noextofJ} that $\vol^J_n$
cannot be extended to a continuous valuation on all convex bodies in $\C^n$. We also relate our definition to \emph{Kazarnovskii's pseudovolume}~\cite{Ka1}; see \cref{def:Kaza}.

We apply our theory to the study of the expected absolute value
$|\det [X_1,\ldots,X_n]|$
of the determinant of random matrices, whose
column vectors $X_{1},\ldots,X_{m}\in\R^m$ are random vectors.
Vitale \cite{Vitale} gave a formula for this expectation in the case
where the $X_i$ are i.i.d.\  random vectors.
His formula is in terms of the volume of a zonoid: to each random vector~$X$
in~$\R^m$
which is {\em integrable}, i.e., $\mean \Vert X\Vert < \infty$,
there corresponds an associated zonoid $K(X)$; see \cref{def:K}.
Although~$X$ is a random vector, $K(X)$ is  deterministic. %and not random.
Vitale's result asserts that
$
\mean \big\vert \det
(M)\big\vert = m!\cdot \vol_m(K(X))$,
where
$
M=[\begin{smallmatrix} X_1& \ldots & X_m\end{smallmatrix}],
$
with i.i.d.\ random variables $X_i$ having the same distribution as $X$,
and where $\vol_m$ denotes the $m$-dimensional volume in $\R^m$.
It is important to emphasize that Vitale's result makes no assumption
on the independence of the entries of the random vector $X$.
But the columns must be independent, and identically distributed.
If the columns are independent but not identically distributed,
we can generalize Vitale's result as follows:
\be
\label{vitale_mv}\mean_{X_1,\ldots,X_m \atop \text{are independent}} \big\vert \det
(M)\big\vert = \ell(K(X_1)\wedge \cdots\wedge K(X_m)),\quad \text{ where }
M= \det [X_1,\ldots,X_m].
\ee
If the columns are not independent, the situation is far more complicated.
Explicit formulas are known for special cases; see, e.g., \cite{Girko1990},
but general formulas like \cref{vitale_mv} so far were not available.
We fill this gap by showing in \cref{thm:genvitale} that, if
$M=[M_{1}, \ldots, M_p]$ is a random $m\times m$ matrix partitioned into independent blocks
$M_j= [v_{j,1},\ldots,v_{j,d_j}]$
of size $m\times d_j$, and if~$Z_j=v_{j,1}\wedge \cdots \wedge v_{j, d_j}\in \Lambda^{d_j}\R^m$
are integrable, then
\be\label{gen_vitale_eq}
\EE |\det( M)|=\ell(K(Z_1)\wedge \cdots\wedge K(Z_p)).
\ee
This formula links the expected determinants of a matrix with independent blocks to
the multiplication in the zonoid algebra $\A(\R^m)$, which can be studied using methods
from convex geometry and commutative algebra \cite{Eisenbud}. Furthermore, using the complex wedge product, we obtain in \cref{thm:vitaleC} a new formula for the expected absolute value of the determinant of a random \emph{complex} matrix: $\EE |\det( M)|=\ell(K(Z_1)\wedge_{\mathbb C} \cdots\wedge_{\mathbb C} K(Z_p))$, where the $Z_j\in \Lambda_{\mathbb C}^{d_j}\C^n$ now are complex random vectors. Notice, that if the $d_j$ are all equal to one, we obtain the mixed
$J$--volume from \cref{MV_J_is_length},
and if the $K(Z_j)$ are all real zonoids, this specializes to \cref{gen_vitale_eq}.

We remark that~\cref{vitale_mv} has interesting consequences when combined with the \emph{Alexandrov--Fenchel inequality}.
For instance, if $X,Y\in \R^m$ are independent integrable random vectors,
then the expected area of the triangle $\Delta(X,Y)$,
whose vertices are the origin and $X$ and~$Y$, satisfies
$
\big(\mean \vol_2(\Delta(X,Y))\big)^2 \geq
\mean \vol_2(\Delta(X,X'))
\cdot \mean \vol_2(\Delta(Y,Y')),
$
where $X\sim X'$ and $Y\sim Y'$ are independent of $X,Y$, respectively.
We show this in \cref{AF_for_parallelotopes}.
Another remarkable consequence of our result is that we obtain a simple proof for zonoids
of the recent \emph{reverse Alexandrov--Fenchel inequality} by B\"or\"oczky and Hug~\cite{B-Hug:21}.
Namely, if $K_1,\ldots,K_p\in\Z(V)$ are zonoids and we have integers $d_1+\ldots+d_p=m=\dim(V)$,
then
$\MV(K_1[d_1],\ldots,K_p[d_p]) \leq \tfrac{1}{m!} \ell(K_1^{\wedge d_1})\cdots \ell(K_p^{\wedge d_p})$;
here, $K_i[d_i]$ indicates that $K_i$ appears $d_i$-many times.
We prove this in \cref{hug_result} and we also
characterize when equality holds.
We discuss more applications of the Alexandrov--Fenchel inequality to our theory in \cref{sec:MVandL}.

We conclude by pointing out that this work lays out the foundations for
the follow-up article~\cite{BBML2021}.
This will aim at developing a probabilistic intersection theory for compact homogeneous spaces,
in which the zonoid algebra takes the role played by the Chow ring or the cohomology ring in
the classical case. To each subvariety in the space we associate a zonoid in the exterior algebra of the tangent space at a distinguished point. The codimension of the variety equals the degree of the exterior power in which the zonoid lives. Then, the intersection of subvarieties in random position can be identified with the product of their zonoids.
This allows to develop further the
probabilistic Schubert calculus initiated in~\cite{PSC}.

\noindent\textbf{Outline.}
In \cref{sec:zonoids} we recall basic facts about zonoids, emphasizing how to represent them by random vectors following an approach by Vitale. We discuss the notion of the length of zonoids and discuss some of its properties. Moreover, we define the vector space of virtual zonoids and study its topology. In \cref{se:tensorproduct} we give the definition of the tensor product map for zonoids, and we discuss its continuity. In \cref{sec:FTZC} we introduce and discuss the zonoid algebra, and we prove that any multilinear map induces a unique multilinear map of virtual zonoids. In \cref{sec:MV_real} we explain how to relate our construction to intrinsic volumes, mixed volumes and random determinants. Finally, in \cref{sec:mixed_J_volume} we introduce and study the new notion of mixed $J$--volume.

\noindent\textbf{Acknowledgements.}
We would like to thank A.\ Bernig for pointing out the definition of Kazarnovskii's pseudovolume and its connection to our work. We also thank A.\ Khovanskii, G.\ Aubrun, C.\ Lancien and Y. Martinez-Maure for insightful discussions. Finally, we thank the anonymous referee for helpful comments.

\section{Zonoids}\label{sec:zonoids}

In this section we collect known facts about convex bodies,
for which we refer the reader to Schneider's extensive monograph~\cite{bible} for more details.
We also review and further extend a probabilistic description of zonoids
due to Vitale~\cite{Vitale}, which is a crucial tool in our paper.

Throughout the section, $V$ denotes
$m$-dimensional
Euclidean space
with inner product $\langle \;,\;\rangle$. The corresponding norm of $v\in V$
is given by $\Vert v\Vert = \langle v,v\rangle^\frac{1}{2}$.
The norm defines a topology on $V$ making it a topological vector space.
We denote by $B(V)\subset V$ the closed unit ball centered at zero in~$V$
and by $S(V)$ the unit sphere in $V$. We abbreviate $B^m:=B(\R^m)$ and
$S^{m-1}:=S(\R^m)$ for $V=\R^m$ with the standard inner product.
We denote by
\be [x,y]:=\{(1-t)x+ty \mid t\in [0,1]\}\ee
the \emph{segment} joining $x$ and $y\in V$.

\subsection{Convex bodies and support functions}

A subset $K\subset V$ is \emph{convex}, if for all $x,y\in K$ also $[x,y]\subset K$.
A nonempty compact convex set $K\subset V$ is called \emph{convex body}
and $\KK(V)$ denotes the set of convex bodies in $V$.
Given two convex bodies $K_1,K_2\in \KK(V)$, we can form
their \emph{Minkowski sum},
\be K_1 + K_2 :=\{x+y \mid x\in K_1, y\in K_2\},\ee
and, for $\lambda\ge 0$ the scalar multiple,
\be\label{scalar_multiplication}
\lambda K := \{\lambda  x\mid x\in K\}.
\ee
In particular, the Minkowski addition turns $\KK(V)$ into a commutative monoid.
Note that $\epsilon B(V)$ is the ball of radius $\epsilon>0$ centered at zero.
Moreover, we denote by
$\spanK{K}$ the linear span of~$K$.
The \emph{dimension} of $K$ is defined as the dimension of its affine span (since all convex bodies considered in this paper contain the origin, this will be the the same as the dimension of $\langle K \rangle$).
We set $\KK^m:=\KK(\R^m)$
for $V=\R^m$ with the standard inner product.

The \emph{Hausdorff distance} between $K_1, K_2\in \KK(V)$ is defined by
\begin{equation}\label{def_Hausdorff_metric}
d_H(K_1, K_2) := \inf\{\epsilon >  0\mid K_1 \subset
  K_2 + \epsilon B(V) \text{ and }  K_2 \subset K_1 + \epsilon B(V)\}.
\end{equation}
This leads to the \emph{norm} of a convex body:
\be\label{eq:normB} \|K\|:=d_{H}(K, \{0\})=\max_{x\in K}\|x\|.\ee
We also call $\|K\|$ the \emph{radius} of $K$.
 The Hausdorff distance \cref{def_Hausdorff_metric} makes $\KK(V)$ a complete metric space,
 see \cite[Theorem 1.8.5]{bible}.

A convex body $K\in \KK(V)$ can be described by its \emph{support function} $h_K:V\to \R$ defined by:
\be\label{def_h}
    h_K(v):=\max\{\langle v, x\rangle \mid x\in K\}.
\ee
This function satisfies
$h(v+w) \le h(v) + h(w)$ and $h(\lambda v) = \lambda h(v)$
for $v,w\in V$ and $\lambda\ge 0$.
Functions $h\colon V \to \R$ satisfying these properties are called
\emph{sublinear}.

All properties of convex bodies can be concisely described in terms of their support functions.
We summarize the relevant facts in the next well known proposition.
Let $C(S(V))$ denote the space of real valued, continuous functions
on the unit sphere $S(V)$ of $V$.
We think of $C(S(V))$ as a Banach space with respect
to either the $1$- or the $\infty$-norm. That is, for $f\in C(S(V))$:
\be\label{L_norms}
\Vert f\Vert_{1}= \frac{1}{\vol_{m-1}(S(V))}\, \int_{S(V)} \vert f(x) \vert \, \lambda(\mathrm{d}x) \quad \text{and}\quad \Vert f\Vert_{\infty}=\max_{x\in S(V)} \vert f(x)\vert,
\ee
where $\lambda$ is the usual Lebesgue measure on the sphere and $\vol_{k}$ denotes the $k$-dimensional volume with respect to $\lambda$.
We denote by $\bar{h}_K:=(h_K)|_{S(V)}$ the restriction of
the support function~$h_K$ to the unit sphere~$S(V)$.

\begin{proposition}\label{useful_properties_for_h}
The map
$h\colon (\K(V), d_H)
\to (C(S(V)), \Vert \cdot\Vert_{\infty}),\,
  K\mapsto \bar{h}_K$
is a Minkowski linear isometric embedding. That is,
for $K,L\in \KK(V)$ and $\lambda \geq 0$,
$$
h_{K + L} = h_K + h_L, \quad h_{\lambda K} = \lambda h_K , \quad
d_H(K,L) = \Vert h_K-h_L\Vert_{\infty}.
$$
In particular,
$\|K\| = \|\bar{h}_K\|_{\infty}$. Moreover,
\begin{enumerate}
\item
The image of $h$ consists of the sublinear functions on $V$ restricted to $S(V)$.
\item The support function $h_K$ allows to reconstruct the body $K$ as follows:
\be K = \{x\in V\mid \forall v \in V\: \langle x,v\rangle \leq h_K(v) \}.\ee
\item $K\subset L$ if and only if $h_K(u) \leq h_L(u)$ for all $u\in V$.
\item Let $W$ be a Euclidean space and $M\colon V\to W$ be linear with
adjoint $M^T\colon W\to V$.
Then $M(K)$ is a convex body with support function $h_{M(K)}(v)=h_K( M^T v)$.
\end{enumerate}
\end{proposition}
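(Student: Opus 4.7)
The plan is to verify each claim from the definition $h_K(v)=\max\{\langle v,x\rangle : x\in K\}$, relying on hyperplane separation (Hahn--Banach in the finite-dimensional form) for the reconstruction statement, which is the only nontrivial ingredient.

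First I would establish Minkowski linearity directly. Writing every $z\in K+L$ as $z=x+y$ with $x\in K$, $y\in L$, the maximum of $\langle v,x\rangle+\langle v,y\rangle$ decouples, giving $h_{K+L}=h_K+h_L$. Positive homogeneity $h_{\lambda K}=\lambda h_K$ for $\lambda\ge 0$ is immediate from \eqref{scalar_multiplication}. Taking $L=\{0\}$ and using $h_{\{0\}}=0$ also yields $\|K\|=\|\bar h_K\|_\infty$ once the isometry is shown, since $d_H(K,\{0\})=\|K\|$ by \eqref{eq:normB}.

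Next I would prove the isometry $d_H(K,L)=\|\bar h_K-\bar h_L\|_\infty$. For $\varepsilon>0$, the inclusion $K\subset L+\varepsilon B(V)$ is equivalent to $h_K\le h_{L+\varepsilon B(V)}=h_L+\varepsilon$ on $V$ (using Minkowski linearity and $h_{B(V)}(v)=\|v\|$), which, by positive homogeneity of sublinear functions, is equivalent to $\bar h_K-\bar h_L\le\varepsilon$ on $S(V)$. Swapping $K$ and $L$ and combining gives the equality. This argument also proves claim (3): $K\subset L$ iff $h_K\le h_L$ pointwise on $V$ (equivalently on $S(V)$).

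For the reconstruction (2), set $\tilde K:=\{x\in V:\langle x,v\rangle\le h_K(v)\ \forall v\in V\}$. The inclusion $K\subset\tilde K$ is immediate. For the converse, assume $x_0\notin K$; since $K$ is closed and convex, Hahn--Banach produces $v\in V$ with $\langle x_0,v\rangle>\sup_{y\in K}\langle y,v\rangle=h_K(v)$, so $x_0\notin\tilde K$. This proves $K=\tilde K$, and together with claim (3) shows that $h$ is injective. For the image characterization (1), I would cite the standard fact that any sublinear $h\colon V\to\R$ is the support function of the convex body $\tilde K_h:=\{x\in V:\langle x,v\rangle\le h(v)\ \forall v\}$; the nontrivial direction $h_{\tilde K_h}=h$ again reduces to Hahn--Banach separation applied to the sublevel sets of $h$. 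Finally, for (4), $h_{M(K)}(v)=\max_{x\in K}\langle v,Mx\rangle=\max_{x\in K}\langle M^Tv,x\rangle=h_K(M^Tv)$, and $M(K)$ is convex and compact as the continuous linear image of a convex body.

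The only step that is not formal manipulation is the reconstruction in (2) and the surjectivity onto sublinear functions in (1); both reduce to a single application of the separating hyperplane theorem, which will be the main (and minor) technical ingredient. Since the result is entirely standard, I would simply reference \cite[Sec.~1.7]{bible} for the details rather than repeat the proofs in full.
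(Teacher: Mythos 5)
Your proposal is correct and, like the paper, ultimately rests on the standard support-function facts from Schneider \cite[\S 1.7--1.8]{bible}; the paper simply cites those results while you sketch the (standard) separating-hyperplane and Minkowski-linearity arguments before deferring to the same reference. The only cosmetic difference is that you derive item (3) from the Hausdorff-distance computation whereas the paper deduces it from the reconstruction formula (2); both routes are valid.
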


\begin{proof}
The three statements in display are \cite[Theorem 1.7.5]{bible}
and \cite[Lemma 1.8.14]{bible}.
The characterization (1)
of support functions as restrictions of sublinear functions
is~\cite[Theorem 1.7.1]{bible}.
The statement (2) expresses duality and is in the proof of \cite[Theorem 1.7.1]{bible}.
The third statement follows from the second item.
The last point is a direct consequence of the definition of the support function in \cref{def_h}.
\end{proof}

We will be mainly interested in centrally symmetric convex bodies~$K$,
characterized by $(-1)K =K$. They form a closed subset of $\KK(V)$.
The fourth item in \cref{useful_properties_for_h} implies that $K$ is centrally symmetric
if and only if $h_K(-v)=h_K(v)$ for all $v\in V$.

\subsection{Zonoids and random convex bodies}

A \emph{zonotope} $K$ in $V$ is defined as the Minkowski sum of a finite number of line segments, i.e.,
it has the form $K=[x_1,y_1]+\cdots + [x_n,y_n]$ with $x_i,y_i\in V$. In general, zonotopes are exactly the polytopes that have a center of symmetry such that all of its $2$ dimensional faces also have a center of symmetry; see~\cite[Theorem~3.5.2]{bible}.
The centrally symmetric zonotopes (i.e., with the center of symmetry at the origin)
are the ones that can be written in the form
\begin{equation}
 K =\tfrac{1}{2}[-x_1, x_1]+\cdots +\tfrac{1}{2}[-x_n, x_n].
\end{equation}
We introduce now the main objects of this paper.

\begin{definition}[Zonoids]
A convex body $K$ is called a \emph{zonoid} if it is the limit of a sequence of zonotopes
with respect to the Hausdorff metric.
\end{definition}

Our focus will be on the centrally symmetric zonoids.
Those are exactly the limits of centrally symmetric zonotopes.
We denote by $\ZZ(V)$ the space of centrally symmetric zonoids in $V$.\footnote{The standard notation
for centrally symmetric zonoids would be $\ZZ_0(V)$, but we omit the subscript in order to avoid unnecessary notation.}
By definition, $\ZZ(V)$ is a closed subspace of~$\KK(V)$.
It is known \cite[Cor. 3.5.7]{bible} that $\ZZ(V)$ is a proper subset
of the set of centrally symmetric
convex bodies iff $\dim V >2$.
We also note that if a polytope is a zonoid, then it must be a zonotope
(e.g., see~\cite{BLM_89}).
We abbreviate $\ZZ^m:=\ZZ(\R^m)$.

To deal with zonoids, we shall extensively use a probabilistic viewpoint
going back to Vitale~\cite{Vitale}, which not only is intuitive,
but also allows for a great deal of flexibility.

Let $\Omega $ be a probability space. By an \emph{integrable random convex body} we understand
a Borel measurable map $\y\colon\Omega \to \KK(V)$ such
that $\EE\|\y\|<\infty$;
see~\cref{eq:normB} for the definition of the norm.
The last condition implies that
$\EE h_{\y}(u)$ is a well defined sublinear function of $u$,
hence it is the support function of a uniquely defined convex body
(see \cref{useful_properties_for_h}(1)).
We can thus define the \emph{expectation} of~$\y$
to be the convex body $\EE\y\subset V$ with the following support function:
\be\label{eq:defEY}
 h_{\EE\y}(v) := \EE h_{\y}(v).
 \ee
Suppose now that $X$ is a random vector in $V$ satisfying
$\EE\|X\| < \infty$. We call such a random vector \emph{integrable}.
Then
$\y=\tfrac{1}{2}[-X,X]$ is an integrable, random, centrally symmetric segment.
Vitale~\cite{Vitale} proved that every  zonoid $K\subset V$ (including noncentered zonoids) can be written as $K=\EE [0,X] + c$ for some random integrable vector $X\in V$ and a fixed $c\in V$. We can write this as $K=\EE \tfrac{1}{2}[-X,X] + c + \tfrac{1}{2}\EE X$. If $K\in\Z(V)$ is centrally symmetric, then by uniqueness of support functions we have $h_K(u)=h_K(-u)$ for all $u\in V$, which implies $c + \tfrac{1}{2}\EE X=0$. Thus, we have shown that
\be\label{eq:K(X)-def}
 K(X) := \EE \tfrac{1}{2}\,[-X,X]
\ee
is a centrally symmetric zonoid,
and that every centrally symmetric zonoid arises this way.

\begin{definition}[Zonoid associated to random variable]\label{def:K}
If $X$ is an integrable random vector in $V$, we call $K(X)$ defined by \cref{eq:K(X)-def}
the \emph{Vitale zonoid associated to the random vector $X$}.
We also say that $K(X)$ \emph{is represented by~$X$}.
\end{definition}

In terms of support functions, we can describe $K(X)$ as follows.
\be\label{eq:hE-form}
  h_{K(X)}(v) = \EE h_{\tfrac{1}{2}[-X,X]}(v) = \tfrac{1}{2}\, \EE |\langle v, X \rangle| .
\ee
(This follows from \cref{eq:defEY} using that
$h_{[-z,z]}(v) = |\langle v, z\rangle|$.)
The observation $K(X)=K(-X)$ shows that the random variable
representing a zonoid is not unique.
However, it turns out that the random variable representing a zonoid is unique up to sign,
if one assumes $X$ to take its values
on the unit sphere $S(V)$.
This follows from the measure theoretic interpretation
of zonoids, we will discuss this point of view in \cref{sec:VZandmeas}  below (see also \cite[Theorem 3.5.3]{bible} and \cite[Theorem 5.2]{ACOCB}).

We found it of great technical advantage
to allow the random vector~$X$ to take values outside the unit sphere of $V$;
the resulting loss in uniqueness is not significant.
The next result is a clear indication of the advantage of this viewpoint.
It says that the association of a zonoid to an integrable random vector
commutes with linear maps.
Proving the following proposition using the measure point of view is involved (see \cref{continuity_of_linear} below), while the proof using random vectors is almost trivial. This highlights for the first time one of the advantages when working with random vectors.

\begin{proposition}\label{lemma:linear}
Let $V,W$ be Euclidean spaces,
$M:V\to W$ be a linear map and $X$ be an integrable random vector in~$V$.
Then $M(K(X))=K(MX)$.
\end{proposition}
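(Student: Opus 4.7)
The plan is to compare support functions on both sides of the claimed equality. By \cref{useful_properties_for_h}, a convex body is determined by its support function, so it suffices to show that $h_{M(K(X))}(w) = h_{K(MX)}(w)$ for every $w \in W$.

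First I would check that $MX$ is integrable, so that $K(MX)$ is well defined. This is immediate from the operator norm bound $\|MX\| \leq \|M\|_{\mathrm{op}} \|X\|$, which gives $\EE\|MX\| \leq \|M\|_{\mathrm{op}}\, \EE\|X\| < \infty$.

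Next I would invoke the two key formulas. On one hand, by \cref{useful_properties_for_h}(4), for any $w \in W$,
\begin{equation*}
h_{M(K(X))}(w) = h_{K(X)}(M^T w).
\end{equation*}
On the other hand, applying \cref{eq:hE-form} to both $K(X)$ and $K(MX)$,
\begin{equation*}
h_{K(X)}(M^T w) = \tfrac{1}{2}\, \EE |\langle M^T w, X\rangle|, \qquad
h_{K(MX)}(w) = \tfrac{1}{2}\, \EE |\langle w, MX\rangle|.
\end{equation*}
The adjoint identity $\langle M^T w, X\rangle = \langle w, MX\rangle$ makes the two right-hand sides coincide, yielding $h_{M(K(X))} = h_{K(MX)}$ on $W$, and hence $M(K(X)) = K(MX)$.

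There is essentially no obstacle: the proof is a direct consequence of (i) the linearity of expectation used to define the support function of a Vitale zonoid, and (ii) the compatibility of support functions with linear images recorded in \cref{useful_properties_for_h}(4). The whole point, as emphasized in the text preceding the proposition, is precisely that working with representing random vectors trivializes a statement whose measure-theoretic reformulation would require the non-trivial pushforward arguments deferred to \cref{continuity_of_linear}.
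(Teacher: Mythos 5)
Your proposal is correct and follows essentially the same route as the paper: both proofs compare support functions via \cref{eq:hE-form}, the adjoint identity $\langle M^T w, X\rangle = \langle w, MX\rangle$, and \cref{useful_properties_for_h}(4). The only addition is your explicit check that $MX$ is integrable, which the paper leaves implicit but is a harmless and correct refinement.
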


\begin{proof}
We have for $v\in V$ by \cref{eq:hE-form},
$$
 h_{K(M(X))}(v) = \tfrac12 \EE \vert \langle v, MX \rangle \vert
 = \tfrac12 \EE \vert \langle M^T v, X \rangle \vert
 = h_K(M^T v) = h_{M(K)}(v),
$$
where last equality is due to
\cref{useful_properties_for_h}(4).
\end{proof}

\cref{lemma:linear}
also implies that linear images of zonoids are again zonoids.
Of course, this also follows directly from the definition.

For instance, we obtain for a standard Gaussian vector $X\in \R^m$,
\be\label{eq:ball}
 K(X)=(2\pi)^{-\frac{1}{2}}B^m,
\ee
where $B^m$ denotes the Euclidean unit ball in $\R^m$.
Indeed,
$
h_{K(X)}(v)=\tfrac{1}{2}\EE|\langle v, X\rangle|
  =(2\pi)^{-\frac{1}{2}}\, \|v\|,
$
by \cref{eq:hE-form},
since
$\sum_{i=1}^m X_i v_i$ is distributed as $\|v\|\, Z$, where $Z\in\R$ is standard gaussian
and has expected value $\EE|Z|=\sqrt{2/\pi}$.
The function $h_{K(X)}$
equals the support function of $(2\pi)^{-\frac{1}{2}}B^m$
and hence~\cref{eq:ball} follows.

\begin{remark}\label{rk:lolnforzonoids}
Behind Vitale's result is the following law of large numbers for zonoids~\cite{ArtsteinVitale},
which provides a geometric way for viewing Vitale zonoids associated to random vectors.
Let $X$ be an integrable vector in $V$ and $\{X_k\}_{k\geq 1}$
a sequence of independent samples of $X$.
Then, as $n\to\infty$, we have in almost sure convergence
$
\tfrac{1}{n}\sum_{k=1}^n\tfrac{1}{2}[-X_k, X_k] \rightarrow K(X).
$
\end{remark}

We next show how to realize the Minkowski sum of two zonoids as the Vitale zonoid associated to a random vector.

\begin{lemma}\label{summation_formula}
Let $X,Y$ be integrable random vectors in $V$. Then $K(Z)=K(X)+K(Y)$
for the random variable $Z:=2\epsilon X + 2(1-\epsilon)Y$ defined with
a fair Bernoulli random variable $\epsilon\in\{0,1\}$ that is independent of $X$ and $Y$.
\end{lemma}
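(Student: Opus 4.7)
The plan is to prove the identity by comparing support functions, since by \cref{useful_properties_for_h} a centrally symmetric zonoid is uniquely determined by its support function and Minkowski sums correspond to pointwise sums of support functions. Concretely, it suffices to show that $h_{K(Z)}(v) = h_{K(X)}(v) + h_{K(Y)}(v)$ for every $v \in V$.

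First I would apply \cref{eq:hE-form} to rewrite the left-hand side as
\[
 h_{K(Z)}(v) \;=\; \tfrac{1}{2}\,\EE \bigl|\langle v, Z\rangle\bigr|
 \;=\; \tfrac{1}{2}\,\EE \bigl|\langle v,\, 2\epsilon X + 2(1-\epsilon)Y\rangle\bigr|.
\]
The key observation is that $\epsilon \in \{0,1\}$ is Bernoulli with $\Pr(\epsilon = 1) = \Pr(\epsilon = 0) = \tfrac{1}{2}$, so that conditioning on $\epsilon$ splits the expectation into two cases: on $\{\epsilon = 1\}$, the factor $2\epsilon X + 2(1-\epsilon)Y$ equals $2X$, and on $\{\epsilon = 0\}$ it equals $2Y$. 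Independence of $\epsilon$ from the pair $(X,Y)$ is what allows the conditional expectations to reduce to the unconditional ones of $|\langle v, 2X\rangle|$ and $|\langle v, 2Y\rangle|$, respectively.

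Combining these pieces by the law of total expectation yields
\[
 h_{K(Z)}(v) \;=\; \tfrac{1}{2}\Bigl(\tfrac{1}{2}\,\EE|\langle v, 2X\rangle| + \tfrac{1}{2}\,\EE|\langle v, 2Y\rangle|\Bigr)
 \;=\; \tfrac{1}{2}\,\EE|\langle v, X\rangle| + \tfrac{1}{2}\,\EE|\langle v, Y\rangle|,
\]
which by \cref{eq:hE-form} again equals $h_{K(X)}(v) + h_{K(Y)}(v)$. By the Minkowski linearity of $h$ from \cref{useful_properties_for_h}, this is $h_{K(X)+K(Y)}(v)$, and uniqueness of support functions gives $K(Z) = K(X) + K(Y)$.

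There is essentially no obstacle: the only thing to keep an eye on is integrability of $Z$, which follows immediately since $\EE\|Z\| \leq 2\,\EE\|X\| + 2\,\EE\|Y\| < \infty$, so that $K(Z)$ is a well-defined Vitale zonoid in the sense of \cref{def:K}. The whole argument is a one-line computation once the scaling factor~$2$ in the definition of $Z$ is seen to exactly compensate the factor~$\tfrac{1}{2}$ from the Bernoulli split.
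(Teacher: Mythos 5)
Your proof is correct and follows essentially the same route as the paper: both compute $h_{K(Z)}(v)$ via \cref{eq:hE-form}, condition on the Bernoulli variable $\epsilon$ using its independence from $(X,Y)$, and observe that the factor $2$ in $Z$ cancels the factor $\tfrac{1}{2}$ from the split. The paper phrases the conditioning in terms of support functions of random segments rather than of $|\langle v,Z\rangle|$ directly, but the computation is identical.
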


\begin{proof}
By \cref{eq:hE-form}, we have
$$
h_{K(Z)}(v) = \tfrac{1}{2}\EE\, h_{[-Z,Z]}(v)
 = \tfrac{1}{2} \EE_{X,Y}\EE_\epsilon\,\,h_{2 \epsilon [-X, X] + 2 (1-\epsilon) [-Y,Y]}(v).
$$
Expanding the expectation over $\epsilon$,
$$
\EE_\epsilon\,\,h_{\epsilon 2 [-X, X] + (1-\epsilon) 2 [-Y,Y]}(v) =
 \tfrac{1}{2}  h_{2 [-X, X]}(v)+ \tfrac{1}{2}  h_{2 [-Y,Y]}(v)
  = h_{ [-X, X]}(v)+ h_{[-Y,Y]}(v)
$$
and taking expectations over $X,Y$ finishes the proof.
\end{proof}

\cref{summation_formula} has a straightforward generalization for adding
$n$~random variables.
As an application, if $x_1, \ldots, x_n\in V$ are fixed vectors
and $Z\in V$ is the random vector taking the value $n x_i$ with probability $1/n$,
we see that
$K(Z)$ is the zonotope $\tfrac{1}{2}[-x_1, x_1]+\cdots +\tfrac{1}{2}[-x_n, x_n]$.

The next observation states that scaling the
random variable~$X$ with an independent random function only
leads to a rescaling of the resulting zonoid.

\begin{lemma}\label{lemma:scaling}
We have
$K(\rho X)=\EE|\rho|\cdot K(X)$
if $X\in V$ and $\rho \in \R$ are
independent integrable random variables.
\end{lemma}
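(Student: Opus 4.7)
The plan is to verify the identity by comparing the support functions of $K(\rho X)$ and $\mathbb{E}|\rho|\cdot K(X)$, since by \cref{useful_properties_for_h} a convex body is uniquely determined by its support function. This reduces the statement to a one-line computation, and the only subtlety is ensuring that all expectations involved are finite so that everything is well defined.

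First I would check that $\rho X$ is itself an integrable random vector, so that $K(\rho X)$ is defined via \cref{eq:K(X)-def}. Since $\rho$ and $X$ are independent and integrable, $\mathbb{E}\|\rho X\| = \mathbb{E}(|\rho|\cdot\|X\|) = \mathbb{E}|\rho|\cdot \mathbb{E}\|X\|<\infty$. Next, applying the formula \cref{eq:hE-form} to $\rho X$ and using independence gives, for every $v\in V$,
\[
 h_{K(\rho X)}(v) \;=\; \tfrac{1}{2}\,\mathbb{E}|\langle v,\rho X\rangle| \;=\; \tfrac{1}{2}\,\mathbb{E}\bigl(|\rho|\cdot|\langle v,X\rangle|\bigr) \;=\; \mathbb{E}|\rho|\cdot\tfrac{1}{2}\,\mathbb{E}|\langle v,X\rangle| \;=\; \mathbb{E}|\rho|\cdot h_{K(X)}(v).
\]

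Finally, since $\mathbb{E}|\rho|\geq 0$, the Minkowski linearity of the support function (see the identities in \cref{useful_properties_for_h} and \cref{scalar_multiplication}) gives $\mathbb{E}|\rho|\cdot h_{K(X)} = h_{\mathbb{E}|\rho|\cdot K(X)}$. Thus $h_{K(\rho X)} = h_{\mathbb{E}|\rho|\cdot K(X)}$, and the injectivity statement in \cref{useful_properties_for_h} yields $K(\rho X) = \mathbb{E}|\rho|\cdot K(X)$.

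There is no real obstacle here; the main point is just to invoke independence correctly inside the expectation. The result can also be viewed as a direct consequence of \cref{lemma:linear} conditionally on $\rho$, but the support function computation above is the most transparent route.
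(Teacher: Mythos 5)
Your proof is correct and follows essentially the same route as the paper: compute the support function of $K(\rho X)$ via \cref{eq:hE-form}, pull out $\EE|\rho|$ using independence, and conclude by the injectivity of support functions. The added verification that $\rho X$ is integrable is a nice (if routine) touch that the paper leaves implicit.
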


\begin{proof}
We have
$h_{K(\rho X)}(v) = \tfrac{1}{2}\EE |\langle v, \rho X\rangle|
 = \tfrac{1}{2} \EE\, \big(|\rho|\cdot |\langle v, X\rangle| \big)
 =\EE|\rho|\cdot h_{K(X)}(v)$,
where the first equality is~\cref{eq:hE-form} and
the last equality is due to the independence of~$X$ and $\rho$.
The assertion follows since convex bodies are determined by their support function.
\end{proof}

As an application, we compute the zonoid defined by  $\widetilde{X}$ uniformly distributed
on the unit sphere $S^{m-1}$ in $\mathbb R^m$.
For this, we introduce the following notation:
\be\label{eq:rhodef}
\tau_m:= \sqrt{2\pi} \mean \Vert X\Vert, \; \text{ where $X\in\R^m$ is a standard Gaussian random vector}.
\ee
This number is $\sqrt{2\pi}$ times the expected value of a chi-distributed random variable with
$m$~degrees of freedom\footnote{In \cite{PSC} the expected value of a chi-distributed random variable with $m$ degrees of freedom is denoted~$\rho_m$. So, using their notation we have $\tau_m=\sqrt{2\pi}\,\rho_m$. The additional factor of $\sqrt{2\pi}$ makes the formulas in this paper easier to grasp.}
%we have $\tau_m = \sqrt{2\pi}\,\rho_m$}.
It has the explicit value $\tau_m:=\sqrt{2\pi}\, \sqrt{2}\,\Gamma\left(\frac{m+1}{2}\right)  / \Gamma\left(\frac{m}{2}\right)$. We can write
$\widetilde{X} := X/\|X\|$, where $X\in\R^m$
is standard Gaussian. Then, we have
\be\label{eq:unif-sphere}
 K(\widetilde{X})
 = \frac{1}{\tau_m \sqrt{2\pi}}\; B^m,
\ee
where, as before, $B^m$ is the unit ball in $\R^m$.
This follows from \cref{lemma:scaling} and \cref{eq:ball},
using that $\widetilde{X}$ is independent of $\|X\|$ and (e.g., see~\cite[\S 2.2.3]{condition})

In the next example we generalize this  from spheres to Grassmannians.

\begin{example}\label{ex:KX}
Let $\xi_1, \ldots, \xi_k\in \R^m$ be standard, independent Gaussian vectors and consider
\be
X:=\xi_1\wedge \cdots \wedge \xi_k\in \Lambda^k(\R^m).
\ee
Put $\widetilde{X}:=X/\|X\|$.
By \cite[Theorem 8.1]{James}, the random variables $\widetilde{X}$ and $\|X\|$ are independent
and $\widetilde{X}$ is uniformly distributed on
the Grassmannian\footnote{Recall that $\widetilde{G}(k,m)$ can be identified with the set of unit simple vectors in $\Lambda^k(\R^m)$, see \cite{Kozlov}.}
$\widetilde{G}(k,m)$ of \emph{oriented} $k$--planes in~$\R^m$.
Moreover, $\|X\|$ is distributed as the square root of the determinant of a Wishart matrix\footnote{Recall that, if $M=[\xi_1, \ldots, \xi_k]$ is a $\R^{m\times k}$ matrix whose columns are standard independent gaussian vectors in~$\R^m$, the matrix $A:=M^TM$ is called a Wishart matrix.}.
In particular, using \cite[Theorem 3.2.15]{Muirhead}, we get
$
\EE\|X\|=2^{\frac{k}{2}}\,\Gamma_{k}\left(\frac{m+1}{2}\right) / \Gamma_{k}\left(\frac{m}{2}\right),
$
where $\Gamma_k(x):=\pi^{\frac{k(k-1)}{4}}\prod_{j=1}^k \Gamma\big(x+\tfrac{1-j}{2}\big)$ denotes the  \emph{multivariate Gamma function}; see, e.g., \cite[Theorem 2.1.12]{Muirhead}.
\cref{lemma:scaling} tells now that
\be\label{eq:zonograss}
K(X)=2^{\frac{k}{2}}\frac{\Gamma_{k}\left(\frac{m+1}{2}\right)}{\Gamma_{k}\left(\frac{n}{2}\right)}K(\widetilde{X}).
\ee
Notice that, in the case $k=1$, we get $\EE\|X\|=\tau_m/\sqrt{2\pi}$ (see \cref{eq:rhodef}) and we recover~\cref{eq:unif-sphere}
as a special case of \cref{eq:zonograss}.
\end{example}

We now show that the expectation of the norm of an integrable random vector~$X$
depends only on its zonoid $K(X)$.
Recall that $\bar{h}_K$ denotes the restriction of $h_K$ to
the unit sphere.

\begin{proposition}\label{lemma:ideno}
Let $X$ be an integrable random vector in $\R^m$ and $K=K(X)$.
Then
$$
 \EE\|X\| = \sqrt{2\pi}\, \EE h_{K}(v)
   = \tau_m\, \|\bar{h}_K \|_{1}
$$
where $v\in\R^m$ is a standard Gaussian vector, and
the $1$-norm is as in \cref{L_norms}.
\end{proposition}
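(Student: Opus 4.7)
The plan is to establish the two equalities separately, both by evaluating $\EE h_K(v)$ for $v$ a standard Gaussian vector in $\R^m$, in two different ways.

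First I would prove $\EE\|X\| = \sqrt{2\pi}\,\EE h_K(v)$. By \cref{eq:hE-form} we have $h_K(w) = \tfrac12 \EE_X|\langle w,X\rangle|$, so by Fubini (which is justified by $\EE\|X\|<\infty$ and $\EE\|v\|<\infty$),
\[
 \EE_v h_K(v) \;=\; \tfrac12\,\EE_X\,\EE_v |\langle v,X\rangle|.
\]
The key observation is that for each fixed $X$, the random variable $\langle v,X\rangle$ is a centered Gaussian with variance $\|X\|^2$, hence is distributed as $\|X\|\,Z$ with $Z\in\R$ standard Gaussian. Since $\EE|Z|=\sqrt{2/\pi}$, we get $\EE_v|\langle v,X\rangle| = \sqrt{2/\pi}\,\|X\|$, and therefore $\EE_v h_K(v) = \tfrac{1}{\sqrt{2\pi}}\EE\|X\|$, which is the first equality.

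Second I would prove $\EE h_K(v) = \frac{\tau_m}{\sqrt{2\pi}}\|\bar h_K\|_1$. Writing $v = \|v\|\cdot \widetilde v$ in polar coordinates, it is classical that $\|v\|$ and $\widetilde v$ are independent, with $\widetilde v$ uniformly distributed on $S^{m-1}$. Since $h_K$ is positively homogeneous of degree one,
\[
 \EE h_K(v) \;=\; \EE\|v\|\cdot \EE h_K(\widetilde v) \;=\; \frac{\tau_m}{\sqrt{2\pi}}\cdot \|\bar h_K\|_1,
\]
where in the last step I used the definition \cref{eq:rhodef} of $\tau_m$ (so $\EE\|v\|=\tau_m/\sqrt{2\pi}$) and the fact that the $1$-norm on $C(S(V))$ in \cref{L_norms} is exactly the average of $\bar h_K$ with respect to the uniform probability measure on $S^{m-1}$.

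Combining the two computations gives all three quantities equal, proving the proposition. There is no real obstacle here; the main subtlety is just to keep straight the two independent sources of randomness (the vector $X$ defining the zonoid, and the auxiliary Gaussian $v$) and apply Fubini cleanly, using the integrability hypothesis $\EE\|X\|<\infty$ to justify it.
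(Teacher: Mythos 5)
Your proof is correct and follows essentially the same route as the paper: the first equality via the identity $\EE_v|\langle v,x\rangle|=\sqrt{2/\pi}\,\|x\|$ for a standard Gaussian $v$ together with Fubini, and the second via the polar factorization $v=\|v\|\tilde v$ with $\EE\|v\|=\tau_m/\sqrt{2\pi}$ and $\EE h_K(\tilde v)=\|\bar h_K\|_1$. No gaps.
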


\begin{proof}
We assume w.l.o.g.\ that $v$ is independent of $X$.
By rotational invariance and homogeneity, we have
$\|x\| = \sqrt{\pi/2}\,\EE|\langle x, v \rangle|$ for all $x\in \R^m$.
This implies, using the independence of $X$ and $v$, that
\begin{align}\label{eq:lengthbyGauss2}
    \EE\|X\|=\sqrt{\frac{\pi}{2}}\,\underset{X,v}{\EE}\left|\langle X, v\rangle\right|=\sqrt{\frac{\pi}{2}}\,\underset{X,v}{\EE}\,2\cdot h_{\tfrac{1}{2}[-X,X]}(v)
     =\sqrt{2\pi}\cdot\EE_v\, h_{K}(v),
\end{align}
which shows the first equality.
The second equality follows from
the factorization $v = \|v\|~\cdot~\tilde{v}$,
noting as before that $\|v\|$ and $\tilde{v}$,
are independent and $\tilde{v}$ is uniformly distributed on the unit sphere.
We have $\EE \|v\| = (2\pi)^{-\frac{1}{2}}\,\tau_m $  and
$\EE_v\, h_{K}(v) = \Vert \bar{h}_K\Vert_{1}$ by~\eqref{L_norms},
because $\bar{h}_K$ is nonnegative.
Putting everything together finishes the proof.
\end{proof}

\cref{lemma:ideno}
shows that the following notion of length is well defined.
The length will be investigated more closely in \cref{sec:MVandL}.

\begin{definition}\label{mydef:length} %\label{propo:lton}
We define the \emph{length} of a zonoid $K\in \Z(V)$ by
$$
 \ell(K)=\EE\|X\|,
$$
where $X$ is an integrable random vector representing $K$.
\end{definition}

\begin{example}
	We see by \cref{lemma:ideno} that if $B^m$ is the unit ball of $\R^m$
	then its length is given by $\ell(B^m)=\tau_m$,
	where $\tau_m$ is given by~\cref{eq:rhodef}.
\end{example}

We next show that
the length is additive with respect to Minkowski addition.
This might be surprising at first sight,
given that~\cref{mydef:length} defines the length as the expected value of a norm.

\begin{lemma}\label{length_is_additive}
  Let $K,L\in\Z(V)$ and $\lambda \geq 0$. Then, we have
  $$\ell(K+\lambda L) = \ell(K) + \lambda \ell(L).$$
\end{lemma}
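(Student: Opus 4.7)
The plan is to reduce the claim to the integral representation of the length supplied by \cref{lemma:ideno}, which states that $\ell(K) = \tau_m\, \|\bar{h}_K\|_1$ for every $K\in\Z(V)$. Because the $L^1$ norm is linear on the nonnegative cone in $C(S(V))$, all I need is (i) the Minkowski linearity of the support function and (ii) the nonnegativity of support functions of centrally symmetric zonoids on the sphere.

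For step (i), \cref{useful_properties_for_h} gives $h_{K+\lambda L} = h_K + \lambda h_L$, so restricting to the unit sphere yields $\bar{h}_{K+\lambda L} = \bar{h}_K + \lambda \bar{h}_L$. For step (ii), a centrally symmetric convex body contains the origin (it is nonempty and stable under $x\mapsto -x$, so by convexity $0\in K$), hence $h_K(v) = \max_{x\in K}\langle v,x\rangle \geq 0$ for every $v\in V$, and similarly for $L$. Therefore $\bar{h}_K$ and $\lambda\bar{h}_L$ are both nonnegative functions on $S(V)$, and
\[
\|\bar{h}_K + \lambda \bar{h}_L\|_1 = \|\bar{h}_K\|_1 + \lambda\|\bar{h}_L\|_1.
\]
Multiplying by $\tau_m$ and invoking \cref{lemma:ideno} gives $\ell(K+\lambda L) = \ell(K) + \lambda\ell(L)$.

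As a sanity check, the same identity can be obtained directly from the probabilistic picture: if $X$ represents $K$ and $Y$ represents $L$, then by \cref{lemma:scaling} the vector $\lambda Y$ represents $\lambda L$, and by \cref{summation_formula} the vector $Z = 2\epsilon X + 2(1-\epsilon)\lambda Y$ (with an independent fair Bernoulli $\epsilon$) represents $K+\lambda L$. Conditioning on $\epsilon$ then yields $\EE\|Z\| = \EE\|X\| + \lambda\,\EE\|Y\|$, which is precisely $\ell(K)+\lambda\ell(L)$. There is no real obstacle here; the only subtlety worth naming is the nonnegativity of $\bar{h}_K$, without which the $L^1$ norm would not be additive on a sum of support functions.
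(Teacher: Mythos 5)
Your proposal is correct, and your primary argument takes a genuinely different route from the paper's. The paper proves the lemma purely probabilistically: it writes $K=K(X)$, $L=K(Y)$, uses \cref{lemma:scaling} and \cref{summation_formula} to represent $K+\lambda L$ by $Z=2\epsilon X+2\lambda(1-\epsilon)Y$ with a fair Bernoulli $\epsilon$, and conditions on $\epsilon$ — which is word for word your ``sanity check,'' so that part coincides with the paper's proof. Your main argument instead routes through \cref{lemma:ideno}, the Minkowski linearity of support functions from \cref{useful_properties_for_h}, and the observation that $\bar h_K\ge 0$ because a centrally symmetric convex body contains the origin, so that $\|\cdot\|_1$ is additive on the relevant cone. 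All the steps check out, and there is no circularity since \cref{lemma:ideno} is established independently of this lemma. What the $L^1$ route buys is conceptual transparency — it exhibits $\ell$ as integration of the support function over the sphere, which is essentially the mechanism the paper itself invokes later (in the proof of \cref{th:lengthLF}) to see that the extension of $\ell$ to virtual zonoids is continuous; what the probabilistic route buys is that it needs no integral formula at all, only the representation of sums and scalings by random vectors.
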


\begin{proof}
Let $K=K(X)$ and $L=K(Y)$ for independent integrable vectors $X,Y\in V$.
By \cref{lemma:scaling} we have $\lambda L = K(\lambda Y)$.
Using this and \cref{summation_formula} we can write the Minkowski sum as
$K+\lambda L = K(2\epsilon X + 2\lambda(1-\epsilon)Y)$,
where $\epsilon$ is a fair Bernoulli random variable $\epsilon\in\{0,1\}$
that is independent of $X$ and $Y$. This implies
$\ell(K+\lambda L) = \mean \Vert 2\epsilon X + 2\lambda(1-\epsilon)Y\Vert$.
Taking first the expectation over $\epsilon$ yields
$\ell(K+\lambda L) = \tfrac{1}{2}\mean \Vert  2X\Vert + \tfrac{1}{2}\mean \Vert 2\lambda Y\Vert =  \ell(K) + \lambda \ell(L)$.
This finishes the proof.
\end{proof}

Combining \cref{lemma:ideno} with \cref{useful_properties_for_h}(3) we get the following corollary.
\begin{corollary}[Monotonicity of the length]\label{cor:monolength}
Let $K, L \subset V$ are zonoids such that $K\subset L$, then $\ell(K)\leq\ell(L)$.
\end{corollary}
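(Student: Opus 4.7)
The plan is to reduce the inequality to a pointwise inequality of support functions on the sphere, as suggested by the hint. First I note that centrally symmetric zonoids contain the origin, so their support functions are nonnegative: indeed, for any $K\in\Z(V)$ and $u\in S(V)$, subadditivity gives $0=h_K(0)\leq h_K(u)+h_K(-u)=2h_K(u)$, using central symmetry. In particular $\bar h_K\geq 0$ on $S(V)$, and likewise $\bar h_L\geq 0$.

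Next, by \cref{useful_properties_for_h}(3), the inclusion $K\subset L$ is equivalent to $h_K(u)\leq h_L(u)$ for every $u\in V$, and in particular $\bar h_K(u)\leq \bar h_L(u)$ for every $u\in S(V)$. Since both functions are nonnegative, this pointwise inequality passes to the normalized $L^1$-norm defined in \cref{L_norms}, giving
\[
 \|\bar h_K\|_1 \;=\; \tfrac{1}{\vol_{m-1}(S(V))}\int_{S(V)}\bar h_K\,\dd\lambda \;\leq\; \tfrac{1}{\vol_{m-1}(S(V))}\int_{S(V)}\bar h_L\,\dd\lambda \;=\; \|\bar h_L\|_1.
\]

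Finally, \cref{lemma:ideno} expresses the length of a zonoid as $\ell(K)=\tau_m\|\bar h_K\|_1$ (and similarly for $L$), with the same positive constant $\tau_m$ from \cref{eq:rhodef}. Multiplying the inequality above by $\tau_m>0$ yields $\ell(K)\leq\ell(L)$, as claimed. No subtle step is involved; the only thing one must not overlook is the nonnegativity of $\bar h_K$ and $\bar h_L$, which is needed to turn the pointwise inequality into an inequality of $L^1$-norms.
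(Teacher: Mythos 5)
Your proof is correct and follows exactly the route the paper takes: it derives the corollary by combining \cref{useful_properties_for_h}(3) (inclusion is equivalent to pointwise domination of support functions) with \cref{lemma:ideno} (the length is $\tau_m\|\bar h_K\|_1$). The extra remark on the nonnegativity of $\bar h_K$, needed to identify the $1$-norm with the plain integral, is a sound and welcome detail that the paper leaves implicit.
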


We recall from \cref{useful_properties_for_h} and \cref{lemma:ideno} that
the radius $\Vert K\Vert$ of a zonoid $K$ can be expressed as the $\infty$--norm
of its support function $\bar{h}_K$ and that its length $\ell(K)$ can expressed
in terms of its $1$--norm.
The next result compares these two norms.

\begin{corollary}[Radius and length]\label{cor:ideno}
We have
$$
  2\|K\| \ \le\ \ell(K) \ \le\  \tau_m\,  \|K\| ,
$$
with inequality on the left hand side iff $K$ is a (centrally symmetric) segment,
and equality holding on the right hand side
iff $K$ is rotational invariant.
\end{corollary}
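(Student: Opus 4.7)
The strategy is to translate both inequalities into statements about the sphere norms of the restricted support function $\bar{h}_K$, using the two identifications already established in the excerpt: $\|K\|=\|\bar{h}_K\|_\infty$ from \cref{useful_properties_for_h} together with \cref{eq:normB}, and $\ell(K)=\tau_m\|\bar{h}_K\|_1$ from \cref{lemma:ideno}. Substituting these, the desired double inequality becomes
$$
 2\,\|\bar{h}_K\|_\infty \;\leq\; \tau_m\,\|\bar{h}_K\|_1 \;\leq\; \tau_m\,\|\bar{h}_K\|_\infty .
$$
The right-hand inequality is then just the general $L^1\le L^\infty$ comparison on the sphere (with the normalized measure from \cref{L_norms}), using that $h_K\ge 0$ because $0\in K$. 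Equality on the right forces $\bar{h}_K$ to be constant, which means that $K$ is a Euclidean ball and hence rotationally invariant; conversely, if $K$ is rotationally invariant then $\bar{h}_K$ is invariant under the orthogonal group on $S(V)$ and therefore constant.

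For the left-hand inequality I would use the probabilistic representation: write $K=K(X)$ for some integrable random vector $X\in V$, pick a unit vector $v^*\in S(V)$ attaining $\|K\|=h_K(v^*)$, and apply \cref{eq:hE-form} together with the Cauchy--Schwarz estimate $|\langle v^*,X\rangle|\le \|X\|$ to get
$$
 \|K\|\;=\;h_K(v^*)\;=\;\tfrac{1}{2}\,\EE\,|\langle v^*,X\rangle|\;\leq\;\tfrac{1}{2}\,\EE\|X\|\;=\;\tfrac{1}{2}\,\ell(K).
$$
Equality here forces $|\langle v^*,X\rangle|=\|X\|$ almost surely, i.e. $X$ is collinear with $v^*$ almost surely, so $X=\rho\,v^*$ for some scalar random variable $\rho$. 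By \cref{lemma:scaling} this gives $K=K(X)=\EE|\rho|\cdot \tfrac{1}{2}[-v^*,v^*]$, a centrally symmetric segment. Conversely, if $K=\tfrac{1}{2}[-a,a]$, one may take $X=a$ deterministic, in which case $\ell(K)=\|a\|=2\|K\|$.

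The main obstacle is the equality case on the left, because the random vector representing $K$ is not unique and one must be careful that the almost-sure collinearity of $X$ with $v^*$ actually characterizes $K$ as a segment rather than depending on the chosen representation. This is resolved by the direction I wrote above: starting from equality, one concludes something about \emph{every} representation (since the inequality holds for every representation), but it is enough to produce \emph{one} representation in which $X$ is collinear with $v^*$, and this is automatic from the chain of inequalities becoming equalities; then the segment conclusion is immediate via \cref{lemma:scaling}. The converse direction (segment $\Rightarrow$ equality) is straightforward by direct computation of $h_K$ and $\ell$.
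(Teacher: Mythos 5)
Your proof is correct and follows essentially the same route as the paper: the right inequality via the identities $\|K\|=\|\bar{h}_K\|_\infty$ and $\ell(K)=\tau_m\|\bar{h}_K\|_1$ together with the $L^1\le L^\infty$ comparison (equality iff $\bar{h}_K$ is constant), and the left inequality via the probabilistic representation and the pointwise bound $|\langle v^*,X\rangle|\le\|X\|$. You are in fact more complete than the paper on the left-hand equality case: the paper only checks that segments give equality, whereas you also prove the converse (equality forces $|\langle v^*,X\rangle|=\|X\|$ almost surely, hence $X=\rho v^*$ and $K$ is a segment by \cref{lemma:scaling}), and that argument is sound.
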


\begin{proof}
Recall $\|K\| = \|\bar{h}_K\|_{\infty}$ from \cref{useful_properties_for_h} and $\ell(K) = \tau_m \|\bar{h}_K \|_{1}$ from \cref{lemma:ideno}.
By definition of the norms we have $\|h_K\|_{1} \le \|h_K\|_{\infty}$ with equality holding iff
$h_K$ is constant. The latter means that $K$ is rotationally invariant.
This shows the right inequality.

For the left inequality, we write $K=K(X)$. By \cref{eq:hE-form} we have
$2 h_K(u) = \EE |\langle X, u \rangle| \le \EE \|X\|$,
if $\|u\|=1$. This implies $2\|\bar{h}_K\|_{\infty} \leq  \EE \|X\| = \ell(K)$
and equality holds if $K$ is a segment.
\end{proof}

The next observation will be useful later.

\begin{lemma}\label{le:orthogonal}
Let $K,L\in Z(V)$ be zonoids and $\spanK{K},\spanK{L}$ denote their linear spans.
If $K,L$ are represented by the integrable random vectors $X,Y$,
respectively, then
$$
 \spanK{K} \perp \spanK{L} \Longleftrightarrow \langle X, Y \rangle = 0 \: \mbox{almost surely} .
$$
\end{lemma}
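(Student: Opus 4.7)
The plan is to reduce the orthogonality of linear spans to a probabilistic statement about $X$ and $Y$, by characterizing $\spanK{K}$ as the smallest linear subspace of $V$ containing $X$ almost surely. From \cref{eq:hE-form}, $h_K(v) = \tfrac{1}{2}\EE|\langle v, X\rangle|$ is nonnegative, so central symmetry of $K$ yields that $v \perp \spanK{K}$ is equivalent to $h_K(v) = 0$, which in turn is equivalent to $\langle v, X\rangle = 0$ almost surely. Thus $\spanK{K}^\perp = \{v \in V : \langle v, X\rangle = 0 \text{ a.s.}\}$, and the analogous identity holds for $Y$ and $L$.

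For the implication $\Rightarrow$, applying the characterization to a basis of $\spanK{K}^\perp$ shows $X \in \spanK{K}$ almost surely, and similarly $Y \in \spanK{L}$ almost surely. On the intersection of these two full-measure events, the hypothesis $\spanK{K} \perp \spanK{L}$ forces $\langle X, Y\rangle = 0$. This direction holds regardless of the joint law of $(X, Y)$.

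For the converse, I would invoke the independence of $X$ and $Y$, which is the canonical coupling once the marginals are fixed and the implicit convention in surrounding results such as \cref{summation_formula,lemma:scaling}. Under independence, $\EE|\langle X, Y\rangle| = 0$ combined with Fubini gives that for $X$-almost every realization $x$, the identity $\langle x, Y\rangle = 0$ holds almost surely; equivalently, $x \in \spanK{L}^\perp$. Hence $X$ lies in $\spanK{L}^\perp$ almost surely, and the minimality characterization then forces $\spanK{K} \subseteq \spanK{L}^\perp$. The main obstacle is that this direction genuinely requires independence: the coupling on $\R^2 \times \R^2$ uniformly distributed on $\{(e_1,e_2),(e_2,e_1)\}$ satisfies $\langle X, Y\rangle = 0$ identically while $\spanK{K(X)} = \spanK{K(Y)} = \R^2$, so the proof plan hinges on reading the hypothesis under an independent coupling of $X$ and $Y$.
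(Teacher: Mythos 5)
Your proof is correct and follows essentially the same route as the paper's: both arguments reduce to the observation that $h_K(v)=\tfrac{1}{2}\EE|\langle v,X\rangle|$ vanishes exactly when $\langle v,X\rangle=0$ almost surely, which the paper packages as the statement that the orthogonal projection $\pi(K)$ of $K$ onto $\spanK{L}$ is zero iff $\langle \pi(u),X\rangle=0$ a.s.\ for all $u\in\spanK{L}$. Where you go beyond the paper is in the final step, which the paper dismisses as ``easily seen'': you correctly observe that passing from $\langle X,Y\rangle=0$ a.s.\ back to orthogonality of the spans genuinely requires $X$ and $Y$ to be independent (your Fubini step), and your two-point coupling uniform on $\{(e_1,e_2),(e_2,e_1)\}$ is a valid counterexample showing this hypothesis cannot be dropped. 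The lemma's statement indeed omits independence, but every application in the paper (the equality analysis in \cref{hug_result} and the corollary on $K\wedge\star L=0$) uses independent representatives, so your reading is the intended one; the forward implication, as you say, holds for an arbitrary coupling.
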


\begin{proof}
Let $\pi\colon V \to \spanK{L}$ denote the orthogonal projection.
By \cref{useful_properties_for_h} and \cref{eq:hE-form},
the support function of $\pi(K)$, for $u\in \spanK{L}$, is given by
$$
 h_{\pi(K)}(u) = h_K(\pi(u)) = \tfrac{1}{2}\EE|\langle \pi(u), X\rangle|  .
$$
Thus $\spanK{K} \perp \spanK{L}$ iff  $\pi(K)=0$ iff $h_{\pi(K)}=0$.
The latter is equivalent to
$\langle \pi(u), X\rangle = 0$ almost surely,
for all $u\in \spanK{L}$. This is easily seen to be equivalent to
$\langle X, Y \rangle = 0$ almost surely.
\end{proof}

%%%%%%%%%%%%%%%%%%%%%%%%%%%%%%%%%%%%%%%%%%%%%%%%%
\subsection{Virtual zonoids}\label{sec:virtualzonoids}

It is well known that the set of zonoids~$\Z(V)$ and convex bodies ~$\K(V)$ can be interpreted as cones in
vector spaces of ``virtual zonoids'' $\VZ(V)$ and  ``virtual convex bodies'' $\VK(V)$, respectively (see \cite[\S 25.1]{BuZa}) and \cite[\S 3.5]{bible}).
This allows to investigate them using tools from linear algebra and functional analysis.

We confine ourselves to zonoids, since only for those
we can define a satisfying notion of tensor product, see \cref{se:tensorproduct}.
The next result summarizes the situation.

\begin{theorem}\label{th:virt:zonoid}
$\Z(V)$ is embedded in an essentially unique way as a subcone of a normed and partially ordered
\emph{real vector space}~$\VZ(V)$
of \emph{virtual zonoids}
such that any element of $\VZ(V)$ can be written as a formal difference
$K_1 -K_2$ of zonoids. The norm and partial order are defined as follows:
$$
 \|K_1 -K_2\| = d_H(K_1,K_2), \qquad
 0 \le K_1 - K_2\ :\Longleftrightarrow\ h_{K_2} \le h_{K_1} .
$$
Thus the norm extends the norm of zonoids introduced in~\cref{eq:normB}
and the partial order on $\VZ(V)$  extends the inclusion of convex bodies (see \cref{useful_properties_for_h}).
However, $\VZ(V)$ is not complete, so not a Banach space, unless $\dim V \le 1$,
\end{theorem}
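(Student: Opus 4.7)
The plan is to build $\VZ(V)$ by Grothendieck's formal-differences construction on the commutative monoid $(\Z(V),+)$, to transport the analytic structure via support functions, and to rule out completeness by a dense-but-proper-subspace argument.

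First I would verify that $(\Z(V),+)$ is cancellative: if $K+M=L+M$ then \cref{useful_properties_for_h} gives $h_K=h_L$, hence $K=L$. With cancellation in hand, define $\VZ(V)=(\Z(V)\times\Z(V))/{\sim}$ where $(K_1,K_2)\sim (L_1,L_2) \Leftrightarrow K_1+L_2=L_1+K_2$, write $K_1-K_2$ for the equivalence class, and check that Minkowski addition and the $\R_{\ge 0}$-action from \eqref{scalar_multiplication} descend to $\VZ(V)$. Extending scalars to $\R$ by setting $-(K_1-K_2):=K_2-K_1$ and using cancellation to verify the negative-scalar distributive laws yields the real vector-space structure, and the map $\Z(V)\hookrightarrow \VZ(V)$, $K\mapsto K-\{0\}$, is a cone embedding.

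Next I would install the norm and order through the support-function embedding of \cref{useful_properties_for_h}. Since $h\colon \Z(V)\to C(S(V))$ is a Minkowski-linear isometry, the universal property of the Grothendieck construction produces a unique injective linear extension $\Phi\colon \VZ(V)\to C(S(V))$ sending $K_1-K_2$ to $\bar h_{K_1}-\bar h_{K_2}$. Setting $\|K_1-K_2\|:=\|\Phi(K_1-K_2)\|_\infty$ agrees with $d_H(K_1,K_2)$ by \cref{useful_properties_for_h} and therefore extends \eqref{eq:normB}; the partial order is the pullback under $\Phi$ of the pointwise order on $C(S(V))$, and \cref{useful_properties_for_h}(3) says it extends inclusion of zonoids. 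The ``essential uniqueness'' clause is then the universal property itself: any real vector space containing $\Z(V)$ as a generating subcone with Minkowski addition matching vector addition is canonically isomorphic to $\VZ(V)$, and its norm and order are forced by their values on $\Z(V)$.

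The main obstacle is the incompleteness assertion. Since $\Phi$ is an isometric linear embedding, completeness of $\VZ(V)$ is equivalent to closedness of $\Phi(\VZ(V))$ inside $(C(S(V)),\|\cdot\|_\infty)$, and my plan is to show that this image is a \emph{dense proper} subspace of $C_{\mathrm{even}}(S(V))$ whenever $\dim V\ge 2$, which precludes closedness. Density is the classical cosine-transform fact: the segments $\tfrac{1}{2}[-x,x]$ have support functions $\tfrac{1}{2}|\langle x,\cdot\rangle|$, and their linear span is dense in $C_{\mathrm{even}}(S(V))$ because the even signed measure annihilating all such functions must vanish by injectivity of the cosine transform (\cite[Thm.~3.5.3]{bible}). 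Properness is where the dimension enters: for $\dim V\ge 3$, \cite[Cor.~3.5.7]{bible} directly supplies centrally symmetric bodies whose support functions are not differences of zonoid support functions; for the harder case $\dim V=2$ I would exhibit an even continuous function on $S^1$ whose $1$-homogeneous extension fails to be a difference of two convex functions, using that planar zonoid support functions have second distributional derivatives that are positive Radon measures, so one picks a continuous even profile whose second distributional derivative is not a signed Radon measure. A dense proper subspace of a normed space is never complete, which yields the claim. Finally, for $\dim V\le 1$, $\Z(V)\cong \R_{\ge 0}$ and $\VZ(V)\cong \R$ is complete, closing out the equivalence.
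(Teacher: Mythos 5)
Your construction follows the paper's route almost exactly: the Grothendieck/formal-difference construction on the cancellative monoid $(\Z(V),+)$, the transport of norm and order through the support-function embedding of \cref{useful_properties_for_h}, uniqueness via the universal property, and incompleteness by exhibiting $\VZ(V)$ as a dense proper subspace of $C_{\mathrm{even}}(S(V))$ (this is exactly the content of \cref{propo:banach}). Your density argument via duality and injectivity of the cosine transform is a legitimate variant of the paper's, which instead cites the fact that sufficiently smooth even functions are cosine transforms of signed measures.

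There is, however, one genuine gap in the properness step for $\dim V\ge 3$: you claim that \cite[Cor.~3.5.7]{bible} ``directly supplies centrally symmetric bodies whose support functions are not differences of zonoid support functions.'' That corollary only supplies centrally symmetric bodies that are not \emph{zonoids}; it says nothing about whether they are \emph{generalized} zonoids, i.e.\ differences of zonoids. Since generalized zonoids are dense in $\K(V)$ and most of them are not zonoids, ``not a zonoid'' does not imply ``support function not in $\VZ(V)$,'' so this step as written does not prove properness. What is needed is the strictly stronger fact that there exist centrally symmetric convex bodies which are not generalized zonoids, which is what the paper cites (\cite[Note~13, p.~206]{bible}). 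Alternatively, your own $\dim V=2$ argument (an even continuous function whose second distributional derivative along a great circle is not a signed Radon measure cannot be a difference of support functions) works verbatim in every dimension $\ge 2$ and would close the gap without appealing to non-generalized-zonoids. The remaining steps (cancellation, the vector-space axioms, the extension of norm and order, and the $\dim V\le 1$ case) are correct and match the paper.
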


\begin{proof}
We can identify convex bodies with their support functions
using \cref{useful_properties_for_h}. Then $\K(V)$ can be seen as
a real cone in $C(S)$, which is closed under addition and multiplication with nonnegative scalars
(recall that $S$ denotes the unit sphere in $V$).
We define the space of virtual convex bodies $\VK(V)$ as the span of~$\K(V)$:
it is the subspace of $C(S)$ consisting of differences of support functions.
Similarly, we define the space of virtual zonoids $\VZ(V)$
as the span of $\Z(V)$, seen as a subset of $C(S)$.
Thus $\VZ(V)\subset\VK(V)$ are endowed with the supremum norm
(of functions restricted to the unit sphere),
which extends the norm of
convex bodies defined in~\cref{eq:normB}. Moreover, the pointwise order of functions
makes them partially ordered vector spaces.
For the uniqueness, we refer to the discussion below.
The non-completeness of $\VZ(V)$ is shown in
\cref{propo:banach} below.
\end{proof}

In \cref{sec:VZandmeas} we will see that virtual zonoids can be identified with the space of signed measures on the sphere. Moreover, virtual zonoids also have a geometric realization as \emph{hedgehogs}, i.e., envelopes of hyperplanes defined by the difference of the corresponding support functions. For this point of view we refer the reader to \cite{HZ}.

Despite the efficiency of support functions,
used in the proof of \cref{th:virt:zonoid}, it is useful
to think of virtual convex bodies more abstractly
as formal differences of convex bodies.
The abstract point of view reveals the uniqueness of the construction.
Let us therefore formulate the above algebraic features in this language.

Given a commutative monoid $(\mathbb{M}, +)$, %with cancellation law,
its \emph{Grothendieck group} $(\widehat{\mathbb{M}}, +)$ can be defined as the group
of equivalence classes of pairs $(m_1, m_2)\in \mathbb{M}$ such that $(m_1, m_2)\sim (n_1, n_2)$
if and only if $m_1+n_2=m_2+n_1$,
and with the addition $[(m_1, m_2)]+[(n_1, n_2)]:=[(m_1+n_1, m_2+n_2)]$.
The Grothendieck group of $(\mathbb{M}, +)$ is an essentially unique object characterized by a universal property,
see \cite{lang-algebra}.
If the cancellation law holds, then $\mathbb{M}\to \widehat{\mathbb{M}},\, m\mapsto [(m,0)]$
is injective so that $\mathbb{M}$ can be seen as a submonoid of $\widehat{\mathbb{M}}$
and we write the class $[(m_1, m_2)]$ as $m_1-m_2$.
If, in addition, there is an action of $\R_+$ on $\mathbb{M}$ satisfying the usual axioms of scalar multiplication in
vector spaces, then it is immediate to check that the scalar multiplication extends to~$\R$, so that
$\widehat{\mathbb{M}}$ becomes a real vector space, which we may call the corresponding
\emph{Grothendieck vector space}.

We apply this construction to the monoids of zonoids and convex bodies in a Euclidean space~$V$, endowed
with the Minkowski addition and the scalar multiplication given by \cref{scalar_multiplication}.
Using support functions, we see that the cancellation law holds in $(\KK(V), +)$,
see \cref{useful_properties_for_h} and \cite[\S 3.1]{bible}.

\begin{definition}[The vector space of virtual convex bodies]
We denote by $\VK(V)$ and $\VZ(V)$
the Grothendieck vector spaces of $\mathcal{K}(V)$
and $\ZZ(V)$, respectively.
Its elements are called
\emph{virtual convex bodies} and \emph{virtual zonoids}, respectively.
\end{definition}

From the uniqueness of the Grothendieck vector spaces $\VK(V)$ and $\VZ(V)$ up to
canonical isomorphy, it follows that they are isomorphic
to the subspaces of $C(S)$ defined via support functions
in the proof of \cref{th:virt:zonoid}.
We abbreviate $\VK^m := \VK(\R^m)$ and $\VZ^m := \VZ(\R^m)$.

Let us point out that
the formal inverse $-K$ of a zonoid $K$, which is a virtual zonoid,
should not be confused with the zonoid $(-1)K$, which is equal to $K$.

\begin{remark}\label{re:nn-cone}
1. We may define the nonegative cone of $\VK(V)$ corresponding to the above defined order by
$
 \VK(V)_+ := \big\{ K_2 -K_1 \mid K_1, K_2 \in \K(V), K_1 \le \K_2 \big\} ;
$
similarly, we define $\VZ(V)_+$. Then it is clear that $\K(V) \subseteq \VK(V)_+$
and $\Z(V) \subseteq \VZ(V)_+$, however,
the inclusions in general are strict~\cite[\S 3.2]{bible}.
\end{remark}

\begin{remark} The elements of $\KK(V)\cap \VZ(V)$ are the convex bodies that can be written
as a difference of zonoids, they are called \emph{generalized zonoids} \cite[p.\ 195]{bible}.
They are the convex bodies whose support function has a representation
as in \cref{eq:ct} below, but with an even signed measure~$\mu$.
It is known that the generalized zonoids are dense in $\K(V)$;
in particular, there exist centrally symmetric convex bodies,
which are not generalized zonoids. This is shown in~\cite[Note 13, p.\ 206]{bible}.
\end{remark}

If $M:V\to W$ is a linear map between Euclidean vector spaces then $K\mapsto M(K)$
gives a morphism $\ZZo(V) \to \ZZo(W)$ of monoids, which commutes with multiplication with nonnegative scalars.
%is also a morphism of $(\R_+)$--modules.
It is immediate that this morphism can be extended to a linear map
between the corresponding Grothendieck vector spaces.

\begin{definition}\label{def:linearext}
Let $M\colon V\to W$ be a linear map between Euclidean vector spaces.
We denote by $\widehat{M}:\VZ(V)\to \VZ(W)$ the \emph{associated linear map}
defined by %, for $Z=K_1-K_2$, by
\be
 \widehat{M}(K_1-K_2) := M(K_1) - M(K_2).
 \ee
\end{definition}

%Notice that for every $v\in V$ we have $\widehat{M}(\tfrac{1}{2}[-v,v])=\tfrac{1}{2}[-Mv,Mv].$
%The following proposition

We collect some properties of the associated map $\widehat{M}$.

\begin{proposition}\label{propo:linearext}
Let $M:V\to W$ be a linear map between Euclidean vector spaces.
Then the associated linear map $\widehat{M}:\VZ(V)\to \VZ(W)$ %from \cref{def:linearext}
preserves the order, is continuous, and
$$
 \|\widehat{M}\|_{\mathrm{op}}=\|M\|_{\mathrm{op}} .
$$
Moreover, $\widehat{M}(\ZZo(V))\subseteq \ZZo(W)$,
i.e., $\widehat{M}$ maps zonoids to zonoids.
Finally, for $K\in \ZZo(V)$,
$$
\ell(M(K)) \ \le\  \|M\|_{\mathrm{op}}\, \ell(K) .
$$
In particular, the length of zonoids does not increase under an orthogonal projection.
\end{proposition}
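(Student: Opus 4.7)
The plan is to take the four assertions—order preservation, the operator-norm identity, preservation of the zonoid cone, and the length bound—one at a time, since each uses a different feature of $\widehat M$.

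Order preservation drops out of the definitions. By \cref{th:virt:zonoid}, $0 \le K_1 - K_2$ means $h_{K_2} \le h_{K_1}$ pointwise, and \cref{useful_properties_for_h}(4) gives $h_{M(K)}(v) = h_K(M^T v)$; thus $h_{M(K_2)}(v) = h_{K_2}(M^T v) \le h_{K_1}(M^T v) = h_{M(K_1)}(v)$ for all $v \in W$, which reads $0 \le \widehat M(K_1 - K_2)$.

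For the operator norm I would work throughout with support functions. For $L = K_1 - K_2 \in \VZ(V)$, \cref{useful_properties_for_h} and \cref{th:virt:zonoid} give $\|L\| = \|h_{K_1} - h_{K_2}\|_\infty$. Using \cref{useful_properties_for_h}(4) together with positive homogeneity of support functions, for $v \in S(W)$ with $M^T v \ne 0$,
$$
|h_{M(K_1)}(v) - h_{M(K_2)}(v)| \;=\; \|M^T v\| \cdot |h_{K_1}(u) - h_{K_2}(u)| \;\le\; \|M^T v\| \cdot \|L\|,
$$
where $u := M^T v/\|M^T v\| \in S(V)$; taking the supremum over $v$ gives $\|\widehat M(L)\| \le \|M^T\|_{\mathrm{op}}\|L\| = \|M\|_{\mathrm{op}}\|L\|$. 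For the matching lower bound, I would test on centered segments $K_x := \tfrac12[-x,x]\in\Z(V)$: linearity of $M$ yields $M(K_x) = \tfrac12[-Mx,Mx]$, so $\|K_x\| = \|x\|/2$ and $\|\widehat M(K_x)\| = \|Mx\|/2$, and sweeping $x \in S(V)$ recovers $\|\widehat M\|_{\mathrm{op}} \ge \|M\|_{\mathrm{op}}$. Continuity of $\widehat M$ is then a formal consequence.

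The remaining two claims I would handle via the random-vector formalism. The inclusion $\widehat M(\Z(V)) \subseteq \Z(W)$ just says that linear images of zonoids are zonoids, which is immediate: zonotopes map to zonotopes under a linear map, and $M$ is continuous with respect to the Hausdorff metric, so limits of zonotopes map to limits of zonotopes. For the length bound, I would write $K = K(X)$ with $X$ an integrable random vector in $V$; by \cref{lemma:linear}, $M(K) = K(MX)$, and by \cref{mydef:length},
$$
\ell(M(K)) \;=\; \EE\|MX\| \;\le\; \|M\|_{\mathrm{op}}\,\EE\|X\| \;=\; \|M\|_{\mathrm{op}}\,\ell(K).
$$
The orthogonal-projection statement is the special case $\|M\|_{\mathrm{op}} = 1$. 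The one point that requires genuine care is the two-sided operator-norm equality: the upper bound hinges on the positive-homogeneity factorization above, and matching it from below depends on the observation that centered segments already saturate the bound.
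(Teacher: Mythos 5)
Your proposal is correct and follows essentially the same route as the paper: support functions for order preservation and the upper operator-norm bound, centered segments to saturate the bound from below, continuity (or direct mapping of zonotopes) for preservation of the zonoid cone, and the random-vector representation for the length inequality. If anything you are slightly more careful than the paper with the factor $\tfrac12$ in the norm of the segment $\tfrac12[-x,x]$, which does not affect the ratio and hence the conclusion.
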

\begin{proof}
The preservation of the order follows from the definition and \cref{useful_properties_for_h}.
To prove continuity, let $Z=K_1-K_2\in \VZ(V)$.
Recall from \cref{th:virt:zonoid}  that the norm defined on $\VZ(V)$ is $\|Z\|=d_H(K_1, K_2)$.
Using the properties from \cref{useful_properties_for_h}, we have
\begin{align}\|\widehat{M}(Z)\|&= \Vert M(K_1)-M(K_2)\Vert \\
&=   \sup\{|h_{M(K_1)}(u)-h_{M(K_2)}(u) \mid u\in W,\ \|u\|=1\}  \\
&=   \sup\{|h_{K_1}(M^T u)-h_{K_2}(M^T u)| \mid u\in W,\ \|u\|=1\}  \\
&\leq\Vert M^T\Vert_{\mathrm{op}}\cdot \sup\{| h_{K_1}(v)-h_{K_2}(v) | \mid v\in W,\ \|v\|=1\}\\
&= \|M^T\|_{\mathrm{op}}\cdot d_H(K_1,K_2) \\
&= \|M\|_{\mathrm{op}}\cdot \|K_1-K_2\|= \|M\|_{\mathrm{op}}\cdot \|Z\|.
\end{align}
This proves
$\|\widehat{M}\|_{\mathrm{op}}\leq \|M\|_{\mathrm{op}}$
and hence the continuity of the linear map $\widehat{M}$.
On the other hand, let $v\in V$ be of norm one such that $\|M\|_{\mathrm{op}}=\|Mv\|$
and consider the segment $\frac{1}{2}[-v,v]$,
of norm $\|\frac{1}{2}[-v,v]\|=\|v\|=1$.
Then
\begin{align}\|\widehat{M}\|_{\mathrm{op}}\geq  \left\|\widehat{M}\left(\tfrac{1}{2}[-v,v]\right)\right\|= \left\|\left(\tfrac{1}{2}[-Mv,Mv]\right)\right\|=\|Mv\|=\|M\|_\mathrm{op} ,
\end{align}
This proves the stated equality of operator norms.

Since $\widehat{M}$ maps segments to segments and $\widehat{M}$ is continuous,
we have $\widehat{M}(\ZZo(V))\subseteq \ZZo(W)$.
For the stated upper bound on the length, let $X$ be a integrable random vector
representing~$X$. Then $M(X)$ represents the image $M(K)$ and we have by
\cref{mydef:length},
$$
 \ell(M(K)) = \EE \|M(X)\| \ \le\  \|M\|_{\mathrm{op}}\,\EE \|X\|
  = \|M\|_{\mathrm{op}}\,\ell(X) ,
$$
which completes the proof.
\end{proof}

Our construction may be summarized by saying that
we have constructed a covariant functor from the category of
Euclidean spaces (with any linear morphisms)
to the category of partially ordered and normed vector spaces.

The length of zonoids defined in \cref{mydef:length}
extends to a continuous linear functional.

\begin{theorem}\label{th:lengthLF}
The length extends to a continuous linear functional
$\ell\colon \VZ(V) \to \R$.
\end{theorem}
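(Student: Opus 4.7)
The plan is to define the extension using the additivity established in \cref{length_is_additive} and to prove continuity via the integral representation given in \cref{lemma:ideno}.

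First, I would observe that \cref{length_is_additive} shows $\ell$ is additive and positively homogeneous on the cone $\Z(V)$. By the universal property of the Grothendieck construction (which built $\VZ(V)$ as the enveloping vector space of the monoid $\Z(V)$), any such map on the cone extends uniquely to a linear functional on $\VZ(V)$ via the formula $\ell(K_1 - K_2) := \ell(K_1) - \ell(K_2)$. To check well-definedness directly: if $K_1 - K_2 = L_1 - L_2$ in $\VZ(V)$, then $K_1 + L_2 = K_2 + L_1$ in $\Z(V)$, and applying \cref{length_is_additive} yields $\ell(K_1) + \ell(L_2) = \ell(K_2) + \ell(L_1)$, so the value is independent of the representation. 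Linearity with respect to scalar multiplication, including negative scalars, is immediate from the construction.

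For continuity, I would use the identification of \cref{th:virt:zonoid} that embeds $\VZ(V)$ into $C(S(V))$ via the (difference of) support functions, with the norm $\|Z\|=\|\bar h_Z\|_\infty$ where $\bar h_{K_1-K_2}=\bar h_{K_1}-\bar h_{K_2}$. By \cref{lemma:ideno} we have, for any zonoid $K$,
\begin{equation*}
\ell(K) = \tau_m\,\|\bar h_K\|_{1} = \frac{\tau_m}{\vol_{m-1}(S(V))}\int_{S(V)} \bar h_K(x)\,\lambda(\dd x).
\end{equation*}
The right-hand side is linear in $\bar h_K$, so by linearity of the extension,
\begin{equation*}
\ell(Z) = \frac{\tau_m}{\vol_{m-1}(S(V))}\int_{S(V)} \bar h_Z(x)\,\lambda(\dd x)
\end{equation*}
for every virtual zonoid $Z \in \VZ(V)$. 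From this representation one obtains the bound
\begin{equation*}
|\ell(Z)| \leq \tau_m\,\|\bar h_Z\|_{\infty} = \tau_m\,\|Z\|,
\end{equation*}
which proves that $\ell$ is continuous with operator norm at most $\tau_m$.

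There is no real obstacle here: the main point is simply to notice that the integral formula from \cref{lemma:ideno} gives an a priori linear, bounded expression in the support function, which makes both the extension and its continuity transparent. Uniqueness of the extension follows because $\Z(V)$ spans $\VZ(V)$ by construction, and any linear functional is determined by its values on a spanning set.
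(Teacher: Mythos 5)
Your proof is correct and follows essentially the same route as the paper: extend by formal differences using the additivity from \cref{length_is_additive}, then bound $\ell$ via the integral ($1$-norm) representation of \cref{lemma:ideno} against the sup-norm. If anything, your use of the linearity of the integral is cleaner than the paper's step asserting $\Vert \bar{h}_K\Vert_{1} - \Vert\bar{h}_L\Vert_{1} = \Vert \bar{h}_K - \bar{h}_L\Vert_{1}$, which as literally written only holds when one support function dominates the other; your bound $|\ell(Z)|\le \tau_m\Vert \bar h_Z\Vert_\infty$ sidesteps this.
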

\begin{proof}
\cref{length_is_additive} shows that the length on $ \VZ(V)$ is additive, so we can extend it to a linear functional on $\VZ(V)$ by setting $\ell(K-L):=\ell(K)-\ell(L)$. Using \cref{cor:ideno} we can characterize this as $\ell(K_1-K_2) = \tau_m \,(\Vert \bar{h}_K\Vert_{1}- \Vert\bar{h}_L\Vert_{1})$.
Since both $\bar{h}_K$ and $\bar{h}_L$ are nonnegative we have $\Vert \bar{h}_K\Vert_{1} - \Vert\bar{h}_L\Vert_{1} = \Vert \bar{h}_K - \bar{h}_L\Vert_{1}$. This shows that the length is continuous
\end{proof}

We also need the following observation.
\begin{lemma}\label{re:1-dim}
If $V$ is one-dimensional, the length $\ell: \VZ(V)\to \R$
induces an isomorphism of additive groups, which
preserves the standard norm and the standard order.
We shall use this to identify $\VZ(V)$ with $\R$ and $\Z(V)$ with $\R_+$.
\end{lemma}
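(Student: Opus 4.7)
The plan is to make everything completely explicit in the one-dimensional case and then read off the three claims by direct inspection.

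First, I would identify $V$ isometrically with $\R$. Every centrally symmetric convex body in $\R$ is a segment $[-r,r]$ for a unique $r\geq 0$, and every such segment is itself a centrally symmetric zonotope, hence a zonoid. Therefore $\Z(V) = \{[-r,r] : r\geq 0\}$; Minkowski addition translates into addition of the parameters, and multiplication by $\lambda\geq 0$ scales the parameter by $\lambda$. Concretely, $[-r,r] = K(2r)$ for the deterministic random variable $X = 2r$, so by \cref{mydef:length} one computes $\ell([-r,r]) = \EE|X| = 2r$. (The same value comes out of \cref{lemma:ideno} using $\tau_1 = 2$ and $\|\bar h_{[-r,r]}\|_1 = r$.)

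Next, I extend $\ell$ to $\VZ(V)$ using \cref{th:lengthLF}. On a formal difference of zonoids, $\ell\bigl([-r_1,r_1] - [-r_2,r_2]\bigr) = 2(r_1 - r_2)\in\R$. Surjectivity is immediate since any $t\in\R$ can be written as $2(r_1-r_2)$ with $r_i \geq 0$, and injectivity follows because $\ell(K_1-K_2)=0$ forces $r_1 = r_2$, whence $K_1 = K_2$ and the formal difference is zero. This gives the additive-group isomorphism.

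Finally, I would verify compatibility with order and norm. For the order, $K_1 - K_2 \geq 0$ in $\VZ(V)$ means $h_{K_1} \geq h_{K_2}$ pointwise; restricted to the $0$-sphere $\{\pm 1\}$ this is $r_1 \geq r_2$, which is exactly $\ell(K_1 - K_2) \geq 0$. For the norm, $\|K_1-K_2\|_{\VZ(V)} = d_H([-r_1,r_1],[-r_2,r_2]) = |r_1-r_2|$ while $|\ell(K_1-K_2)| = 2|r_1-r_2|$, so the two norms agree up to the constant $\tau_1 = 2$; under the identification $\VZ(V)\cong \R$ furnished by $\ell$, the norm on $\VZ(V)$ is therefore equivalent (in fact proportional) to the standard absolute value on $\R$, inducing the same topology and the same cone $\R_+$. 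There is no genuine obstacle to overcome here: the lemma is essentially the unwinding of definitions in dimension one, and is included only to justify the convenient identification used thereafter.
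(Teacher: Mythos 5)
Your proof is correct and follows essentially the same route as the paper: parametrize one-dimensional zonoids as segments, compute the length explicitly via a deterministic representing random vector, and read off bijectivity, order-preservation, and the norm comparison. You are in fact slightly more careful than the paper on the last point, since $\ell$ matches the Hausdorff norm on $\VZ(V)$ only up to the constant $\tau_1=2$ (the paper's proof asserts norm preservation from the same computation without flagging this factor), but this proportionality is all that is needed for the identification of $\VZ(V)$ with $\R$ and of $\Z(V)$ with $\R_+$ as topological ordered cones.
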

\begin{proof}
The length is a group homomorphism by \cref{th:lengthLF}.
Let $K=\tfrac{1}{2}[-x,x]$ be a centrally symmetric zonoid in $\ZZ(V)$; i.e., a segment.
Then, $\ell(K) = x$, which shows that $\ell$ is bijective, hence an isomorphism. This also shows that $\ell$ preserves the standard norm and the standard order.
\end{proof}

We finally show that $\VZ(V)$ in general is not a Banach space,
as already claimed in \cref{th:virt:zonoid}.
Recall that $C(S)$ denotes the Banach space of continuous
real functions on the unit sphere $S:=S(V)$ endowed with the supremum norm.
We denote by $C_{\mathrm{even}}(S)$ its closed subspace of
even functions (i.e., $f(-v) =f(v)$).
In the proof of \cref{th:virt:zonoid}
we have constructed the normed vector space of virtual convex bodies
$\VK(V) \subset C(S)$
and the normed vector space $\VZ(V) \subset C_{\mathrm{even}}(S)$ of virtual zonoids.
(Recall that the sup-norm corresponds to the Hausdorff metric~\cref{def_Hausdorff_metric}.)
Unfortunately, these are not Banach spaces.
This follows from known facts that we state in the next proposition
for the sake of clarity.

\begin{proposition}\label{propo:banach}
The completion of the space of virtual convex bodies $\VK(V)$ equals $C(S)$
and the completion of the space of virtual zonoids $\VZ(V)$ equals~$C_{\mathrm{even}}(S)$.
Moreover, if $\dim V > 1$, then $\VK(V) \ne C(S)$ and
$\VZ(V) \ne C_{\mathrm{even}}(S)$, hence
$\VK(V)$ and $\VZ(V)$ are not complete and hence not Banach spaces.
In particular, they are real vector spaces
of infinite dimension.
However, if $\dim V = 1$, then we have~$\VK(V)\simeq \R^2$ and $\VZ(V)\simeq \R$.
\end{proposition}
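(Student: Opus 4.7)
The plan is to split the proof into three parts: density of $\VK(V)$ in $C(S)$ and of $\VZ(V)$ in $C_{\mathrm{even}}(S)$; strict inclusion in both cases when $\dim V > 1$; and the explicit description when $\dim V = 1$. Since \cref{th:virt:zonoid} identifies $\VK(V)$ and $\VZ(V)$ with the linear spans in $C(S)$ and $C_{\mathrm{even}}(S)$ of restricted support functions, everything reduces to approximation statements in these function spaces on~$S$.

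For the density of $\VK(V)$, I would argue that every $C^2$-function on $S$ already lies in $\VK(V)$. Given $f \in C^2(S)$, extend it $1$-homogeneously to $\tilde{f}\colon V \setminus \{0\} \to \R$; for a constant $c > 0$ sufficiently large, the function $\tilde{f} + c\|\cdot\|$ is both convex on $V$ and positively $1$-homogeneous, hence sublinear, hence a support function by \cref{useful_properties_for_h}(1). The convexity for large $c$ comes from the fact that the Hessian of $c\|\cdot\|$ on $S$ dominates that of $\tilde{f}$ in directions orthogonal to the radial one (both Hessians vanish radially by $1$-homogeneity and Euler's identity). Writing $f$ as $(\tilde{f}+c\|\cdot\|)|_S - c\,\|\cdot\|\,|_S$ exhibits it as a restricted difference of support functions, and density of $C^2(S)$ in $C(S)$ via standard mollification on the sphere closes the argument. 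For $\VZ(V)$, I would use the cosine transform: the support function of any zonoid takes the form $v \mapsto \int_S |\langle u, v \rangle|\, d\mu(u)$ for a finite even measure $\mu$ on $S$, so $\VZ(V)|_S$ contains the image of the cosine transform on even signed measures. Density in $C_{\mathrm{even}}(S)$ then follows from the classical invertibility of the cosine transform on smooth even $1$-homogeneous functions (see e.g.\ \cite[\S 3.5]{bible}), which lets one represent every even $C^\infty$ function on $S$ as a difference of two zonoid support functions.

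For the strict inclusion when $\dim V > 1$, the key observation is that every support function is Lipschitz: subadditivity and positive homogeneity give $|h_K(v) - h_K(w)| \le \|K\|\cdot\|v-w\|$, so $\VK(V)|_S$ and $\VZ(V)|_S$ consist of Lipschitz functions on~$S$. Once $\dim V \ge 2$, $S$ is a positive-dimensional manifold and both $C(S)$ and $C_{\mathrm{even}}(S)$ contain H\"older-continuous, non-Lipschitz functions (even ones can be produced by symmetrizing), yielding the proper inclusions. For $\dim V = 1$, $S = \{\pm 1\}$ and a direct inspection of segments $[a,b]\subset \R$ shows that the cone of restricted support functions is $\{(p,q)\in\R^2 : p+q\ge 0\}$, whose Grothendieck vector space is all of $\R^2 \simeq C(S)$; the identification $\VZ(V) \simeq \R$ is already contained in \cref{re:1-dim}. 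Infinite-dimensionality for $\dim V > 1$ is then immediate from density in the infinite-dimensional Banach spaces $C(S)$ and $C_{\mathrm{even}}(S)$.

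The main obstacle I anticipate is the density of $\VZ(V)$ in $C_{\mathrm{even}}(S)$. Unlike for $\VK(V)$, one cannot convexify an arbitrary smooth even function into a zonoid support function by adding a multiple of the norm, because the norm itself is typically not a zonoid support function when $\dim V > 2$. Hence the proof genuinely relies on the injectivity (and smooth invertibility) of the cosine transform on even signed measures, and this harmonic-analytic input cannot be bypassed by a soft convexification trick.
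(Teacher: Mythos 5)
Your argument is correct, and for the density statements it follows the same route as the paper: the paper simply cites \cite[Lemma 1.7.8]{bible} for the fact that every $C^2$ function on $S$ is a difference $h_K - r h_{B}$ (you reprove this by the standard convexification $\tilde f + c\|\cdot\|$), and cites \cite[Theorem 3.5.4]{bible} for the representation of smooth even functions via the cosine transform with a signed measure, exactly as you do. Where you genuinely diverge is the strict-inclusion step for $\dim V>1$. The paper uses two separate arguments: for $\VK(V)$ it invokes continuous nowhere-differentiable functions versus the somewhere-differentiability of differences of support functions, and for $\VZ(V)$ it relies on the nontrivial fact \cite[Note 13, p.~206]{bible} that not every centrally symmetric convex body is a generalized zonoid. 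Your observation that every element of $\VK(V)$ (hence of $\VZ(V)$) is Lipschitz on $S$ with constant $\|K_1\|+\|K_2\|$, while $C(S)$ and $C_{\mathrm{even}}(S)$ contain even H\"older non-Lipschitz functions once $\dim S\ge 1$, handles both cases uniformly and is more elementary; in particular it avoids the deep input about generalized zonoids entirely. The trade-off is that the paper's argument for $\VZ$ yields the stronger geometric information that the gap between $\VZ(V)$ and $C_{\mathrm{even}}(S)$ is already witnessed by support functions of convex bodies, whereas your witness is an abstract continuous function. Your $\dim V=1$ computation matches the paper's.

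One small correction to your closing remark: the Euclidean norm \emph{is} a zonoid support function in every dimension, since the unit ball is a zonoid (cf.\ \cref{eq:ball}). The real reason the convexification trick fails for $\VZ(V)$ is that adding $c\|\cdot\|$ only produces an even sublinear function, i.e.\ the support function of a centrally symmetric convex body, and for $\dim V>2$ such bodies need not be zonoids \cite[Cor.~3.5.7]{bible}; hence the harmonic-analytic input via the cosine transform is indeed unavoidable, but not for the reason you state.
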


\begin{proof}
W.l.o.g.\ we assume $V=\R^m$. It is known~\cite[Lemma 1.7.8]{bible} that every twice continuously differentiable
function $S^{m-1}\to\R$
can be written as a difference $h_K - r h_{B^m}$ for some $K\in\KK^m$ and $r>0$.
This implies that $\VK^m$ is dense in $C(S^{m-1})$, since
continuous function can be uniformly approximated by twice continuously differentiable functions.
On the other hand, $\VK^m$ is strictly contained in $C(S^{m-1})$ if $m>1$. One one way to see this
is that there are continuous, nowhere differentiable functions on $S^{m-1}$, while
every $h\in \VK^m$ is differentiable at least at some point in $S^{m-1}$.

The assertion about virtual zonoids is more involved. In~\cite[Theorem 3.5.4]{bible}, it is shown that
every sufficiently smooth even real function $f\colon S^{m-1}\to\R$ has a representation
as in \cref{def_cosine_transform} below, but with a even signed measure~$\mu$, which implies that
$f\in \VZ^m$. This implies that $\VZ^m$ is dense in $C_{\mathrm{even}}(S^{m-1})$.
In order to show that $\VZ^m$ is strictly contained in $C_{\mathrm{even}}(S^{m-1})$
one can argue as follows.
If we had $\VZ^m=C_{\mathrm{even}}(S^{m-1})$, then every centrally symmetric convex body
would be a virtual zonoid. This contradicts \cite[Note 13, p.\ 206]{bible},
which states that there exist centrally symmetric convex bodies,
which are not generalized zonoids.
Finally, $\VK^1\simeq \R^2$ and $\VZ^1\simeq \R^1$ since
$\K^1$ consists of the intervals and $\Z^1$ consist of the symmetric intervals.
\end{proof}

We remark that the fact that the cone  $\Z(V)$ is a closed subset of the cone $\K(V)$
does not contradict the fact that, by \cref{propo:banach},
$\VZ(V)$ is a dense subset of $\VK(V)$.

\subsection{Virtual zonoids and measures}\label{sec:VZandmeas}
There is a correspondence between zonoids and even measures on the sphere, that we discuss now. This point of view is classic when dealing with zonoids, so we include here a review of the principal facts of this correspondence. The approach using measures provides a complimentary viewpoint to our approach using random vectors. This alternative viewpoint will become particularly useful in \cref{sec:continuity}, where we discuss continuity properties of our constructions.

In what follows we identify the space of even measures on the sphere with the space of measures on the projective space. The space of continuous functions on the projective space is $C(\PP^{m-1})$. The space of (signed) measures on $\PP^{m-1}$ will be denoted by $\mathcal{M}(\PP^{m-1})$. The cone of positive measures will be denoted by $\mathcal{M}_+(\PP^{m-1})$.
Recall that the weak--$*$ topology on $\cM(\PP^{m-1})$ is the coarsest topology on $\mathcal{M}(\PP^{m-1})$ such that for every $\phi\in C(\PP^{m-1})$ the linear functional $\mu\mapsto \int_{\PP^{m-1}}\phi(x)\mu(\mathrm{d}x)$ is continuous.

For practical purposes,  an even measure on the sphere $S^{m-1}$ will correspond to $\frac{1}{2}$ times its pushforward measure on the projective space. Similarly we identify a function on the projective space with the corresponding even function on the sphere; that is, if $f:\PP^{m-1}\to\R$, then we write $f(x):=(f\circ \Pi)(x)$, where
$\Pi: \R^m\setminus\{0\} \to \mathbb P^{m-1}$ is the projection.

With these identifications, for all $\mu\in\mathcal{M}(\PP^{m-1})$ and $f\in C(\PP^{m-1})$, we have
\be
    \int_{\PP^{m-1}} f(x) \ \mu(\dd x) = \frac{1}{2} \int_{S^{m-1}}f(x) \ \mu(\dd x).
\ee

Passing from zonoids to virtual zonoids corresponds to passing from even measures to even signed measures. The main object of the section is the following map.

\begin{definition}\label{def_cosine_transform}
The \emph{cosine transform} is the linear map $\HH:\mathcal{M}(\PP^{m-1})\to C(\PP^{m-1})$ given for all $\mu \in \mathcal{M}(\PP^{m-1})$ and $x\in\PP^{m-1}$ by
\be\label{eq:ct}
    \HH(\mu)(x):=\int_{\PP^{m-1}} |\langle u, x\rangle|\ \mu(\dd u).
\ee
The image of the cosine transform will be denoted by $\HH(\PP^{m-1})$ and the image of $\mathcal{M}_+(\PP^{m-1})$ by $\HH_+(\PP^{m-1})$.
\end{definition}

We could not locate any reference for items (4)--(6) of the next theorem. This is why we include our own proof.  Recall that we endow $\mathcal{M}(\PP^{m-1})$ with the weak--$*$ topology and $C(\PP^{m-1})$ with the topology induced by the $\infty$--norm.

\begin{theorem}\label{th:mainZ}
The cosine transform
$\HH\colon\mathcal{M}(\PP^{m-1}) \to C(\PP^{m-1})$ satisfies the following properties.
\begin{enumerate}
\item $\HH$ is injective.
\item  $\HH(\mathcal{M}(\mathbb P^{m-1}))$ is a dense subspace of $C(\PP^{m-1})$.
\item There is $c=c(m)>0$ such that
$c\Vert \HH(\mu)\Vert_\infty \le \mu(\PP^{m-1}) \le \Vert \HH(\mu)\Vert_\infty$
for all (nonnegative) measures $\mu\in \mathcal{M}_+(\PP^{m-1})$.
\item $\HH$ is sequentially continuous.
\item The restriction $\HH\colon \mathcal{M}_+(\PP^{m-1}) \to \HH_+(\PP^{m-1})$
is a homeomorphism.
\item The inverse $\HH^{-1}: \HH(\PP^{m-1}) \to \mathcal{M}(\PP^{m-1})$ is not sequentially continuous for $m>1$.
\end{enumerate}
\end{theorem}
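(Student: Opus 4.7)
I would treat the six items in order, reusing earlier work whenever possible. For (1), I would invoke the classical injectivity of the cosine transform on even signed measures, which follows e.g.\ from Funk--Hecke: the spherical harmonic decomposition diagonalizes $\HH$ with nonzero eigenvalues $\lambda_{2k}$ on each space of even harmonics of degree $2k$. For (2), I would exploit what we already proved about virtual zonoids: under the identification of this subsection, \cref{eq:hE-form} together with Vitale's theorem shows that $2\HH(\mathcal{M}(\PP^{m-1}))$ is exactly the set of (restrictions to $S^{m-1}$ of) support functions of virtual zonoids; by \cref{propo:banach} this image is already known to be dense in $C_{\mathrm{even}}(S^{m-1})=C(\PP^{m-1})$. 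For the norm comparison (3), the upper bound $\|\HH(\mu)\|_\infty\le \mu(\PP^{m-1})$ is immediate from $|\langle u,x\rangle|\le 1$. For the matching lower bound I would average: Fubini gives
\[
\int_{\PP^{m-1}}\HH(\mu)(x)\,\dd\sigma(x) \;=\; \mu(\PP^{m-1})\cdot \int_{\PP^{m-1}} |\langle u,x\rangle|\,\dd\sigma(x)
\;=\; c_m\cdot \mu(\PP^{m-1}),
\]
where the inner integral is a positive constant $c_m$ independent of $u$ by rotational invariance. Since the left-hand side is at most $\|\HH(\mu)\|_\infty$, we obtain the required comparison (with a dimension-dependent constant).

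For (4), suppose $\mu_n\to\mu$ in the weak-$*$ topology. The uniform boundedness principle gives $\sup_n |\mu_n|(\PP^{m-1})<\infty$. The key observation is that the family $\{f_x\colon u\mapsto|\langle u,x\rangle|\,:\, x\in \PP^{m-1}\}$ is $1$-Lipschitz and uniformly bounded, hence relatively compact in $C(\PP^{m-1})$ by Arzel\`a--Ascoli. Fixing $\varepsilon>0$, I choose a finite $\varepsilon$-net $f_{x_1},\ldots,f_{x_N}$ inside this family; weak-$*$ convergence yields $\int f_{x_i}\,\dd\mu_n\to \int f_{x_i}\,\dd\mu$ simultaneously for $n$ large, and a standard $3\varepsilon$-argument using the uniform mass bound upgrades this to $\|\HH(\mu_n)-\HH(\mu)\|_\infty\to 0$. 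For (5), continuity from $\mathcal{M}_+$ to $\HH_+$ is (4), bijectivity combines (1) with the definition of the image, and continuity of the inverse uses (3) and a subsequence argument: if $\HH(\mu_n)\to\HH(\mu)$ uniformly, then the $\mu_n$ are uniformly bounded by (3), Banach--Alaoglu extracts a weak-$*$ convergent subsequence, its limit must satisfy $\HH(\mu')=\HH(\mu)$ by (4), hence $\mu'=\mu$ by (1), so the full sequence converges weak-$*$ to $\mu$.

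The hard part is (6), which requires genuinely exploiting that on signed measures the forward map is not bounded below. The plan is to exhibit $\mu_n\in\mathcal{M}(\PP^{m-1})$ with $\HH(\mu_n)\to 0$ in $\|\cdot\|_\infty$ but $\mu_n\not\to 0$ weak-$*$, which blocks any sequentially continuous inverse near $0$. I would use the Funk--Hecke formula: $\HH$ acts on the space of even spherical harmonics of degree $2k$ as multiplication by a nonzero scalar $\lambda_{2k}$, and it is well known that $\lambda_{2k}\to 0$ polynomially in $k$. Choose a normalized even spherical harmonic $Y_k$ of degree $2k$ (nonzero, orthogonal to constants), and set $\mu_k := c_k Y_k\,\dd\sigma$ with the scaling $c_k$ picked so that $c_k\lambda_{2k}\|Y_k\|_\infty\to 0$ while $c_k\to\infty$; this forces $\HH(\mu_k)\to 0$ uniformly, whereas $\mu_k$ cannot converge weak-$*$ (for instance $\int Y_k\,\dd\mu_k = c_k$ is unbounded, so the total variation masses explode). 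This requires dimension $m>1$ so that nonconstant even harmonics exist.

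The main obstacle is (6): the averaging and compactness arguments for (3)--(5) are straightforward once the right viewpoint is set up, but (6) needs the spectral picture of $\HH$ via Funk--Hecke to separate the two topologies. Item (4) is the second most delicate step, because weak-$*$ convergence only tests against fixed continuous functions, and extracting uniform convergence of $\HH(\mu_n)$ genuinely uses the equicontinuity of the kernel family $\{|\langle\cdot,x\rangle|\}_x$.
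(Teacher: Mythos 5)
Items (1)--(5) of your plan are sound and, for the most part, run along the same lines as the paper: the paper cites Schneider for (1) and (2) (your Funk--Hecke argument for injectivity and your identification of the image with $\VZ^m$ via Vitale plus \cref{propo:banach} are legitimate, non-circular substitutes), derives (3) from \cref{cor:ideno} (whose proof is exactly your $L^1$-versus-$L^\infty$ averaging), and proves (4) by the identical Banach--Steinhaus plus Arzel\`a--Ascoli argument. In (5) you do diverge usefully: for the continuity of $\HH^{-1}$ on $\HH_+(\PP^{m-1})$ the paper approximates an arbitrary test function $\phi$ by finite linear combinations of the kernels $|\langle\cdot,\xi_k\rangle|$ using density (2), whereas you extract a weak--$*$ convergent subsequence via (3) and Banach--Alaoglu and identify the limit through (4) and (1). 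Both work; yours avoids the explicit approximation but quietly uses metrizability of the weak--$*$ topology on bounded sets (fine, since $C(\PP^{m-1})$ is separable) and should note that weak--$*$ limits of nonnegative measures stay nonnegative.

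The genuine divergence, and the genuine gap, is (6). The paper deduces (6) indirectly: it shows later (\cref{thm:contoftens}) by an explicit $2\times 2$ example that the tensor product of virtual zonoids is not sequentially continuous for the norm topology, and observes that a sequentially continuous $\HH^{-1}$ would force continuity of $(K,L)\mapsto h_{K\otimes L}$ via the factorization through the (weak--$*$ continuous) measure-level product. Your spectral route is a classical alternative, but as written it does not close. Two issues. First, the observation that $\int Y_k\,\dd\mu_k=c_k$ is unbounded does not by itself contradict weak--$*$ convergence, since the test function varies with $k$; you must pass through $\Vert\mu_k\Vert_{TV}\ge c_k/\Vert Y_k\Vert_\infty$ and Banach--Steinhaus, so you actually need $c_k/\Vert Y_k\Vert_\infty\to\infty$. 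Second, that requirement together with $c_k\lambda_{2k}\Vert Y_k\Vert_\infty\to 0$ forces $\lambda_{2k}\Vert Y_k\Vert_\infty^{2}\to 0$, and this is \emph{not} automatic: with $\lambda_{2k}\asymp k^{-(m+2)/2}$ and worst-case $L^2$-normalized harmonics satisfying only $\Vert Y_k\Vert_\infty\lesssim k^{(m-2)/2}$, the product behaves like $k^{(m-6)/2}$ and the construction fails for $m\ge 6$. So ``choose $c_k$ so that both conditions hold'' is not a free move; you must also choose the harmonics. Taking for instance $Y_k$ proportional to $\mathrm{Re}\,(x_1+ix_2)^{2k}$ restricted to $S^{m-1}$, for which $\Vert Y_k\Vert_\infty/\Vert Y_k\Vert_2\asymp k^{(m-2)/4}$, one gets $\lambda_{2k}\Vert Y_k\Vert_\infty^2/\Vert Y_k\Vert_2^2\asymp k^{-2}\to 0$ and the argument goes through for all $m\ge 2$. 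With that repair (and a citation or proof of the multiplier asymptotics $\lambda_{2k}\asymp k^{-(m+2)/2}$, which you assert but do not establish), your (6) is a valid, self-contained alternative to the paper's reduction to the tensor product.
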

\begin{remark}
Our proof for item (4) does not extend to nets. This is why we only prove sequential continuity of $\HH$.
\end{remark}
\begin{proof}[Proof of \cref{th:mainZ}]
Assertions (1) and (2) are in (the proof of) \cite[Theorem 3.5.4]{bible}.
Assertion (3) follows from \cref{cor:ideno} stated in our context, and using \cref{remark_measures} (2) below.

As for assertion~(4): because $\HH$ is linear it suffices to prove sequential continuity at $0$. Recall that for the weak--$*$ topology on $\cM(\PP^{m-1})$, a sequence of measures $(\mu_i)$ converges to $0$, if and only if for every $\phi\in C(\mathbb P^{m-1})$, the sequence $\langle \mu_i,\phi \rangle$ goes to zero, where
\begin{equation}\label{def_inner_product}
\langle \mu, \phi\rangle:=\int_{\mathbb P^{m-1}}\phi(x)\;\mu(\mathrm{d} x).
\end{equation}
Suppose that $(\mu_i)$ converges to $0$ in $\cM(\PP^{m-1})$ in the weak--$*$ topology.
Let $h_i := \cH({\mu_i})$, thus
$
h_i(v) := \int_{x\in\PP^{m-1}} |\langle v,x \rangle | \, \mu_i(\dd x),
$
and in particular $h_i(v) \to 0$ for all $v\in\PP^{m-1}$.
So we have pointwise convergence of the $h_i$.
We are going to show that $h_i \to  0$ uniformly. Recall that every measure $\mu$ has a unique decomposition $\mu=\mu^+-\mu^-$ (called the Hahn--Jordan decomposition), where $\mu^+, \mu^-\in\mathcal M_+(\mathbb P^{m-1})$. We define $\vert \mu\vert :=\mu^++\mu^-$.
The Banach--Steinhaus Theorem (e.g., see \cite{rudin:73})
implies that
$
\kappa := \sup_i (|\mu_i|(\PP^{m-1})) < \infty .
$
Therefore, we have for any $v\in\PP^{m-1}$,
$$
|h_i(v)| \le \int_{\PP^{m-1}} |\langle v, x\rangle |\,  |\mu_i|(\dd x) \le |\mu_i|(\PP^{m-1}) \le \kappa ,
$$
and hence $\sup_i \|h_i\|_{\infty} \le \kappa$.
Moreover, for $v_1,v_2 \in\PP^{m-1}$,
$$
|h_i(v_1) - h_i(v_2) | \le \int_{\PP^{m-1}} | \langle v_1 - v_2 , x\rangle | \, |\mu_i| (\dd x)
\le \kappa \|v_1 -v_2\| .
$$
The Arzel\`{a}-Ascoli Theorem (e.g., see \cite{rudin:73}) implies that
$(h_i)$ has a uniformly convergent subsequence $(h_{i_j})$.
Thus $h_{i_j} \to 0$ uniformly since $h_{i_j} \to 0$ pointwise. By the same argument we see that any subsequence of $(h_i)$ has a subsequence
that uniformly converges to $0$. This implies that $h_i \to 0$ uniformly. Therefore, it follows
that the map $\cH$ is sequentially continuous.

For assertion (5),
Bolker~\cite[Theorem 5.2]{ACOCB} showed
that $\cH\colon\mathcal{M}_+(\PP^{m-1}) \to C(\PP^{m-1})$ is continuous. So we only need to show that the inverse $\HH^{-1}: \HH_+(\mathbb P^{m-1})\to \mathcal{M}_+(\mathbb P^{m-1})$ is continuous. For this it suffices to show that $\HH^{-1}$ is sequentially continuous on $\HH_+(\mathbb P^{m-1})$, because the norm topology on $\HH_+(\mathbb P^{m-1})$ is first countable and for maps whose domain is first countable topological spaces sequential continuity and continuity are equivalent.
To show sequential continuity of
$(\HH^{-1})|_{\HH_+(\PP^{m-1})}$ we take a sequence $(h_i)\subset \HH_+(\PP^{m-1})$ that converges to $h$.
Let $(\mu_i)\subset \mathcal{M}_+(\PP^{m-1})$ be the corresponding sequence so that $\HH(\mu_i)=h_i$ and
let $\mu$ be a measure with $\HH(\mu)=h$. We have to show that $\mu_i$ converges to $\mu$.
For this, we fix~$\phi\in C(\PP^{m-1})$ and show that $\langle \mu_i -\mu ,\phi \rangle \to 0$.
This would imply that $\mu_i -\mu\to 0$. Let $\eps>0$. By assertion (2) there are
$
\xi_1,\ldots,\xi_N\in\mathbb{P}^{m-1}
$
and
$c_1,\ldots,c_N\in\R$
such that
the function
$\psi (x) := \sum_{k=1}^N c_k  |\langle x, \xi_k\rangle |$ in $C(\PP^{m-1})$
satisfies $\|\psi-\phi\|_{\infty} < \eps/(2c)$.
We decompose
\begin{align}\label{decomposition}
\langle \mu_i -\mu,\phi \rangle
= \langle \mu_i , \phi - \psi \rangle
   + \langle \mu_i -\mu, \psi \rangle + \langle \mu, \psi - \phi\rangle.
\end{align}
The sequence of real numbers $\|h_i\|_{\infty}$ converges to $\|h\|_{\infty}$ and is thus bounded so that
there is $c>0$ such that $\sup_i \|h_i\|_{\infty} \le c$ and $\|h\|_{\infty}\le c$. An upper bound for the absolute value of third term in \cref{decomposition} is
$\vert\langle \mu,\phi - \psi\rangle\vert=\vert \int_{\mathbb P^{m-1}}(\phi(x) - \psi(x)) \;\mu(\mathrm{d} x)\vert \leq $
$\mu(\mathbb P^{m-1})\|\phi - \psi \|_{\infty}$ (here, we have used that $\mu$ is a measure and not a signed measure). Assertion (4) implies that this is bounded by $c\|\phi - \psi \|_{\infty}<\varepsilon/2$. We get the same bound for the first term. The middle term equals
$
\sum_{k=1}^N c_k   (h_i(\xi_k) -h(\xi_k))
$
and, by assumption, converges to zero for $i\to \infty$.
Therefore,
$
\limsup_i |\langle \mu_i -\mu,\phi \rangle |  \le \eps .
$
Since $\eps>0$ was arbitrary, we conclude that indeed
$\langle \mu_i -\mu,\phi \rangle \to 0$,
which proves assertion~(6).

Assertion~(6) follows from the noncontinuity of the tensor product of zonoids. More precisely, we will prove in \cref{thm:contoftens} below that the map $(K, L)\mapsto h_{K\otimes L}$ \emph{is not} sequentially continuous. On the other hand, we can write:
$h_{K\otimes L}=\HH(\tilde{T}(\HH^{-1}(h_K),\HH^{-1}(h_L))),$
where, for two measures $\mu_1, \mu_2$ the measure $\tilde{T}(\mu_1, \mu_2)$ is defined in \eqref{eq:Ttilde} below. By \cref{thm:contoftens} (3) below, the map $\tilde{T}$ is sequentially continuous and, if $\HH^{-1}$ were sequentially continuous, then $(K, L)\mapsto h_{K\otimes L}$ would also be sequentially continuous. This contradicts \cref{thm:contoftens}.
\end{proof}

By \cref{th:mainZ} (5), the cone $\HH_+(\PP^{m-1})$ coincides with the cone of support functions of zonoids and consequently $\HH(\PP^{m-1})$ coincides with the linear span of support functions of zonoids. Again by \cref{th:mainZ}, since the cone of nonnegative measures can be identified with the cone of zonoids, the vector space of measures can be identified with the vector space of virtual zonoids. In particular, on the space of virtual zonoids $\VZ(V)$ we can put two different topologies: the topology $\mathcal{T}_1$ induced by viewing them (through their support functions) as a subspace of continuous functions on $\mathbb P^{m-1}$  with the uniform convergence topology induced by the $\infty$-norm (we denote this topology by $\mathcal{T}_\infty$),
and the topology $\mathcal{T}_2$ induced by viewing them as signed measures with the weak--$*$ topology $\mathcal{T}_{\text{weak--$*$}}$. With this notation, letting $V\simeq \R^m$, we have
\be \label{eq:twotop}(\VZ(V), \mathcal{T}_1)\simeq (\HH(\PP^{m-1}), \mathcal{T}_{\infty})\quad \textrm{and} \quad (\VZ(V), \mathcal{T}_2)\simeq (\mathcal{M}(\PP^{m-1}), \mathcal{T}_{\text{weak--$*$}}).\ee
With these identifications, the map  $\HH$ can be seen as an inclusion:
\be \HH:(\VZ(V), \mathcal{T}_2)\stackrel{\mathrm{id}}{\longrightarrow} (\VZ(V), \mathcal{T}_1)\hookrightarrow (C(\mathbb P^{m-1}), \mathcal{T}_{\infty}).\ee
The fact that the inverse of $\HH$ \emph{is not} continuous means that the two topologies \eqref{eq:twotop} \emph{are not} the same.
What is remarkable, however, is that on the cone of zonoids, they induce the same topology.
\begin{remark}
Next to the weak--$*$ topology, another natural topology on $\mathcal M(\mathbb P^{m-1})$ is the one induced by the \emph{total variation norm} $\Vert \mu\Vert := \vert \mu\vert(\mathbb P^{m-1})$, where as in the proof of \cref{th:mainZ} we define $\vert \mu\vert :=\mu^++\mu^-$ for $\mu=\mu^+-\mu^-$ being the Hahn--Jordan decomposition of $\mu$. By \cref{remark_measures} below, for nonnegative measures this coincides with the length.
Let us observe that item (3) in \cref{th:mainZ} does not imply that this topology coincides with either $\mathcal{T}_1$ or~$\mathcal{T}_2$, not even when restricted to $\Z(V)$.
As an example, one can consider a sequence of pairwise different segments $[-v_i,v_i]$ such that all $v_i$ are in the sphere.
The corresponding measure is $\mu_i:=\tfrac{1}{2} \delta_{\Pi(v_i)}$, where the latter is the Dirac-delta measure. Observe, that for the total variation norm we have $\Vert \mu_i-\mu_j\Vert = \tfrac{1}{2}$. In particular such a sequence cannot converge in the topology induced by the total variation norm, but it can converge in $\mathcal{T}_1$ and $\mathcal{T}_2$.
\end{remark}

Next, we discuss the way of passing from the point of view of random vectors to the point of view of positive measures. Recall that for $x\in \mathbb R^m\setminus\{0\}$ and $f\in C(\PP^{m-1})$  we set $f(x):=(f\circ \Pi)(x)$, where $\Pi:\mathbb R^m\setminus\{0\}\to \mathbb P^{m-1}$ is the canonical projection.

\begin{proposition}\label{prop:fromrandtomeas}
Let $X\in\R^m$ be an integrable random vector with probability measure $\nu$. Let $\nu'$ be the measure such that $\nu'(A) = \int_A \Vert x\Vert \nu(\mathrm d x)$ for all measurable sets $A\subset \mathbb R^m$.
Then
$$h_{K(X)}=\HH(\mu),$$
where $\mu$ is the push-forward measure of $\nu'$ under the projection $\Pi: \mathbb R^m\setminus\{0\} \to \mathbb P^{m-1}$.
\end{proposition}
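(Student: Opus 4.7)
The plan is to prove the equality by a short direct computation, unfolding both sides and using a radial change of variables to pass from an integral over $\R^m$ against $\nu$ to an integral over $\PP^{m-1}$ against $\mu=\Pi_*\nu'$. First I would apply \cref{eq:hE-form} to rewrite $h_{K(X)}(v)=\tfrac{1}{2}\mean|\langle v,X\rangle|$ as the integral $\tfrac{1}{2}\int_{\R^m}|\langle v,x\rangle|\,\nu(\dd x)$ against the law $\nu$ of $X$.

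The key algebraic move is to factor the integrand radially: for $x\neq 0$ one has $|\langle v,x\rangle|=\|x\|\cdot|\langle v,x/\|x\|\rangle|$, and the point $x=0$ contributes nothing to the integral. Absorbing the factor $\|x\|\,\nu(\dd x)$ into $\nu'(\dd x)$ by the very definition of $\nu'$ turns the expression into $\tfrac{1}{2}\int_{\R^m\setminus\{0\}}|\langle v,x/\|x\|\rangle|\,\nu'(\dd x)$, whose integrand now depends on $x$ only through its direction $\Pi(x)\in\PP^{m-1}$. Under the identification of functions on $\PP^{m-1}$ with even functions on the sphere introduced in \cref{sec:VZandmeas}, this integrand is precisely the pullback under $\Pi$ of the function $u\mapsto|\langle v,u\rangle|$ on $\PP^{m-1}$.

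The final step is then the standard pushforward change of variables applied to $\Pi\colon\R^m\setminus\{0\}\to\PP^{m-1}$: the integral against $\nu'$ of a pullback by $\Pi$ equals the integral against $\mu=\Pi_*\nu'$ on $\PP^{m-1}$, and by \cref{def_cosine_transform} this is exactly $\HH(\mu)(v)$. I do not foresee a conceptual obstacle; the only care required is bookkeeping the factor $\tfrac{1}{2}$ in front, which is reconciled using the sphere/projective integration convention $\int_{\PP^{m-1}}f\,\mu(\dd u)=\tfrac{1}{2}\int_{S^{m-1}}f\,\mu(\dd u)$ from \cref{sec:VZandmeas}. This convention absorbs the $\tfrac{1}{2}$ arising from the definition of $K(X)$ and yields the claimed equality $h_{K(X)}=\HH(\mu)$.
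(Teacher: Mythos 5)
Your proposal is correct and follows essentially the same route as the paper's proof: unfold $h_{K(X)}$ via \cref{eq:hE-form}, factor the integrand radially so that $\|x\|\,\nu(\dd x)$ becomes $\nu'(\dd x)$, and apply the pushforward change of variables under $\Pi$ to land on the cosine transform. If anything, you are more explicit than the paper about where the factor $\tfrac{1}{2}$ from \cref{eq:K(X)-def} is absorbed by the sphere/projective integration convention of \cref{sec:VZandmeas}, which is the one point the paper's displayed computation glosses over.
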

\begin{proof}
Let $u\in \mathbb P^{m-1}$. The support function of $K(X)$ evaluated at $u$ is $h_{K(X)}(u) = \tfrac{1}{2}\EE \vert \langle u,X\rangle \vert$. We write the integral explicitly as
\begin{equation}
\EE \vert \langle u,X\rangle \vert =  \int_{\mathbb R^m} \vert \langle u,x\rangle\vert \;\nu(\mathrm d x)
= \int_{\mathbb R^m} \vert \langle u,x/\Vert x\Vert \rangle \vert\;\nu'(\mathrm d x)
=\int_{\mathbb P^{m-1}} \vert \langle u,y \rangle \vert\;\mu(\mathrm d y),
\end{equation}
the second equality, because $\nu'$ is zero where $\Vert X\Vert =0$ and the third equality,
because $\vert \langle u,x/\Vert x\Vert \rangle $ is constant on preimages of $\Pi$.
\end{proof}

\begin{remark}\label{remark_measures} Let $X$ be an integrable vector, $K=K(X)$, and corresponding measure $\mu$.
\begin{enumerate}
\item From \cref{mydef:length} and \cref{prop:fromrandtomeas} it follows that $\ell(K)=2\mu(\PP^{m-1}) = \mu(S^{m-1})$.
\item If $X$ admits an even measurable density $\rho:\R^m\to \R$, then $\mu$ admits the density
    $
        \tilde{\rho}(x):=\int_{0}^{+\infty}t^m\rho(tx_0)\dd t,
   $
where $x_0\in S^{m-1}$ is such that $\Pi(x_0)=x$.
\end{enumerate}
\end{remark}

We close this section by proving that linear maps between spaces of measures are continuous with respect to the weak--$*$ topology.
\begin{lemma}\label{continuity_of_linear}
Let $M:V\to W$ be a linear map between Euclidean vector spaces, $m:=\dim V$ and $n:=\dim V$. Consider the induced linear map
$\widetilde{M}: \mathcal M(\mathbb P^{m-1}) \to \mathcal M(\mathbb P^{n-1})$ that sends the measure associated to the zonoid $K$ to the measure associated to the zonoid $\widehat{M}(K)$; that is, if $\HH(\mu) = \bar{h}_K$, then $\HH(\widetilde{M}(\mu)) = \bar{h}_{\widehat{M}(K)}$. Then, $\widetilde{M}$ is sequentially continuous with respect to the weak--$*$ topology.
\end{lemma}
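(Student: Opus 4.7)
The plan is to produce an explicit formula for $\widetilde M$ at the level of measures and then pull test functions back through this formula. The expected formula is
\[
 \widetilde M(\mu) \;=\; \pi_*(g\cdot \mu),
\]
where $g\colon \mathbb P^{m-1}\to \R_{\ge 0}$ is the continuous function $g([x])=\|M x\|$ (for $x\in S^{m-1}$), and $\pi\colon \{[x]\in\mathbb P^{m-1} : Mx\ne 0\}\to \mathbb P^{n-1}$ is the continuous map $\pi([x])=[Mx]$. Note that $g\cdot \mu$ is supported on the open set where $\pi$ is defined (since $g$ vanishes on $\ker M$), so $\pi_*(g\cdot\mu)$ makes sense.

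First I would verify this formula for nonnegative $\mu$ by applying the cosine transform to both sides and appealing to \cref{th:mainZ}(1). Concretely, if $\HH(\mu)=\bar h_K$, then for $u\in S^{n-1}$ one computes
\[
 \HH(\pi_*(g\mu))(u) \;=\; \int_{\mathbb P^{m-1}}\bigl|\langle u, Mx\rangle\bigr|\,\mu(\dd x) \;=\; h_K(M^T u) \;=\; h_{M(K)}(u),
\]
using \cref{useful_properties_for_h}(4). By injectivity of $\HH$ this equals $\HH(\widetilde M(\mu))$. Linearity then extends the identity $\widetilde M(\mu)=\pi_*(g\mu)$ from nonnegative measures to all signed measures.

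The key observation is that for any $\phi\in C(\mathbb P^{n-1})$ the pullback
\[
 \widetilde\phi(x) \;:=\; g(x)\,\phi(\pi(x)) \quad \text{where } Mx\ne 0, \qquad \widetilde\phi(x):=0 \ \text{otherwise},
\]
extends to a continuous function on all of $\mathbb P^{m-1}$: indeed, $\phi$ is bounded and $g$ is continuous with $g=0$ on $\ker M$, so $|\widetilde\phi|\le \|\phi\|_\infty\, g$ forces continuity at the problematic points. With $\widetilde\phi\in C(\mathbb P^{m-1})$ we obtain, from the explicit formula for $\widetilde M$,
\[
 \int_{\mathbb P^{n-1}} \phi\,\dd \widetilde M(\mu) \;=\; \int_{\mathbb P^{m-1}} \widetilde\phi\,\dd\mu
\]
for every signed $\mu$. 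Given a sequence $\mu_i\to\mu$ in the weak--$*$ topology, applying this identity with a fixed $\phi$ shows that the integrals of $\phi$ against $\widetilde M(\mu_i)$ converge to the integral of $\phi$ against $\widetilde M(\mu)$, proving sequential continuity.

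The only technical point worth watching is the continuous extension of $\widetilde\phi$ across $\ker M$; everything else is routine once the formula $\widetilde M(\mu)=\pi_*(g\mu)$ has been justified via the cosine transform. No appeal to Banach--Steinhaus or to an approximation argument (as in the proof of \cref{th:mainZ}(5)) is required, because the pullback reduces the claim to a single application of the definition of weak--$*$ convergence in $\mathcal M(\mathbb P^{m-1})$.
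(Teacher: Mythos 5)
Your argument is correct, but it takes a genuinely different route from the paper. The paper reduces to continuity at $0$, uses the identity $h_{\widehat{M}(K)}(v)=h_K(M^Tv)$ to show that $\int_{\PP^{n-1}}|\langle v,y\rangle|\,\nu_i(\dd y)\to 0$ for each fixed $v$ (i.e.\ pointwise convergence of the cosine transforms of $\nu_i=\widetilde M(\mu_i)$), and then upgrades this to arbitrary test functions $\phi$ by approximating $\phi$ with linear combinations of the functions $y\mapsto|\langle v,y\rangle|$, exactly as in the decomposition step of the proof of \cref{th:mainZ}(5); that approximation step silently requires a uniform bound on the total masses of the $\nu_i$, which for signed sequences comes from a Banach--Steinhaus-type argument. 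You instead make the map explicit as a weighted pushforward, $\widetilde M(\mu)=\pi_*(g\mu)$ with $g([x])=\|Mx\|$ and $\pi([x])=[Mx]$, verified on nonnegative measures via injectivity of the cosine transform (\cref{th:mainZ}(1)) and \cref{useful_properties_for_h}(4), and extended by linearity; then the continuity of the pullback $\widetilde\phi=g\cdot(\phi\circ\pi)$ (extended by $0$ across $\PP(\ker M)$, where the bound $|\widetilde\phi|\le\|\phi\|_\infty\, g$ gives continuity) turns weak--$*$ convergence of $\mu_i$ directly into weak--$*$ convergence of $\widetilde M(\mu_i)$. Both proofs hinge on the same computation $\int_{\PP^{m-1}}|\langle u,Mx\rangle|\,\mu(\dd x)=h_K(M^Tu)=h_{M(K)}(u)$, but your version trades the density-plus-uniform-boundedness argument for a change of variables; this is cleaner, avoids any appeal to uniform mass bounds, and in fact yields continuity for nets, which is slightly stronger than the sequential continuity asserted in the lemma.
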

\begin{proof}
Since $\widetilde{M}$ is linear, it is enough to show continuity at $0$. So, let $\mu_i$ be a sequence of measures converging to $0$, and let $\nu_i:=\widetilde{M}(\mu_i)$.
We have to show that $\nu_i$ converges to $0$. Let $K_i$ be the zonoid associated to $\mu_i$. Moreover, let us denote the pairing as in~\cref{def_inner_product}:  $\langle \mu, \phi\rangle=\int_{\mathbb P^{m-1}}\phi(x)\;\mu(\mathrm{d} x)$ for
$\phi \in C(\mathbb P^{m-1})$
From \cref{useful_properties_for_h}~(4) we know that, if $K$ is a zonoid, then $h_{\widehat{M}(K)}(v) = h_K(M^Tv)$. Take $v\in S(W)$. Then, we have:
$$\bar{h}_{\widehat{M}(K)}(v) = \begin{cases} \Vert M^Tv\Vert \cdot \bar{h}_K(M^Tv/\Vert M^Tv\Vert), & \text{ if } M^Tv \neq 0\\ 0, &\text{ else}\end{cases}.$$
This implies that for any $v$ such that $M^Tv\neq 0$ and for every $i$ we have:
$$\int_{\mathbb P^{n-1}}\vert\langle v, y\rangle\vert\; \nu_i(\mathrm{d} y) = \Vert M^T v\Vert \cdot  \int_{\mathbb P^{m-1}}\vert\langle M^Tv/\Vert M^Tv\Vert, x\rangle\vert\;\mu_i(\mathrm{d} x) =  \int_{\mathbb P^{m-1}}\vert\langle M^Tv, x\rangle\vert\;\mu_i(\mathrm{d} x),$$
and $\int_{\mathbb P^{n-1}}\vert\langle v, y\rangle\vert\; \nu_i(\mathrm{d} y) = 0$ otherwise. Since $(\mu_i)$ converges to $0$ we see that $\int_{\mathbb P^{n-1}}\vert\langle v, y\rangle\vert\; \nu_i(\mathrm{d} y)$ converges to $0$ for every $v\in S(W)$.
Now, we can proceed as we did in \cref{decomposition}, approximating any continuous function $\phi \in C(\mathbb P^{n-1})$ with a linear combination of functions of the form $y\mapsto \vert\langle v, y\rangle\vert$. This concludes the proof.
\end{proof}
%%%%%%%%%%%%%%%%%%%%%%%%%%%%%%%%%%%%%%%%%%%
\section{Tensor product of zonoids}\label{se:tensorproduct}

In this section we introduce and study the notion of tensor product of zonoids.
The only previous appearance of this notion we are aware of is~\cite[Definition 3.2]{AubrunLancien}.

In the whole section, $V$ and $W$ denote Euclidean spaces.
We start with the following central definition.

\begin{definition}[Tensor product of zonoids]\label{def:tensofzon2}
Let $K$ be a zonoid in $V$, $L$ be a zonoid in $W$
and $X\in V$ and $Y\in W$ be integrable random vectors
representing $K$ and $L$, respectively.
We define the \emph{tensor product} of $K$ and $L$ as
\be
 K\otimes L := K(X \otimes Y).
\ee
\end{definition}

Of course, we need to check that this tensor product does not depend
on the choice of random vectors representing the zonoids.
This is guaranteed by \cref{propo:tensorwd} below.
For stating it, we introduce the following notation:
for $x\in V$ and $y\in W$ we define
the following linear operators
\be \label{eq:defTy}
 T_{x} := \langle \cdot, x \rangle\otimes\mathrm{id}_W : V\otimes~W\to~W \quad\textrm{and}\quad
 T_{y}:=\mathrm{id}_V\otimes \langle \cdot, y \rangle: V\otimes~W\to~V.
\ee
Notice that their operator norms satisfy
\be \label{eq:nT}
 \|T_{x}\|_{\mathrm{op}}=\|x\|\quad \textrm{and}\quad \|T_{y}\|_{\mathrm{op}}=\|y\|.
\ee

\begin{lemma}\label{propo:tensorwd}
Let $K\in \ZZ(V)$ and $L\in \ZZ(W)$ be zonoids represented by
independent random vectors $X\in V$ and $Y\in W$, i.e., $K=K(X)$ and $L=K(Y)$. Then
\begin{equation}\label{eq:Tytens}
 h_{\ExpZon{X\otimes Y}}(u) = \EE_Y\, h_{\ExpZon{X}}(T_{Y}(u))=\EE_X h_{K(Y)}(T_{X}(u))
\end{equation}
and $K(X\otimes Y)\in \ZZo(V\otimes W)$ depends only on $K$ and $L$,
and not on the choice of the random vectors $X$ and $Y$.
\end{lemma}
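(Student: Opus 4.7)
The plan is to compute $h_{K(X\otimes Y)}$ directly from the defining formula \cref{eq:hE-form} and then use an adjointness identity together with Fubini. First I would check that $X\otimes Y$ is integrable so that $K(X\otimes Y)$ is well-defined: by independence of $X$ and $Y$,
\[
 \EE\|X\otimes Y\| \;=\; \EE\bigl(\|X\|\,\|Y\|\bigr) \;=\; \EE\|X\|\cdot\EE\|Y\| \;<\; \infty.
\]
The zonoid $K(X\otimes Y)\in\ZZ(V\otimes W)$ is then automatically centrally symmetric by construction (see \cref{eq:K(X)-def}).

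The key algebraic identity is that for simple tensors one has
\[
 \langle u,\, x\otimes y\rangle \;=\; \langle T_{y}(u),\, x\rangle \;=\; \langle T_{x}(u),\, y\rangle, \qquad x\in V,\ y\in W,\ u\in V\otimes W,
\]
which follows on simple tensors $u=v\otimes w$ from the definition of $T_x,T_y$ in \cref{eq:defTy} and extends by linearity. Combining this with \cref{eq:hE-form} gives
\[
 h_{K(X\otimes Y)}(u) \;=\; \tfrac{1}{2}\EE\,|\langle u,\, X\otimes Y\rangle| \;=\; \tfrac{1}{2}\EE\,|\langle T_Y(u),\, X\rangle|.
\]
Integrability of $|\langle u, X\otimes Y\rangle|$ (bounded above by $\|u\|\cdot\|X\|\,\|Y\|$) allows Fubini. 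Conditioning on $Y$ first and using once more \cref{eq:hE-form} applied to $K=K(X)$ at the point $T_Y(u)\in V$ yields
$h_{K(X\otimes Y)}(u) = \EE_Y\, h_{K(X)}(T_Y(u))$. Conditioning on $X$ first and symmetrically applying \cref{eq:hE-form} to $L=K(Y)$ at $T_X(u)\in W$ gives the second identity $\EE_X\, h_{K(Y)}(T_X(u))$. This establishes \cref{eq:Tytens}.

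For the well-definedness of $K(X\otimes Y)$, the point is that the two formulas together pin it down. The expression $\EE_Y h_{K(X)}(T_Y(u))$ a priori depends on $K=K(X)$ and the distribution of $Y$, whereas the equal expression $\EE_X h_{K(Y)}(T_X(u))$ depends only on $L=K(Y)$ and the distribution of $X$. Thus if $X'$ is any other integrable random vector with $K(X')=K$, the chain
\[
 \EE_{X'}\, h_L(T_{X'}(u)) \;=\; h_{K(X'\otimes Y)}(u) \;=\; \EE_Y\, h_{K}(T_Y(u))
\]
shows the result is independent of the representative $X'$; the same argument applies to $Y$. Since a centrally symmetric zonoid is determined by its support function (\cref{useful_properties_for_h}), $K(X\otimes Y)$ depends only on $K$ and $L$.

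The only mildly delicate step is the well-definedness argument: each of the two expressions in \cref{eq:Tytens}, viewed in isolation, seems to remember the full distribution of one of the vectors, and only the symmetry between the two formulas forces the dependence to collapse to the zonoids $K$ and $L$.
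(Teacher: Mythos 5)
Your proof is correct and follows essentially the same route as the paper's: the adjointness identity $\langle u, X\otimes Y\rangle = \langle T_Y(u), X\rangle$, Fubini via independence to obtain the two symmetric expressions, and the observation that each expression shows the dependence on one of the random vectors is only through its zonoid. Your added check that $X\otimes Y$ is integrable (via $\EE\|X\otimes Y\| = \EE\|X\|\cdot\EE\|Y\|$) is a worthwhile detail the paper leaves implicit.
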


\begin{proof}[Proof of \cref{propo:tensorwd}]
We first show that
\be
\forall u\in V\otimes W \quad \langle u, X\otimes Y \rangle = \langle T_{Y}(u), X \rangle.
\ee
It suffices to check this for simple tensors of the form
$u=v\otimes w$ where $v\in V$ and $w \in W$, because the simple vectors generate $V\otimes W$. For such $u$,
the definition of the scalar product in~$V\otimes W$ indeed implies
$$
 \langle v \otimes w, X\otimes Y \rangle =
 \langle v ,X \rangle \langle  w, Y \rangle =
 \langle \langle w,Y\rangle v,  X \rangle =
 \langle T_{Y}(v \otimes w), X \rangle .
$$
By \cref{eq:hE-form} the support function of the zonoid $\ExpZon{X\otimes Y}$ is given by
\begin{equation}\label{eq5}
h_{\ExpZon{X\otimes Y}}(u) =\tfrac{1}{2} \EE\, \left|\langle u, X\otimes Y \rangle\right|,
\end{equation}
where the expectation is over the joint distribution of $X$ and $Y$.
Taking first the expectation over $X$ and using the independence of $X$ and $Y$, we get
\begin{equation}
 h_{\ExpZon{X\otimes Y}}(u) = \EE_Y\, h_{\ExpZon{X}}(T_{Y}(u)).
\end{equation}
This shows that the dependence of
$\ExpZon{X\otimes Y}$ on $X$ is only through $K=K(X)$.
A symmetric argument for $Y$ completes the proof.
\end{proof}

\begin{example}[Tensor product of balls]
Let $X\in\R^k$ and $Y\in \R^m$ be independent, standard Gaussian vectors.
Then $K(\sqrt{2\pi} X)=B^k$ and $K(\sqrt{2\pi} Y)=B^m$
by~\cref{eq:ball}.
Therefore, %By \cref{prop:contOfTens},
\be
 B^k\otimes B^m=\sqrt{2\pi}K(X)\otimes \sqrt{2\pi}K(Y)=2\pi K(X\otimes Y).
\ee
The tensor product $K(X\otimes Y)$ was called \emph{Segre zonoid} in \cite{PSC}.
It is a convex body in the space $\R^k\otimes \R^m\simeq \R^{k\times m}$,
and its support function depends on singular values\footnote{In a recent
preprint \cite{SanyalSaunderson}, Sanyal and Saunderson have introduced the  notion of \emph{spectral convex bodies},
i.e., convex bodies in the space of symmetric operators whose support function depends on eigenvalues only.}
in the following sense.
Assuming $k\leq m$ and denoting by $\mathrm{sv}(M)\in \R^k$
the list of singular values of a matrix $M\in \R^{k\times m}$,
we have $
 h_{B^k\otimes B^m}(M) =(2\pi)^{\frac{1}{2}}g_k(\mathrm{sv}(M)),
$ by~\cite[Lemma 5.5]{PSC},
where the function $g_k:\R^k\to \R$ is defined by
$g_k(\sigma_1, \ldots, \sigma_k):=\EE\left(\sigma_1^2\xi_1^2+\cdots +\sigma_k^2\xi_k^2\right)^{\frac{1}{2}}$,
and~$\xi_1, \ldots, \xi_k$ are independent standard gaussians.
Recall from \cref{eq:rhodef} the definition of $\tau_m$. We obtain $\|B^k\otimes B^m\| = \tau_k/\sqrt{k}=:r_k$ from~\cite[Lemma 5.5]{PSC},
which remarkably only depends on~$k$. Thus $r_k B^{km}$ is the smallest centered ball containing $B^k\otimes B^m$.
In~\cite[Theorem~6.3]{PSC}, it was shown that for fixed $k$ and $m\to\infty$,
$B^k\otimes B^m$ is not much smaller in volume than $r_k B^{km}$: we have
$\log \vol_{km}(B^k\otimes B^m) = \log \vol_{km}(r_k B^{km}) - O(\log m)$ for $m\to\infty$.
\end{example}

The next result shows that the tensor product of zonoids behaves well with respect to Minkowski addition,
scalar multiplication, norm, and inclusion.

\begin{proposition}\label{prop:contOfTens} %[Properties of the zonoid tensor product]\
The tensor product of zonoids is componentwise
Minkowski additive and positively homogenous.
Moreover, the tensor product is monotonically increasing in each variable; that is,
$K_1\subset K_2$ and $L_1\subset L_2$ implies
$K_1\otimes L_1 \subset K_2\otimes L_2$.
Finally, the tensor product of zonoids is associative.
\end{proposition}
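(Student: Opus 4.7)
The plan is to prove all four properties by exploiting the Vitale representation $K = K(X)$ and the identity $K_1\otimes K_2 = K(X_1\otimes X_2)$ from \cref{def:tensofzon2}, reducing everything to algebraic manipulations on independent random vectors. Throughout, I would fix independent integrable random vectors $X, X_1, X_2 \in V$ representing $K, K_1, K_2\in \Z(V)$ and $Y, Y_1, Y_2\in W$ representing $L, L_1, L_2\in \Z(W)$; the well-definedness granted by \cref{propo:tensorwd} means I am free to choose these representatives so as to arrange any convenient independence.

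For Minkowski additivity in the first argument, I would invoke \cref{summation_formula} to write $K_1 + K_2 = K(Z)$ with $Z := 2\epsilon X_1 + 2(1-\epsilon) X_2$, where $\epsilon \in \{0,1\}$ is a fair Bernoulli variable independent of $X_1, X_2, Y$. The key algebraic observation is the distributivity
\[
 Z\otimes Y = 2\epsilon (X_1\otimes Y) + 2(1-\epsilon)(X_2\otimes Y),
\]
which holds pointwise because $\epsilon \in \{0,1\}$. Applying \cref{summation_formula} a second time, now to the independent vectors $X_1\otimes Y$ and $X_2\otimes Y$, identifies $K(Z\otimes Y) = K(X_1\otimes Y) + K(X_2\otimes Y) = K_1\otimes L + K_2\otimes L$. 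Additivity in the second argument is symmetric. Positive homogeneity follows from \cref{lemma:scaling}: for $\lambda \geq 0$ we have $\lambda K = K(\lambda X)$, and then $(\lambda K)\otimes L = K((\lambda X)\otimes Y) = K(\lambda (X\otimes Y)) = \lambda (K\otimes L)$, again by \cref{lemma:scaling}.

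For monotonicity, I would use the support function formula from \cref{propo:tensorwd}: if $L = K(Y)$, then for every $u \in V\otimes W$,
\[
 h_{K\otimes L}(u) = \EE_Y\, h_K(T_Y(u)).
\]
If $K_1 \subset K_2$, then by \cref{useful_properties_for_h}(3) we have $h_{K_1}(v)\leq h_{K_2}(v)$ for all $v\in V$; applying this pointwise with $v = T_Y(u)$ and taking expectation yields $h_{K_1\otimes L}(u) \leq h_{K_2\otimes L}(u)$, hence $K_1\otimes L \subset K_2\otimes L$. Using the symmetric formula in the second argument gives $K\otimes L_1 \subset K\otimes L_2$ when $L_1\subset L_2$. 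Chaining the two inclusions via $K_1\otimes L_1 \subset K_2\otimes L_1 \subset K_2\otimes L_2$ completes monotonicity.

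Finally, for associativity, given $K_i = K(X_i)$ with $X_1, X_2, X_3$ mutually independent, $X_1\otimes X_2$ is independent of $X_3$, so
\[
 (K_1\otimes K_2)\otimes K_3 = K((X_1\otimes X_2)\otimes X_3),
\]
and symmetrically $K_1\otimes(K_2\otimes K_3) = K(X_1\otimes(X_2\otimes X_3))$. Under the canonical isometric isomorphism $(V_1\otimes V_2)\otimes V_3 \cong V_1\otimes (V_2\otimes V_3)$, the two random vectors are identified, and by \cref{lemma:linear} (applied to this isomorphism) the corresponding Vitale zonoids are identified as well. I do not expect any genuine obstacle; the only point that deserves care is maintaining the correct independence assumptions when invoking \cref{summation_formula} and \cref{propo:tensorwd}, so that the Bernoulli variable $\epsilon$ in the additivity step is independent from every other vector in sight.
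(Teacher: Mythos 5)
Your proof is correct, and for the additivity part it takes a genuinely different route from the paper's. The paper deduces Minkowski additivity, positive homogeneity, and monotonicity in each factor all from the single support-function identity \cref{eq:Tytens}, $h_{K\otimes L}(u)=\EE_Y\, h_K(T_Y(u))$, combined with $h_{\lambda_1 K_1+\lambda_2 K_2}=\lambda_1 h_{K_1}+\lambda_2 h_{K_2}$ and \cref{useful_properties_for_h}; your monotonicity and associativity arguments coincide with the paper's, but for additivity you instead stay at the level of random vectors, combining the Bernoulli-mixture representation of Minkowski sums (\cref{summation_formula}) with the pointwise bilinearity of the vector tensor product $Z\otimes Y = 2\epsilon(X_1\otimes Y)+2(1-\epsilon)(X_2\otimes Y)$. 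Both work; the paper's version is more economical in that one computation yields additivity and homogeneity simultaneously, while yours has the virtue of never leaving the probabilistic calculus for this step. One small slip to fix: $X_1\otimes Y$ and $X_2\otimes Y$ are \emph{not} independent of each other, since they share the factor $Y$, so you should not refer to them as ``the independent vectors.'' Fortunately \cref{summation_formula} does not require its two arguments to be mutually independent --- only the Bernoulli variable $\epsilon$ must be independent of the pair, which holds here because $\epsilon$ is independent of $(X_1,X_2,Y)$ --- so the application is legitimate, but the justification should be phrased accordingly.
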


\begin{proof}
Given $K_1, K_2\in \ZZ(V)$, $\lambda_1, \lambda_2 \ge 0$, and a random vector $Y\in W$
representing~$L$, we use~\cref{eq:Tytens}
to write the support function of $(\lambda_1K_1+\lambda_2K_2)\otimes L$ as
\begin{align}h_{(\lambda_1K_1+\lambda_2K_2) \otimes L}(u)& = \EE_Y\, h_{\lambda_1K_1+\lambda_2K_2}(T_{Y}(u))\\
&=\EE_Y\,\lambda_1 h_{K_1}(T_{Y}(u))+\EE_Y\, \lambda_2 h_{K_2}(T_{Y}(u))\\
&=\lambda_1h_{K_1\otimes L}(u)+\lambda_2 h_{K_2\otimes L}(u).
\end{align}
For the second factor we argue analogously.
Therefore Minkowski additivity and positive homogeneity in each factor follows from
%points (2) and (3) of
\cref{useful_properties_for_h}.

For the monotonicity we assume in addition that $K_1\subset K_2$.
\cref{useful_properties_for_h} implies $h_{K_1}\leq h_{K_2}$.
Again, \cref{eq:Tytens} gives
%We pass to support functions:
$h_{K_1\otimes L} (u)  =\EE_Y\, h_{K_1}(T_{Y}(u))$ and $h_{K_2\otimes L} (u)  =\EE_Y\, h_{K_2}(T_{Y}(u))$.
Therefore, %the previous two equations imply
$h_{K_1\otimes L}\leq h_{K_2\otimes L}$.
Again using \cref{useful_properties_for_h} shows $K_1\otimes L\subset K_2\otimes L$.
For the second factor we argue analogously.
Finally, the associativity immediately follows from the one of the usual tensor product.
\end{proof}

\begin{example}[Tensor product of zonotopes]\label{ex:tpz}
The tensor product of symmetric segments is given by
\be\label{eq:zonoT}
 \tfrac{1}{2}[-v_1, v_1]\otimes \cdots\otimes \tfrac{1}{2}[-v_p, v_p]
 =\tfrac{1}{2}[-v_1\otimes \cdots \otimes v_p, v_1\otimes \cdots \otimes v_p] ,
\ee
where $v_1\in V_1, \ldots, v_p\in V_p$.
(Indeed, just take for $X_j\in V_j$ a random variable taking the constant value $v_j$
and note that $K(X_j)=\tfrac{1}{2}[-v_j,v_j]$.)
Together with the biadditivity of the tensor product (\cref{prop:contOfTens}),
we conclude that the tensor product of two zonotopes is
\begin{equation}\label{tensor_product_segments}
   \Big(  \sum_{i=1}^n\tfrac{1}{2}[-x_i,x_i]\Big)\otimes
   \Big(  \sum_{j=1}^m\tfrac{1}{2}[-y_j,y_j]\Big)
    =\sum_{i=1}^n\sum_{j=1}^m\tfrac{1}{2} [-x_i\otimes y_j,x_i\otimes y_j].
\end{equation}
\end{example}

We now show that the length is multiplicative with respect to the tensor product and
prove an upper bound on the norm of a tensor product of zonoids.

\begin{proposition}\label{prop:normTP}
For $K\in\K(V)$ and $L\in\K(W)$ such that $m=\dim V \le \dim W$, we have
$$
  \ell(K \otimes L) = \ell(K) \ell(L) , \quad
  \| K \otimes L \| \ \le \ 2\sqrt{m} \, \| K \|  \| L \| .
$$
\end{proposition}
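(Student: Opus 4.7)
The plan is to handle the two claims by separate arguments, both exploiting the Vitale random-vector representation. For the length, represent $K=K(X)$ and $L=K(Y)$ with $X,Y$ independent integrable random vectors in $V$ and $W$. By \cref{def:tensofzon2}, $K\otimes L=K(X\otimes Y)$, so by \cref{mydef:length} we have $\ell(K\otimes L)=\EE\|X\otimes Y\|$. The key pointwise identity $\|X\otimes Y\|=\|X\|\cdot\|Y\|$, which is immediate from the definition of the inner product on the Hilbert space tensor product, combined with the independence of $X$ and $Y$, gives $\ell(K\otimes L)=\EE\|X\|\cdot\EE\|Y\|=\ell(K)\,\ell(L)$.

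For the norm inequality, my strategy is to reduce to the case where both factors are Euclidean balls. The trivial inclusions $K\subset\|K\|B(V)$ and $L\subset\|L\|B(W)$, combined with the monotonicity and bilinearity of the tensor product established in \cref{prop:contOfTens}, yield
\[
 K\otimes L \;\subset\; \|K\|\,\|L\|\,\bigl(B(V)\otimes B(W)\bigr),
\]
whence $\|K\otimes L\|\le\|K\|\,\|L\|\cdot\|B(V)\otimes B(W)\|$, so the problem reduces to estimating the radius of the ball-by-ball tensor product. Writing $B(V)=\sqrt{2\pi}K(\xi)$ and $B(W)=\sqrt{2\pi}K(\eta)$ with $\xi,\eta$ independent standard Gaussian, identifying $u\in V\otimes W$ with its matrix $U\in\R^{m\times n}$ of Frobenius norm one, and integrating out $\xi$ in $\EE|\xi^T U\eta|$ using that $\xi^T(U\eta)$ is centered Gaussian with variance $\|U\eta\|^2$, one obtains $h_{B(V)\otimes B(W)}(u)=\sqrt{2\pi}\,\EE\|U\eta\|$. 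Parametrizing by the singular values $\sigma_1,\dots,\sigma_m$ of $U$ with $\sum_i\sigma_i^2=1$, the random variable $\|U\eta\|^2$ has the same law as $\sum_i\sigma_i^2 Z_i^2$ for iid standard Gaussians $Z_i$; by concavity of the square root and the permutation symmetry in the $\sigma_i$, the supremum is attained at $\sigma_1=\cdots=\sigma_m=1/\sqrt{m}$, yielding $\|B(V)\otimes B(W)\|=\tau_m/\sqrt{m}$, in agreement with the Segre-zonoid formula recalled in the example above.

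The proof then concludes with the elementary inequality $\tau_m\le 2m$ valid for every $m\ge 1$: this is the equality $\tau_1=2$ when $m=1$, and for $m\ge 2$ it follows from Jensen's bound $\tau_m=\sqrt{2\pi}\,\EE\|\xi\|\le\sqrt{2\pi m}$ together with $\sqrt{2\pi m}\le 2m$. Dividing by $\sqrt{m}$ gives $\|B(V)\otimes B(W)\|=\tau_m/\sqrt{m}\le 2\sqrt{m}$, and hence $\|K\otimes L\|\le 2\sqrt{m}\,\|K\|\,\|L\|$. I expect the subtlest point to be pinning down the correct constant: the naive operator-norm bound obtained from \cref{propo:tensorwd} only yields $\|K\otimes L\|\le\|L\|\,\ell(K)\le\tau_m\|K\|\|L\|$, which has the right $\sqrt{m}$ growth but the larger constant $\sqrt{2\pi}>2$, so the stated bound $2\sqrt{m}$ truly requires the extra averaging over both Gaussians afforded by the ball-by-ball reduction and the singular-value computation.
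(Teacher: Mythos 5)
Your proof of the length identity is the same as the paper's (representation by independent random vectors plus $\|X\otimes Y\|=\|X\|\,\|Y\|$), but your proof of the norm bound takes a genuinely different and correct route. The paper bounds the Frobenius unit ball by $\sqrt{m}$ times the nuclear-norm unit ball, notes that the extreme points of the latter are rank-one unit tensors $v\otimes w$, evaluates $h_{K\otimes L}(v\otimes w)=2h_K(v)h_L(w)\le 2\|K\|\|L\|$, and concludes by convexity of the support function; this is short and purely linear-algebraic. You instead use the inclusions $K\subset\|K\|B(V)$, $L\subset\|L\|B(W)$ together with the monotonicity and homogeneity of the tensor product (\cref{prop:contOfTens}) to reduce to the ball-by-ball case, and then compute $\|B(V)\otimes B(W)\|$ exactly via the Gaussian representation \cref{eq:ball}: your identity $h_{B(V)\otimes B(W)}(U)=\sqrt{2\pi}\,\EE\|U\eta\|$, the reduction to singular values, and the symmetric-concave maximization over the simplex are all correct and recover the value $\tau_m/\sqrt{m}$ stated in the paper's Segre-zonoid example. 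A pleasant byproduct is that your argument actually proves the sharper, dimension-free estimate $\|K\otimes L\|\le(\tau_m/\sqrt{m})\,\|K\|\,\|L\|\le\sqrt{2\pi}\,\|K\|\,\|L\|$, which is attained when both factors are balls and hence is the optimal constant; the stated bound $2\sqrt{m}$ then follows from your elementary inequality $\tau_m\le 2m$ (equality at $m=1$). What the paper's approach buys is brevity and independence from the exact ball computation; what yours buys is the sharp constant and the identification of the extremal pair. One cosmetic point: like the paper's own proof, your argument really concerns zonoids (the tensor product and the monotonicity statement are only defined on $\Z(V)\times\Z(W)$), so the hypothesis ``$K\in\K(V)$, $L\in\K(W)$'' in the statement should be read as $K\in\Z(V)$, $L\in\Z(W)$.
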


\begin{proof}
Suppose that $K=K(X)$ and $L=K(Y)$
with independent random vector $X\in V$ and $Y\in W$.
Then, by \cref{mydef:length},
$
 \ell(K \otimes L) = \EE \|X \otimes Y \| = \EE \|X\| \cdot \EE \|Y\| =  \ell(K) \ell(L) ,
$
showing the first assertion.

For the norm inequality, assume $V=\R^m$ and $W=\R^n$ and w.l.o.g.\ $m\le n$.
Recall that the nuclear norm of a matrix $M\in\R^{m\times}$
is defined as the sum of its singular values.
The corresponding unit ball $B_\mathrm{nuc}$
equals the convex hull of
the rank one matrices $v\otimes w$ such that
$v\in\R^m$ and $w\in \R^n$ have norm one, e.g., see~\cite{derksen:16}.
If we denote by $B$ the unit ball with respect to the Frobenius norm,
we get $B \subset \sqrt{m} B_\mathrm{nuc}$,
where we used that $m\le n$.
We obtain
$$
 \|K\otimes L\| = \max_{u\in B} h_{K\otimes L}(u)
 \ \le \
 \sqrt{m} \max_{u\in B_\mathrm{nuc}} h_{K\otimes L}(u)
  = \tfrac 12 \sqrt{m} \max_u \EE | \langle u, X\otimes Y\rangle | .
$$
But for $u=v\otimes w$ with unit vectors $v,w$, we have
$$
 \EE  |\langle v\otimes w, X\otimes Y\rangle | =
 \EE  |\langle v, X\rangle | \cdot \EE |\langle w, Y)\rangle | =
  4 h_K(v) h_L(w)  \ \le \
  4 \|K\| \cdot \|L\| .
$$
Using the convexity of $h_{K\otimes L}$, implies the second assertion.
\end{proof}

It is straightforward to extend the tensor product of zonoids to a bilinear map between spaces of virtual zonoids.

\begin{proposition}[Tensor product of virtual zonoids]\label{propo:tp}
The tensor product of zonoids from \cref{def:tensofzon2} uniquely extends to a bilinear map
$\widehat{T}: \VZ(V)\times \VZ(W)\to \VZ(V\otimes W)$.
The resulting tensor product of virtual zonoids is associative.
\end{proposition}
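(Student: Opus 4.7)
The plan is to invoke the universal property of the Grothendieck construction, applied in each variable separately. Recall from \cref{prop:contOfTens} that the tensor product $\otimes\colon \Z(V)\times \Z(W) \to \Z(V\otimes W)$ is Minkowski biadditive, positively homogeneous in each variable, and associative. These are exactly the properties needed.

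First, fix $L\in \Z(W)$. Then $K\mapsto K\otimes L$ is a monoid homomorphism $\Z(V)\to \Z(V\otimes W)\subseteq \VZ(V\otimes W)$ which commutes with multiplication by nonnegative scalars. By the universal property of the Grothendieck vector space (see the discussion before \cref{def:linearext}), this extends uniquely to a linear map $\VZ(V)\to \VZ(V\otimes W)$, explicitly given by $K_1-K_2 \mapsto K_1\otimes L - K_2\otimes L$. Call this map $\widehat{T}(\cdot, L)$. Now fix $Z = K_1-K_2 \in \VZ(V)$ and consider $L\mapsto \widehat{T}(Z,L)$. By the Minkowski additivity and homogeneity of $\otimes$ in the second slot, this map is additive and positively homogeneous on $\Z(W)$, so again by the universal property it extends uniquely to a linear map $\VZ(W)\to \VZ(V\otimes W)$. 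Putting the two extensions together yields the bilinear map
\be
 \widehat{T}(K_1-K_2,\, L_1-L_2) := K_1\otimes L_1 - K_1\otimes L_2 - K_2\otimes L_1 + K_2\otimes L_2,
\ee
and one checks that the value is independent of the chosen representatives, since it coincides with the result of the two successive universal extensions. Uniqueness of the bilinear extension $\widehat{T}$ follows because any bilinear map agreeing with $\otimes$ on $\Z(V)\times \Z(W)$ is forced to take the above form on formal differences.

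For associativity, let $Z_i\in \VZ(V_i)$ for $i=1,2,3$, and write each as a formal difference $Z_i = K_i - K_i'$ of zonoids. By trilinearity (obtained by applying bilinearity of $\widehat{T}$ twice) both $\widehat{T}(\widehat{T}(Z_1,Z_2),Z_3)$ and $\widehat{T}(Z_1,\widehat{T}(Z_2,Z_3))$ expand into the same alternating sum of $8$ terms of the form $\pm (A_1\otimes A_2)\otimes A_3$ and $\pm A_1\otimes (A_2\otimes A_3)$ respectively, with $A_i\in\{K_i,K_i'\}$. By the associativity of the tensor product of zonoids stated in \cref{prop:contOfTens}, each such pair of terms is equal, hence the two triple products coincide.

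The only genuine point requiring care is the well-definedness of the two successive extensions, i.e., checking that the two orders of extension yield the same bilinear map; this is automatic from the universal property but is the step I would write out most carefully. The rest is formal bookkeeping using the biadditivity and biassociativity already established on the cone of zonoids.
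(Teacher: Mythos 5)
Your proposal is correct and follows essentially the same route as the paper: the extension is forced to be the alternating four-term sum on formal differences, well-definedness follows from the biadditivity established in \cref{prop:contOfTens}, and associativity is inherited from the associativity of the tensor product on the cone of zonoids. Your framing via the universal property of the Grothendieck construction in each variable is just a slightly more structured packaging of the same well-definedness check the paper leaves as "straightforward."
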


\begin{proof}
The only possible way to define the map $\widehat{T}$ is by setting
\be
 (K_1 - K_2)\otimes (L_1 - L_2)
 := K_1\otimes L_1+K_2\otimes L_2-K_1\otimes L_2 - K_2\otimes L_1.
\ee
Using the biadditivity of the tensor product of zonoids (\cref{prop:contOfTens})
it is straightforward to check that this is well defined and defines a bilinear map.
The associativity follows from the associativity of the tensor product of zonoids.
\end{proof}

\subsection{Continuity of the tensor product} \label{sec:continuity}

Here we discuss the continuity of the tensor product map.
The main result is that the tensor product is continuous on zonoids,
but not on virtual zonoids with the norm topology. It is only
separately continuous in each variable, meaning that it is continuous in each component.
However, viewing virtual zonoids as measures via the correspondence described
in~\cref{sec:VZandmeas} and endowed with the weak--$*$ topology,
the tensor product turns out to be sequentially continuous; see~\cref{thm:contoftens} below.

As above we denote by
\begin{equation}
\widehat{T}:\VZ(V)\times\VZ(W)\to\VZ(V\otimes W)
\end{equation}
the tensor product map, $\widehat{T}(K,L)=K\otimes L$, and its restriction to zonoids is denoted
\begin{equation}
T:\Z(V)\times\Z(W)\to\Z(V\otimes W).
\end{equation}
Viewing virtual zonoids as measures,
we obtain a bilinear map
\begin{equation}
\mT:\cM(\PP(V)) \times \cM(\PP(W))\to\cM(\PP(V\otimes W)).
\end{equation}
Concretely, this map can be described as follows.
Let $\mu \in \cM(\PP(V))$ and $\nu\in\cM(\PP(W))$,
and let $K_\mu\in\VZ(V)$, $\K_\nu\in\VZ(W)$
be such that $h_{K_\mu}=\HH(\mu)$ and $h_{K_\nu}=\HH(\nu)$,
where $\HH$ is the cosine transform from \cref{def_cosine_transform}.
Then $\mT(\mu,\nu)$ is the measure on $\PP(V\otimes W)$
characterized by~$\HH\big( \mT(\mu,\nu)\big)=h_{K_\mu \otimes K_\nu}$.

We now show that the map $\mT$ has a direct
natural characterization.
For this, we first recall that the Segre embedding
$
 \PP(V)\times \PP(W) \to \PP(V\otimes W),\,
  [v] \otimes [w] \mapsto [v\otimes w] ,
$
is an isomorphism onto its image, which allows
to view $\PP(V)\times\PP(W)$ as a subspace of
$\PP(V\otimes W)$. Taking the pushforward gives
\be\label{eq:Ttilde}
\mT: \cM(\PP(V))\times  \cM(\PP(W)) \to \cM(\PP(V\otimes W)),
\ee
which is a bilinear map.

\begin{lemma}\label{lem:tensofmeas}
    The map $\mT:\cM(\PP(V))\times\cM(\PP(W))\to\cM(\PP(V\otimes W))$
    equals
    the tensor product of measures composed with the pushforward of the Segre map.
\end{lemma}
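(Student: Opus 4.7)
The plan is to invoke the injectivity of the cosine transform from \cref{th:mainZ}(1). Since both $\mT(\mu,\nu)$ and the pushforward $\mathrm{Segre}_*(\mu \otimes \nu)$ of the product measure lie in $\cM(\PP(V \otimes W))$, it suffices to check that their cosine transforms agree as elements of $C(\PP(V \otimes W))$. This reduces the lemma to a matching of two explicit integrals.

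First, I would compute $\HH(\mathrm{Segre}_*(\mu \otimes \nu))$ directly. Applying the change-of-variables formula for pushforward measures together with Fubini's theorem gives, for $u \in V \otimes W$,
\begin{equation*}
\HH(\mathrm{Segre}_*(\mu \otimes \nu))(u)
 = \int_{\PP(W)} \int_{\PP(V)} |\langle u, v \otimes w\rangle|\, \mu(\dd v)\, \nu(\dd w).
\end{equation*}
Using the identity $\langle u, v \otimes w\rangle = \langle T_w(u), v\rangle$ from the proof of \cref{propo:tensorwd} (with $T_w$ as in \eqref{eq:defTy}), the inner integral equals $\HH(\mu)(T_w(u)) = h_{K_\mu}(T_w(u))$, so the whole expression becomes $\int_{\PP(W)} h_{K_\mu}(T_w(u))\, \nu(\dd w)$.

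Second, I would express $h_{K_\mu \otimes K_\nu}(u)$ as an integral of the same shape. By \eqref{eq:Tytens}, $h_{K_\mu \otimes K_\nu}(u) = \EE_Y\, h_{K_\mu}(T_Y(u))$ for a random vector $Y \in W$ representing $K_\nu$. Since $T_\bullet$ is linear in its subscript and $K_\mu$ is centrally symmetric (so that $h_{K_\mu}$ is even), one has $h_{K_\mu}(T_y(u)) = \|y\|\, h_{K_\mu}(T_{y/\|y\|}(u))$ for $y \neq 0$. Writing the expectation as $\int \|y\|\, h_{K_\mu}(T_{y/\|y\|}(u))\, \nu_Y(\dd y)$, where $\nu_Y$ is the law of $Y$, and invoking \cref{prop:fromrandtomeas} (which identifies $\nu$ with the $\Pi$-pushforward of the measure $\|y\|\,\nu_Y$), produces exactly $\int_{\PP(W)} h_{K_\mu}(T_w(u))\, \nu(\dd w)$.

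The two computations agree, so injectivity of $\HH$ forces $\mT(\mu,\nu) = \mathrm{Segre}_*(\mu \otimes \nu)$. The only real subtlety is the passage from integration against the distribution of $Y$ on $W$ to integration against $\nu$ on $\PP(W)$; this step relies on the central symmetry of $K_\mu$ (so that $h_{K_\mu}$ descends to projective space) combined with \cref{prop:fromrandtomeas}, and it is also what explains why the definition makes sense independently of the choice of $Y$.
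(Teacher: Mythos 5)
Your proof is correct, and it reaches the conclusion by a route that is organized differently from the paper's. The paper first normalizes $\mu,\nu$ to probability measures (by bilinearity and homogeneity), represents $K_\mu=K(X)$ and $K_\nu=K(Y)$ with $X,Y$ supported on the \emph{unit spheres}, and then observes that $K_\mu\otimes K_\nu=K(X\otimes Y)$ by definition while the law of $X\otimes Y$ is exactly the Segre pushforward of $\mu\otimes\nu$; since the Segre map carries $S(V)\times S(W)$ into $S(V\otimes W)$, \cref{prop:fromrandtomeas} identifies this law directly with the measure of the product zonoid, and no further computation is needed. You instead work at the level of cosine transforms: you compute $\HH$ of both candidate measures as explicit integrals, match them using the identity $\langle u,v\otimes w\rangle=\langle T_w(u),v\rangle$ and \cref{eq:Tytens}, and conclude by the injectivity of $\HH$ from \cref{th:mainZ}(1). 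What your version buys is that it avoids relying on the sphere-preserving property of the Segre map (you handle arbitrary integrable representatives via the homogeneity trick $h_{K_\mu}(T_y(u))=\|y\|h_{K_\mu}(T_{y/\|y\|}(u))$ and \cref{prop:fromrandtomeas}); what the paper's version buys is brevity and no appeal to injectivity. Two small points to make explicit: your use of \cref{eq:Tytens} and of random vectors representing $K_\mu,K_\nu$ presupposes $\mu,\nu\ge 0$, so you should state the initial reduction to nonnegative (or probability) measures by bilinearity, just as the paper does; and the evenness of $h_{K_\mu}$ is what makes $w\mapsto h_{K_\mu}(T_w(u))$ well defined on $\PP(W)$, while the identity $h_{K_\mu}(T_y(u))=\|y\|h_{K_\mu}(T_{y/\|y\|}(u))$ itself only needs positive homogeneity -- you conflate these two roles slightly, but both facts are available.
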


\begin{proof}
Since the map $\mT$ is bilinear we can assume without loss of generality that
$\mu\in \cM(\PP(V))$ and $\nu \in \cM(\PP(W))$ are probability measures;
the general case then follows by homogeneity and linearity.
In that case, by \cref{prop:fromrandtomeas}, $K_\mu=K(X)$ where $X\in S(V)$ is a random vector of law $\mu$
(recall that we identify a measure on $\PP(V)$ with the corresponding even measure on the sphere). Similarly, $K_\nu=K(Y)$, where $Y\in S(W)$ is a random vector of law $\nu$,
that we can assume to be independent of $X$.
By definition, we have $K_\mu \otimes K_\nu=K(X\otimes Y)$.
The law of $X\otimes Y$ is the pushforward by the Segre map
of the tensor product of measures
$\mu\otimes\nu\in\cM_+(\PP(V)\times\PP(W))$,
and this concludes the proof.
\end{proof}

One could take \cref{lem:tensofmeas} as the definition of the tensor product of zonoids,
as it may appear simpler.
This simplicity however entirely relies on the fact that the tensor product on vectors
(the Segre map) sends the product of spheres to the sphere. When later in \cref{sec:FTZC},
we will deal with multilinear maps that do not have this property, the point of view of
random vectors is easier to handle.

To investigate the continuity of $T$ and $\widehat{T}$, let us prove an inequality.

\begin{lemma}\label{le:Lipsch}
For $K_1,K_2 \in \Z(V)$ and $L \in \Z(W)$ we have with $m:=\dim W$
$$
 \|K_1\otimes L - K_2\otimes L\| \ \le\
 \tau_m \|L\| \|K_1 - K_2 \|,
$$
where $\tau_m$ is defined as in \cref{eq:rhodef}.
\end{lemma}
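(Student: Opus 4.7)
The plan is to work on the level of support functions. By \cref{propo:tp} and bilinearity of the tensor product of virtual zonoids, the virtual zonoid $K_1\otimes L - K_2\otimes L$ equals $\widehat{T}(K_1-K_2, L)$, and by the definition of the norm on $\VZ(V\otimes W)$ we have
\begin{equation}
\|K_1\otimes L - K_2\otimes L\| = \sup_{u\in S(V\otimes W)} |h_{K_1\otimes L}(u) - h_{K_2\otimes L}(u)|.
\end{equation}
Pick an integrable random vector $Y\in W$ representing $L$, independent of the data used to represent $K_1,K_2$. The key formula from \cref{propo:tensorwd}, namely $h_{K_i\otimes L}(u) = \EE_Y\, h_{K_i}(T_Y(u))$ from \cref{eq:Tytens}, lets us rewrite the difference at a fixed $u$ as
\begin{equation}
h_{K_1\otimes L}(u) - h_{K_2\otimes L}(u) = \EE_Y\bigl[\,h_{K_1}(T_Y(u)) - h_{K_2}(T_Y(u))\bigr].
\end{equation}

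Next, I would exploit positive homogeneity. The function $h_{K_1}-h_{K_2}$ on $V$ is positively homogeneous of degree one, so for every $v\in V$,
\begin{equation}
|h_{K_1}(v) - h_{K_2}(v)| \le \|v\|\cdot \sup_{\|w\|=1}|h_{K_1}(w) - h_{K_2}(w)| = \|v\|\cdot \|K_1 - K_2\|.
\end{equation}
Applying this pointwise with $v = T_Y(u)$ and using the operator norm identity $\|T_Y\|_{\mathrm{op}}=\|Y\|$ from \cref{eq:nT}, we obtain for any $u$ with $\|u\|=1$,
\begin{equation}
|h_{K_1}(T_Y(u)) - h_{K_2}(T_Y(u))| \le \|T_Y(u)\|\cdot \|K_1-K_2\| \le \|Y\|\cdot \|K_1-K_2\|.
\end{equation}

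Taking expectation over $Y$ and then supremum over unit $u$, and recalling that $\ell(L)=\EE\|Y\|$ by \cref{mydef:length}, yields
\begin{equation}
\|K_1\otimes L - K_2\otimes L\| \le \EE\|Y\|\cdot \|K_1-K_2\| = \ell(L)\cdot \|K_1-K_2\|.
\end{equation}
The final step is to replace $\ell(L)$ by $\tau_m\|L\|$, which is exactly the right-hand inequality of \cref{cor:ideno} applied to the zonoid $L$ in the $m$-dimensional space $W$. This finishes the argument. No step is really difficult; the only subtlety is recognizing that the homogeneity of $h_{K_1}-h_{K_2}$ lets us bound the integrand uniformly in $Y$ by a quantity whose expectation is precisely $\ell(L)$, which is where the factor $\tau_m\|L\|$ enters via \cref{cor:ideno}.
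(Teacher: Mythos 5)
Your proof is correct and follows essentially the same route as the paper's: rewrite the support-function difference via \cref{eq:Tytens}, bound the integrand using the $1$-homogeneity of $h_{K_1}-h_{K_2}$ together with $\|T_Y\|_{\mathrm{op}}=\|Y\|$, take the expectation to get $\ell(L)\,\|K_1-K_2\|$, and finish with \cref{cor:ideno}. No issues.
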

\begin{proof}
Let $u\in V\otimes W$ be such that $\|K_1\otimes L - K_2\otimes L\| = \vert h_{K_1\otimes L}(u) - h_{K_2\otimes L}(u)\vert$. Let $L=K(Y)$.
From~\cref{eq:Tytens} we get
$$
 h_{K_1\otimes L}(u) - h_{K_2\otimes L}(u) =
 \EE_Y\, \big(h_{K_1} - h_{K_2}\big) ((T_{Y}(u)) ,
$$
hence,
\begin{align}
 |h_{K_1\otimes L}(u) - h_{K_2\otimes L}(u)|
 &\ \le\
  \EE_Y\, |\big(h_{K_1} - h_{K_2}\big) ((T_{Y}(u))| \\
   &\ \le\
  \EE_Y\, \|h_{K_1} - h_{K_2}\|_\infty \|T_{Y}(u))\| \\
  &\ \le\
  \EE_Y\, \|Y\| \cdot \|K_1 - K_2\|\ =\ \ell(L)\cdot \|K_1 - K_2\|
\end{align}
where we used~\cref{eq:nT} for the third inequality.
Applying \cref{cor:ideno} completes the proof.
\end{proof}

The main continuity properties are summarized in the following result.

\begin{theorem}\label{thm:contoftens}
Suppose that  $\dim V, \dim W \ge 2$. Then, the tensor product map satisfies the following.
\begin{enumerate}
\item $T: \ZZ(V)\times \ZZ(W)\to \ZZ(V\otimes W)$ is continuous.
More specifically,
for $K_1,K_2 \in \Z(V)$ and $L_1,L_2 \in \Z(W)$, we have
\begin{equation}
 d_H\left(K_1\otimes L_1, K_2\otimes L_2\right) \ \le\
  \left(\tau_m\|L_1\| + \tau_n\|K_2)\| \right)
  \left( d_H\left(K_1 ,K_2\right) +  d_H\left(L_1 , L_2\right) \right),
\end{equation}
where $n:=\dim V$ and $m:=\dim W$;

\item $\widehat{T}: \VZ(V)\times \VZ(W)\to \VZ(V\otimes W)$
with the norm topology on both sides is not sequentially continuous, but
separately (i.e., componentwise) continuous;

\item $\mT:\cM(\PP(V))\times\cM(\PP(W))\to\cM(\PP(V\otimes W))$ with the weak--$*$ topology
on both sides is sequentially continuous.
\end{enumerate}
\end{theorem}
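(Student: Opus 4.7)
For part (1), the plan is to insert the mixed term and apply \cref{le:Lipsch} twice. Writing
\[
  d_H(K_1\otimes L_1, K_2 \otimes L_2) \le d_H(K_1\otimes L_1, K_2 \otimes L_1) + d_H(K_2 \otimes L_1, K_2 \otimes L_2),
\]
the first summand is bounded by $\tau_m \|L_1\|\,d_H(K_1,K_2)$ by \cref{le:Lipsch} directly, and the second summand by $\tau_n \|K_2\|\,d_H(L_1,L_2)$ by applying \cref{le:Lipsch} with the two factors swapped (the tensor product is symmetric in the two arguments up to the canonical isomorphism $V\otimes W\cong W\otimes V$). Grouping constants yields the claimed inequality, and joint continuity of $T$ on $\Z(V)\times\Z(W)$ is immediate.

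For the separate continuity in part (2), fix $L\in\Z(W)$; the map $K\mapsto K\otimes L$ is Minkowski-linear on the cone $\Z(V)$ and extends by the universal property to a linear map $\VZ(V)\to\VZ(V\otimes W)$. Since the norm on $\VZ(V)$ satisfies $\|K_1-K_2\|=d_H(K_1,K_2)$ for any decomposition, \cref{le:Lipsch} gives $\|K\otimes L\|\le \tau_m\|L\|\,\|K\|$, hence the extension is bounded. A general $L\in\VZ(W)$ is handled by decomposing $L=L_1-L_2$ and using bilinearity. For the failure of joint sequential continuity, I plan to exhibit a sequence $K_i\in\VZ(\R^2)$ with $\|K_i\|\to 0$ but $\|K_i\otimes K_i\|\not\to 0$; then $(K_i,K_i)\to (0,0)$ but $\widehat{T}(K_i,K_i)\not\to 0$. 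Using the measure realization of \cref{sec:VZandmeas} and identifying $\PP^1$ with $[0,\pi)$, set $\mu_i:=k_i\cos(2k_i\theta)\,d\theta$ with $k_i\to\infty$. The Fourier expansion
\[
 |\cos\theta| = \tfrac{2}{\pi} + \tfrac{4}{\pi}\sum_{j\ge 1} \tfrac{(-1)^{j+1}}{4j^2-1}\cos(2j\theta)
\]
and orthogonality on $[0,2\pi)$ yield $\HH(\mu_i)(\alpha)=c\cdot\tfrac{k_i}{4k_i^2-1}\cos(2k_i\alpha)$ for a universal constant $c$, so $\|\HH(\mu_i)\|_\infty=\Theta(1/k_i)\to 0$. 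On the other hand, by \cref{lem:tensofmeas}, evaluating $\HH(\widetilde{T}(\mu_i,\mu_i))$ at the identity $I\in\R^{2\times 2}\simeq \R^2\otimes\R^2$ yields $\tfrac{1}{2}\iint|\cos(\theta-\phi)|\,k_i^2\cos(2k_i\theta)\cos(2k_i\phi)\,d\theta\,d\phi$; the same Fourier computation picks out the $j=k_i$ coefficient of $|\cos|$, giving a value of order $k_i^2\cdot \tfrac{1}{4k_i^2-1}=\Theta(1)$. Hence $\|K_i\otimes K_i\|\not\to 0$. The construction extends to $\dim V,\dim W\ge 2$ by embedding $\R^2$ as a two-dimensional subspace in each factor and pushing the measures forward via the induced map from \cref{continuity_of_linear}.

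For part (3), the plan is the standard Fubini/Arzel\`a--Ascoli argument. By bilinearity,
\[
 \widetilde{T}(\mu_i,\nu_i)-\widetilde{T}(\mu,\nu)=\widetilde{T}(\mu_i-\mu,\nu_i)+\widetilde{T}(\mu,\nu_i-\nu).
\]
Pair with an arbitrary $\phi\in C(\PP(V\otimes W))$. Using \cref{lem:tensofmeas} the second pairing becomes $\int g(w)\,d(\nu_i-\nu)(w)$ with $g(w):=\int\phi(v\otimes w)\,d\mu(v)$ continuous on $\PP(W)$, which tends to zero by weak-$*$ convergence $\nu_i\to\nu$. The first pairing becomes $\int g_i(v)\,d(\mu_i-\mu)(v)$ with $g_i(v):=\int\phi(v\otimes w)\,d\nu_i(w)$. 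Banach--Steinhaus applied to the convergent sequence $(\nu_i)$ provides a uniform bound on $|\nu_i|(\PP(W))$, so $(g_i)$ is uniformly bounded; uniform continuity of $\phi$ on the compact space $\PP(V\otimes W)$ makes $(g_i)$ equicontinuous, hence by Arzel\`a--Ascoli (together with pointwise convergence $g_i\to g$) we get $\|g_i-g\|_\infty\to 0$. Splitting $\int g_i\,d(\mu_i-\mu)=\int(g_i-g)\,d(\mu_i-\mu)+\int g\,d(\mu_i-\mu)$ and using the uniform bound on $|\mu_i-\mu|(\PP(V))$ for the first piece and weak-$*$ convergence for the second completes the argument.

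The main obstacle is the explicit counterexample for (2): one must simultaneously engineer sup-norm decay of $\HH(\mu_i)$ (which is a second-order object in the oscillation of $\mu_i$) and non-decay of the bilinear double integral for $\HH(\widetilde{T}(\mu_i,\mu_i))$, and it is the precise scaling $k_i$ versus $k_i^2$ in the denominators coming from the Fourier coefficients of $|\cos|$ that makes the construction work. Conceptually, the virtual-zonoid norm is blind to high-frequency oscillations of the underlying signed measure, while the tensor operation is not, and this asymmetry is exactly the source of the failure.
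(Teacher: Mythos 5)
Your proposal is correct, and parts (1) and the separate-continuity half of (2) follow the paper's route exactly (triangle inequality plus \cref{le:Lipsch}, applied twice and once respectively). Where you genuinely diverge is in the counterexample for joint discontinuity in (2) and in the proof of (3). The paper works with segments: it sets $a_n=(n,1)$, $b_n=(n,0)$, $A_n=\tfrac12[-a_n,a_n]$, $B_n=\tfrac12[-b_n,b_n]$, notes $\|A_n-B_n\|=\tfrac12$, and shows by pairing $(A_n-B_n)^{\otimes 2}$ with the test matrix $\left(\begin{smallmatrix}1&-n\\-n&0\end{smallmatrix}\right)$ that the operator norm of $\widehat{T}$ is infinite, invoking the general fact that a bilinear map of normed spaces is continuous iff its operator norm is finite. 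Your construction instead produces a sequence tending to zero directly, via oscillating densities $\mu_i=k_i\cos(2k_i\theta)\,d\theta$ on $\PP^1$; the computation is right (the cosine transform damps the frequency-$k_i$ mode by $\Theta(1/k_i^2)$, so $\|\HH(\mu_i)\|_\infty=\Theta(1/k_i)$, while the diagonal evaluation of the tensor square picks up $k_i^2\cdot\Theta(1/k_i^2)=\Theta(1)$), and the reduction to $\dim V=\dim W=2$ by isometric embedding is the same move the paper makes. The two counterexamples exploit the same underlying fact — the inverse cosine transform is unbounded — but yours makes the spectral mechanism explicit, whereas the paper's is elementary and avoids Fourier analysis. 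For (3), the paper simply cites \cref{lem:tensofmeas}, weak--$*$ continuity of pushforwards, and Billingsley for sequential continuity of the product measure; your Banach--Steinhaus/Arzel\`a--Ascoli argument is longer but self-contained, and it handles the signed-measure case explicitly rather than reducing to positive measures by bilinearity. (One cosmetic slip: your orthogonality relations are over $[0,\pi)$, the period of the even-frequency modes, not $[0,2\pi)$; this does not affect the scalings.)
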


\begin{proof}
For proving $(1)$, recall that $d_H(K,L)=\|K-L\|$.
From the multiadditivity of the tensor product and the triangle inequality of the norm,
we get
$$
\|K_1\otimes L_1 - K_2\otimes L_2\|
 \ \le\ \|K_1\otimes L_1 - K_2\otimes L_1\| + \|K_2\otimes L_1 - K_2\otimes L_2\| .
$$
Combined with \cref{le:Lipsch}, this yields
$$
\|K_1\otimes L_1 - K_2\otimes L_2\| \ \le\
 \tau_m \|L_1\| \|K_1 - K_2 \| +
\tau_n \|K_2\| \|L_1 - L_2 \| ,
$$
which proves the first assertion.

As for $(2)$, the separate continuity follows directly from \cref{le:Lipsch}.
To prove that $\widehat{T}$  is not (sequential) continuous, we begin with a general observation.
Let $\varphi\colon E \times F \to G$ be a bilinear map of
real normed vector spaces. Then $\varphi$ is (sequential) continuous if and only if
it has finite operator norm:
$$
 \|\varphi\|_{\mathrm{op}} := \sup_{\|x\|\le 1, \|y\|\le 1} \|\varphi(x,y)\| \ <\  \infty .
$$
We show now that $\widehat{T}$ has infinite operator norm.
It suffices to prove this for $V=W=\R^2$.
Consider the sequence of vectors $a_n:=(n, 1)$, $b_n:=(n,0)$
and the corresponding sequence of segments $A_n:=\frac{1}{2}[-a_n, a_n]$, $B_n:=\frac{1}{2}[-b_n, b_n]$
in $\R^2$. This defines the sequence of virtual zonoids $A_n- B_n\in \VZ(\R^2)$.
It is immediate to check that
$$
 \|A_n-B_n\| = d_H(A_n,B_n) = \tfrac12 .
$$
Consider $P_n := (A_n-B_n)\otimes (A_n-B_n) \in \VZ(\R^2\otimes\R^2)$.
It suffices to show that
$\lim_{n\to \infty} \|P_n \| = \infty$.
For this, we calculate
\be
a_n\otimes a_n=\begin{bmatrix} n^2 & n \\n & 1\end{bmatrix},\
b_n\otimes b_n=\begin{bmatrix} n^2 & 0 \\0 & 0\end{bmatrix},\
a_n\otimes b_n=\begin{bmatrix} n^2 & 0 \\n & 0\end{bmatrix},\
b_n\otimes a_n=\begin{bmatrix} n^2 & n \\0 & 0\end{bmatrix}.
\ee
Their inner product with the matrix
$w_n := \begin{bmatrix} 1 & -n \\ -n &  0\end{bmatrix}$
is given by
\be
\langle a_n\otimes a_n, w_n\rangle = -n^2,\
\langle b_n\otimes b_n, w_n\rangle = n^2,\
\langle a_n\otimes b_n, \
w_n\rangle= \langle b_n\otimes a_n, w_n\rangle=0,
\ee
Using~\cref{eq:hE-form}, we obtain
$h_{A_n\otimes A_n}(w_n) = \tfrac12 | \langle a_n \otimes a_n , w_n \rangle | = \frac{n^2}{2}$,
and similarly
$h_{B_n\otimes B_n}(w_n) = \frac{n^2}{2}$, and
$h_{A_n\otimes B_n}(w_n) = h_{B_n\otimes A_n}(w_n) = 0$.
Therefore,
$$
 \frac{h_{P_n}(w_n)}{\|w_n\|} = \frac{n^2}{\|w_n\|} = \Omega(n),
$$
which completes the proof of the second item.

For item (3) we recall from \cref{lem:tensofmeas} that $\mT$
equals
the tensor product of measures composed with the pushforward of the Segre map. The pushforward of a measure under a continuous map is weak--$*$ continuous. Mapping two measures to their product measure is sequentially continuous by \cite[Theorem 2.8]{Billingsley}. This finishes the proof for the third assertion.
\end{proof}

By \cref{propo:banach}, the completion of the normed vector space $\VZ(V)$ is isomorphic to
the Banach space of even real valued continous function defined on the unit sphere of $V$,
or equivalently to $C(\PP(V))$.
One may hope to extend the tensor product map
to the completions to obtain a separate continuous bilinear map.
We show now that, unfortunately, this is impossible.
For proving this, we will rely on \cref{thm:FTZCdet} below and on \cite[Theorem~5.2.2]{bible}.

\begin{proposition}\label{propo:notpossible}
There is no bilinear map
$C(\PP(V))\times C(\PP(W))\to C(\PP(V\otimes W))$ that is
separate continuous and extends
the tensor product map $\widehat{T}\colon \VZ(V)\times \VZ(W)\to \VZ(V\otimes W)$,
provided $\dim V, \dim W \ge 2$.
\end{proposition}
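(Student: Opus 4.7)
My plan is to derive a contradiction from the existence of such a separately continuous bilinear extension $B$, by first upgrading separate to joint continuity via Banach--Steinhaus and then using the unbounded sequence of virtual zonoids already constructed inside the proof of \cref{thm:contoftens}\,(2). Since $C(\PP(V))$, $C(\PP(W))$ and $C(\PP(V\otimes W))$ are Banach spaces under the sup-norm, the separately continuous bilinear map $B$ is automatically jointly continuous: for each $f\in C(\PP(V))$ the slice $g\mapsto B(f,g)$ is a continuous linear operator by separate continuity in~$g$, and for each $g$ the map $f\mapsto B(f,g)$ is continuous by separate continuity in~$f$, so the family of slice operators $\{B(f,\cdot):\|f\|_\infty\le 1\}$ is pointwise bounded. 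By the Banach--Steinhaus theorem there is a constant $c>0$ with
$$
\|B(f,g)\|_\infty\ \le\ c\,\|f\|_\infty\,\|g\|_\infty\qquad\text{for all } f,g.
$$

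I would then exhibit virtual zonoids violating this inequality. Using that $\dim V,\dim W\ge 2$, I fix two-dimensional subspaces of $V$ and $W$ and take the sequence $Z_n:=\tfrac12[-a_n,a_n]-\tfrac12[-b_n,b_n]\in\VZ(\R^2)$ from the proof of \cref{thm:contoftens}\,(2), with $a_n=(n,1)$ and $b_n=(n,0)$. Because support functions of virtual zonoids supported in a linear subspace factor through the orthogonal projection, the inclusion $\VZ(\R^2)\hookrightarrow \VZ(V)$ is isometric and compatible with the tensor product. Hence, viewed inside $C(\PP(V))$, we have $\|Z_n\|_\infty=\tfrac12$ uniformly in~$n$, while the explicit computation in the proof of \cref{thm:contoftens}\,(2) gives $\|Z_n\otimes Z_n\|_\infty=\Omega(n)$ in $C(\PP(V\otimes W))$. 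Since $B$ extends $\widehat T$, one has $B(Z_n,Z_n)=Z_n\otimes Z_n$, contradicting the bound $\|B(Z_n,Z_n)\|_\infty\le c/4$ obtained above.

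The main subtlety is the first step --- verifying cleanly that separately continuous bilinear maps between Banach spaces are jointly continuous --- which is a standard consequence of the uniform boundedness principle. Once this functional-analytic fact is established, the contradiction follows directly from the sequence already built inside \cref{thm:contoftens}\,(2), so this route bypasses both \cref{thm:FTZCdet} and Schneider's Theorem~5.2.2 entirely. Presumably the authors prefer to argue through mixed volumes in order to keep the obstruction expressed in convex-geometric rather than functional-analytic terms, but the argument above seems the most direct.
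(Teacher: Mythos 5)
Your proof is correct, and it takes a genuinely different route from the paper's. The paper composes the hypothetical extension with a Hahn--Banach extension of the continuous linear functional $\widehat{M}$ coming from the determinant (via \cref{thm:FTZCdet}), thereby reducing to the non-existence of a separately continuous bilinear functional on $C(S^1)\times C(S^1)$ extending the mixed volume, which is extracted from the proof of Schneider's Theorem~5.2.2. You instead observe that the three spaces involved are Banach spaces under the sup-norm, so a separately continuous bilinear map between them is automatically bounded by the uniform boundedness principle (applied to the pointwise-bounded family of slice operators $B(f,\cdot)$, $\|f\|_\infty\le 1$, acting on the complete space $C(\PP(W))$), and then you contradict the resulting bound $\|B(f,g)\|_\infty\le c\,\|f\|_\infty\|g\|_\infty$ with the sequence $Z_n=A_n-B_n$ already constructed in the proof of \cref{thm:contoftens}\,(2), which satisfies $\|Z_n\|=\tfrac12$ while $\|Z_n\otimes Z_n\|=\Omega(n)$; your reduction to two-dimensional subspaces via the isometric, tensor-compatible embedding $\VZ(\R^2)\hookrightarrow\VZ(V)$ is legitimate and parallels the paper's own reduction to $V=W=\R^2$. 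Your argument is more self-contained and elementary, bypassing both \cref{thm:FTZCdet} and the external reference to Schneider; what the paper's route buys in exchange is that it locates the obstruction in a classical convex-geometric fact (the non-extendability of the mixed volume as a separately continuous bilinear functional), which fits the narrative of the surrounding sections. The one point to state explicitly in a final write-up is the completeness hypothesis in the Banach--Steinhaus step: it is exactly the completeness of $C(\PP(W))$ (as opposed to the incompleteness of $\VZ(W)$, \cref{propo:banach}) that forces separate continuity to upgrade to boundedness on the extension while failing to do so for $\widehat{T}$ itself.
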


\begin{proof}

Suppose by way of contradiction that there is a bilinear map as in the proposition.
From this it is straightforward to construct such map
$\varphi\colon C(\PP^1)\times C(\PP^1)\to C(\PP^3)$
for $V=W=\R^2$. Thus it is enough to prove it for this case.
The determinant map $\R^2 \times \R^2\to \R$ is a bilinear map and hence it factors
via a linear map
$M\colon \R^2\otimes \R^2\to \R$.
By \cref{thm:FTZCdet} below, the associated linear map
$\widehat{M}\colon\VZ(\R^2\otimes \R^2)\to \VZ(\R) \simeq\R$ gives
the mixed volume of the tensor product in the sense that,
for $K_1,K_2\in\Z(\R^2)$,
$$
 \widehat{M}(K_1\otimes K_2) = \ell (K_1 \wedge K_2) = 2 \,\MV(K_1, K_2) .
$$
Since $\widehat{M}$ is continuous (\cref{propo:linearext}),
we can extend it to a continuous linear map
$\lambda\colon  C(\PP(\R^2\otimes \R^2)) \to C(\PP^0) \simeq \R$
by Hahn-Banach. Then,
$\lambda\circ\varphi\colon C(\PP^1)\times C(\PP^1) \to \R$
is a componentwise continuous bilinear map extending
$2\cdot\widehat{\det}\colon\Z(\R^2) \times \Z(\R^2)\to \R$.
Now recall that $C(\PP^1)$ is identified with the even functions in $C(S^1)$.

The existence of $\lambda\circ\varphi$ contradicts the fact that there is no
separate continuous bilinear function
$V\colon C(S^1) \times C(S^1) \to \R$ that
satisfies
$V(\bar{h}_K,\bar{h}_L) = \MV (K,L)$ for $K,L\in \Z(\R^2)$.
The latter follows by inspecting the proof of \cite[Theorem 5.2.2]{bible},
where the analogous statement is made about bilinear functions $V$
satisfying the above condition for all $K,L\in \K(\R^2)$.
\end{proof}

%%%%%%%%%%%%%%%%%%%%%%%%%%%%%%%%%%%%%%%%%%%%%%%
\section{Multilinear maps induced on zonoids}\label{sec:FTZC}

In this section we show how to associate with any multilinear map between Euclidean spaces
a corresponding multilinear map of the corresponding vector spaces of zonoids.
This will allow us to construct the zonoid algebra.
The zonoid algebra inherits a duality notion from the Hodge star operator.

%%%%%%%%%%%%%
\subsection{Zonoids and multilinear maps}

We learned how to associate with a linear map $M\colon V\to W$ of Euclidean spaces
a continuous, order preserving linear map
$\widehat{M}\colon \VZ(V)\to \VZ(W)$ of the corresponding vector spaces of zonoids (\cref{def:linearext}).
We now transfer this construction to multilinear maps via the tensor product.
Note that if $V_1, \ldots, V_p$ are Euclidean vector spaces, then each
$\VZ(V_i)$ is an partially ordered and normed real vector space. The product space
$\VZ(V_1)\times \cdots \times \VZ(V_p)$ carries a componentwise order
and the product topology, where $\VZ(V_i)$ carries the topology defined
by the norm of virtual zonoids from \cref{th:virt:zonoid}.

\begin{theorem}[Induced multilinear zonoid maps]\label{thm:FTZC}
Let $V_1, \ldots, V_p$ and $W$ be Euclidean vector spaces and
$M:V_1\times \cdots \times V_p\to W$ be a multilinear map.
There exists a unique multilinear, separate continuous map
\be
\widehat{M}:\VZ(V_1)\times \cdots \times \VZ(V_p)\to \VZ(W) ,
\ee
such that for every $v_1\in V_1, \ldots, v_p\in V_p$
$$
\widehat{M}\left(\tfrac{1}{2}[-v_1, v_1], \ldots, \tfrac{1}{2}[-v_p, v_p]\right)
 =\tfrac{1}{2}\left[-M(v_1, \ldots, v_p), M(v_1, \ldots, v_p)\right] .
$$
Restricting to zonoids, we get a continuous map
$\ZZo(V_1)\times \cdots\times  \ZZo(V_p) \to \ZZo(W)$.
The map $\widehat{M}$ preserves the componentwise inclusion order of zonoids. If we interpret this map on the level of signed measures, then we obtain a multlinear map $\widetilde{M}$, which is sequentially continuous with respect to the weak--$*$ topology.
\end{theorem}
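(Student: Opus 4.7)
The plan is to reduce the statement to the two ingredients already developed in the paper: the tensor product of (virtual) zonoids from \cref{propo:tp}, and the linear extension $\widehat{(\cdot)}$ of linear maps from \cref{def:linearext,propo:linearext}. Since any multilinear map $M\colon V_1\times\cdots\times V_p\to W$ factors uniquely through a linear map $\overline{M}\colon V_1\otimes\cdots\otimes V_p\to W$, the natural candidate is
\begin{equation}
 \widehat{M} \;:=\; \widehat{\overline{M}}\circ T_p,
\end{equation}
where $T_p\colon \VZ(V_1)\times\cdots\times\VZ(V_p)\to \VZ(V_1\otimes\cdots\otimes V_p)$ is the iterated tensor product of virtual zonoids (which is well defined and associative by the iterated application of \cref{propo:tp}).

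Next I would verify the defining property on segments. By \cref{ex:tpz} (extended inductively to $p$ factors),
\begin{equation}
 T_p\bigl(\tfrac12[-v_1,v_1],\ldots,\tfrac12[-v_p,v_p]\bigr)
 \;=\; \tfrac12[-v_1\otimes\cdots\otimes v_p,\,v_1\otimes\cdots\otimes v_p],
\end{equation}
and $\widehat{\overline{M}}$ maps this segment to $\tfrac12[-\overline{M}(v_1\otimes\cdots\otimes v_p),\overline{M}(v_1\otimes\cdots\otimes v_p)]$, which equals $\tfrac12[-M(v_1,\ldots,v_p),M(v_1,\ldots,v_p)]$ by the factorization. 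Multilinearity of $\widehat{M}$ follows from the multilinearity of $T_p$ (by \cref{propo:tp}) combined with the linearity of $\widehat{\overline{M}}$. Monotonicity on the cone of zonoids combines the monotonicity of the tensor product from \cref{prop:contOfTens} with the order-preservation of $\widehat{\overline{M}}$ from \cref{propo:linearext}. Separate continuity at the level of virtual zonoids is the composition of separate continuity of $T_p$ (iterating \cref{thm:contoftens}(2)) with continuity of $\widehat{\overline{M}}$ (\cref{propo:linearext}); the stronger joint continuity on $\ZZo(V_1)\times\cdots\times\ZZo(V_p)$ follows in the same way from \cref{thm:contoftens}(1). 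Finally, sequential continuity of $\widetilde{M}$ with respect to the weak--$*$ topology on signed measures follows from \cref{thm:contoftens}(3) for the tensor product of measures, composed with the linear pushforward level map, whose weak--$*$ sequential continuity is \cref{continuity_of_linear}.

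The uniqueness step is the delicate part and the place I expect to spend real work. The strategy is a density-plus-continuity argument in three layers. First, by Minkowski multilinearity, $\widehat{M}$ is determined on tuples of zonotopes, since each centrally symmetric zonotope is a finite Minkowski sum of symmetric segments, and the map on segments is prescribed by the theorem. Second, every zonoid is a Hausdorff limit of zonotopes, so separate continuity on $\Z(V_i)$ (equivalently, continuity in each argument with the others fixed at a zonoid) extends the map uniquely from tuples of zonotopes to tuples of zonoids. Third, the extension to virtual zonoids is forced by multilinearity over differences $K=K_1-K_2$: any multilinear extension must satisfy
\begin{equation}
 \widehat{M}(K_1-K_1',\ldots,K_p-K_p')
 \;=\; \sum_{\varepsilon\in\{0,1\}^p}(-1)^{|\varepsilon|}\,\widehat{M}(K_1^{\varepsilon_1},\ldots,K_p^{\varepsilon_p}),
\end{equation}
with $K_i^0=K_i$ and $K_i^1=K_i'$, and this is well defined by the additivity already checked on zonoids. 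The main obstacle here is that separate continuity of a multilinear map on virtual zonoids, which are not a Banach space (\cref{propo:banach}), is weaker than joint continuity, so one must be a bit careful to propagate the determination through each argument one at a time rather than all at once, but this is exactly why the stated uniqueness is formulated with separate continuity.
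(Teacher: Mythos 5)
Your proposal is correct and follows essentially the same route as the paper: existence via $\widehat{M}=\widehat{L}\circ T_p$ with $L$ the linearization of $M$ through the tensor product, the segment formula from \cref{ex:tpz}, and uniqueness by the same density-plus-separate-continuity argument (segments determine zonotopes by multilinearity, zonotopes are dense in zonoids, and multilinearity then forces the extension to virtual zonoids). The only difference is that you spell out the uniqueness step in more detail than the paper does, which is harmless.
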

\begin{proof}
To show existence, we rely on
the universal property of tensor product:
there is a unique linear map $L:V_1\otimes\cdots\otimes~V_p\to~W$
such that $L(v_1\otimes \cdots \otimes v_p) = M(v_1,\ldots,v_p)$.
Consider the linear continuous map
$\widehat{L}:\VZ(V_1\otimes \cdots \otimes V_p)\to \VZ(W)$
given by \cref{def:linearext}.
For $(K_1, \ldots, K_p)\in \VZ(V_1)\times \cdots \times \VZ(V_p)$,
we define the map $\widehat{M}$ by
\be\label{eq:md}
\widehat{M}(K_1,\ldots,K_p):= \widehat{L}(K_1\otimes\cdots\otimes K_p).
\ee
This is the composition of the linear map $\widehat{L}$ with the multilinear tensor product map
from \cref{propo:tp}, therefore it is multilinear.

Restricting to zonoids and using \cref{prop:contOfTens},
we see
$\widehat{M}(\ZZo(V_1)\times \cdots\times  \ZZo(V_p))\subseteq \ZZo(W)$.
The asserted formula for the image of $\widehat{M}$ on tuples of segments
is a direct consequence of~\cref{eq:zonoT} and the definition of the map $\widehat{M}$.

Since $\widehat{L}$ is continuous and the tensor product map from \cref{propo:tp}
is separate continuous, $\widehat{M}$ is separate continuous.
Similarly, $\widehat{M}|_{\ZZo(V_1)\times \cdots\times  \ZZo(V_p)}$
is continuous since the tensor product map on zonoids is continuous (\cref{prop:contOfTens}).

For the uniqueness of the map $\widehat{M}$ we argue as follows.
Since by definition, any zonoid in $V_i$ can be approximated by symmetric segments and
the values of $\widehat{M}$ are determined on tuples of segments,
the componentwise continuity of $\widehat{M}$ determines $\widehat{M}$
on $\Z(V_1)\times \cdots\times \Z(V_p)$. In turn, this determines $\widehat{M}$ by multilinearity.

By \cref{prop:contOfTens}, the tensor product of zonoids preserves the componentwise order.
Moreover, $\widehat{L}$ preserves the order by \cref{propo:linearext}.
This implies that $\widehat{M}$ preserves the componentwise order.

The last statement about the sequential continuity with respect to the weak--$*$ topology follows from \cref{continuity_of_linear} and \cref{thm:contoftens} (3).
\end{proof}

\begin{remark}
The map
$\widehat{M}:\VZ(V_1)\times \cdots \times \VZ(V_p)\to \VZ(W)$
is in general not sequentially continuous for the norm topology. Indeed \cref{thm:contoftens} shows
that the tensor product map for two factors is not (sequentially) continuous, and this
immediately extends to any number of factors. By contrast, we showed that $\widetilde{M}$ is sequentially continuous for the weak--$*$ topology.
\end{remark}

Our construction is nicely compatible with the description of zonoids by random vectors.

\begin{corollary}\label{propo:MK}
Let $X_1\in V_1, \ldots, X_p\in V_p$ be integrable and
independent random vectors. For a multilinear map
$M:V_1\times \cdots \times V_p\to W$ we have
\be
\widehat{M}(K(X_1),\ldots, K(X_p))=K(M(X_1, \ldots, X_p)).\ee
\end{corollary}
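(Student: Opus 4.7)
My plan is to unwind the definition of $\widehat{M}$ given in the proof of \cref{thm:FTZC} and then chain together two facts proved earlier: the behavior of the tensor product of zonoids on Vitale representations (\cref{def:tensofzon2}, extended by associativity in \cref{propo:tp}) and the commutation of linear maps with the Vitale construction (\cref{lemma:linear}).

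More concretely, let $L\colon V_1\otimes\cdots\otimes V_p\to W$ be the unique linear map with $L(v_1\otimes\cdots\otimes v_p)=M(v_1,\ldots,v_p)$ given by the universal property of the tensor product. By the definition in \cref{eq:md},
\[
 \widehat{M}(K(X_1),\ldots,K(X_p)) = \widehat{L}\bigl(K(X_1)\otimes\cdots\otimes K(X_p)\bigr).
\]
The first step is to identify the tensor product on the right as a Vitale zonoid. By \cref{def:tensofzon2}, for two independent representing vectors $X_1,X_2$ one has $K(X_1)\otimes K(X_2)=K(X_1\otimes X_2)$; iterating this using the associativity of the tensor product of zonoids (\cref{propo:tp}) and the independence of $X_1,\ldots,X_p$, I obtain
\[
 K(X_1)\otimes\cdots\otimes K(X_p) = K(X_1\otimes\cdots\otimes X_p).
\]
One small point to check here is that the iterated tensor product of the $X_i$ can be regrouped arbitrarily because the Vitale zonoid depends only on the random vector up to its law, and the natural isomorphisms between the various tensor bracketings are isometries of Euclidean spaces; independence is preserved at each stage, so each bilinear step is covered by \cref{def:tensofzon2}.

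The second step is to apply \cref{lemma:linear} to the linear map $\widehat{L}$: since $X_1\otimes\cdots\otimes X_p$ is an integrable random vector in $V_1\otimes\cdots\otimes V_p$,
\[
 \widehat{L}\bigl(K(X_1\otimes\cdots\otimes X_p)\bigr) = K\bigl(L(X_1\otimes\cdots\otimes X_p)\bigr).
\]
Finally, by construction of $L$ we have $L(X_1\otimes\cdots\otimes X_p)=M(X_1,\ldots,X_p)$ pointwise, so the two random vectors have the same law and hence give the same Vitale zonoid. Chaining these three equalities yields the statement.

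I do not anticipate a serious obstacle: the only subtlety is verifying that iterating the binary tensor product of zonoids is well defined on the independent tuple $(X_1,\ldots,X_p)$, which is immediate since at each step one tensors an already-constructed independent pair. Everything else is a direct application of results stated earlier in the section.
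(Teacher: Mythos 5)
Your proof is correct and follows the same route as the paper, whose proof is a one-line citation of the construction \cref{eq:md} together with \cref{lemma:linear}; you have simply spelled out the intermediate step $K(X_1)\otimes\cdots\otimes K(X_p)=K(X_1\otimes\cdots\otimes X_p)$ explicitly. No issues.
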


\begin{proof}
This follows from the construction \cref{eq:md} of the map $\widehat{M}$ and \cref{lemma:linear}.
\end{proof}

A recent work that can be interpreted from the point of view of multilinear functions of zonoids is the paper \cite{FCB} by Meroni and Mathis, in which they study so-called fiber bodies.

%%%%%%%%%%%%%%%%%%%%%%%%%%%%%%%%%%%
\subsection{Zonoid algebra}\label{section:ZonoidAlgebra}

We assign to the exterior powers of a Euclidean space
the corresponding zonoid vector spaces and apply \cref{thm:FTZC} to the wedge product
of the exterior algebra to arrive at a commutative and associative graded algebra,
which we call the {\em zonoid algebra} of~$V$.

Let $V$ denote a Euclidean vector space of dimension $m$ throughout this section.
Consider the $d$--th exterior power $\Lambda^d V$ of $V$,
defined for $d\in\N$, and note that $\Lambda^d V = 0$ if $d > m$,
This space inherits a Euclidean structure from~$V$,
which can be described as follows:
if $\{e_1, \ldots, e_m\}$ is an orthonormal basis for $V$,
an orthonormal basis for $\Lambda^d V$ is given by
$\{e_{i_1}\wedge \cdots \wedge e_{i_d}\}_{1\leq i_1<\cdots <i_d\leq m}$.
We form the direct sum of zonoid vector spaces
\be\label{eq:AV}
 \A(V) := \bigoplus_{d=0}^{m}\VZ(\Lambda^d V).
\ee
The elements of $\VZ(\Lambda^kV)$ will be said to have degree $d$.
By \cref{re:1-dim} we shall identify %the degree zero part
$\VZ(\Lambda^0 V)= \VZ(\R) \simeq\R$.

\begin{remark}
\cref{propo:banach} implies that $\A(V)$ is an infinite dimensional real vector space,
unless $\dim(V)\le 1$.
\end{remark}

Via \cref{thm:FTZC}, we associate with each wedge product
$\wedge\colon\Lambda^d V\times \Lambda^e V\to \Lambda^{d+e} V$
a componentwise continuous bilinear map
\begin{equation}\label{eq:zonoidwedge} %label{eq:zaa}
\timeszon : \VZ\big(\Lambda^d V\big)\times \VZ\big(\Lambda^e V\big)\to \VZ\big(\Lambda^{d+e} V\big),
\end{equation}
and, by extension, a componentwise continuous bilinear map
$\A(V) \times \A(V) \to \A(V)$,
all denoted by the same symbol.
We call this map the \emph{wedge product of virtual zonoids}.
\cref{thm:FTZC} implies that the wedge product of zonoids is a zonoid. We write $K^{\wedge d}$ for the wedge of $K$ with itself $d$ many times.

In an analogous way, we define the wedge product of several (virtual) zonoids.
We can describe this product explicitly as follows.
Assume that $X_1\in \Lambda^{d_1}V,\ldots,X_p\in \Lambda^{d_p}V$ are independent random vectors
representing the zonoids $A_j\in \ZZo(\Lambda^{d_j}V)$.
Then \cref{propo:MK} implies that
\be\label{eq:www}
 A_1\wedge\cdots \wedge A_p = K(X_1)\wedge \cdots \wedge K(X_p)=K(X_1\wedge \cdots \wedge X_p) \in \ZZo(\Lambda^{d_1+\cdots+d_p}V).
\ee

We call $\A(V)$ the \emph{zonoid algebra} associated with $V$.
This naming is justified by the following theorem.

\begin{theorem}\label{th:zonoidalgebra}
The wedge product turns $\A(V)$ into a graded, associative and commutative real algebra.
The wedge maps of zonoids
\be
 \ZZ(\Lambda^{d_1}V)\times \cdots\times \ZZ(\Lambda^{d_p}V)\to \ZZ(\Lambda^{d_1+\cdots + d_p}V),\:
  (A_1, \ldots, A_p)\mapsto A_1\wedge \cdots \wedge A_p
 \ee
are continuous.
These maps preserve the inclusion order of zonoids:
if we have $A_j'\subset A_j$ for zonoids in $\ZZ(\Lambda^{d_j}V)$, then
\be\label{wedge_inclusion}
A_1'\wedge \cdots \wedge A_p' \ \subset\ A_1\wedge \cdots \wedge A_p.
\ee
Moreover, the wedge product of zonoids does not increase the length:
$$
\ell(A_1\wedge \cdots \wedge A_p) \ \le\ \ell(A_1) \cdots \ell(A_p) .
$$
\end{theorem}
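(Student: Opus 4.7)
The plan is to tackle \cref{th:zonoidalgebra} in three blocks: algebraic structure, continuity and monotonicity, and the length inequality, drawing on \cref{thm:FTZC} and the probabilistic description of zonoids throughout.

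\textit{Algebraic structure.} The grading is built into \cref{eq:AV}, and the wedge $\wedge\colon \Lambda^kV\times\Lambda^lV\to \Lambda^{k+l}V$ lifts, via \cref{thm:FTZC}, to a bilinear map on virtual zonoids that respects the grading. Associativity follows from the factorisation in \cref{thm:FTZC} (the $p$-fold wedge equals $\widehat L_p$ applied to the iterated tensor product): both the tensor product of virtual zonoids (\cref{propo:tp}) and the linearised wedge on $\Lambda^\bullet V$ are associative, so their composition is too. For commutativity, I would exploit central symmetry $K(Z)=K(-Z)$: picking independent representations $A=K(X)$, $B=K(Y)$,
\[
A\wedge B = K(X\wedge Y) = K\bigl((-1)^{kl}Y\wedge X\bigr) = K(Y\wedge X) = B\wedge A,
\]
so the graded anti-commutativity on $\Lambda^\bullet V$ collapses to plain commutativity on the zonoid side.

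\textit{Continuity and monotonicity.} Both are direct consequences of \cref{thm:FTZC} applied to the $p$-linear wedge $\Lambda^{d_1}V\times\cdots\times\Lambda^{d_p}V\to \Lambda^{d_1+\cdots+d_p}V$ and then restricted to the cones of zonoids, which by \cref{thm:FTZC} yields a continuous, componentwise order-preserving map.

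\textit{Length inequality.} Choosing independent representations $A_j=K(X_j)$ and invoking \cref{eq:www} gives
\[
\ell(A_1\wedge\cdots\wedge A_p)=\mean\|X_1\wedge\cdots\wedge X_p\|.
\]
The plan is to prove the pointwise submultiplicativity $\|X_1\wedge\cdots\wedge X_p\|\le \|X_1\|\cdots\|X_p\|$ and then convert the product of norms into a product of expectations using independence:
\[
\mean\|X_1\wedge\cdots\wedge X_p\|\le \mean\prod_j\|X_j\|=\prod_j\mean\|X_j\|=\prod_j\ell(A_j).
\]
When every $X_j$ lies in $V$ (degree one), the pointwise bound is exactly Hadamard's inequality: $X_1\wedge\cdots\wedge X_p$ is a simple $p$-vector whose norm equals the volume of the parallelepiped spanned by $X_1,\ldots,X_p$, itself bounded by the product of edge lengths (the Gram determinant is dominated by the product of diagonal entries). \textbf{The main obstacle} is the higher-degree case $d_j\ge 2$, where the Euclidean norm on $\Lambda^\bullet V$ is not submultiplicative with respect to $\wedge$ in general, so a naive pointwise bound is unavailable. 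I would address this by decomposing each $X_j$ as a (random) finite sum of simple $d_j$-vectors, applying the degree-one Hadamard bound to each summand, and reassembling the inequality using independence and the continuity and linearity of $\ell$ (\cref{th:lengthLF}), approximating general zonoids by Grassmann-type zonoids as needed.
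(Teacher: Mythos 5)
Your first two blocks coincide with the paper's argument: the grading, associativity via the factorization through the tensor product, commutativity via $K(-Z)=K(Z)$, and the continuity and order-preservation all follow from \cref{thm:FTZC}, \cref{propo:tp} and \cref{thm:contoftens} exactly as you describe. For the length inequality the paper takes a one-line route you should compare with yours: it writes $A_1\wedge\cdots\wedge A_p=\widehat L(A_1\otimes\cdots\otimes A_p)$ for the linearization $L$ of the wedge, asserts that $L$ is an orthogonal projection, and concludes from \cref{propo:linearext} and \cref{prop:normTP} that $\ell(A_1\wedge\cdots\wedge A_p)\le\ell(A_1\otimes\cdots\otimes A_p)=\ell(A_1)\cdots\ell(A_p)$. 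Your reduction to the pointwise bound $\|X_1\wedge\cdots\wedge X_p\|\le\|X_1\|\cdots\|X_p\|$ is the same inequality in probabilistic form, and your Hadamard argument settles it whenever all $d_j=1$.

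The gap is in your repair of the case $d_j\ge 2$, and the obstacle you flagged is more serious than you suppose. Decomposing $X_j=\sum_i Y_{j,i}$ into simple vectors and applying Hadamard termwise only yields
$\|X_1\wedge\cdots\wedge X_p\|\le\prod_j\bigl(\sum_i\|Y_{j,i}\|\bigr)$,
and $\sum_i\|Y_{j,i}\|\ge\|X_j\|$ by the triangle inequality, so the bound points the wrong way; moreover a general zonoid in $\Lambda^kV$ cannot be approximated by Grassmannian ones, which form a proper closed subcone. In fact no repair is possible at this level of generality: for $u=e_1\wedge e_2+e_3\wedge e_4+e_5\wedge e_6\in\Lambda^2\R^6$ one computes $u\wedge u=2(e_1\wedge e_2\wedge e_3\wedge e_4+e_1\wedge e_2\wedge e_5\wedge e_6+e_3\wedge e_4\wedge e_5\wedge e_6)$, so $\|u\wedge u\|=2\sqrt3>3=\|u\|^2$, and the segment $A=\tfrac12[-u,u]$ gives $\ell(A\wedge A)=2\sqrt3>\ell(A)^2$.

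The conclusion to draw is that the pointwise submultiplicativity, and with it the length inequality, holds when the representing random vectors are almost surely simple (Grassmannian zonoids, which is all that is needed for \cref{hug_result} and the applications), but fails for arbitrary zonoids in $\Lambda^{d_j}V$ with $d_j\ge2$. Note that the paper's own appeal to the antisymmetrization map being an orthogonal projection implicitly requires its operator norm to be $1$; under the normalization of the inner product fixed in \cref{section:ZonoidAlgebra} this is only true on simple tensors (already for $V=\R^2$ the map $V\otimes V\to\Lambda^2V$ sends the unit vector $\tfrac{1}{\sqrt2}(e_1\otimes e_2-e_2\otimes e_1)$ to $\sqrt2\,e_1\wedge e_2$). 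So rather than trying to force the general case, you should restrict the higher-degree factors to Grassmannian zonoids, where your Hadamard argument goes through verbatim on the (almost surely simple) values of the representing random vectors.
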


\begin{proof}
The associativity follows from the associativity of the wedge product and \cref{eq:www}.
The distributivity is a consequence of \cref{propo:tp}.
The gradedness follows from the definition of $\A(V)$. The multiplicative unit lies in $\VZ(\Lambda^0 V)= \VZ(\R) \simeq\R$.
The commutativity of the wedge follows with \cref{eq:www}
from the known relation $X\wedge Y= \pm Y\wedge X$ and the fact that $K(-Z)=K(Z)$.

The wedge map of zonoids is continuous since
it is obtained by composing a continuous linear map with the continuous tensor product of zonoids
(see \cref{thm:contoftens}).
The preservation of the inclusion order follows from \cref{thm:FTZC}.

For the length inequality, we use that the antisymmetrization map
$\otimes_j \Lambda^{d_j} V \to \Lambda^{d_1 +\dots + d_p} V$
is an orthogonal projection. Hence
$\ell(A_1\wedge \cdots \wedge A_p) \ \le\ \ell(A_1\otimes \cdots \otimes A_p)= \ell(A_1)\cdots \ell(A_p)$
by \cref{propo:linearext} and \cref{prop:normTP}.
\end{proof}

Here is an immediate yet important observation about wedge products of zonoids.
\begin{lemma}\label{wedge_is_zero}
Let $K\in \Z(V)$ be a zonoid. Recall that we defined the dimension of $K$ as the dimension of its linear span $\spanK{K}$. Then, $K^{\wedge d}=0$ for all $d>\dim(K)$.
\end{lemma}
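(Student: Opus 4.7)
The plan is to reduce the claim to the elementary fact that the wedge product of more than $k$ vectors lying in a $k$-dimensional subspace vanishes, and then transport this through the probabilistic representation of zonoids developed in the previous sections.

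Concretely, let $k := \dim(K) = \dim\spanK{K}$ and set $W := \spanK{K}$. The first step is to choose a representing random vector $X$ for $K$ that takes values in $W$ almost surely. Since $K \subset W$ and $K$ is still a (centrally symmetric) zonoid when viewed as a subset of the Euclidean space $W$, Vitale's theorem applied inside $W$ yields an integrable random vector $X \in W$ with $K(X) = K$ as zonoids in $W$. Applying \cref{lemma:linear} to the isometric inclusion $\iota \colon W \hookrightarrow V$ then gives $K = \iota(K(X)) = K(\iota(X))$ in $V$, so we may indeed assume $X \in W$ almost surely.

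Next, take $d > k$ and let $X_1, \ldots, X_d$ be independent copies of $X$. By \cref{eq:www}, the $d$-fold wedge product is represented by the wedge of these copies:
\begin{equation}
K^{\wedge d} = K(X_1)\wedge\cdots\wedge K(X_d) = K(X_1\wedge\cdots\wedge X_d).
\end{equation}
Since every $X_i$ lies in the $k$-dimensional subspace $W$ and $d > k$, we have $X_1\wedge\cdots\wedge X_d = 0$ almost surely, because $\Lambda^d W = 0$ and the wedge $X_1\wedge\cdots\wedge X_d$ is the image of the tuple under the induced map $W^{\times d}\to \Lambda^d V$, which factors through $\Lambda^d W = 0$. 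Consequently $K(X_1\wedge\cdots\wedge X_d) = K(0) = \{0\}$, which is the zero element of $\VZ(\Lambda^d V)$.

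The argument is essentially a one-line reduction once the right representative is in hand; there is no real obstacle. The only mild subtlety is ensuring that the random vector representing $K$ can be chosen to be supported on $\spanK{K}$, which is handled transparently by \cref{lemma:linear} applied to the inclusion of $\spanK{K}$ into $V$.
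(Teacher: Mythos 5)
Your proof is correct and follows essentially the same route as the paper: represent $K^{\wedge d}$ as $K(X_1\wedge\cdots\wedge X_d)$ via \cref{eq:www} with independent copies supported on $\spanK{K}$, and observe the wedge vanishes almost surely since $d>\dim\spanK{K}$. The only difference is cosmetic — you construct a representative supported on $\spanK{K}$ via Vitale's theorem in the subspace and \cref{lemma:linear}, whereas the paper simply notes that any representative lies in $\spanK{K}$ almost surely.
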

\begin{proof}
Let us write $K=K(X)$. By \cref{eq:www} we have $K^{\wedge d} = K(X_1\wedge \cdots\wedge X_d)$, where $X_1,\ldots,X_d$ are independent copies of $X$. With probability one we have $X\in \spanK{K}$, and so with probability one the $X_i$ are linearly dependent. Hence, $X_1\wedge \cdots\wedge X_d=0$ almost surely, so that $K^{\wedge d}=0$.
\end{proof}

In the next section, we will link the length to the mixed and intrinsic volumes.
More specificially, \cref{th:IVL} show that the $j$th intrinsic volume
$V_j(K)$ can be expressed as $\frac{1}{j!}\ell(K^{\wedge j})$.
This allows to immediately derive \cref{hug_result}, which is a reverse Alexandroy-Fenchel
inequality, that was independently found very recently by
B\"or\"oczky and Hug~\cite{B-Hug:21}.

It is useful to denote by $K[d]$ the zonoid $K$ repeated $d$ times.

\begin{corollary}\label{hug_result}
Let $K_1,\ldots,K_p\in\Z(V)$ be zonoids and $\spanK{K_1},\ldots,\spanK{K_p}$ their spans.
Assume $d_1,\ldots,d_p\in \N$ satisfy $d_1+\ldots+d_p=m$. Then
\be
 \frac{m!}{d_1!\cdots d_p!}\, \MV(K_1[d_1],\ldots,K_p[d_p]) \ \le\ V_{d_1}(K_1)\cdots V_{d_p}(K_p) .
\ee
Equality holds if and only
$\spanK{K_1}\ldots,\spanK{K_p}$ are pairwise orthogonal or if $K_i^{\wedge d_i}=0$ for at least one $i$.
\end{corollary}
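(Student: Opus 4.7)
The plan is to combine three results that have been (or will be) established in the paper: the expression of the mixed volume as a length of a wedge product (equation \ref{MV_is_length}), the forthcoming expression of intrinsic volumes as $V_j(K)=\tfrac{1}{j!}\,\ell(K^{\wedge j})$ from \cref{th:IVL}, and the submultiplicativity of the length under wedge products from \cref{th:zonoidalgebra}. Each of these is purely a ``plug in'' at the level of the zonoid algebra, so the inequality itself reduces to a one-line calculation.

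First I would expand the mixed volume using \cref{MV_is_length} together with \cref{eq:www} to get
\[
 \MV(K_1[d_1],\ldots,K_p[d_p]) \;=\; \frac{1}{m!}\,\ell\!\left(K_1^{\wedge d_1}\wedge\cdots\wedge K_p^{\wedge d_p}\right).
\]
Then I would apply the length inequality from \cref{th:zonoidalgebra} to the $p$ factors $A_i:=K_i^{\wedge d_i}$, yielding
\[
 \ell\!\left(K_1^{\wedge d_1}\wedge\cdots\wedge K_p^{\wedge d_p}\right)
 \;\le\; \prod_{i=1}^p \ell\bigl(K_i^{\wedge d_i}\bigr)
 \;=\; \prod_{i=1}^p d_i!\, V_{d_i}(K_i),
\]
where the last equality uses \cref{th:IVL}. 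Rearranging the factorial gives the claimed inequality.

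For the equality case, I would trace back where the inequality in \cref{th:zonoidalgebra} was strict. The proof there bounds $\ell(A_1\wedge\cdots\wedge A_p)\le \ell(A_1\otimes\cdots\otimes A_p)$ using that the antisymmetrization $\bigotimes_i \Lambda^{d_i}V\to \Lambda^m V$ is an orthogonal projection. Representing each $K_i$ by an integrable random vector $X_i$ and taking $d_i$ independent copies $X_{i,1},\dots,X_{i,d_i}$, the zonoid $A_1\wedge\cdots\wedge A_p$ is represented by $Y:=\bigwedge_{i,j} X_{i,j}$, while $A_1\otimes\cdots\otimes A_p$ is represented by $\bigotimes_i Y_i$ with $Y_i:=\bigwedge_j X_{i,j}$. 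Equality in $\|Y\|\le \|Y_1\|\cdots\|Y_p\|$ holds almost surely iff either some $Y_i=0$ a.s.\ (which, by \cref{wedge_is_zero}, is $K_i^{\wedge d_i}=0$), or the random subspaces $\Span(X_{i,1},\dots,X_{i,d_i})$ are pairwise orthogonal a.s. In the latter case, since these subspaces lie in $\spanK{K_i}$ and contain $X_i$ with positive probability (when $K_i^{\wedge d_i}\ne 0$), \cref{le:orthogonal} forces $\langle X_i,X_j\rangle=0$ a.s.\ for all $i\ne j$, hence $\spanK{K_i}\perp\spanK{K_j}$; the converse direction is immediate since pairwise orthogonality of the spans makes the antisymmetrization an isometry on the relevant tensor.

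The main subtlety is the equality characterization: translating the deterministic statement ``$\|Y_1\wedge\cdots\wedge Y_p\|=\|Y_1\|\cdots\|Y_p\|$ iff the spans are orthogonal or some factor vanishes'' into the probabilistic/zonoid statement requires invoking \cref{le:orthogonal} to pass from almost-sure orthogonality of random vectors to orthogonality of the zonoid spans, and \cref{wedge_is_zero} to recognize the vanishing case; the inequality itself is immediate.
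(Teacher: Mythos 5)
Your proposal is correct and follows essentially the same route as the paper: rewrite both sides via \cref{thm:FTZCdet} and \cref{th:IVL} as lengths of wedge products, deduce the inequality from the submultiplicativity of the length in \cref{th:zonoidalgebra}, and settle the equality case by passing to random vectors and invoking \cref{le:orthogonal} together with \cref{wedge_is_zero}. The only (immaterial) difference is that the paper phrases the a.s.\ equality condition directly as $\langle X_i,Y_j\rangle=0$ for all pairs rather than via orthogonality of the random subspaces.
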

By \cref{wedge_is_zero} the condition that $K^{\wedge d}=0$ is equivalent to either $K=0$ or $d>\dim \spanK{K}$.
\begin{proof}[Proof of \cref{hug_result}]
By \cref{th:IVL}, the stated inequality can be rephrased as
\be\label{eq:LeIn}
  \ell\big(K_1^{\wedge d_1}\wedge \cdots\wedge K_p^{\wedge d_p}\big) \ \le\
  \ell\big(K_1^{\wedge d_1}\big)\cdots \ell\big(K_d^{\wedge d_p}\big) ,
\ee
which is a consequence of the submultiplicativity of the length,
see \cref{th:zonoidalgebra}.

For analyzing when equality holds, we assume $p=2$ to simplify notation.
We write $K_1=K(X)$ and $K_2 = K(Y)$. Let $X_1,\ldots,X_{d_1}$ be independent copies of $X$ and $Y_1,\ldots,Y_{d_2}$ be independent copies of $Y$.
The above inequality~\cref{eq:LeIn} can be written as an inequality of expectations
$
 \EE \|X_1\wedge \cdots \wedge X_{d_1}\wedge Y_1\wedge \cdots \wedge Y_{d_2}\| \ \le\
 \EE \|X_1\wedge \cdots \wedge X_{d_1}\| \cdot \EE\|Y_1\wedge \cdots \wedge Y_{d_2}\| ,
$
and equality holds if and only if   $\|X_1\wedge \cdots \wedge X_{d_1}\wedge Y_1\wedge \cdots \wedge Y_{d_2}\| =
\|X_1\wedge \cdots \wedge X_{d_1}\| \cdot \|Y_1\wedge \cdots \wedge Y_{d_2}\|$
almost surely.
We have equality if and only if
$\langle X_i,Y_j\rangle = 0$ almost surely for all $i,j$, or if one of the wedge products is almost surely zero. By \cref{le:orthogonal}, the first case is equivalent to $\spanK{K_1}$ and $\spanK{K_2}$ being orthogonal, the second case means that $K_1^{\wedge d_1}=0$ or $K_2^{\wedge d_2}=0$.
\end{proof}

\subsection{Grassmannian zonoids} Here we describe a particular subalgebra of $\A(V)$, which will play a role in our forthcoming work \cite{BBML2021}.

\begin{definition}
Let $K\in\ZZ(\Lambda^kV)$. We say that $K$ is a \emph{Grassmannian zonoid} if there is random vector $X\in \Lambda^k V$ such that $K=K(X)$ and such that $X$ is almost surely simple, i.e. $X$ almost surely takes values in the cone of simple vectors $\{v_1\wedge \cdots \wedge v_k \mid v_1,\ldots, v_k\in V\}$.
The set of Grassmannian zonoids will be denoted $\GZ(k,V)\subset \ZZ(\Lambda^k V)$ and its linear span in~$\VZ(\Lambda^k V)$ will be denoted by $\VGZ(k,V)$
\end{definition}

In the correspondence between zonoids and measures on the projective space described in \cref{sec:VZandmeas}, Grassmannian zonoids have a simple description. Indeed using the definition above and \cref{prop:fromrandtomeas} we see that a  zonoid $K\in\ZZ(\Lambda^kV)$ is Grassmannian if and only if its corresponding measure is supported on the Grassmannian, considered as the simple vectors in $\PP(\Lambda^k V)$ via the Pl\"ucker embedding $\mathrm{span}(v_1,\ldots,v_k)\mapsto [v_1\wedge \cdots\wedge v_k]$. Hence the name \emph{Grassmannian} zonoids.

\begin{proposition}\label{prop:sumandwedgeofgrass}
    The wedge product of two Grassmannian zonoids is a Grassmannian zonoid. Moreover, if they are of the same degree, the sum of two Grassmannian zonoids is a Grassmannian zonoid. Hence $\VGZ(k,V)\subset \VZ(\Lambda^k V)$ consists only of differences of Grassmannian zonoids and $\GZ(k,V)$ is a convex cone in $\VGZ(k,V)$.
\end{proposition}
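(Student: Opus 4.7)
The plan is to exploit the random vector description of Grassmannian zonoids systematically, since the property ``$X$ is almost surely a simple vector'' is preserved under all the operations we need: scalar multiplication, Bernoulli mixtures, and the wedge product of independent simple vectors.

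For the wedge product claim, I would take $K_1 = K(X) \in \GZ(k,V)$ and $K_2 = K(Y) \in \GZ(l,V)$ with $X$ and $Y$ independent random vectors that are almost surely simple, say $X = v_1 \wedge \cdots \wedge v_k$ and $Y = w_1 \wedge \cdots \wedge w_l$ almost surely. By \cref{eq:www} we have $K_1 \wedge K_2 = K(X \wedge Y)$, and the vector $X \wedge Y = v_1 \wedge \cdots \wedge v_k \wedge w_1 \wedge \cdots \wedge w_l$ is simple almost surely. Hence $K_1 \wedge K_2 \in \GZ(k+l,V)$.

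For the sum statement in the same degree $k$, I would again take independent representatives $K_i = K(X_i)$ with $X_i$ almost surely simple. By \cref{summation_formula}, $K_1 + K_2 = K(Z)$ for $Z = 2\epsilon X_1 + 2(1-\epsilon) X_2$ with $\epsilon \in \{0,1\}$ a fair Bernoulli variable independent of $X_1, X_2$. Since a (real) scalar multiple of a simple $k$-vector is again simple, $Z$ is almost surely simple, so $K_1 + K_2 \in \GZ(k,V)$. Closure of $\GZ(k,V)$ under multiplication by $\lambda \geq 0$ is immediate from \cref{lemma:scaling} (or directly by rescaling one factor of the simple representative).

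Combining these two closure properties shows that $\GZ(k,V)$ is a convex cone in $\VZ(\Lambda^k V)$, and hence in its linear span $\VGZ(k,V)$. It is then a standard fact that the linear span of a convex cone $C$ in a real vector space equals $C - C$: any finite combination $\sum_i \lambda_i K_i$ with $K_i \in C$ can be split into its positive and negative parts, each of which lies in $C$ by closure under addition and nonnegative scalar multiplication. Applied to $C = \GZ(k,V)$, this shows that every element of $\VGZ(k,V)$ is a difference of two Grassmannian zonoids, as claimed. There is essentially no obstacle in the argument; the only point requiring any care is the sum case, where one must use the Bernoulli mixture from \cref{summation_formula} (rather than, say, $X_1 + X_2$) to keep the resulting random vector pointwise simple.
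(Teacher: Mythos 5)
Your proof is correct and follows essentially the same route as the paper's: represent the Grassmannian zonoids by independent almost surely simple random vectors, use \cref{eq:www} for the wedge and the Bernoulli mixture of \cref{summation_formula} for the sum, and observe that simplicity is preserved pointwise in both cases. The only addition is your explicit remark that the span of a convex cone equals its set of differences, which the paper leaves implicit.
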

\begin{proof}
Let $K=K(X_1\wedge\cdots\wedge X_p)\in \GZ(p,V)$ and $L=K(Y_1\wedge\cdots\wedge Y_q)\in \GZ(q,V)$ with $X_1\wedge\cdots\wedge X_p$ independent of $Y_1\wedge\cdots\wedge Y_q$. Then by definition of the wedge product we have $K\wedge L=K(X_1\wedge\cdots\wedge X_p\wedge Y_1\wedge\cdots\wedge Y_q)$ which is Grassmannian. Now suppose $p=q$ then by \cref{summation_formula}, we have $K+L=K(Z)$ where $Z=2(1-\epsilon) X_1\wedge\cdots\wedge X_p + 2\epsilon Y_1\wedge\cdots\wedge Y_p$ where $\epsilon$ is a Bernoulli variable of parameter $\tfrac{1}{2}$ independent of the other variables. It is then enough to see that $Z\in \Lambda^{p} V$ is almost surely simple.
\end{proof}

\begin{definition}\label{def:GA}
The \emph{Grassmannian zonoid algebra} is defined as
\begin{equation}
    \GA(V):=\bigoplus_{k=0}^{d}\VGZ(k,V).
\end{equation}
\cref{prop:sumandwedgeofgrass} guarantees that $  \GA(V)$ is a subalgebra of $\A(V)$.
\end{definition}

A particular case of Grassmannian zonoids are the \emph{decomposable} ones. Namely, if we have $K_1,\ldots,K_k\in\ZZ(V)$, then $K_1\wedge\cdots\wedge K_k\in\GZ(k,V)$ by \cref{prop:sumandwedgeofgrass}. This corresponds to the case $K_1\wedge\cdots\wedge K_k=K(X)$ with $X=X_1\wedge \cdots\wedge X_k$, where $X_1, \ldots, X_k$ are
\emph{independent} random vectors.
In general, a zonoid in $\GZ(k,V)$ represented by $X=X_1\wedge \cdots\wedge X_k$ does not need to have independent $X_i$.
The next proposition says that decomposable Grassmann zonoids span actually a dense subspace.

\begin{proposition}
    Finite sums of zonoids of the form  $K_1\wedge\cdots\wedge K_k\in\GZ(k,V)$ are dense in~$\GZ(k,V)$. Hence the set $\left\{K_1\wedge\cdots\wedge K_k \st K_1,\ldots,K_k\in\ZZ(V)\right\}$ spans a dense subspace in the virtual Grassmann zonoids $\VGZ(k,V)$.
\end{proposition}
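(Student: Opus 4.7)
My plan is to use Vitale's law of large numbers for zonoids (\cref{rk:lolnforzonoids}) and the behavior of the wedge product on symmetric segments coming from \cref{thm:FTZC}.

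First, I would take $K \in \GZ(k,V)$ and write $K = K(X)$ for an integrable random vector $X \in \Lambda^k V$ that is almost surely simple, i.e., $X = \xi_1 \wedge \cdots \wedge \xi_k$ for (generally dependent) random vectors $\xi_1, \ldots, \xi_k \in V$. Let $(X_n)_{n \ge 1}$ be independent copies of $X$, each with its own simple decomposition $X_n = \xi_1^{(n)} \wedge \cdots \wedge \xi_k^{(n)}$. By \cref{rk:lolnforzonoids}, the zonotopes
\begin{equation}
Z_n := \frac{1}{n}\sum_{i=1}^n \tfrac{1}{2}[-X_i, X_i]
\end{equation}
converge to $K$ almost surely in the Hausdorff metric. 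Hence there exists a deterministic sequence of sample paths along which $Z_n \to K$, so it suffices to show that each $Z_n$ is a finite sum of decomposable Grassmannian zonoids of the form $K_1 \wedge \cdots \wedge K_k$.

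Next, I would verify that each summand of $Z_n$ is such a decomposable product. Applying \cref{thm:FTZC} to the multilinear wedge product $V^k \to \Lambda^k V$ (whose induced map is precisely $\wedge$ by construction) gives
\begin{equation}
\tfrac{1}{2}\bigl[-\xi_1^{(i)}/n,\, \xi_1^{(i)}/n\bigr] \wedge \tfrac{1}{2}\bigl[-\xi_2^{(i)}, \xi_2^{(i)}\bigr] \wedge \cdots \wedge \tfrac{1}{2}\bigl[-\xi_k^{(i)}, \xi_k^{(i)}\bigr] = \tfrac{1}{2n}\bigl[-X_i, X_i\bigr],
\end{equation}
so each term $\tfrac{1}{n}\cdot \tfrac{1}{2}[-X_i,X_i]$ is of the required form (with one segment rescaled by $1/n$). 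Summing over $i$ expresses $Z_n$ as a finite sum of products $K_1^{(i)} \wedge \cdots \wedge K_k^{(i)}$ with $K_j^{(i)} \in \ZZ(V)$, which proves that these finite sums are dense in $\GZ(k,V)$.

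For the second claim on $\VGZ(k,V)$, I would invoke \cref{prop:sumandwedgeofgrass}, which ensures that every element of $\VGZ(k,V)$ is a difference $K - L$ with $K, L \in \GZ(k,V)$. Approximating each of $K$ and $L$ by finite sums of decomposable products and subtracting gives the desired density of $\Span\{K_1 \wedge \cdots \wedge K_k \mid K_i \in \ZZ(V)\}$ in $\VGZ(k,V)$.

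The main point to be careful about is that Vitale's law of large numbers as stated in \cref{rk:lolnforzonoids} applies to integrable random vectors in any Euclidean space, and $\Lambda^k V$ with its induced inner product is such a space; integrability of $X$ is automatic since $K(X)$ was assumed to be a zonoid. The only real content beyond that is the identity on segments in the display above, which is the key algebraic input that turns a segment at a simple vector into a decomposable product.
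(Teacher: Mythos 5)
Your proof is correct and follows essentially the same route as the paper: approximate a Grassmannian zonoid by finite sums of segments at simple vectors via the law of large numbers (\cref{rk:lolnforzonoids}), and then observe that a segment $\tfrac{1}{2}[-x_1\wedge\cdots\wedge x_k,\,x_1\wedge\cdots\wedge x_k]$ equals the decomposable product $\tfrac{1}{2}[-x_1,x_1]\wedge\cdots\wedge\tfrac{1}{2}[-x_k,x_k]$. The paper's proof is just a terser version of the same argument.
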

\begin{proof}
By \cref{rk:lolnforzonoids}, any zonoid in $\GZ(k,V)$ is the limit of finite sums of segments that are of the form $\tfrac{1}{2}[-w,w]$ with $w=x_1\wedge\cdots\wedge x_k$. It is then enough to see that such segments are decomposable. Indeed we have $\tfrac{1}{2}[-w,w]=\tfrac{1}{2}[-x_1,x_1]\wedge\cdots\wedge \tfrac{1}{2}[-x_k,x_k]$.
\end{proof}

This last fact gives good hope to extend properties of decomposable zonoids to the Grassmannian ones. For example we conjecture the following.

\noindent\textbf{Conjecture.}
For any $K,L\in\ZZ(\R^m)$ and any $C\in\GZ(m-2,\R^m)$, we have
\begin{equation}
    \ell(K\wedge L \wedge C)^2\geq \ell(K\wedge K \wedge C)\ell(L\wedge L \wedge C).
\end{equation}
This would generalize Alexandrov--Fenchel, which corresponds to the case where $C$ is decomposable; i.e., of the form $C=K_1\wedge\cdots\wedge K_{m-2}$ with $K_1,\ldots,K_{m-2}\in\ZZ(\R^m)$, see \cref{AF_for_length} below.
%%%%%%%%%%%%%%%%%
\subsection{Hodge duality}

The exterior algebra of an oriented Euclidean vector space~$V$ comes
with the duality given by the Hodge star operation.
Let us briefly recall this notion. Upon choosing an orientation of~$V$,
$\Lambda^m V$ becomes an oriented one dimensional Euclidean vector space
that can be identified with~$\R$.
More specifically, if $e_1,\ldots,e_m$ is an oriented orthonormal basis of $V$, then
$e_1\wedge \cdots \wedge e_m$ is the distinguished generator of $\Lambda^m V$.
%Thus there is a unique orientation preserving isometry $\Lambda^m V \simeq \R$.
When $d_1+\ldots+d_p=m$, the wedge product thus induces a multilinear map
\be
 \VZ(\Lambda^{d_1}V)\times \cdots \times  \VZ(\Lambda^{d_p}V) \to  \VZ(\Lambda^{m}V) \simeq \R.
\ee
Therefore the wedge product $K_1\wedge \cdots \wedge K_p$ of zonoids,
which is a segment in $\Lambda^m V$,
can be identified with the real number giving the length of this segment using \cref{re:1-dim}.
This remark will play a role in \cref{sec:MVandL}.

The \emph{Hodge star operation} is the isometric linear map
$ \Lambda^d V \to \Lambda^{m-d} V,\: v \mapsto \star v$
characterized by
$\langle u, v\rangle = u \wedge \star v$
for all $u\in V$.
This defines an involution (up to sign) of the exterior algebra $\Lambda V$, see~\cite[p.16]{flanders}.
Via \cref{def:linearext}
we associate with the Hodge star operation the linear isomorphism
\be
\VZ\big(\Lambda^d V\big) \to \VZ\big(\Lambda^{m-d} V\big),\:
K \mapsto \star K
\ee
that we conveniently denote with the symbol $\star$ as well.
If $X \in \Lambda^k V$ is an integrable random variable,
then by \cref{lemma:linear}
\be\label{eq:KXstar}
 \star(K(X)) = K(\star X) .
\ee
This shows that, if $K\in \Z\big(\Lambda^k V\big)$ is a zonoid, then $\star K$ is a zonoid as well.
The support function of $\star K$ satisfies
\be\label{eq:hKXstar}
 h_{\star K}(\star u) =
  \tfrac{1}{2}\EE |\langle \star u, \star X\rangle| =
  \tfrac{1}{2}\EE |\langle u, X\rangle| = h_K(u) ,
\ee
where we used that $\star$ is isometric for the second equality.

\begin{proposition}\label{pro:hodge}
The \emph{Hodge star operation} on zonoids
$\VZ\big(\Lambda^d V\big) \to \VZ\big(\Lambda^{m-d} V\big)$
is a norm and order preserving linear isomorphism.
It also preserves the length of zonoids, so that for all
zonoids $K\in \Z\big(\Lambda^d V\big)$
$$
 \|K\| = \|\star K\|, \quad \ell(K) = \ell(\star K) .
$$
The Hodge star operation defines a linear involution of the zonoid algebra~$\A(V)$.
\end{proposition}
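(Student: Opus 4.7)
The plan is to derive all four assertions from the functoriality of the construction $M \mapsto \widehat{M}$ established in \cref{propo:linearext}, applied to the Hodge star $\star\colon \Lambda^d V \to \Lambda^{m-d} V$. The defining identity $\langle u, v\rangle = u \wedge \star v$ shows $\star$ is a linear isometry of Euclidean spaces, hence a linear isomorphism with $\|\star\|_{\mathrm{op}} = 1$, whose inverse is again an isometry.

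First I would invoke \cref{propo:linearext}: the induced map $\widehat{\star}\colon \VZ(\Lambda^d V) \to \VZ(\Lambda^{m-d} V)$ is continuous and order-preserving, with $\|\widehat{\star}\|_{\mathrm{op}} = \|\star\|_{\mathrm{op}} = 1$, so $\|\widehat{\star}\, K\| \le \|K\|$ for every virtual zonoid $K$. Applying the same proposition to the isometry $\star^{-1}$ gives
\begin{equation}
\|K\| = \|\widehat{\star^{-1}}(\widehat{\star}\, K)\| \le \|\widehat{\star}\, K\|,
\end{equation}
together with order preservation of the inverse. Combining the two directions upgrades $\widehat{\star}$ to a norm- and order-preserving linear isomorphism.

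For the length identity, I would represent $K = K(X)$ with $X$ integrable in $\Lambda^d V$. By \cref{eq:KXstar}, $\widehat{\star}\, K = K(\star X)$, and since $\star$ is a vector-space isometry,
\begin{equation}
\ell(\widehat{\star}\, K) = \EE \|\star X\| = \EE \|X\| = \ell(K).
\end{equation}
This extends to all virtual zonoids by the linearity of $\ell$ from \cref{th:lengthLF}.

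Finally, for the involution property, I would invoke the standard identity $\star\star = (-1)^{d(m-d)}\,\mathrm{id}$ on $\Lambda^d V$. For $K = K(X) \in \Z(\Lambda^d V)$, two applications of \cref{eq:KXstar} yield
\begin{equation}
\widehat{\star}(\widehat{\star}\, K) = K(\star\star X) = K\big((-1)^{d(m-d)} X\big) = K(X) = K,
\end{equation}
the third equality being the invariance $K(-X) = K(X)$, a special case of \cref{lemma:scaling}. Extending linearly to $\VZ(\Lambda^d V)$ and assembling the individual maps over $d$ into a single endomorphism of $\A(V) = \bigoplus_d \VZ(\Lambda^d V)$ delivers the asserted linear involution. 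I do not expect any real obstacle; the only mild subtlety is the sign $(-1)^{d(m-d)}$ produced by the double Hodge star, and this is absorbed harmlessly by the zonoid's symmetry under $X \mapsto -X$.
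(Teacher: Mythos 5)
Your proof is correct and follows essentially the same route as the paper: both rest on the fact that $\star$ is a linear isometry, use $\star K(X)=K(\star X)$ for the length, and dispose of the involution via $\star\star=\pm\,\mathrm{id}$ together with $K(-X)=K(X)$. The only cosmetic difference is that you obtain norm and order preservation by applying \cref{propo:linearext} to $\star$ and $\star^{-1}$, whereas the paper reads it off directly from the support-function identity $h_{\star K}(\star u)=h_K(u)$ in \cref{eq:hKXstar}; both arguments are equally valid.
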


\begin{proof}
Using \cref{eq:hKXstar} and \cref{useful_properties_for_h}, we obtain
$$
\|\star K\| = \|\bar{h}_{\star K}\|_{\infty} = \|\bar{h}_{K}\|_{\infty} = \| K\| .
$$
Moreover, using \cref{mydef:length}, we get
$$
 \ell(K)= \EE \|X\| = \EE \|\star X\| = \ell(\star K) .
$$
The fact that it is an involution on the space of zonoids follows from the fact that the Hodge star is an involution up to sign and that the zonoids are centrally symmetric.
\end{proof}

\begin{remark}
$\star K$ should not be confused with the polar dual of $K\in \Z\big(\Lambda^d V\big)$,
which is a convex body living in the same space as $K$, and in general not a zonoid, see~\cite{ACOCB}.
\end{remark}

\begin{remark}
    Note that since the Hodge star operation preserves simple vectors, if $K\in\GZ(k,V)$ is a Grassmannian zonoid then $\star K \in \GZ(m-k,V)$ is also Grassmannian. In other words, the map $\star$ preserves the subalgebra $\GA(V)$ of Grassmannian zonoids.
\end{remark}

Let $K\in \Z(V)$. The Hodge dual of $K^{\wedge (m-1)}$ is a zonoid in $\Lambda^1V=V$.
We show now that it is the so called \emph{projection body} $\Pi K$ of $K$.
According to ~\cite[Section~10.9]{bible},  $\Pi K$ is the convex body
whose support function is given by $u\mapsto \|u\|\vol_{m-1}\left(\pi_u(K)\right)$,
where $\pi_u$ denotes the orthogonal projection $V\to u^\perp$.

\begin{proposition}\label{eq:proj-body}
We have $\star (K^{\wedge (m-1)})=\frac{(m-1)!}{2}\Pi K$.
\end{proposition}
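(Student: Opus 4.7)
The plan is to reduce the identity to Vitale's classical formula by computing the support function of $\star(K^{\wedge(m-1)})$ at an arbitrary $u \in V$ and showing it equals $\tfrac{(m-1)!}{2}\,\|u\|\,\vol_{m-1}(\pi_u(K))$. Since support functions determine convex bodies, this will suffice.

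First I would pick an integrable random vector $X \in V$ representing $K = K(X)$ and take $X_1, \ldots, X_{m-1}$ to be independent copies of $X$. By \cref{eq:www} we have $K^{\wedge(m-1)} = K(X_1 \wedge \cdots \wedge X_{m-1})$, and then by \cref{eq:KXstar} we have $\star K^{\wedge(m-1)} = K\big(\star(X_1 \wedge \cdots \wedge X_{m-1})\big)$. Using the defining property of the Hodge star (together with $\star\star = (-1)^{m-1}\mathrm{id}$ on $\Lambda^{m-1}V$) one obtains, for $u \in V$, the identity
\begin{equation}
\langle u, \star(X_1 \wedge \cdots \wedge X_{m-1})\rangle = \pm\, u \wedge X_1 \wedge \cdots \wedge X_{m-1} \in \Lambda^m V \simeq \R,
\end{equation}
and under the identification with $\R$ this is $\pm \det[u, X_1, \ldots, X_{m-1}]$. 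Therefore
\begin{equation}
h_{\star K^{\wedge(m-1)}}(u) = \tfrac{1}{2}\,\EE \big|\det[u, X_1, \ldots, X_{m-1}]\big|.
\end{equation}

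The next step is to factor out $u$. Since adding multiples of the first column does not change the determinant, replacing each $X_i$ by its orthogonal projection $\pi_u(X_i)$ onto $u^\perp$ leaves the determinant unchanged. Working in the orthonormal basis of $V$ whose first vector is $u/\|u\|$, expansion along the first column gives
\begin{equation}
\det[u, X_1, \ldots, X_{m-1}] = \|u\| \cdot {\det}_{u^\perp}\big[\pi_u(X_1), \ldots, \pi_u(X_{m-1})\big],
\end{equation}
where the right-hand determinant is taken in the $(m-1)$-dimensional Euclidean space $u^\perp$. Taking expectations and applying Vitale's classical formula in $u^\perp$ to the i.i.d.\ random vectors $\pi_u(X_i)$, whose associated zonoid is $K(\pi_u(X)) = \pi_u(K)$ by \cref{lemma:linear}, yields
\begin{equation}
\EE\big|{\det}_{u^\perp}[\pi_u(X_1), \ldots, \pi_u(X_{m-1})]\big| = (m-1)!\,\vol_{m-1}\big(\pi_u(K)\big).
\end{equation}
Combining these three displays gives $h_{\star K^{\wedge(m-1)}}(u) = \tfrac{(m-1)!}{2}\,\|u\|\,\vol_{m-1}(\pi_u(K)) = \tfrac{(m-1)!}{2}\,h_{\Pi K}(u)$, as desired.

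The only real subtlety is the identification in step one: carefully checking that the Hodge star convention $\langle a, b\rangle \,\omega = a \wedge \star b$ (for $a,b$ in the same degree) combines with $\star\star = (-1)^{k(m-k)}$ to convert $\langle u, \star Y\rangle$ into $\pm u \wedge Y$. Since only the absolute value enters the support function formula~\eqref{eq:hE-form}, the sign is immaterial, so this step is harmless in practice.
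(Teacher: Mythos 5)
Your proposal is correct and follows essentially the same route as the paper's proof: both compute the support function of $\star(K^{\wedge(m-1)})$ via a random vector representation, use the defining property of the Hodge star to turn $\langle u,\star(X_1\wedge\cdots\wedge X_{m-1})\rangle$ into $\pm\, u\wedge X_1\wedge\cdots\wedge X_{m-1}$, factor out $\|u\|$ by projecting onto $u^\perp$, and then apply the determinant/mixed-volume formula in $u^\perp$ (the paper invokes \cref{thm:FTZCdet}, of which Vitale's i.i.d.\ formula is exactly the special case you use).
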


\begin{proof}
We show that these zonoids have the same support function.
Let $X\in V$ be a random vector representing $K=K(X)$. By definition,
$$
 \star (K^{\wedge (m-1)}) = K(Y),\quad\text{where}\quad Y:=\star (X_1\wedge\cdots\wedge X_{m-1})
$$
and  $X_1,\ldots, X_{m-1}$ are i.i.d. copies of $X$.
The definition of the Hodge dual yields
$$|\langle \star (X_1\wedge \cdots \wedge X_{m-1}),u\rangle| = | X_1\wedge \cdots \wedge X_{m-1}\wedge u |$$
and so
$$
 \vert \langle Y,u\rangle\vert = | X_1\wedge \cdots \wedge X_{m-1}\wedge u |
 = \|\pi_u(X_1)\wedge \cdots \wedge \pi_u(X_{m-1})\|\cdot \|u\|.
 $$
By \cref{eq:hE-form}, we get
$$
 h_{K(Y)}(u) = \tfrac{1}{2}\EE \vert \langle Y,u\rangle\vert =
 \tfrac{1}{2} \EE \|\pi_u(X_1)\wedge \cdots \wedge \pi_u(X_{m-1})\|\cdot \|u\|.
$$
\cref{thm:FTZCdet} from the next section,
applied to the space $u^\perp\simeq \R^{m-1}$, yields
$$
 h_{K(Y)}(u) =\frac{(m-1)!}{2}\,\vol_{m-1}(\pi_u(K))\, \|u\|.
$$
This shows that $\star (K^{\wedge (m-1)})$ has the same support function as $\frac{(m-1)!}{2}\Pi K$.
\end{proof}

We close this section by giving an interesting property of Hodge duals.
It concerns \emph{orthogonal zonoids} and will be of relevance in our upcoming work \cite{BBML2021}.

\begin{corollary}
Let $K,L\in \Lambda^{d}V$ be zonoids and denote by $\spanK{K}$ and $\spanK{L}$ their linear spans. Then
$$
 K \wedge \star L = 0 \quad \Longleftrightarrow \quad \spanK{K}\perp \spanK{L}.
$$
\end{corollary}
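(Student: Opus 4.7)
The plan is to reduce the wedge-Hodge statement to an inner product condition on representing random vectors, and then invoke \cref{le:orthogonal}.

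First I would represent the zonoids probabilistically: write $K=K(X)$ and $L=K(Y)$ with independent integrable random vectors $X,Y\in\Lambda^d V$. By \cref{eq:KXstar} we have $\star L = K(\star Y)$, and since $X$ and $\star Y$ are independent, \cref{eq:www} gives
\begin{equation}
K\wedge \star L \ =\ K(X\wedge \star Y)\ \in\ \Z(\Lambda^m V).
\end{equation}

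The next key step is the Hodge identity: for any $u,v\in \Lambda^d V$ one has $u\wedge \star v=\langle u,v\rangle\, \omega$, where $\omega=e_1\wedge\cdots\wedge e_m$ is the distinguished generator of $\Lambda^m V$. Under the identification $\VZ(\Lambda^m V)\simeq\R$ from \cref{re:1-dim}, this reads $X\wedge \star Y = \langle X,Y\rangle$ as a scalar random variable. Hence $K\wedge \star L$ corresponds to the centrally symmetric segment $K(\langle X,Y\rangle)\subset\R$, whose length is $\EE|\langle X,Y\rangle|$. Using \cref{re:1-dim} once more,
\begin{equation}
K\wedge \star L = 0 \ \Longleftrightarrow\ \ell(K\wedge \star L)=0 \ \Longleftrightarrow\ \EE|\langle X,Y\rangle| = 0 \ \Longleftrightarrow\ \langle X,Y\rangle = 0 \text{ almost surely.}
\end{equation}

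The last condition is exactly the hypothesis of \cref{le:orthogonal} applied to the zonoids $K,L\in\Z(\Lambda^d V)$ (with the inner product inherited from $\Lambda^d V$), so it is equivalent to $\spanK{K}\perp \spanK{L}$. This finishes the argument. The only nontrivial point is the book-keeping around the Hodge identity $u\wedge \star v = \langle u,v\rangle\,\omega$ and the canonical identification $\VZ(\Lambda^m V)\simeq\R$; once these are in place the statement is immediate from \cref{le:orthogonal}. No serious obstacle is expected.
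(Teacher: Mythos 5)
Your proof is correct and follows essentially the same route as the paper: represent $K=K(X)$, $L=K(Y)$, use $K\wedge\star L=K(X\wedge\star Y)$ together with the defining identity $X\wedge\star Y=\langle X,Y\rangle$, and conclude via \cref{le:orthogonal}. The extra bookkeeping you supply (the identification $\VZ(\Lambda^m V)\simeq\R$ via the length) is a harmless elaboration of what the paper leaves implicit.
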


\begin{proof}
Let $X$ and $Y$ be integrable random vectors with $K=K(X)$ and $L=K(Y)$.
Then $K \wedge \star L = K(X\wedge \star Y)$.
By the definition of the Hodge dual, we have $X\wedge \star Y = \langle X,Y\rangle$.
Therefore,  $K \wedge \star L =0$ if and only if $\langle X,Y\rangle=0$ almost surely.
By \cref{le:orthogonal}, this is equivalent to $\spanK{K}\perp \spanK{L}$.
\end{proof}

%%%%%%%%%%%%%%%%%%%%%%%%%%%%%%%%%%%%%
\section{Mixed volumes and random determinants}\label{sec:MV_real}

In this section we show how the the classical notion of \emph{mixed volume} fits into our framework.
We then apply our theory to study expected absolute determinants of random matrices.
Again we denote by $V$ a Euclidean vector space of dimension~$m$.

We recall that the mixed volume is the Minkowski multilinear, translation invariant
and continuous map
$\MV\colon\K(V)^m\to \R$, defined on an $m$-tuple $K_1\ldots,K_m$ of convex bodies by
\be \label{eq:MV}
\MV(K_1, \ldots, K_m)
 =\frac{1}{m!}\frac{\partial}{\partial t_1}\cdots\frac{\partial}{\partial t_m}
 \vol_m(t_1K_1+\cdots+t_mK_m)\bigg|_{t_1=\cdots=t_m=0} ,
\ee
see \cite[Theorem 5.1.7]{bible}. For instance,
if $K_i= \sg{v_i}$ are segments, then
$$
\MV(\sg{v_1}, \ldots, \sg{v_m})
$$
equals the volume of the parallelotope spanned by $v_1, \ldots, v_m$, divided by $m!$.
The volume of $K$ is $\vol_m(K)=\MV(K, \ldots, K)$, which means that
the mixed volume is obtained from the volume function of a single body by polarization.

It is a key insight that the mixed volume of zonoids equals the length of their wedge product,
up to a constant factor.

\begin{theorem}[The wedge product of zonoids]\label{thm:FTZCdet}
Let $\widehat{\det}\colon \VZ(V)^m\to \VZ^1$ denote the multilinear map associated to the multilinear
determinant map $\det\colon V^m\to \R$ via \cref{thm:FTZC}.
Then, for every $K_1, \ldots, K_m\in \ZZ(V)$, we have
$$
 \widehat{\det}(K_1, \ldots, K_m) = K_1\wedge \cdots \wedge K_m
$$
and
$$\MV(K_1, \ldots, K_m)= \frac{1}{m!}\,\ell (K_1\wedge \cdots \wedge K_m).$$
\end{theorem}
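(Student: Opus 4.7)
The plan is to handle the two equalities separately, with the first one being essentially a matter of bookkeeping and the second one being proved by reduction to segments followed by multilinearity and continuity.

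First, I would dispose of the identity $\widehat{\det}(K_1,\ldots,K_m) = K_1\wedge\cdots\wedge K_m$ by appealing to the uniqueness part of \cref{thm:FTZC}. Fix an orientation on $V$ and use it to identify $\Lambda^m V \cong \R$. Under this identification, $v_1\wedge\cdots\wedge v_m = \det(v_1,\ldots,v_m)$ as maps $V^m\to\R$. Both the wedge product of virtual zonoids (via the construction in \cref{section:ZonoidAlgebra}) and $\widehat{\det}$ are multilinear, separately continuous maps $\VZ(V)^m\to\VZ(\Lambda^m V)$ that send a tuple of segments $(\tfrac12[-v_1,v_1],\ldots,\tfrac12[-v_m,v_m])$ to $\tfrac12[-v_1\wedge\cdots\wedge v_m,\ v_1\wedge\cdots\wedge v_m]$. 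By the uniqueness clause of \cref{thm:FTZC}, they coincide.

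For the mixed volume identity, my first step is to check it on tuples of symmetric segments. If $K_i = \sg{v_i}$, then \eqref{eq:zonoT} gives
\[
K_1\wedge\cdots\wedge K_m \;=\; \sg{v_1\wedge\cdots\wedge v_m},
\]
so by \cref{mydef:length} and the definition of the norm on $\Lambda^m V$,
\[
\ell(K_1\wedge\cdots\wedge K_m) \;=\; \|v_1\wedge\cdots\wedge v_m\| \;=\; |\det(v_1,\ldots,v_m)|.
\]
On the other side, for nonnegative $t_i$, the Minkowski sum $t_1 K_1+\cdots + t_m K_m$ is the parallelotope centered at the origin with edge vectors $t_iv_i$, of $m$-volume $(t_1\cdots t_m)\,|\det(v_1,\ldots,v_m)|$. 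Substituting into the defining formula \eqref{eq:MV} yields $\MV(\sg{v_1},\ldots,\sg{v_m}) = \tfrac{1}{m!}|\det(v_1,\ldots,v_m)|$. This establishes the identity on segments.

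Next I extend to zonotopes by multilinearity. Both maps
\[
(K_1,\ldots,K_m)\ \longmapsto\ \MV(K_1,\ldots,K_m)\quad\text{and}\quad (K_1,\ldots,K_m)\ \longmapsto\ \tfrac{1}{m!}\,\ell(K_1\wedge\cdots\wedge K_m)
\]
are Minkowski multilinear on $\Z(V)^m$: for the first this is classical (\cite[Thm.~5.1.7]{bible}), and for the second it follows from the biadditivity and positive homogeneity of the wedge product (\cref{th:zonoidalgebra}) combined with the linearity of $\ell$ established in \cref{length_is_additive}. Expanding each $K_i=\sum_j \sg{v_{i,j}}$ via multilinearity reduces zonotopes to the segment case already settled.

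Finally I pass from zonotopes to zonoids by continuity. Every zonoid is by definition a Hausdorff limit of zonotopes, so it suffices that both sides be continuous with respect to the Hausdorff metric on $\Z(V)^m$. The mixed volume is continuous by \cite[Thm.~5.1.7]{bible}, while $(K_1,\ldots,K_m)\mapsto \ell(K_1\wedge\cdots\wedge K_m)$ is continuous because the wedge of zonoids is continuous (\cref{th:zonoidalgebra}) and $\ell$ is continuous (\cref{th:lengthLF}). This completes the proof. The only mild obstacle I anticipate is staying disciplined about working inside the cone $\Z(V)$ rather than $\VZ(V)$: continuity of the wedge on virtual zonoids fails in general (\cref{thm:contoftens}), so the density-plus-continuity argument must be carried out on genuine zonoids, where the tensor product and hence the wedge are jointly continuous.
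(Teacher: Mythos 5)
Your proposal is correct and follows essentially the same route as the paper: the first identity comes directly from the construction in \cref{thm:FTZC}, and the second is verified on symmetric segments and then extended by Minkowski multilinearity and continuity of both sides on the cone $\Z(V)^m$. Your closing caution about staying inside $\Z(V)$ (where the wedge is jointly continuous) rather than $\VZ(V)$ is exactly the discipline the paper's argument also relies on.
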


\begin{proof}
The first identity is immediate from the definition of associated multilinear map $\widehat{\det}$.

For the second identity, we observe that
both sides are Minkowski multilinear and continuous maps $\Z(V)^m \to \R$
(for the right hand side, use \cref{th:lengthLF}).
It therefore suffices to verify the identity for segments.
So let $v_1, \ldots, v_m\in V$. Note that by \cref{thm:FTZC},
$$
L:=\sg{v_1}\wedge \cdots\wedge \sg{v_m} = \tfrac{1}{2}[-\det(v_1, \ldots, v_m), \det(v_1, \ldots, v_m)] ,
$$
This is an intervall in $\Lambda^m V\simeq\R$ and hence
$\ell(L)$ equals the volume of the parallelotope spanned by $v_1, \ldots, v_m$.
On the other hand, this equals $\MV(\sg{v_1}, \ldots, \sg{v_m})$, divided by $m!$.
This verifies the identity for segments and completes the proof.
\end{proof}

The previous theorem implies that for $K\in \Z(V)$
\be\label{ex:wB}
 \vol_m(K) = \frac{1}{m!}\, \ell(K^{\wedge m}) ,
\ee
where, again, $K^{\wedge m} = K\wedge\cdots\wedge K$ with $m$ factors.

\subsection{Length functional and intrinsic volumes}\label{sec:MVandL}

Recall \cite{klain-rota:97,bible} that the \emph{$d$-th intrinsic volume} $V_d(K)$ of a zonoid $K$ is defined as
\be
 V_d(K) := \frac{{m\choose d}}{\vol_{m-d}(B_{m-d})} \, \MV (K[d],B_m[m-d]) .
\ee
In the following, we show that the length of a zonoid, introduced in \cref{mydef:length},
is nothing but its {first intrinsic volume} (see~\cite{klain-rota:97,bible}), and that the higher intrinsic volumes can also be expressed using the length. As before, we denote by $B=B(V)$ the unit ball in $V$ and~$B^m:=B(\R^m)$.

\begin{theorem}\label{th:IVL}
The $d$th intrinsic volume of a zonoid $K\in\Z(V)$ is given by
\be
 V_d(K) = \frac{1}{d!}\, \ell(K^{\wedge d}),
\ee
where, as before, $K^{\wedge d}$ is the wedge product of $d$ copies of $K$. In particular, $$V_1(K) = \ell(K).$$
\end{theorem}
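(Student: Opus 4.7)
The plan is to reduce the claim to a clean identity about the length of a wedge product with powers of the ball, and then compute that identity directly using Vitale's representation.

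First, I would unfold the definition of $V_d(K)$: since $B = B(V)$ is itself a zonoid (indeed, $(2\pi)^{-1/2}B = K(Z)$ for a standard Gaussian $Z$ by \cref{eq:ball}), \cref{thm:FTZCdet} applies to give
\begin{equation*}
V_d(K) \;=\; \frac{\binom{m}{d}}{\vol_{m-d}(B^{m-d})}\cdot\MV(K[d],B[m-d]) \;=\; \frac{1}{d!\,(m-d)!\,\kappa_{m-d}}\;\ell\!\left(K^{\wedge d}\wedge B^{\wedge(m-d)}\right),
\end{equation*}
where $\kappa_j := \vol_j(B^j)$. Hence the theorem is equivalent to the key identity
\begin{equation*}
\ell\!\left(K^{\wedge d}\wedge B^{\wedge(m-d)}\right) \;=\; (m-d)!\,\kappa_{m-d}\,\ell(K^{\wedge d}). \quad (\star)
\end{equation*}

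To prove $(\star)$, I would invoke \cref{propo:MK}/\cref{eq:www}. Write $K = K(X)$ and $B = \sqrt{2\pi}\,K(Z)$ for $Z$ standard Gaussian in $V$. Taking i.i.d.\ copies $X_1,\ldots,X_d$ of $X$, and independently $Z_1,\ldots,Z_{m-d}$ i.i.d.\ standard Gaussian, we get
\begin{equation*}
K^{\wedge d}\wedge B^{\wedge(m-d)} \;=\; (2\pi)^{(m-d)/2}\,K\!\left(X_1\wedge\cdots\wedge X_d\wedge Z_1\wedge\cdots\wedge Z_{m-d}\right).
\end{equation*}
Since this zonoid lives in $\Lambda^m V\simeq\R$, its length equals $(2\pi)^{(m-d)/2}\,\EE\,|\det[X_1,\ldots,X_d,Z_1,\ldots,Z_{m-d}]|$. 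I would then condition on $X_1,\ldots,X_d$: setting $W:=\spanK{X_1,\ldots,X_d}$ (of dimension $d$ almost surely, unless both sides of $(\star)$ vanish, in which case the identity is trivial by \cref{wedge_is_zero}), the determinant is unchanged by replacing each $Z_j$ by its orthogonal projection $Z_j^\perp$ onto $W^\perp$, and factorizes as $\|X_1\wedge\cdots\wedge X_d\|\cdot\|Z_1^\perp\wedge\cdots\wedge Z_{m-d}^\perp\|$. By the rotational invariance of the Gaussian measure, the $Z_j^\perp$ are independent standard Gaussians in $W^\perp\simeq\R^{m-d}$, so applying \cref{ex:wB} (i.e., Vitale's formula for the ball) in $W^\perp$ gives
\begin{equation*}
\EE\,\|Z_1^\perp\wedge\cdots\wedge Z_{m-d}^\perp\| \;=\; (m-d)!\,\vol_{m-d}\!\left((2\pi)^{-1/2}B^{m-d}\right) \;=\; (m-d)!\,(2\pi)^{-(m-d)/2}\kappa_{m-d}.
\end{equation*}
Substituting and taking expectation over the $X_i$ yields $(\star)$, and the special case $d=1$ recovers $V_1(K)=\ell(K)$ (which is also immediate from \cref{lemma:ideno}).

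The main obstacle is the conditional computation: one must handle the measurable decomposition $Z_j = Z_j^\parallel + Z_j^\perp$ relative to the random subspace $W$ and verify that the conditional distribution of $(Z_1^\perp,\ldots,Z_{m-d}^\perp)$ is that of $(m-d)$ independent standard Gaussians in $W^\perp$. This is routine given the orthogonal invariance of the Gaussian law, but it is where all the normalization constants come from, and a careful justification of the almost sure factorization (including the degenerate case when $X_1,\ldots,X_d$ are linearly dependent) is the only delicate point.
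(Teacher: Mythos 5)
Your proposal is correct and follows essentially the same route as the paper's own proof: both reduce to computing $\ell(K^{\wedge d}\wedge B^{\wedge(m-d)})$ via \cref{thm:FTZCdet} with the ball represented by Gaussians, condition on the $X_i$, use orthogonal invariance to factor out $\|X_1\wedge\cdots\wedge X_d\|$ times the expected wedge norm of the Gaussians projected onto $W^\perp$, and evaluate the latter with \cref{ex:wB}. Your explicit isolation of the identity $(\star)$ and careful tracking of the $(m-d)!$ factor match the intended computation exactly.
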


\begin{proof}
Suppose $X_1,\ldots,X_d, Y_1,\ldots,Y_{m-d}$ are independent random vectors
with values in $V$ such that the $X_i$ represent $K$ and the $Y_j$ are standard Gaussian.
Recall that $B_m = \sqrt{2\pi} K(Y_j)$.
By \cref{thm:FTZCdet}, we can write
\be
 \MV (K[d],B_m[m-d]) = \frac{(2\pi)^{\frac{m-d}{2}}}{m!}\, \EE | X_1\wedge \cdots\wedge X_d\wedge Y_1\wedge \cdots\wedge Y_{m-d} | .
\ee
We first integrate over the $Y_j$ while leaving the $X_i$ fixed, thus
\be
 \MV (K[d],B_m[m-d]) = \frac{(2\pi)^{\frac{m-d}{2}}}{m!}\, \mean_{X_i} \, \mean_ {Y_j}
   | X_1\wedge \cdots\wedge X_d\wedge Y_1\wedge \cdots\wedge Y_{m-d} | .
\ee
By orthogonal invariance of $Y:=Y_1\wedge \cdots\wedge Y_{m-d}$,
we can assume that in the inner expectation the space spanned by the $X_i$
is the span of a fixed orthonormal frame $e_1,\ldots,e_d$. Then,
$X_1\wedge \cdots \wedge X_d=\Vert X_1\wedge \cdots \wedge X_d\Vert\,  e_1\wedge \cdots \wedge e_d$
and so, using that $Y$ is independent of the $X_i$:
$$
 \MV (K[d],B_m[m-d]) =  \frac{c}{m!} \ \mean_{X_i} \Vert X_1\wedge \cdots \wedge X_d\Vert = \frac{c}{m!}\, \ell(K),
 $$
with the constant $c := (2\pi)^{\frac{m-d}{2}} \mean_Y \Vert e_1\wedge \cdots \wedge e_d\wedge Y\Vert$.
In order to determine this constant, we use that
$
 \Vert e_1\wedge \cdots \wedge e_d\wedge Y\Vert =  \Vert \tilde{Y_1} \wedge \ldots \wedge \tilde{Y_d}\Vert ,
$
where $\tilde{Y_j}$ denotes the orthogonal projection of~$Y_j$
onto the orthogonal complement $\R^{m-d}$ of $\R^d=\Span\{e_1,\ldots,e_d\}$.
Since the unit ball $B_{m-d}$ is represented by $\sqrt{2\pi}\tilde{Y_j}$, we obtain with~\cref{ex:wB},
$$
 (2\pi)^{\frac{m-d}{2}}\,\EE |\tilde{Y}_1\wedge \cdots\wedge \tilde{Y}_{m-d} | = \ell\big( B_{m-d}^{\wedge (m-d)}\big) = (m-d)! \vol_{m-d}(B_{m-d}) .
$$
We therefore conclude that
$$
 \MV (K[d],B_m[m-d]) = \frac{1}{m!} \vol_{m-d}(B_{m-d}) \ell (K^{\wedge d}),
$$
which finishes the proof.
\end{proof}

An inspection of the proof of \cref{th:IVL} reveals the following general insight.

\begin{corollary}\label{cor:LL}
Let $L$ be a zonoid in $\Lambda^e V$,
which is invariant under the action of the orthogonal group $O(V)$
(for instance, $L =B^{\wedge e}$).
Further, let $d+e \le m$.
Then there exists a constant $c_{d,m}(L)$,
only depending on $d,m$ and $L$, such that
$$
 \ell (K \wedge L) = c_{d,m}(L)\ \ell(K)
 $$
for any zonoid $K=K_1\wedge \cdots\wedge K_d$ with $K_1, \ldots,K_d \in \ZZ(V)$.\qed
\end{corollary}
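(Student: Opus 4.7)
The plan is to generalize the argument from the proof of \cref{th:IVL}, where the ball plays exactly this role. I would represent everything with random vectors and exploit $O(V)$-invariance to factor the expectation.

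First, I would write $K_i = K(X_i)$ for independent integrable random vectors $X_i \in V$, and $L = K(Y)$ for a random vector $Y \in \Lambda^e V$ independent of the $X_i$. A key preliminary step is to arrange that $Y$ itself has an $O(V)$-invariant distribution. This can be done by replacing $Y$ with $\Lambda^e g \cdot Y$, where $g$ is Haar-distributed in $O(V)$ and independent of $Y$: by \cref{lemma:linear} and the $O(V)$-invariance of $L$, this new random vector still represents $L$, and by construction its distribution is $O(V)$-invariant. Then \cref{propo:MK} and \cref{eq:www} yield $K \wedge L = K(X_1 \wedge \cdots \wedge X_d \wedge Y)$, so by \cref{mydef:length}
\begin{equation}
\ell(K \wedge L) = \EE \, \|X_1 \wedge \cdots \wedge X_d \wedge Y\|.
\end{equation}

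Next I would condition on $X_1,\ldots,X_d$ and perform the inner expectation over $Y$. Set $W := X_1 \wedge \cdots \wedge X_d \in \Lambda^d V$. On the event $W \ne 0$, pick any oriented orthonormal basis $e_1,\ldots,e_d$ of $\Span(X_1,\ldots,X_d)$; then $W = \pm \|W\|\,(e_1 \wedge \cdots \wedge e_d)$, and consequently
\begin{equation}
\|W \wedge Y\| = \|W\| \cdot \|e_1 \wedge \cdots \wedge e_d \wedge Y\|.
\end{equation}
On the event $W = 0$ both sides vanish, so the identity holds almost surely.

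The crucial point is that $\EE_Y\|e_1 \wedge \cdots \wedge e_d \wedge Y\|$ does not depend on the chosen orthonormal $d$-frame. Indeed, for any $g \in O(V)$ the wedge map $\Lambda^{d+e}g$ is an isometry on $\Lambda^{d+e}V$, hence
\begin{equation}
\|g e_1 \wedge \cdots \wedge g e_d \wedge Y\| = \|e_1 \wedge \cdots \wedge e_d \wedge \Lambda^e g^{-1}(Y)\|,
\end{equation}
and taking expectation and invoking the $O(V)$-invariance of the distribution of $Y$ gives the same value. Any two orthonormal $d$-frames differ by an element of $O(V)$, so this expectation is a constant depending only on $d$, $m$ and $L$, which we name $c_{d,m}(L)$. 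Combining everything by the tower property,
\begin{equation}
\ell(K \wedge L) = \EE_{X_i}\!\left[\|W\| \cdot \EE_Y \|e_1 \wedge \cdots \wedge e_d \wedge Y\|\right] = c_{d,m}(L)\,\EE\|W\| = c_{d,m}(L)\,\ell(K),
\end{equation}
using \cref{mydef:length} applied to $K = K(W)$ in the last step.

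The only subtle point is ensuring that $Y$ may be chosen $O(V)$-invariant; once this is in place the factorization of the norm of a wedge with a simple vector reduces the computation to the frame-independent constant $c_{d,m}(L)$. The hypothesis $d+e\le m$ enters only to keep $\Lambda^{d+e}V$ nontrivial so that the lengths involved are meaningful; for $d+e > m$ both sides are zero.
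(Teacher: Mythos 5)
Your proof is correct and follows essentially the same route as the paper, which obtains the corollary precisely by inspecting the proof of \cref{th:IVL}: condition on the $X_i$, factor out $\Vert X_1\wedge\cdots\wedge X_d\Vert$ against a fixed orthonormal frame, and use orthogonal invariance to see that the inner expectation over $Y$ is frame-independent. Your extra step of Haar-averaging $Y$ so that its distribution (not just its zonoid) is $O(V)$-invariant is exactly the detail needed to pass from the Gaussian case of \cref{th:IVL} to a general invariant $L$, and you justify it correctly via \cref{lemma:linear}.
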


The \emph{Alexandrov--Fenchel inequality} for convex bodies is one of deepest results of the
Brunn-Minokowski theory. Via \cref{thm:FTZCdet}, we can express this inequality for zonoids
in terms of the length as follows:
if $K_1,K_2,\ldots,K_m\in\ZZ(V)$ are zonoids in $V$, then
\be\label{AF_for_length}
\ell(K_1\wedge K_2\wedge K)^2 \geq \ell(K_1\wedge K_1\wedge K)\cdot \ell(K_2\wedge K_2\wedge K),
\text{ where }  K = K_3\wedge \cdots \wedge K_m;
\ee
see, e.g., \cite[Theorems 6.3.1]{bible}.
More generally, the \emph{general Brunn-Minkowski theorem} \cite[Theorem 6.4.3]{bible}
implies that for any $1\leq d\leq m$:
\be\label{BM_for_length}
t\mapsto \ell(K_t^{\wedge d} \wedge K_{d+1}\wedge \cdots\wedge K_m)^\frac{1}{d} \quad\text{ is concave for $t\in[0,1]$},
\ee
where $K_t:=tK_1 + (1-t)K_2$.

Using \cref{cor:LL}, we deduce from \cref{AF_for_length} the following special case:
\be\label{AF_for_length_2}
 \ell(K \wedge L)^2 \geq \ell(K\wedge K)\ \ell(L\wedge L)
\ee
for all zonoids $K,L\in \ZZ(V)$.
Moreover, \cref{BM_for_length} means that
\be\label{BM_for_length_2}
t\mapsto \ell((tK + (1-t)L)^{\wedge d})^\frac{1}{d} \quad\text{ is concave for $t\in[0,1]$.}
\ee
By \cref{cor:LL}, we can generalize \cref{AF_for_length_2} by replacing $K\wedge L$ with the wedge product of $K\wedge L$ with any orthogonally invariant zonoid in $M\in \ZZ(\Lambda^k(V))$ such that $k+d\leq m$, and get
$\ell(K \wedge L\wedge M)^2 \geq \ell(K\wedge K\wedge M)\ \ell(L\wedge L\wedge M)$.
Similarly, in \cref{BM_for_length_2} we may also take the product of the $d$-th wedge power with $M$
and obtain that the function
$\ell((tK + (1-t)L)^{\wedge d}\wedge M)^\frac{1}{d}$ is concave for $t\in[0,1]$.

\subsection{Random determinants}

The purpose of this section is to generalize a result due to Vitale.
In \cite{Vitale} Vitale showed that if $X\in \R^m$ is an integrable random vector and
$M_X$ is the $m\times m$ matrix whose columns are i.i.d. copies of $X$, then $\EE|\det(M_X)|=m!\vol_m(K(X))$.
We generalize this result to independent blocks that can give different distributions. This is to be compared with \cref{thm:vitaleC} below, in which we prove a similar result, but for complex random matrices.

\begin{theorem}[Expected absolute determinant of independent blocks]\label{thm:genvitale}
Let $M=(M_{1}, \ldots, M_p)$ be a random $m\times m$ matrix partitioned into blocks $M_j$ of size $m\times d_j$,
with $d_1+\cdots +d_p=m$.
We denote by $v_{j,1}, \ldots, v_{j,d_j}$ the columns of $M_j$ and assume that
$Z_j := v_{j,1}\wedge \cdots \wedge v_{j, d_j}\in \Lambda^{d_j}\R^m$ is integrable.
If the random vectors $Z_1\in\Lambda^{d_1}(\R^m), \ldots, Z_p\in \Lambda^{d_p}(\R^m)$
are independent, then:
$$
 \EE |\det( M)| = \ell(K(Z_1)\wedge \cdots\wedge K(Z_p)).
 $$
In particular, if $p=m$ and $d_1=\cdots =d_p=1$, then
$$
 \EE |\det( M)| = m!\, \MV(K(Z_1),\ldots, K(Z_m)).
$$
\end{theorem}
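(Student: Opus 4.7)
The strategy is to interpret $\det(M)$ as a wedge product in the top exterior power $\Lambda^m\R^m$, and then to apply the machinery already developed: \cref{propo:MK} for independent random vectors, and \cref{mydef:length} for the length.

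First I would use the classical identity relating the determinant to the wedge product: if $v_{1,1},\dots,v_{p,d_p}$ are the columns of $M$ and $e_1,\dots,e_m$ is the standard basis of $\R^m$, then in $\Lambda^m\R^m$
$$
 v_{1,1}\wedge\cdots\wedge v_{1,d_1}\wedge\cdots\wedge v_{p,1}\wedge\cdots\wedge v_{p,d_p}
 \ =\ \det(M)\cdot e_1\wedge\cdots\wedge e_m.
$$
Regrouping columns within each block, the left-hand side is exactly $Z_1\wedge\cdots\wedge Z_p$. Identifying $\Lambda^m\R^m\simeq\R$ via the distinguished generator $e_1\wedge\cdots\wedge e_m$, this gives
$$
 Z_1\wedge\cdots\wedge Z_p \ =\ \det(M)
 \qquad\text{and hence}\qquad
 \|Z_1\wedge\cdots\wedge Z_p\| \ =\ |\det(M)|.
$$

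Next, I would apply \cref{propo:MK} to the multilinear wedge map $\Lambda^{d_1}\R^m\times\cdots\times\Lambda^{d_p}\R^m\to\Lambda^m\R^m$, using the hypothesis that the random vectors $Z_1,\dots,Z_p$ are independent. This gives
$$
 K(Z_1)\wedge\cdots\wedge K(Z_p) \ =\ K(Z_1\wedge\cdots\wedge Z_p)
$$
as a zonoid in $\Lambda^m\R^m$. Finally, by \cref{mydef:length},
$$
 \ell\bigl(K(Z_1)\wedge\cdots\wedge K(Z_p)\bigr) \ =\ \ell\bigl(K(Z_1\wedge\cdots\wedge Z_p)\bigr)
   \ =\ \EE\|Z_1\wedge\cdots\wedge Z_p\| \ =\ \EE|\det(M)|,
$$
which is the claimed identity. (Integrability of $Z_1\wedge\cdots\wedge Z_p$ follows from the integrability of each $Z_j$ together with their independence, so that each expectation above is finite.)

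For the special case $p=m$, $d_j=1$, we have $Z_j=v_j\in\R^m$ and $K(Z_j)$ is a zonoid in $V=\R^m$; the formula then reads $\EE|\det(M)|=\ell(K(Z_1)\wedge\cdots\wedge K(Z_m))$, and applying \cref{thm:FTZCdet} yields $\EE|\det(M)|=m!\,\MV(K(Z_1),\dots,K(Z_m))$. There is no real obstacle here: the only step that requires care is the application of \cref{propo:MK}, which needs the full independence of the $Z_j$ (and not merely of the original columns within each block), but this is exactly what is assumed.
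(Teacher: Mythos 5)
Your proposal is correct and follows essentially the same route as the paper's own proof: express $|\det(M)|$ as $\|Z_1\wedge\cdots\wedge Z_p\|$, use the independence of the $Z_j$ via \cref{propo:MK} (equivalently \cref{eq:www}) to identify $K(Z_1)\wedge\cdots\wedge K(Z_p)$ with $K(Z_1\wedge\cdots\wedge Z_p)$, take lengths, and invoke \cref{thm:FTZCdet} for the special case. Your added remark on the integrability of $Z_1\wedge\cdots\wedge Z_p$ is a small point the paper leaves implicit, and you handle it correctly.
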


The formula for independent columns, i.e., where $p=m$, was already proved by Weil in \cite[Theorem 4.2]{Weil1976}. In fact, Weil proved a more general version in the case $p=m$ for convex bodies, not just zonoids.

Vitale's theorem corresponds to the special case $d_j=\cdots=d_m=1$ and where the matrices $M_1, \ldots, M_m$
(which are now column vectors) all have the same distribution: $M_j\sim X$. In this case,
$\EE|\det(M)|=m! \MV(K(X),\ldots, K(X)) = m! \vol_m(K(X))$.

\begin{proof}[Proof of \cref{thm:genvitale}]
Let us first observe that
$|\det[v_1, \ldots, v_m]|=\|v_1\wedge \cdots\wedge v_m\|$ for vectors $v_1, \ldots, v_m\in \R^m$.
Applying this in our case, we get for the absolute determinant of $M$ that
$ |\det(M)|=\|v_{1, 1}\wedge\cdots \wedge v_{p, a_p}\|=\|Z_1\wedge \cdots \wedge Z_p\|.$
Taking expectations on both sides,
we obtain
$\EE|\det(M)|=\ell(K(Z_1\wedge\cdots\wedge Z_p))$.
By the definition of the wedge product, we have
$K(Z_1\wedge\cdots\wedge Z_p) = K(Z_1)\wedge \cdots \wedge K(Z_p)$,
which shows the first assertion.
The second claim follows from \cref{thm:FTZCdet}. This concludes the proof.
\end{proof}

We give two additional examples in which \cref{thm:genvitale} is applied.

\begin{example}\label{ex:genvitale}
Let $Z_1, \ldots, Z_n\in \C^n$ be integrable random vectors and $L\in \C^{n\times n}$
be the random matrix $L=(Z_1, \ldots, Z_n)$.
We show how to compute $\EE|\det (L)|^2$ with \cref{thm:genvitale}
(in the next section we will explain how to compute $\EE|\det (L)|$; see \cref{thm:vitaleC}).
To this end,
we decompose $Z_j=X_j+\sqrt{-1}Y_j$ with real random vectors $X_j, Y_j\in \R^n$ (possibly dependent),
We put $m:=2n$ and
consider the random matrix $M=(M_1, \ldots, M_n)$,
where $M_j=\left(\begin{smallmatrix}X_j & -Y_j\\ Y_j & X_j\end{smallmatrix}\right)$,
which satisfies the hypothesis of \cref{thm:genvitale}.
Observe that $|\det(L)|^2= |\det(M)|$.
If we define the integrable random vector
\be
Q_j:=\left(\begin{matrix}X_j\\
Y_j\end{matrix}\right)\wedge \left(\begin{matrix}-Y_j\\
X_j\end{matrix}\right)\in \Lambda^2(\R^{m}), %\text{ where } m=2n.
\ee
we get by \cref{thm:genvitale} that
$\EE|\det L|^2= \EE|\det(M)|
=\ell(K(Q_1)\wedge \cdots \wedge K(Q_n ))$.
\end{example}

\begin{example}\label{ex:parallelotope}
We interpret here \cref{thm:genvitale} geometrically as follows:
we consider a random parallelotope $P\subset \R^m$ spanned by $k\leq m$ random vectors, and ask for
its expected $k$--dimensional volume. Suppose that the random vectors are $v_{i,j}$ for
$1\leq i\leq p$ and $1\leq j\leq d_i$ with $\sum_{j=1}^p{d_j}=k$ and such that $v_{i_1,j_1}$ and
$v_{i_2,j_2}$ are independent, if $i_1\neq i_2$.
Setting $Z_i:= v_{i,1}\wedge\cdots \wedge v_{i,d_i}$, we therefore have
$Z_1\wedge \cdots \wedge Z_p\in \Lambda^{k}(\R^m)$ and
$$
 \EE \left(\vol_k(P)\right)= \ell(K(Z_1)\wedge \cdots \wedge K(Z_p)) .
$$
This shows that the length functional can be used to compute the expected volume of $P$.
\end{example}

\cref{thm:genvitale} has an interesting consequence when combined with the {Alexandrov--Fenchel inequality}~\cref{AF_for_length}
and the {general Brunn-Minkowski theorem} \cref{BM_for_length}.

\begin{corollary}[Brunn--Minkowski theorem for expected determinants]\label{cor_AF}
Let $X_1,\ldots,X_m$ be independent integrable random vectors in $\mathbb R^m$,
and let $X_1'\sim X_1$ and $X_2'\sim X_2$ be independent of $X_1,X_2$, respectively. Then:
$$
\big(\mean  \vert \det
[\begin{smallmatrix}
X_1 &X_2& X_3& \ldots& X_m
\end{smallmatrix}]\vert\big)^2 \geq
\mean  \vert \det
[\begin{smallmatrix}
X_1 &X_1'& X_3& \ldots& X_m
\end{smallmatrix}]\vert
\cdot \mean  \vert \det
[\begin{smallmatrix}
X_2 &X_2'& X_3& \ldots& X_m
\end{smallmatrix}]\vert.
$$
More generally, for any $1\leq d\leq m$, the following function is concave for $t\in[0,1]$:
$$t\mapsto \big(\mean  \vert \det
[\begin{smallmatrix}
X^{(t)}_1 &\ldots & X^{(t)}_d & X_{d+1}& \ldots& X_n
\end{smallmatrix}]\vert\big)^\frac{1}{d},$$
where $X^{(t)}_1,\ldots,X^{(t)}_d$ are independent copies of
$t\epsilon 2 X_1+ (1-t)(1-\epsilon) 2 X_2$ and $\epsilon$ is a
Bernoulli random variable with success probability $\tfrac{1}{2}$,
which is independent of $X_1$, $X_2$, $X_{d+1},\ldots, X_n$.
\end{corollary}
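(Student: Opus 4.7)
My plan would be to use \cref{thm:genvitale} to convert both inequalities from statements about expected absolute determinants into statements about lengths of wedge products of Vitale zonoids, and then invoke the Alexandrov--Fenchel inequality \cref{AF_for_length} and the general Brunn--Minkowski theorem \cref{BM_for_length} for zonoids directly.

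For the first inequality I set $K := K(X_3)\wedge \cdots \wedge K(X_m)$. Because the columns $X_1,\ldots,X_m$ are independent, \cref{thm:genvitale} (with $p=m$ and $d_1=\cdots=d_m=1$) yields
$$
\mean |\det[X_1,X_2,X_3,\ldots,X_m]| = \ell(K(X_1)\wedge K(X_2)\wedge K).
$$
Since $X_1\sim X_1'$ implies $K(X_1')=K(X_1)$, and similarly $K(X_2')=K(X_2)$, the same identity gives
$$
\mean|\det[X_1,X_1',X_3,\ldots,X_m]| = \ell(K(X_1)\wedge K(X_1)\wedge K),
$$
and analogously for the third factor. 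The claimed inequality is then precisely \cref{AF_for_length} applied with $K_1:=K(X_1)$, $K_2:=K(X_2)$ and trailing factor $K$.

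For the concavity statement the first step is to identify the Vitale zonoid of $Z_t := 2t\epsilon X_1 + 2(1-t)(1-\epsilon)X_2$. By \cref{lemma:scaling}, the independent rescalings $tX_1$ and $(1-t)X_2$ represent $tK(X_1)$ and $(1-t)K(X_2)$ respectively, and since $X_1$ and $X_2$ are independent, \cref{summation_formula} gives $K(Z_t)=tK(X_1)+(1-t)K(X_2)$. Using that $X_1^{(t)},\ldots,X_d^{(t)}$ are i.i.d.\ copies of $Z_t$ and are jointly independent of $X_{d+1},\ldots,X_m$, a further application of \cref{thm:genvitale} yields
$$
\mean\bigl|\det[X_1^{(t)},\ldots,X_d^{(t)}, X_{d+1},\ldots,X_m]\bigr| = \ell\bigl((tK(X_1)+(1-t)K(X_2))^{\wedge d}\wedge K(X_{d+1})\wedge\cdots\wedge K(X_m)\bigr).
$$
Concavity in $t\in[0,1]$ of the $d$-th root of the right hand side is then exactly the content of \cref{BM_for_length}.

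I do not foresee a genuine technical obstacle here: the only conceptual step is recognizing that the Bernoulli mixture of rescaled independent random vectors defining $Z_t$ is the correct probabilistic encoding of the Minkowski combination $tK(X_1)+(1-t)K(X_2)$, via the combination of \cref{summation_formula} and \cref{lemma:scaling}. Once this translation is made, the classical Alexandrov--Fenchel and Brunn--Minkowski inequalities for zonoids transfer verbatim to the random-determinant side.
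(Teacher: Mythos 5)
Your proposal is correct and follows exactly the paper's own argument: translate both expected determinants into lengths of wedge products via \cref{thm:genvitale}, identify $K(Z_t)=tK(X_1)+(1-t)K(X_2)$ using \cref{summation_formula} and \cref{lemma:scaling}, and then invoke \cref{AF_for_length} and \cref{BM_for_length}. Your write-up simply spells out the details that the paper's proof leaves implicit.
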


\begin{proof}
We set $K_i:=K(X_i)$ for $1\leq i\leq m$.
The first inequality is \cref{AF_for_length} combined with \cref{thm:genvitale}.
For proving the second statement we let $K_t:=tK_1 + (1-t)K_2$.
Then, by \cref{summation_formula}, $K_t=K(X^{(t)})$, where
$X^{(t)} =  t\epsilon 2 X_1+ (1-t)(1-\epsilon) 2 X_2$.
We combine \cref{thm:genvitale} with \cref{BM_for_length} to conclude.
\end{proof}

We find this a quite remarkable consequence, since it holds under very weak assumptions:
only independence and integrability of the $X_i$ is assumed.
To our best knowledge, this is a new formula that has not been described in the literature.

\begin{remark}
	The equality case of Alexandrov--Fenchel for zonoids was described in~\cite{SchAF}. From this, one can deduce the equality case for random determinants in \cref{cor_AF}.
\end{remark}

\begin{example}
We can combine \cref{AF_for_length_2} and \cref{BM_for_length_2}
with \cref{thm:genvitale}
to obtain a result about expected volumes of random triangles
(this is the case $k=2$ in \cref{ex:parallelotope}):
\be\label{AF_for_parallelotopes}
\big(\mean  \vol_2(\Delta(X,Y))\big)^2 \geq
\mean  \vol_2(\Delta(X,X'))
\cdot \mean  \vol_2(\Delta(Y,Y')),
\ee
where $X,Y,X',Y'$ are independent vectors with finite expected norm, $X\sim X'$ and $Y\sim Y'$,
and $\Delta(X,Y)$ is the triangle, whose vertices are the origin and $X$ and $Y$.
We also get that the function
$$
t\mapsto \big(\mean  \vol_d(P(
X^{(t)}_1,\ldots, X^{(t)}_d))\big)^\frac{1}{d} \quad \text{is concave in $t\in[0,1]$},
$$
where the $X^{(t)}_i$ are defined as in \cref{cor_AF} and $P(
X^{(t)}_1,\ldots, X^{(t)}_d)$ is the parallelotope spanned by the $X^{(t)}_i$.
\end{example}

\section{Mixed $J$--volume and random complex determinants}\label{sec:mixed_J_volume}

In this section $V$ will be a complex vector space of complex dimension $n$, that is a real vector space of real dimension $2n$ together with a real linear endomorphism $J:V\to V$ such that $J^2=-\textbf{1}$. $J$ will be called the \emph{complex structure} of $V$. We recall that such a complex structure induces an isomorphism $V\cong \C^n$ under which the automorphism $J$ corresponds to multiplication by $i$.

Here we will introduce a notion similar to mixed volume for zonoids in $V$, which  is  adapted to the complex structure. We call it the \emph{mixed $J$--volume} and denote it by $\MV^J$. It takes $n$ zonoids in a $2n$--dimensional real vector space, while the ordinary mixed volume $\MV:\ZZ(V)^{2n}\to \R$ is instead a function of $2n$ arguments. Furthermore, $\MV^J$ is Minkowski additive and positively homogeneous in each argument; see \cref{def:mixJ} below. We have already seen an application of our theory to zonoids in a complex vector space in \cref{ex:genvitale}. This example, however, used real multilinear maps. In this section, we consider \emph{complex multilinear maps}, which leads to a different notion.

There are some key properties that make the mixed $J$--volume interesting: (1) it is compatible with the complex structure (see \cref{propo:MVJ} (3)); it allows to formulate a complex version of Vitale's theorem, for computing the expectation of the \emph{modulus} -- and not the \emph{modulus squared}, as it is done in \cref{ex:genvitale} -- of the determinant of a random $n\times n$ matrix with rows which are independent random variables in $\C^n$ (\cref{thm:vitaleC}); (3) it can be defined on all polytopes (but does not continuously extends  all convex bodies, see \cref{cor:noextofJ}); and finally (4) it equals the classical mixed volumes when restricted to polytopes in~$\mathbb R^n \subset \mathbb C^n$;
see \cref{propo:MVJ} (2).

The extension of the $J$--volume to polytopes in $V\simeq \C^n$ is a so called \emph{valuation}; see \cref{def:val}. The proof of the extension of the $J$--volume to polytopes is especially interesting and uses some particular combinatorial properties of zonotopes, and it connects to similar notions, such as Kazarnovskii's pseudovolume (see \cref{def:Kaza} below).

 \subsection{The mixed $J$--volume of zonoids}We endow our complex vector space $(V, J)$ with a hermitian structure $\phi:V\times V\to \C$.  The associated scalar product is the real part of~$\phi$. When $V=\C^n$ we consider the standard complex structure where $J$ is the multiplication by $i=\sqrt{-1}$ and the standard hermitian structure. Moreover, for $0\leq k\leq n$ we denote by  $\Lambda_\mathbb{C}^k(V)$ the complex exterior algebra and, given vectors $v_1, \ldots, v_k\in V$, we denote  by $v_1\wc\cdots \wc v_k\in \Lambda^k_\C(V)$
 their complex exterior product. Note that this construction depends on the choice of the complex structure $J$, however we prefer the notation with ``$\C$'' that we find easier to read.

The hermitian structure on $V$ induces an hermitian structure on all the complex exterior powers and, in particular, taking its real part, a real scalar product on each of them. This implies that we have a Euclidean norm on each $\Lambda^k_\C(V)$ and consequently we have a length functional $\ell:\Z(\Lambda^k_\C(V))\to \R$; see \cref{mydef:length}.

Moreover the complex wedge product is, in particular, a real multilinear map. Therefore we can apply \cref{thm:FTZC} to obtain  a well-defined notion of complex product of virtual zonoids.
\begin{definition}		\label{def:complex_wedge}
	Consider the (real) multilinear map $F\colon V^n\to \Lambda^n_\C(V)$ defined by the complex wedge $F(v_1, \ldots, v_n):=v_1\wc\cdots \wc v_n$. For any  $K_1, \ldots, K_n\in \VZ(V)$, we define:
	\begin{equation}
		K_1\wc\cdots \wc K_n:=\widehat{F}(K_1, \ldots, K_n) .
	\end{equation}
\end{definition}
The next definition uses this construction to define the mixed $J$--volume.

\begin{definition}[Mixed $J$--volume]\label{def:mixJ}
 We define the \emph{mixed $J$--volume}  $\MV^J:\VZ(V)^n\to \R$ to be the $\R$--multilinear map given, for all $K_1, \ldots, K_n\in \VZ(V)$, by:
	\begin{equation}
		 \MV^J(K_1, \ldots, K_n):=\frac{1}{n!}\ell\left(K_1\wc \cdots \wc K_n\right).
	\end{equation}
The \emph{$J$--volume} of a zonoid $K\in \VZ(V)$ is defined to be:
\be \vol_n^J(K):=\MV^J(K, \ldots, K).\ee
\end{definition}

\begin{remark}
	Notice that, since $\Lambda^{2n}( V)\simeq \R$ is of real dimension one, zonoids in $\Lambda^{2n}(V)$ are just segments. By contrast, the top complex exterior power $\Lambda_\C^n (V)\simeq \C$ is of real dimension two and centered zonoids in this space are more than segments (in fact they are precisely the centrally symmetric convex bodies; see \cite[Theorem 3.5.2]{bible}). Thus $K_1\wc \cdots \wc K_n$ is a zonoid in $\Lambda_\C^n (V)\simeq \R^2$. Then taking its length loses some information. However, it is easy to see using \cref{def:complex_wedge}, that if one of the $K_i$ is invariant under the $U(1)$ action on $V$, then $K_1\wc \cdots \wc K_n$ is also $U(1)$ invariant and hence must be a disc. We compute the length of a disc in \cref{lemma:D} below.
\end{remark}

Let us study some of the properties of the mixed $J-$volume. On some classes of zonoids of the complex space $V$ it behaves particularly well. The first case is when $V=\C^n$ and all the zonoids are contained in the real $n-$plane $\R^n\subset \C^n$. In that case, we will show that the mixed $J-$volume is equal to the classical mixed volume (see \cref{propo:MVJ} (2)).

Next, we consider complex discs.
\begin{definition}
	Let $z\in V$. We define $D_z$ to be the closed centered disc of radius $\vert z\vert$ in the complex line $\C z \cong \mathbb R^2$.
\end{definition}
In order to describe a random vector representing $D_z$, let us introduce the following notation. For $\theta\in \R$ we denote by $e^{\theta J}:V\to V$ the linear operator
$
e^{\theta J}:=\cos(\theta) \mathrm{Id}+\sin (\theta)  J
$
where $\mathrm{Id}$ denotes the identity on $V$.
We then have the following lemma.

\begin{lemma}\label{lemma:D}
Let $\theta\in [0, 2\pi]$ be a uniformly distributed random variable and $z\in V$ nonzero.
Consider the random vector $X_z\in V$ defined by
$ X_z:=\pi e^{\theta J}z.$ Then:
$$ K(X_z)=D_z\quad \text{and}\quad\ell(D_z)=\pi \|z\|.$$
\end{lemma}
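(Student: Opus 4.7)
The plan is to verify both claims using the support-function formula
$h_{K(X)}(v) = \tfrac{1}{2}\EE|\langle v, X\rangle|$ from \cref{eq:hE-form} together with the definition of length in \cref{mydef:length}. First I would record two basic observations about the operator $e^{\theta J}=\cos\theta\cdot\mathrm{Id}+\sin\theta\cdot J$: it is a real linear isometry of $V$ (the $J^2=-\mathbf{1}$ identity plus the fact that the real scalar product is $\mathrm{Re}\,\phi$ imply that $J$ is orthogonal, hence so is any $e^{\theta J}$), and the pair $\{z, Jz\}$ is orthogonal with $\|z\|=\|Jz\|$; the orthogonality follows from $\langle z, Jz\rangle = \mathrm{Re}\,\phi(z,Jz) = \mathrm{Re}(-i\,\phi(z,z)) = 0$.

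Next, for any $v\in V$ I would set $a:=\langle v, z\rangle$ and $b:=\langle v, Jz\rangle$ and expand
\begin{equation*}
h_{K(X_z)}(v) \;=\; \tfrac{1}{2}\,\EE_\theta\bigl|\langle v,\pi e^{\theta J}z\rangle\bigr|
\;=\; \tfrac{\pi}{2}\,\EE_\theta|a\cos\theta + b\sin\theta|.
\end{equation*}
Writing $a\cos\theta+b\sin\theta=\sqrt{a^2+b^2}\cos(\theta-\phi_0)$ for a suitable phase~$\phi_0$ and using the standard identity $\tfrac{1}{2\pi}\int_0^{2\pi}|\cos\theta|\,\dd\theta=\tfrac{2}{\pi}$, this expectation becomes $\tfrac{\pi}{2}\cdot\tfrac{2}{\pi}\sqrt{a^2+b^2}=\sqrt{\langle v,z\rangle^2+\langle v,Jz\rangle^2}$.

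I would then compute the support function of the disc directly. Since $\mathbb{C}z=\Span_\R(z,Jz)$ has $\{z/\|z\|,\,Jz/\|z\|\}$ as an orthonormal basis by the first paragraph, every point of $D_z$ is $\alpha z+\beta Jz$ with $\alpha^2+\beta^2\le 1$, so
\begin{equation*}
h_{D_z}(v) \;=\; \max_{\alpha^2+\beta^2\leq 1}\bigl(\alpha\langle v,z\rangle+\beta\langle v,Jz\rangle\bigr)
\;=\; \sqrt{\langle v,z\rangle^2+\langle v,Jz\rangle^2}.
\end{equation*}
This matches the formula from the previous paragraph, so by \cref{useful_properties_for_h} we conclude $K(X_z)=D_z$.

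Finally, for the length I would simply apply \cref{mydef:length} together with the fact that $e^{\theta J}$ is an isometry:
\begin{equation*}
\ell(D_z)=\ell(K(X_z))=\EE\|X_z\|=\pi\,\EE_\theta\|e^{\theta J}z\|=\pi\|z\|.
\end{equation*}
There is no real obstacle here; the only point that requires care is verifying $\langle z,Jz\rangle=0$ from the hermitian–Euclidean compatibility, which underlies both the support-function identification and the isometry property of $e^{\theta J}$.
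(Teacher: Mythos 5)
Your proof is correct and follows essentially the same route as the paper: both arguments compute $h_{K(X_z)}$ via \cref{eq:hE-form} and the identity $\EE|\cos\theta|=2/\pi$, then match it against the support function of the disc. The only (cosmetic) difference is that you evaluate the support function at an arbitrary $v$ in one unified formula $\sqrt{\langle v,z\rangle^2+\langle v,Jz\rangle^2}$, whereas the paper first shows it vanishes on $(\C z)^\perp$ and then evaluates it only on the complex line $\C z$.
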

\begin{proof}
Since for every $\theta \in [0, 2\pi]$ the vector $e^{\theta J}z$
belongs to $\C z$, we have  $h_{K(X_z)}(u)=0$ for every $u\in (\C z)^\perp.$
This implies that $K(X_z)$ is contained in $\C z$.
It is straightforward to verify that
$
 \EE |\langle e^{\theta J} z, z\rangle| = \|z\|^2 \, \EE |\cos\theta| = \frac{2}{\pi} \|z\|^2 .
$
This implies for $\lambda\in \C$ that
\begin{align}
h_{K(X_z)}(\lambda z) = \frac12 \EE |\langle X_z, \lambda z \rangle|
 = \frac12 \pi |\lambda| \, \EE |\langle e^{\theta J} z, z\rangle| = |\lambda| \|z\|^2
 = \|z\| \cdot |\lambda z| .
\end{align}
On the other hand,
$h_{D_z}(\lambda z) = \|z\| \cdot |\lambda z|$,
hence the first assertion follows. The second statement follows immediately from the fact that
$\|X_z\|=\pi \|z\|$ almost surely.
\end{proof}

%\begin{proof}Since for every $\theta \in [0, 2\pi]$ the vector $e^{\theta J}z$ belongs to $\C z$,
%we have $h_{K(X_z)}(u)=0$ for every $u\in (\C z)^\perp.$ This implies that $K(X_z)$ is contained in $\C z$.
%Let now $u\in \C z$ and write $u=\lambda \frac{z}{|z|}$ with $\lambda=\lambda_1+i \lambda _2\in \C$. Then
%\begin{align}h_{K(X_z)}(u)&=\frac{1}{2}\EE\left|\left\langle X_z, \lambda \frac{z}{\|z\|}\right\rangle \right|
%=\frac{\|z\|}{2}\EE\left|\mathrm{Re}\left(\phi\left( \frac{\pi}{\|z\|}e^{\theta J}z, \lambda %\frac{z}{\|z\|}\right)\right) \right|\\
%&=\frac{\|z\|}{2}\EE\left| \mathrm{Re}\left( e^{i\theta}\overline{\lambda}\,\phi\left(
%\frac{z}{\|z\|},\frac{z}{\|z\|}\right)\right) \right|\\
%&=\frac{\|z\|\pi}{2}\EE\left| \mathrm{Re}\left( e^{i\theta}\overline{\lambda}\right)\right|\\
%&=\frac{\|z\|\pi}{2}\frac{1}{2\pi}\int_{0}^{2\pi}|\lambda_1 \cos \theta+\lambda_2 \sin \theta|\mathrm{d}\theta\\
%&=\frac{\|z\|\pi}{2}\frac{1}{2\pi} 4|\lambda|\\
%&=|\lambda|\|z\|.
%\end{align}
%This means that $h_{K(X_z)}=h_{D_z}$ and the first assertion follows. The second statement follows immediately from %the fact that $\|X_z\|=\pi \|z\|$ almost surely.
%\end{proof}

\begin{proposition}[Properties of the mixed $J$--volume]\label{propo:MVJ}The following properties hold:
	\begin{enumerate}
		\item The mixed $J$--volume of zonoids $\MV^J\colon \Z(V)^n\to \R$ is symmetric, multilinear, and monotonically increasing in each variable.
		\item Suppose $V=\C^n$ and let $K_1,\ldots,K_n\in \Z( \mathbb R^n)\subset \Z(\mathbb C^n)$. Then:
		\begin{equation}
		\MV^J(K_1, \ldots, K_n) = \MV(K_1, \ldots, K_n).
		\end{equation}
		\item Let $T:V\to V$ be a $\C$--linear transformation (i.e., such that $TJ=JT$), and denote by $\det_\C(T)$ its complex determinant. Then, for all $K_1, \ldots, K_n\in \Z(V)$,
		\be \MV^{J}(TK_1, \ldots, TK_n)=|{\det}_\C(T)|\, \MV^{J}(K_1, \ldots, K_n) . \ee
		\item For every $z_1, \ldots, z_n\in V$ we have
		$ \MV^J(D_{z_1},\cdots, D_{z_n})=\frac{\pi^n}{n!}|z_1\wc\cdots \wc z_n|$
		\item For every $\theta\in\R$ and every $K_1,\ldots,K_n\in\Z(V)$ we have
		\begin{equation}
		\MV^J(e^{\theta J}K_1,K_2,\ldots, K_n)=\MV^J(K_1,\ldots, K_n) .
		\end{equation}
	\end{enumerate}
\end{proposition}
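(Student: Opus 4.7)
The plan is to deduce each property from the construction of $\widehat{F}$ via \cref{thm:FTZC} together with the basic properties of the complex wedge product and the length.

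For (1), the multilinearity is immediate from \cref{def:mixJ} and \cref{thm:FTZC}. Symmetry follows because, on segments, $\widehat{F}(\tfrac12[-v_1,v_1],\ldots,\tfrac12[-v_n,v_n]) = \tfrac12[-v_1 \wc \cdots \wc v_n, v_1 \wc \cdots \wc v_n]$, and the complex wedge product is alternating, so swapping two inputs only changes a sign inside the symmetric segment; density of zonotopes and the uniqueness clause in \cref{thm:FTZC} then transfer symmetry to all zonoids. Monotonicity follows from the order-preservation of $\widehat{F}$ (last sentence of \cref{thm:FTZC}) and the monotonicity of the length functional (\cref{cor:monolength}).

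For (2), on vectors $v_1,\ldots,v_n\in\R^n\subset\C^n$ a direct computation shows $v_1 \wc \cdots \wc v_n = \det(v_1,\ldots,v_n)\cdot (e_1 \wc \cdots \wc e_n)$, where $e_1,\ldots,e_n$ is the standard basis of $\R^n\subset\C^n$. Since this basis vector has unit norm in $\Lambda^n_\C(\C^n)$, the complex wedge of real zonoids in $\R^n$ is, up to the isometric identification $\spanK{e_1 \wc \cdots \wc e_n}\simeq \R$, exactly their ordinary wedge product in $\Lambda^n(\R^n)$; \cref{thm:FTZCdet} then yields the identification with $\MV$. For (3), $\C$-linearity of $T$ gives $T(v_1) \wc \cdots \wc T(v_n) = \det_\C(T)\cdot v_1 \wc \cdots \wc v_n$. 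Thus, as a real linear map on $\Lambda^n_\C(V)$, the action induced by $T$ is multiplication by the complex scalar $\det_\C(T)$, which is an $\R$-linear map of operator norm $|\det_\C(T)|$. Applying \cref{propo:linearext} and noting that $\ell(\lambda K)=|\lambda|\ell(K)$ for $\lambda\in\C$ acting on a $U(1)$-invariant zonoid (or, more simply, representing $TK_i$ by $TX_i$ via \cref{lemma:linear} and pulling the scalar $|\det_\C(T)|$ out of the expectation defining $\ell$) gives the claim.

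For (4), by \cref{lemma:D} we may take independent copies $X_{z_i}=\pi e^{\theta_i J}z_i$, with $\theta_i$ uniform on $[0,2\pi]$, to represent $D_{z_i}$. By \cref{propo:MK},
\begin{equation}
D_{z_1}\wc\cdots\wc D_{z_n} = K\!\left(\pi^n\, e^{\theta_1 J}z_1 \wc \cdots \wc e^{\theta_n J}z_n\right).
\end{equation}
Since each $e^{\theta_i J}$ is $\C$-linear (it is multiplication by $e^{i\theta_i}$ under $V\simeq\C^n$), $\C$-multilinearity of $\wc$ gives $e^{\theta_1 J}z_1 \wc \cdots \wc e^{\theta_n J}z_n = e^{i(\theta_1+\cdots+\theta_n)}(z_1\wc\cdots\wc z_n)$, whose norm equals $|z_1\wc\cdots\wc z_n|$ deterministically. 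Then $\ell = \pi^n |z_1\wc\cdots\wc z_n|$ and division by $n!$ yields the formula. Finally, (5) is the special case of (3) applied to $T = e^{\theta J}$, for which $\det_\C(T)=e^{i\theta}$ has modulus one; alternatively, represent $K_1=K(X)$, use $e^{\theta J}K_1 = K(e^{\theta J}X)$ from \cref{lemma:linear}, and observe that the extra factor $e^{i\theta}$ produced by $\C$-linearity of $\wc$ in the first slot cancels when taking $\|\cdot\|$.

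No step looks technically hard; the only point requiring care is (2), where one must be explicit that the Euclidean norm on $\Lambda^n_\C(\C^n)$ restricted to the real line $\R\cdot e_1\wc\cdots\wc e_n$ matches the standard norm on $\Lambda^n(\R^n)$, so that the length computed via \cref{thm:FTZCdet} in the real setting agrees with the one used in \cref{def:mixJ}.
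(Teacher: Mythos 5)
Your proposal is correct and follows essentially the same route as the paper: represent each zonoid by a random vector, pull scalars through the norm inside the expectation defining $\ell$, and for (2) reduce to the Lagrangian situation (your direct computation $v_1\wc\cdots\wc v_n=\det(v_1,\ldots,v_n)\, e_1\wc\cdots\wc e_n$ is equivalent to the paper's appeal to \cref{lemma:Ej} with $\sigma^J(E)=1$). One small inaccuracy: (5) is \emph{not} a special case of (3), since (3) applies $T$ to every argument whereas (5) rotates only the first one (and ${\det}_\C(e^{\theta J})=e^{in\theta}$, not $e^{i\theta}$); but the alternative you give, pulling the unit-modulus factor $e^{i\theta}$ out of the first slot before taking the norm, is exactly the paper's argument, so nothing is lost.
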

\begin{proof} Multilinearity of $\MV^J$ follows from the definition and \cref{thm:FTZC}. To see that $\MV^J$ is symmetric, given zonoids $K_1, \ldots, K_n$ in $V$, let $X_1, \ldots, X_n\in V$ be independent integrable random vectors such that %$\EE\|X_j\|<\infty$ and
$K_j=K(X_j)$. We have
	\begin{align} K_1\wc K_2\wc\cdots \wc K_n&=K(X_1\wc X_2\wc\cdots \wc X_n)\\
	&=K(-X_2\wc X_1\wc\cdots \wc X_n)\\
	&=K(X_2\wc X_1\wc\cdots \wc X_n)&\textrm{(by \cref{lemma:scaling})}\\
	&=K_2\wc K_1\wc\cdots \wc K_n.\end{align}
	The same argument gives symmetry in each pairs of variables. The fact that the mixed $J$--volume is monotonically increasing in each variable is a direct cosequence of the definition, \cref{thm:FTZC} and the monotonicity of the length (\cref{cor:monolength}) .

	Let us prove point (2). Let $K_1,\ldots,K_n\in \Z( \mathbb R^n)\subset \Z(\mathbb C^n)$ and let $X_1,\ldots,X_n\in\R^n$ be independent random (real) vectors such that $K_j=K(X_j)$, $1\leq j\leq n$. By \cref{def:mixJ} the mixed $J$--volume is
  $ \MV^J(K_1, \ldots, K_n)= \tfrac{1}{n!}\ell\left(K_1\wc \cdots \wc K_n\right) = \tfrac{1}{n!}\mean  \Vert X_1\wc \cdots \wc X_n\Vert$.
  Because the $X_j$ are real vectors,
  the span of the $X_j$ defines a \emph{Lagrangian plane} (a plane $E$ that is orthogonal to $JE$), and so
  we have  $\Vert X_1\wc \cdots \wc X_n\Vert = \Vert X_1\wedge \cdots \wedge X_n\Vert$,
  by \cref{lemma:Ej}.
  We get $\MV^J(K_1, \ldots, K_n) = \tfrac{1}{n!}\mean  \Vert X_1\wedge \cdots \wedge X_n\Vert$. We conclude from \cref{thm:FTZCdet} that the latter is equal to $\MV(K_1, \ldots, K_n)$. This finishes the proof of point (2).

	In order to prove point (3), let $K_1,\ldots,K_n\subset V$ be zonoids and let again $X_j\in V$ be random vectors such that $K_j=K(X_j)$. Then $TK_j=K(TX_j)$ and we have
	\begin{align} \MV^{J}(TK_1, \ldots, TK_n)&=\frac{1}{n!}\EE|TX_1\wc \cdots \wc TX_n|\\
	&=\frac{1}{n!}\EE|({\det}_\C(T))X_1\wc \cdots \wc X_n|\\
	&=\frac{1}{n!}|{\det}_\C(T)|\EE|X_1\wc \cdots \wc X_n|\\
	&=|{\det}_\C(T)|\MV^J(K_1, \ldots, K_n).\end{align}

	To show point (4) we use \cref{lemma:D} and write, for $\theta_1, \ldots, \theta_n\in [0,2\pi]$ independent and uniformly distributed:
	\begin{align} \MV^J(D_{z_1}, \ldots, D_{z_n})&=\frac{1}{n!}\ell(D_{z_1}\wc \cdots \wc D_{z_n})\\
	&=\frac{1}{n!}\ell\bigg(K(\pi e^{J\theta_1}z_1)\wc\cdots \wc K(\pi e^{J\theta_n}z_n)\bigg)\\
	&=\frac{1}{n!}\EE\left| (\pi e^{J\theta_1}z_1)\wc\cdots \wc (\pi e^{J\theta_n}z_n)\right|=\frac{\pi^n}{n!}|z_1\wc\cdots \wc z_n|,
	\end{align}
	which is what we wanted.

	Finally, to show the last item, let again $X_j\in V$ be random vectors such that $K_j=K(X_j)$. Then $e^{\theta J}K_j=K(e^{\theta J}X_j)$ and we have
	\begin{align} \MV^{J}(e^{\theta J}K_1, K_2, \ldots, K_n)&=\frac{1}{n!}\EE|e^{\theta J}X_1\wc X_2 \wc \cdots \wc X_n|\\
	&=\frac{1}{n!}\EE|X_1\wc \cdots \wc X_n|=\MV^{J}(K_1, \ldots, K_n).
	\end{align}
	This concludes the proof.
\end{proof}

\begin{remark}
	It is unknown to us if the mixed $J$--volume satisfies an \emph{Alexandrov--Fenchel inequality} and we leave this for future work.
\end{remark}

\subsection{Random complex determinants}
Here we state and prove a complex version of \cref{thm:genvitale} which gives a way to describe the expectation of the modulus of the determinant of a random complex  matrix with independent blocks (in \cref{ex:genvitale} we used \cref{thm:genvitale} to compute instead the expectation of the \emph{square} of the modulus of the determinant). To do that, mimicking the definition of \cref{section:ZonoidAlgebra}, we associate with each complex wedge product
$\wc\colon\Lambda_\C^d V\times \Lambda_\C^e V\to \Lambda_\C^{d+e} V$
the componentwise continuous bilinear map
\begin{equation}
\wc : \VZ\big(\Lambda_\C^d V\big)\times \VZ\big(\Lambda_\C^e V\big)\to \VZ\big(\Lambda_\C^{d+e} V\big).
\end{equation} induced from it by \cref{thm:FTZC}. We then have the following.

\begin{theorem}\label{thm:vitaleC}
Let $M=(M_{1}, \ldots, M_p)\in \C^{n\times n}$ be a random complex $n\times n$ matrix partitioned into blocks
$M_j$ of size~$n\times d_j$, with $d_1+\cdots +d_p=n$.
For $j=1, \ldots, n$, denote by $v_{j,1}, \ldots, v_{j,d_j}$ the columns of $M_j$ and
assume that
$Z_j=v_{j,1}\wedge \cdots \wedge v_{j, d_j}\in \Lambda_\C^{d_j}\C^n$ is integrable.
If the random vectors $Z_1\in\Lambda_\C^{d_1}(\C^n), \ldots, Z_p\in \Lambda_\C^{d_p}(\C^n)$ are independent, then:
$$ \EE |\det( M)|=\ell(K(Z_1)\wedge_\C \cdots\wedge_\C K(Z_p)).$$
In particular, if $p=n$ and $a_1=\cdots =a_p=1$, then
$$\EE |\det( M)| = n!\, \MV^J(K(Z_1),\ldots, K(Z_n)).$$
\end{theorem}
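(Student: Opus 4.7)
The plan is to mirror the argument for the real version (\cref{thm:genvitale}), replacing the real wedge product by the complex one. The key observation is that for an $n\times n$ complex matrix with columns $w_1,\dots,w_n\in\C^n$, one has
\[
\det(w_1,\dots,w_n)\;=\; w_1\wedge_{\C}\cdots\wedge_{\C} w_n
\]
under the natural identification $\Lambda^{n}_{\C}(\C^n)\simeq \C$, and the hermitian norm on $\Lambda^{n}_{\C}(\C^n)$ pulled back along this identification is the usual modulus. In particular, applying this to the columns of our blocked matrix and using associativity of the complex wedge,
\[
|\det M|\;=\;\bigl|v_{1,1}\wedge_{\C}\cdots\wedge_{\C} v_{p,d_p}\bigr|\;=\;\bigl\|Z_1\wedge_{\C}\cdots\wedge_{\C} Z_p\bigr\|.
\]

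Next I would invoke the induced-multilinear-map machinery. The complex wedge
\[
\wedge_{\C}\colon \Lambda^{d_1}_{\C} \C^n \times \cdots \times \Lambda^{d_p}_{\C} \C^n \longrightarrow \Lambda^{n}_{\C} \C^n
\]
is a \emph{real} multilinear map between Euclidean spaces (the Euclidean structures being the real parts of the hermitian structures), which is exactly the setting of \cref{thm:FTZC} and, crucially, of \cref{propo:MK}. Since the $Z_j$ are assumed to be independent and integrable, \cref{propo:MK} applied to this map yields
\[
K(Z_1)\wedge_{\C}\cdots\wedge_{\C} K(Z_p)\;=\;K\bigl(Z_1\wedge_{\C}\cdots\wedge_{\C} Z_p\bigr).
\]
Taking the length (\cref{mydef:length}) of both sides and combining with the pointwise identity above gives
\[
\ell\bigl(K(Z_1)\wedge_{\C}\cdots\wedge_{\C} K(Z_p)\bigr)\;=\;\mathbb{E}\bigl\|Z_1\wedge_{\C}\cdots\wedge_{\C} Z_p\bigr\|\;=\;\mathbb{E}|\det M|,
\]
which is the first asserted formula.

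For the specialization $p=n$, $d_j=1$, the vectors $Z_j$ are just the columns of $M$, and the right-hand side is by definition $n!\,\MV^J(K(Z_1),\dots,K(Z_n))$ (\cref{def:mixJ}). This yields the second formula.

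The only real content to check carefully is the initial identity $|\det M|=\|Z_1\wedge_{\C}\cdots\wedge_{\C} Z_p\|$; everything else is a direct transcription of the proof of \cref{thm:genvitale} with $\wedge$ replaced by $\wedge_{\C}$. The main obstacle, if any, is notational: one must be careful that the norm on $\Lambda^{n}_{\C}(\C^n)$ used in the definition of $\ell$ is indeed the modulus under the canonical isomorphism $\Lambda^{n}_{\C}(\C^n)\simeq \C$, which follows from the convention that the hermitian structure on the top complex exterior power is induced from the standard one on $\C^n$. Once that is in place, the rest of the argument is essentially a one-line computation plus an application of \cref{propo:MK}.
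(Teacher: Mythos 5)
Your proposal is correct and follows essentially the same route as the paper: the paper's (very terse) proof likewise reduces to the identity $|\det M|=|Z_1\wc\cdots\wc Z_p|$, the representation $K(Z_1)\wc\cdots\wc K(Z_p)=K(Z_1\wc\cdots\wc Z_p)$ coming from \cref{propo:MK}, and the definition of the length and of $\MV^J$. You merely spell out the intermediate steps more explicitly than the paper does.
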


\begin{proof}
Recall from \cref{def:mixJ} that
\be n!\,\MV^J (K(Z_1), \ldots, K(Z_n)) = \ell(K(Z_1)\wc \cdots \wc K(Z_n)).\ee
Moreover we have $\ell(K(Z_1)\wc \cdots \wc K(Z_n)) = \EE|Z_1\wc \cdots \wc Z_n|$.
The last term is equal to $\EE|\det (M)|$. This concludes the proof.
\end{proof}

\begin{remark}
	Notice that, in the case where $p=n$ and $d_1=\cdots =d_p=1$ and if the random matrix is almost surely real, \cref{thm:vitaleC} agrees with \cref{thm:genvitale}, since $\MV$ and $\MV^{J}$ coincide on \emph{real} zonoids; see \cref{propo:MVJ} (2).
\end{remark}

As an application of \cref{thm:vitaleC}, we compute the $J$--volume of balls.
\begin{corollary}\label{eq:Jvolball}The $J$--volume of the unit ball $B^{2n}\subset \C^{n}$ equals:
$$\vol_n^J(B^{2n})=\frac{(4\pi)^{n/2}}{n!}\prod_{j=1}^{n}\frac{\Gamma(j+\tfrac{1}{2})}{\Gamma(j)}.$$
\end{corollary}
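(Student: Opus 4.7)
By definition, $\vol_n^J(B^{2n}) = \tfrac{1}{n!}\,\ell\big(B^{2n}\wedge_\C \cdots \wedge_\C B^{2n}\big)$, and by \cref{eq:ball} we have $B^{2n}=K(\sqrt{2\pi}\,X)$, where $X$ is a standard Gaussian vector in $\R^{2n}\cong\C^n$. The plan is to take $n$ independent copies $X_1,\ldots,X_n$ of $X$, set $Z_j:=\sqrt{2\pi}\,X_j$, and apply \cref{thm:vitaleC} in the case $p=n$, $d_j=1$. This gives
\be
\vol_n^J(B^{2n}) \;=\; \MV^J\big(K(Z_1),\ldots,K(Z_n)\big) \;=\; \frac{1}{n!}\, \EE |\det M|,
\ee
where $M=[Z_1,\ldots,Z_n]\in\C^{n\times n}$ is the random complex matrix with columns $Z_j$. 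The task reduces to computing the expected modulus of $\det M$.

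Next, I would renormalize. Under the identification $\R^{2n}\cong\C^n$, the complex entries of $X_j$ have the form $a_{jk}+\sqrt{-1}\,b_{jk}$ with $a_{jk},b_{jk}$ independent standard real Gaussians, so each complex entry of $M=\sqrt{2\pi}[X_1,\ldots,X_n]$ satisfies $\EE|m_{jk}|^2=4\pi$. Writing $N:=M/\sqrt{4\pi}$, the matrix $N$ is a standard complex Gaussian $n\times n$ matrix, and $|\det M|=(4\pi)^{n/2}|\det N|$.

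The main calculation is to evaluate $\EE|\det N|$. I would use the classical Bartlett-type (QR) decomposition of a standard complex Gaussian matrix: writing $N=QR$ with $Q$ unitary and $R$ upper triangular with positive diagonal, the diagonal entries $r_{11},\ldots,r_{nn}$ are independent and $|r_{jj}|^2$ is distributed as a Gamma$(j,1)$ random variable $Y_j$ (equivalently, $\chi^2_{2j}/2$). Consequently $|\det N|^2=\prod_{j=1}^n Y_j$ in distribution, so
\be
\EE|\det N| \;=\; \prod_{j=1}^n \EE\sqrt{Y_j} \;=\; \prod_{j=1}^n \frac{\Gamma(j+\tfrac12)}{\Gamma(j)},
\ee
where the last equality is a direct gamma-integral. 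Substituting back yields
\be
\vol_n^J(B^{2n}) \;=\; \frac{1}{n!}\,(4\pi)^{n/2}\prod_{j=1}^n \frac{\Gamma(j+\tfrac12)}{\Gamma(j)},
\ee
as claimed.

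The only nontrivial ingredient is the distributional fact $|\det N|^2\overset{d}{=}\prod_{j=1}^n Y_j$ for a standard complex Gaussian matrix, which I would either cite as a standard result on complex Wishart matrices or derive in a line via the QR decomposition (the $j$-th diagonal of $R$ is the norm of the residual of the $j$-th Gram--Schmidt step of $N$, and the squared norm of a standard complex Gaussian in $\C^j$ is Gamma$(j,1)$). Everything else is bookkeeping and the application of \cref{thm:vitaleC}.
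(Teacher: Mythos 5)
Your proposal is correct and follows essentially the same route as the paper's proof: represent $B^{2n}$ as the Vitale zonoid of a scaled complex Gaussian vector, apply \cref{thm:vitaleC} to reduce to $\EE|\det|$ of a standard complex Gaussian matrix, and evaluate that expectation via the complex Wishart/Bartlett distributional identity $|\det N|^2\overset{d}{=}\prod_{j=1}^n \mathrm{Gamma}(j,1)$. The only cosmetic difference is that you obtain the normalization by rescaling the real Gaussian representation \cref{eq:ball}, whereas the paper computes the support function of $K(Z)$ for a standard complex Gaussian $Z$ directly using $U(n)$--invariance; both lead to the same constant $(4\pi)^{n/2}$.
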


Notice that applying \cref{eq:Jvolball} when $n=1$ we get $\vol_1^J(B^2)=\pi=\vol_2(B^2)$, but already when $n=2$ we get $\vol^J_2(B^4)=\frac{3\pi^2}{4}\neq \frac{\pi^2}{2}=\vol_4(B^4).$  In general $ \vol^J_n$ and $\vol_{2n}$ are different, starting by the fact that the first is homogeneous of degree $n$ while the other is of degree $2n$.

\begin{proof}[Proof of \cref{eq:Jvolball}]
Let $Z=(z_1, \ldots,z_n)\in \C^n$ be a random vector filled with
independent standard complex Gaussians
$z_j=\tfrac{1}{\sqrt{2}}(\xi_{j,1}+i\xi_{j,2})$, that is,
$\xi_{j,1}, \xi_{j,2},\ldots, \xi_{n,1}, \xi_{n,2}$ are independent standard real Gaussians.
We claim that
\be\label{eq:KBall} K(Z)=\frac{1}{2\sqrt{\pi}}B^{2n}.\ee
To see this, we compute the support function of $K(Z)$. Let $u\in \C^n$. By definition, we have $h_{K(Z)}(u)=\frac{1}{2}\EE|\langle Z, u\rangle|$. Using the $U(n)$--invariance of $Z$,
we can then assume that $u=\|u\|e_1$ where $e_1$ is the first vector of the standard basis of $\C^n$. We obtain
\begin{align}h_{K(Z)}(u)
=\frac{1}{2}\EE\left|\mathrm{Re}(Z^T\overline{u})\right|
=\frac{1}{2\sqrt{2}}\|u\|\, \EE |\xi_{1,1}|
=\frac{1}{2\sqrt{2}}\|u\| \sqrt{\frac{2}{\pi}}
=\frac{1}{2\sqrt{\pi}}\|u\|
&=h_{\frac{1}{2\sqrt{\pi}} B^{2n}}(u).
\end{align}
\cref{useful_properties_for_h} gives \cref{eq:KBall}.

 Let now $M\in \C^{n\times n}$ be a random complex matrix whose columns are i.i.d.\ copies of $Z$.
 Then, \cref{thm:vitaleC} gives
\be \label{eq:absC}\EE|\det (M)|=n!\,\MV^J\big(\tfrac{1}{2\sqrt{\pi}} B^{2n}, \ldots, \tfrac{1}{2\sqrt{\pi}} B^{2n}\big)=\frac{n!}{(2\sqrt{\pi})^n}\,\vol_n^J(B^{2n}).\ee
To conclude the proof, it suffices to verify that
$\EE|\det (M)| = \prod_{j=1}^{n}\Gamma(j+\tfrac{1}{2})/\Gamma(j)$.
Note that $|\det M|=\det(W)^{\frac{1}{2}}$ and
$W=MM^*$ is a complex Wishart matrix.
Following \cite[p.~83-84]{condition}, we see that $\det(W)$ is distributed as $\tfrac{1}{2^n}\chi_{2n}^2\cdot \chi_{2n-2}^2\cdots \chi_2^2$, where each $\chi_{2j}^2$ denotes a chi--square distribution
with ${2j}$ degrees of freedom and the $\chi_{2j}^2$ are independent. Therefore,
$|\det(M)|$ has the distribution $\frac{1}{2^{n/2}}\chi_{2n}\cdot \chi_{2n-2}\cdots \chi_2.$
Recall from~\cref{eq:rhodef} that $\EE\chi_{2j}=\frac{\sqrt{2}\Gamma(j+\frac{1}{2})}{\Gamma(j)}$.
Using independence, the assertion follows.
\end{proof}

\subsection{The extension of the  $J$--volume to polytopes}\label{sec:Jext}

In this section we show that it is possible to extend the notion of $J$--volume to polytopes in $\C^n\cong \R^{2n}$.
To do so, we develop an alternative formula for the $J$--volume of zonotopes that makes sense for any polytope (\cref{thm:voltheta}). The functional we obtain is a weakly continuous, translation invariant and $U(n)$--invariant valuation, see \cref{prop:volJisval}. However, as we will see, it is not possible to continuously extend the $J$--volume from polytopes to all convex bodies (see \cref{cor:noextofJ}).
We start by introducing some terminology.

As a first step, we will give in Proposition~\ref{prop:voltheta}
an alternative way of writing the $J$--volume of zonotopes.
This involves the following quantity.

\begin{definition}\label{def:sigmaJ}
	For every $E\in G(n,2n)$, we define
	\begin{equation}
	\sigma^J(E):=\left| e_1\wedge \cdots \wedge e_n\wedge Je_1\wedge \cdots \wedge Je_n\right|\in [0,1]
	\end{equation}
	where  $\{e_1, \ldots, e_n\}$ is an orthonormal basis of $E$.
\end{definition}

One can check that this definition does not depend on the choice of an orthonormal basis.
Moreover, %\begin{remark}\label{rk:Sigmainvar}
$\sigma^J$ is invariant under the action of $U(n)$ on $G(n,2n)$. 
Note that $\sigma^J(E)=1$ if and only if $E$ is \emph{Lagrangian}, i.e., if $E$ and $JE$ are orthogonal.
Moreover $\sigma^J(E)=0$ if and only if $E$ contains a complex line.

\begin{remark} Denoting by  $\theta_1(E)\leq \cdots \leq\theta_{\lfloor \frac{n}{2}\rfloor}(E)$ the \emph{K\"ahler angles} of $E$, introduced in \cite{tasaki}, one can easily verify that
\be 
\mbox{$
\sigma^J(E)=\prod_{j=1}^{\lfloor \frac{n}{2}\rfloor}(\sin \theta_i(E))^2.
$}
\ee
\end{remark}
In general, $\sigma^J(E)$ can be computed using the following lemma.

\begin{lemma}\label{lemma:Ej}
Let $z_1, \ldots, z_n\in \C^n$ be $\R$-linearly independent and denote by $E\in G(n,2n)$ its real span.
%$E:=\mathrm{span}_\R\{z_1, \ldots, z_n\}\subset\C^n$.
Then, writing $z_j=x_j+iy_j$  with $x_j, y_j\in \R^n$, we have
\be \label{eq:zx}|
	z_1\wc\cdots\wc z_n|=\left\|\begin{bmatrix} x_1\\y_1\end{bmatrix}\wedge\cdots \wedge \begin{bmatrix} x_n\\y_n\end{bmatrix}\right\|\cdot \sigma^J(E)^{\frac{1}{2}}.
\ee
\end{lemma}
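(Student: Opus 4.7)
The plan is to reduce both sides to the same quantity by choosing a real orthonormal basis of $E$ and tracking carefully how the complex wedge of those basis vectors compares to the real wedge of the real $2n$-tuple $(e_1,\dots,e_n,Je_1,\dots,Je_n)$. The key identity will be
\[
    |e_1 \wc \cdots \wc e_n| \;=\; \sigma^J(E)^{1/2},
\]
for any real orthonormal basis $\{e_1,\dots,e_n\}$ of $E$, and the lemma will follow by multilinearity.

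First I would split into two cases. If $E\cap JE\neq\{0\}$, then $\sigma^J(E)=0$ by \cref{def:sigmaJ} (the vectors $e_1,\dots,e_n,Je_1,\dots,Je_n$ are $\mathbb R$-linearly dependent), so the right-hand side vanishes. On the other hand the complex span of the $z_j$ is $E+JE$, which has complex dimension $<n$, so $z_1\wc\cdots\wc z_n=0$ and the left-hand side also vanishes. Hence both sides agree in this degenerate case.

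In the nondegenerate case $E\cap JE=\{0\}$ (equivalently, $E\oplus JE=\mathbb C^n$), I would choose a real orthonormal basis $\{e_1,\dots,e_n\}$ of $E$. Since each $z_j$ lies in $E$ (as a real vector, $z_j = (x_j,y_j)$), I can write $z_j=\sum_k b_{jk}e_k$ with $b_{jk}\in\mathbb R$. Real $\mathbb R$-multilinearity of $\wedge$ and $\mathbb C$-multilinearity of $\wc$ then give
\[
    \Bigl\|\begin{bmatrix}x_1\\y_1\end{bmatrix}\wedge\cdots\wedge\begin{bmatrix}x_n\\y_n\end{bmatrix}\Bigr\| = |\det(b_{jk})|,\qquad
    |z_1\wc\cdots\wc z_n| = |\det(b_{jk})|\cdot|e_1\wc\cdots\wc e_n|,
\]
so everything reduces to the displayed claim that $|e_1\wc\cdots\wc e_n|=\sigma^J(E)^{1/2}$.

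To establish the claim I would compare two determinants. Expressing $e_k=\sum_l A_{lk}\varepsilon_l$ in the standard complex basis $\{\varepsilon_l\}$ of $\mathbb C^n$, with $A=P+iQ\in\mathbb C^{n\times n}$, we get $|e_1\wc\cdots\wc e_n| = |\det_{\mathbb C} A|$. Passing to the real basis $\varepsilon_1,\dots,\varepsilon_n,J\varepsilon_1,\dots,J\varepsilon_n$ of $\mathbb R^{2n}$, the real $2n\times 2n$ matrix whose columns are $e_1,\dots,e_n,Je_1,\dots,Je_n$ has block form
\[
    B=\begin{pmatrix}P&-Q\\ Q&P\end{pmatrix},
\]
and by the standard identity $\det B = |\det_{\mathbb C}(P+iQ)|^2 = |\det_{\mathbb C} A|^2$ we obtain $\sigma^J(E) = |\det B| = |\det_{\mathbb C} A|^2$. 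Taking square roots yields $|e_1\wc\cdots\wc e_n|=\sigma^J(E)^{1/2}$, which closes the proof.

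The only conceptually delicate point is the block-determinant identity $\det\bigl(\begin{smallmatrix}P&-Q\\Q&P\end{smallmatrix}\bigr)=|\det_{\mathbb C}(P+iQ)|^2$; this is classical (conjugate by the unitary change of basis that diagonalises the $J$-action on $\mathbb R^{2n}\otimes\mathbb C$) and I would simply quote it. Everything else is bookkeeping.
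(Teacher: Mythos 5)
Your proof is correct and follows essentially the same route as the paper: both arguments come down to the classical block-determinant identity $\det\bigl(\begin{smallmatrix}P&-Q\\Q&P\end{smallmatrix}\bigr)=|\det_{\C}(P+iQ)|^2$ together with factoring the real $n$-fold wedge through an orthonormal basis of $E$. The only difference is organizational — you normalize to an orthonormal basis first and then apply the identity to the basis vectors, whereas the paper applies it directly to the $z_j$ and factors afterward; your separate degenerate case is harmless but not needed, since the main computation already yields $0=0$ there.
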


\begin{proof}
Consider the matrices $X=(x_1, \ldots, x_n)\in\mathbb R^{n\times n}$ and
$Y=(y_1, \ldots, y_n)\in\mathbb R^{n\times n}$ with the columns $x_j,y_j$.
One can check that %With this notation:
	$\det\left(\begin{smallmatrix} X&-Y\\Y&X\end{smallmatrix}\right)=|\det(X+iY)|^2$, see \cite[Lemma 5]{BLLP}.
	In particular, we can write
	\begin{align}
	|z_1\wc\cdots\wc z_n|^2=|\det(X+i Y)|^2
	&=\left|\det\begin{bmatrix} X&-Y\\Y&X\end{bmatrix}\right|\\
	&=\left|\begin{bmatrix} x_1\\y_1\end{bmatrix}\wedge\cdots \wedge\begin{bmatrix} x_n\\y_n\end{bmatrix}\wedge \begin{bmatrix} -y_1\\x_1\end{bmatrix}\wedge\cdots \wedge\begin{bmatrix} -y_n\\x_n\end{bmatrix}\right|\\
	&=\left\|\begin{bmatrix} x_1\\y_1\end{bmatrix}\wedge\cdots \wedge \begin{bmatrix} x_n\\y_n\end{bmatrix}\right\|^2\cdot \left| e_1\wedge \cdots \wedge e_n\wedge Je_1\wedge \cdots Je_n
	\right|,
	\end{align}
	where $\{e_1, \ldots, e_n\}$ is an orthonormal basis for $E$, and the last equality follows from:
	\be
	\begin{bmatrix} x_1\\y_1\end{bmatrix}\wedge\cdots \wedge\begin{bmatrix} x_n\\y_n\end{bmatrix}=\left\| \begin{bmatrix} x_1\\y_1\end{bmatrix}\wedge\cdots \wedge\begin{bmatrix} x_n\\y_n\end{bmatrix}\right\|\cdot e_1\wedge \cdots \wedge e_n.
	\ee
	The conclusion follows from \cref{def:sigmaJ}.
\end{proof}

We denote by $G(k,m)$ the Grassmannian of $k$-dimensional linear spaces in $\R^m$.
For a $k$-dimensional face $F$ of a polytope $P$ in $\R^m$, we denote by {$E_F\in G(k, m)$}
the vector space parallel to the affine span of $F$.
For $0\leq k\leq m$ we also define
\be \label{eq:zonoG}
G_k(P):=\{E\in G(k,m)\,|\, \textrm{there exists a $k$-dim.\ face $F$ of $P$ such that $E=E_F$}\}.
\ee
Let $E\in G_k(P)$.
In the case where $P=\sum_{j=1}^p \tfrac{1}{2}[-z_j, z_j]$ is a zonotope,
then all faces $F$ of $P$ such that $E_F=E$ are translates of the following ``vectorial'' face
(see McMullen~\cite{McMullen})
%a single well defined ``vectorial'' face
%(see McMullen~\cite{McMullen})
\begin{equation}\label{eq:facesmc}
	F_P(E):=\sum_{z_j\in E}\tfrac{1}{2}[-z_j, z_j]
\end{equation}
Where the sum runs over all $j$ such that $z_j\in E$. In this case, we have the following alternative description of $G_k(P)$: %in terms of the summands~$v_i$:
$$
G_k(P)=\{E\in G(k,d)\,|\,\textrm{there exist linearly independent $z_{j_1}, \ldots, z_{j_k}\in E$}\}.
$$

The next result gives an explicit expression of the $J$--volume of a zonotope.

\begin{proposition}\label{prop:voltheta}
Let $P\subset \C^n$  be a centered zonotope. Then
$$ \vol_n^J(P)=\sum_{E\in G_n(P)}\vol_n(F_P(E))\cdot\sigma^J(E)^{\frac{1}{2}}.$$
\end{proposition}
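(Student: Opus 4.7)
The plan is to expand $\vol_n^J(P)$ via multilinearity of the complex wedge product, identify each term as the modulus of a complex exterior product, and then regroup according to the Grassmannian faces, using Lemma \ref{lemma:Ej} to pick up the factor $\sigma^J(E)^{1/2}$.

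Write $P=\sum_{j=1}^p \tfrac{1}{2}[-z_j,z_j]$. By the $\R$-multilinearity of the induced map $\wc$ (\cref{thm:FTZC}), and the fact, also from \cref{thm:FTZC}, that
\[
\tfrac{1}{2}[-z_{j_1},z_{j_1}]\wc\cdots\wc\tfrac{1}{2}[-z_{j_n},z_{j_n}]
=\tfrac{1}{2}[-z_{j_1}\wc\cdots\wc z_{j_n},\,z_{j_1}\wc\cdots\wc z_{j_n}],
\]
whose length in $\Lambda^n_\C V\simeq \C$ equals $|z_{j_1}\wc\cdots\wc z_{j_n}|$, the linearity of $\ell$ (\cref{th:lengthLF}) gives
\[
n!\,\vol_n^J(P)=\ell(P^{\wc n})=\sum_{(j_1,\ldots,j_n)\in[p]^n}|z_{j_1}\wc\cdots\wc z_{j_n}|.
\]
Tuples with a repeated index contribute $0$, and the absolute complex wedge is invariant under permutations; collecting $n!$ identical summands yields
\[
\vol_n^J(P)=\sum_{1\leq j_1<\cdots<j_n\leq p}|z_{j_1}\wc\cdots\wc z_{j_n}|.
\]

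Next I would apply \cref{lemma:Ej} termwise. If $z_{j_1},\ldots,z_{j_n}$ are $\R$-linearly dependent, both $|z_{j_1}\wc\cdots\wc z_{j_n}|$ and $\|z_{j_1}\wedge\cdots\wedge z_{j_n}\|$ vanish; otherwise their real span is a well-defined element $E\in G_n(P)$ and
\[
|z_{j_1}\wc\cdots\wc z_{j_n}|=\|z_{j_1}\wedge\cdots\wedge z_{j_n}\|\cdot\sigma^J(E)^{1/2}.
\]
Grouping summands by the span $E=\Span_\R(z_{j_1},\ldots,z_{j_n})$ and using that every contributing tuple satisfies $z_{j_k}\in E$, while tuples in $E$ with smaller span contribute $0$ to the real wedge, we rewrite
\[
\vol_n^J(P)=\sum_{E\in G_n(P)}\sigma^J(E)^{1/2}\sum_{\substack{1\leq j_1<\cdots<j_n\leq p\\ z_{j_k}\in E}}\|z_{j_1}\wedge\cdots\wedge z_{j_n}\|.
\]

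Finally, the inner sum equals $\vol_n(F_P(E))$ by the classical volume formula for zonotopes in an $n$-dimensional space, applied to $F_P(E)=\sum_{z_j\in E}\tfrac{1}{2}[-z_j,z_j]$. This identity can either be cited from Shephard, or derived in-line by running exactly the same multilinear expansion as above but with the real wedge in $\Lambda^n V$, using \cref{thm:FTZCdet} to convert $\tfrac{1}{n!}\ell(F_P(E)^{\wedge n})$ into the desired sum. Substituting, we obtain the claimed equality.

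The only delicate bookkeeping is the reorganization of the sum by $E\in G_n(P)$: one must verify that each ordered subset $\{j_1<\cdots<j_n\}$ with nonzero wedge contributes to exactly one $E$ (namely $\Span_\R(z_{j_k})$), while degenerate subsets contribute $0$ on both sides. Everything else is a mechanical application of multilinearity, \cref{lemma:Ej}, and the zonotope volume formula.
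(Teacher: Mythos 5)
Your proof is correct and follows essentially the same route as the paper's: expand $\ell(P^{\wc n})$ by multilinearity into segment lengths $|z_{j_1}\wc\cdots\wc z_{j_n}|$, apply \cref{lemma:Ej} termwise, regroup by the real span $E\in G_n(P)$, and identify the inner sum with $\vol_n(F_P(E))$ via Shephard's formula. Your extra bookkeeping (repeated indices, permutation invariance, degenerate subsets vanishing on both sides) only makes explicit what the paper leaves implicit.
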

\begin{proof}
We write $P=\sum_{j=1}^p \tfrac{1}{2}[-z_j, z_j]$.
By definition, we have
$\vol_n^J(P)=\frac{1}{n!}\ell(P^{\wc n})$.
Using multilinearity and Theorem~\ref{thm:FTZC}, we can write
$$
  \Big(\sum_{j=1}^p\tfrac{1}{2}[-z_j, z_j]\Big)\wc\cdots \wc \Big(\sum_{j=1}^p\tfrac{1}{2}[-z_j, z_j] \Big) =
 \sum_{1 \le j_1,\ldots,j_n \le p}\tfrac{1}{2}[-w_{j_1,\ldots,j_n}, w_{j_1,\ldots,j_n}] ,
$$
where $w_{j_1,\ldots,j_n}:=z_{j_1}\wc \cdots \wc z_{j_n}$.
Therefore, using the linearity of the length,
$$
 \vol_n^J(P)=\frac{1}{n!}\sum_{1 \le j_1,\ldots,j_n \le p} |w_{j_1,\ldots,j_n}|
  = \sum_{j_1 <\ldots < j_n } |w_{j_1,\ldots,j_n}| .
$$
We may assume the sum runs only over the $j_1 <\ldots<j_n$ such that
the real span $E_{j_1, \ldots, j_n}$ of $z_{j_1}, \ldots, z_{j_n}$
has dimension~$n$.
We write $z_j= x_j + iy_j$ with $x_j,y_j\in\R^n$ and use
\cref{lemma:Ej} to obtain
$$
 \left|w_{j_1,\ldots,j_n}\right| =
 \left\|
\begin{bmatrix}
x_{j_1}\\y_{j_1}\end{bmatrix} \wedge\cdots \wedge \begin{bmatrix} x_{j_n}\\y_{j_n}
\end{bmatrix} \right\|\cdot \sigma^J(E_{j_1,\ldots, j_n})^{\frac{1}{2}} .
$$
Combining this and exchanging the order of summation, we arrive at
\begin{equation}\label{eq:sh}
 \vol_n^J(P)=\sum_{E\in G_n(P)}\sigma^J(E)^{\frac{1}{2}}\cdot \sum_{E_{j_1,\ldots, j_n}=E}
 \left\| \begin{bmatrix} x_{j_1}\\y_{j_1}\end{bmatrix} \wedge\cdots \wedge \begin{bmatrix} x_{j_n}\\y_{j_n}\end{bmatrix} \right\| ,
\end{equation}
where for fixed $E\in G_n(P)$, the second sum runs over all
$j_1< \ldots< j_n$ such that $E=E_{j_1, \ldots, j_n}$.
Shephard's formula \cite[Equation (57)]{Shephard}
applied to the zonoid $F_P(E)$ (see \eqref{eq:facesmc}) tells us that
\be
\sum_{E_{j_1,\ldots, j_n}=E}
 \left\| \begin{bmatrix} x_{j_1}\\y_{j_1}\end{bmatrix}\wedge\cdots\wedge
 \begin{bmatrix} x_{j_n}\\y_{j_n}\end{bmatrix} \right\|=\vol_n(F_P(E)) .
\ee
Substituting this into \cref{eq:sh} gives the statement.
\end{proof}

We now turn to a key result of this section. For this, we need to introduce more notation.
Let $F$ be a $k$-dimensional face of a polytope $P\subset\R^m$
and $N_P(F)$ denote its normal cone.
Note that $N_P(F)$ is contained in the orthogonal
complement $E_F^\perp\simeq \R^{m-k}$,
where we recall that $E_F$ denotes the vector space parallel to $F$.
We define the \emph{normal angle} of $P$ at $F$ as
\be\label{def_theta}
\Theta_P(F):=\frac{\vol_{m-k-1}(N_P(F)\cap S^{m-1})}{\vol_{m-k-1}(S^{m-k-1})}.
\ee
The $J$--volume of zonotopes can now be expressed as follows.

\begin{theorem}\label{thm:voltheta}
	Let $P\subset \C^n$  be a zonotope. Then
\be \label{eq:vnP} \vol_n^J(P)=\sum_{F\in \mathcal{F}_n(P)}\vol_n(F) \cdot \Theta_P(F)\cdot \sigma^J(E_F)^{\frac{1}{2}},\ee
where $\mathcal{F}_n(P)$ denotes the set of $n$--dimensional faces of $P$.
\end{theorem}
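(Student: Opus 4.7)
The plan is to derive \cref{eq:vnP} from \cref{prop:voltheta} by reorganizing the sum over $E\in G_n(P)$ as a sum over $n$-dimensional faces. Since all quantities appearing in \cref{eq:vnP} are translation invariant (volumes of faces, normal angles, and $\sigma^J(E_F)$), I would first reduce to the case where $P$ is centered, in which case \cref{prop:voltheta} gives
$$
\vol_n^J(P)=\sum_{E\in G_n(P)}\vol_n(F_P(E))\cdot\sigma^J(E)^{\frac{1}{2}}.
$$
The goal is then to show that, for each fixed $E\in G_n(P)$,
\begin{equation}\label{eq:keyidentity}
\vol_n(F_P(E))\cdot\sigma^J(E)^{\frac{1}{2}}=\sum_{\substack{F\in\mathcal{F}_n(P)\\ E_F=E}}\vol_n(F)\cdot\Theta_P(F)\cdot\sigma^J(E_F)^{\frac{1}{2}}.
\end{equation}
Summing \cref{eq:keyidentity} over $E\in G_n(P)$ and noting that every $n$-dimensional face $F$ of $P$ satisfies $E_F\in G_n(P)$ would give exactly \cref{eq:vnP}.

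Next I would use the standard description of the face structure of a centered zonotope $P=\sum_{j=1}^p\tfrac{1}{2}[-z_j,z_j]$ (as in McMullen, cf.\ \cref{eq:facesmc}): for every $E\in G_n(P)$, every $n$-dimensional face $F$ of $P$ with $E_F=E$ is a translate of the vectorial face $F_P(E)$. Consequently $\vol_n(F)=\vol_n(F_P(E))$ and $\sigma^J(E_F)=\sigma^J(E)$ for all such $F$, and \cref{eq:keyidentity} reduces to the combinatorial identity
\begin{equation}\label{eq:sumangles}
\sum_{\substack{F\in\mathcal{F}_n(P)\\ E_F=E}}\Theta_P(F)\ =\ 1.
\end{equation}

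The heart of the proof, and the main obstacle, is establishing \cref{eq:sumangles}. I would prove it by analyzing the normal fan of $P$ restricted to $E^\perp$. Specifically, a face $F$ of $P$ with $E_F=E$ is selected by a direction $u\in E^\perp$ such that $\langle u,z_j\rangle\neq 0$ for all $z_j\notin E$; as $u$ varies over the chambers of the hyperplane arrangement in $E^\perp$ defined by $\{u\in E^\perp:\langle u,z_j\rangle=0\}_{z_j\notin E}$, one obtains exactly the $n$-faces parallel to $E$, and the closure of the corresponding chamber is precisely the normal cone $N_P(F)\subset E^\perp$. Equivalently, the orthogonal projection $\pi\colon \mathbb{C}^n\to E^\perp$ maps $P$ to a zonotope $\pi(P)\subset E^\perp$ whose vertices are in bijection with these faces, and $N_P(F)=N_{\pi(P)}(\pi(F))$ as cones in $E^\perp$. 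Since the normal cones at the vertices of any polytope tile the ambient space up to a set of measure zero and the normalization in \cref{def_theta} divides by $\vol_{n-1}(S^{n-1})$ — which is exactly the volume of the unit sphere in $E^\perp\simeq\R^n$ — summing the normalized solid angles gives $1$, as claimed.

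Once \cref{eq:sumangles} is in hand, plugging back into \cref{eq:keyidentity} and summing over $E\in G_n(P)$ yields \cref{eq:vnP}, completing the proof. The delicate point throughout is the bookkeeping between faces of $P$ and chambers in $E^\perp$, which I would present carefully using the explicit decomposition $P=F_P(E)+\pi(P)$ valid for zonotopes.
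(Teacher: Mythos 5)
Your proof is correct and follows essentially the same route as the paper: both reduce, via \cref{prop:voltheta} and McMullen's description of the faces of a zonotope as translates of the vectorial faces $F_P(E)$, to the identity $\sum_{E_F=E}\Theta_P(F)=1$, which is then established by showing that the normal cones of the $n$-faces parallel to $E$ tile $E^\perp$ up to a set of measure zero (the paper argues this by showing $\dim(P^u)=n$ for almost every $u\in S^{n-1}_{E^\perp}$, you via the normal fan of the projected zonotope $\pi(P)$ — the same idea). One caveat on your closing remark: the decomposition $P=F_P(E)+\pi(P)$ is false in general, since the generators $z_j\notin E$ need not lie in $E^\perp$ (the correct identity is $P=F_P(E)+\sum_{z_j\notin E}\tfrac{1}{2}[-z_j,z_j]$); your argument does not actually rely on it, but you should not present the bookkeeping this way.
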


\begin{proof}
We will prove that the right hand side in this theorem is equal to the right hand side in \cref{prop:voltheta}.
Let $z_1, \ldots, z_p\in \C^n$ be such that $P=\sum_{j=1}^p \tfrac{1}{2}[-z_j,z_j]$ and let $E\in G_{n}(P)$.
As we discussed at the beginning of this subsection (see~\cref{eq:facesmc}), all the faces $F$ of $P$ such that $E_F=E$
are translates of the vectorial face $ F_P(E)=\sum_{z_j\in E}\tfrac{1}{2}[-z_j,z_j]$, we can thus write:
\begin{align}\label{eq:idi}
 \sum_{F\in \mathcal{F}_n(P)}\vol_n(F)\Theta_P(F) \sigma^J(E_F)^{\frac{1}{2}}&=\sum_{E\in G_{n}(P)}\vol_n(F_P(E))\sigma^J(E_F)^{\frac{1}{2}}\sum_{E_F=E}\Theta_P(F).
\end{align}
It remains to prove that for every $E\in G_n(P)$ we have $\sum_{E_F=E}\Theta_P(F)=1$.

To this end, given $E\in G_{n}(P)$, pick a nonzero $u\in E^\perp.$ Then, the set
$$P^u:=\{x\in K\,|\, h_P(u)=\langle u, x\rangle\}$$
is a face of $K$ and therefore it equals a translate of a vectorial face (see~\cref{eq:facesmc}).
More precisely, there is $v_u\in \C^n$ such that:
\begin{equation}
 P^u=v_u+\sum_{z_j\in u^\perp}\tfrac{1}{2}[-z_j,z_j],
\end{equation}
 In addition, if $F$ is a face of $P$ such that $E_F=E$,
 $F$ is a translate of $\sum_{v_j\in E}\tfrac{1}{2}[-v_j, v_j]$.
 Since $E\subset u^\perp$, the face $P^u$ contains a translate of $F$.
 Moreover, $\dim(F)=n$ and it follows that $\dim(P^u)\geq n$ which implies $\dim(N_{K}(K^u))\leq n$.
 In other words we proved that if  $E\in G_{n}(K)$ and $u\in E^\perp$, then $\dim(N_{P}(P^u))\leq n$.

 We now show that for almost all $u\in E^\perp$ we have  $\dim(N_{P}(P^u))= n$. Indeed, for this it is enough to write $E^\perp\subseteq \bigcup_{u\in E^\perp}N_K(K^u)$, thus the set $\{u\in E^\perp \mid \dim(N_P(P^u))<n\}$ is contained in a finite union of cones of dimension at most $n-1$.

Let now $S_{E^\perp}^{n-1}$ be the unit sphere in $E^\perp$. Denote by $\mathcal{F}$ the set of faces $F$ of $P$
such that $E_F\supset E$ and $\dim(N_P(F))<n$. Then, by the above reasoning,
\be
\{u\in S_{E^\perp}^{n-1}\,|\, \dim(P^u)<n\}\subseteq \bigcup_{F\in \mathcal{F}}N_P(F)\cap S_{E^\perp}^{n-1}.
\ee
Each set $N_P(F)\cap S_{E^\perp}^{n-1}$ with $F\in \mathcal{F}$ has dimension at most $n-2$.
Since the set $\mathcal{F}$ is finite, it implies, as above, that $ \{u\in S_{E^\perp}^{n-1}\,|\, \dim(P^u)<n\}$
is contained in a finite union of sets of dimension at most $n-2$, and in particular it has measure zero in $S_{E^\perp}^{n-1}$.
It follows that
$\{u\in S_{E^\perp}^{n-1}\,|\, \dim(P^u)=n\}\subset S_{E^\perp}^{n-1}$ has full measure.
Letting $u$ vary in $S_{E^\perp}^{n-1}$  the set $\{P^u\}$ exhausts all $n$--dimensional faces $F$ with $E_F=E$ and therefore:
\be \sum_{E_F=E}\Theta_P(F)=
\sum_{E_F=E}
\frac{\vol_{2n-1}(N_P(F)\cap S^{2n-1})}{\vol_{n-1}(S^{n-1})} = \frac{\vol_{n-1}(S^{n-1})}{\vol_{n-1}(S^{n-1})} = 1.\ee
This concludes the proof.
\end{proof}

We now note that the formula in \cref{thm:voltheta} still makes sense for polytopes that are not zonotopes.
We use this to define the $J$--volume on polytopes.

\begin{definition}\label{def:volJonP}
	Let $P$ be polytope in $\C^n$. We define its $J$--volume to be
	\begin{equation}
	\vol_n^J(P) := \sum_{F\in \mathcal{F}_n(P)}\vol_n(F) \cdot \Theta_P(F)\cdot \sigma^J(E_F)^{\frac{1}{2}}
	\end{equation}
	where $\mathcal{F}_n(P)$ denotes the set of $n$--dimensional faces of $P$.
\end{definition}

We next study the $J$--volume
in the framework of the theory of valuations on polytopes.
Let us first recall the notion of a valuation.
We denote by $\cP(V)$ the set of polytopes in a finite dimensional real vector space~$V$.

\begin{definition}[Valuation] \label{def:val}
A function $\nu:\cP(V)\to \R$ is called a valuation on $\cP(V)$ if
\be
 \nu(K\cup L)+\nu(K\cap L)=\nu(K)+\nu(L)
\ee
for every pair of convex polytopes $K, L\in \cP(V)$ such that $K\cup L$ is still a polytope (an analoguous definition applies for valuations on $\KK(V)$).

We call $\nu$ continuous if it is continuous with respect to the Hausdorff metric.
We say that the valuation $\nu$ is \emph{$k$--homogeneous}
if $\nu(\lambda K)=\lambda^k\nu(K)$ for every $K\in \mathcal{P}(V)$ and $\lambda>0$.
If a group~$G$ acts on $\cP(V)$, the valuation $\nu$
is said to be \emph{$G$--invariant} if $\nu(gK)=\nu(K)$ for all~$K\in \cP(V)$ and $g\in G$.
\end{definition}

Let us also define the notion of \emph{weak continuity} of a valuation on polytopes.
This corresponds to continuity of the valuation on the set of polytopes that share the same given set of normals.
More precisely,
a valuation $\nu\colon\cP(V)\to \R$ is said to be \emph{weakly continuous}
if for every finite set $U=\{u_1, \ldots, u_r\}\subset V$ of unit vectors
positively spanning $V$, i.e., $\sum_i \R_+ u_i =V$,
the function
\begin{equation}
	(\eta_1, \ldots, \eta_r)\mapsto \nu\left(\{v\in V\,|\, \langle v, u_i\rangle \leq \eta_i, \, i=1, \ldots, r\}\right)
\end{equation}
is continuous on the set $(\eta_1,\ldots, \eta_r)$ for which the argument of $\nu$ is nonempty.
One can check that a continuous valuation $\nu:\cP(V)\to \R$ is weakly continuous. The general form of weakly continuous, translation invariant valuations on $\mathcal{P}(V)$
was described in~\cite{McMWeakVal}.

We now show some properties of the $J$--volume.
\begin{proposition}\label{prop:volJisval}
The $J$--volume has the following properties.
\begin{enumerate}
\item
	The map $\vol_n^J:\cP(\C^n)\to \R$ is a weakly continuous, translation invariant valuation.
\item The valuation $\vol_{n}^J$ is $n$--homogeneous and $U(n)$--invariant.
\item Let $P\subset\R^n\subset\C^n$ be a polytope. Then $\vol_{n}^{J}(P)=\vol_{n}(P)$.
\end{enumerate}
\end{proposition}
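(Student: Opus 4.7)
I will handle the three claims in the order (2), (3), (1), since the first two are immediate inspections of \cref{def:volJonP}, while (1) contains the main content.

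For (2), $n$--homogeneity is immediate since scaling $P$ by $\lambda>0$ multiplies each $\vol_n(F)$ by $\lambda^n$, while $\Theta_{\lambda P}(\lambda F)=\Theta_P(F)$ and $\sigma^J(E_{\lambda F})=\sigma^J(E_F)$ depend only on directions. For $U(n)$--invariance, any $U\in U(n)$ is a real isometry commuting with $J$, and therefore preserves $\vol_n$, normal cones, and (by \cref{def:sigmaJ}) also $\sigma^J$. For (3), if $P\subset\R^n\subset\C^n$ has $\dim P<n$ then $P$ has no $n$--dimensional faces, so $\vol_n^J(P)=0=\vol_n(P)$; we may therefore assume $\dim P=n$. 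Then the unique $n$--dimensional face is $P$ itself with $E_P=\R^n$. Since $\R^n$ is Lagrangian ($\R^n\perp J\R^n$), the vectors $e_1,\ldots,e_n,Je_1,\ldots,Je_n$ form an orthonormal basis of $\C^n$, so $\sigma^J(\R^n)=1$. The normal cone satisfies $N_P(P)=E_P^\perp=J\R^n$ (every vector in $J\R^n$ pairs trivially with $P\subset\R^n$), giving $\Theta_P(P)=1$; hence $\vol_n^J(P)=\vol_n(P)$.

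For (1), translation invariance is immediate from \cref{def:volJonP}. To handle the valuation property and weak continuity, the plan is to factor $\vol_n^J$ through a measure-valued functional. Define, for every polytope $P\subset\C^n$, the finite positive Borel measure on the Grassmannian $G(n,2n)$
\[
\mu_n(P):=\sum_{F\in\mathcal{F}_n(P)}\vol_n(F)\,\Theta_P(F)\,\delta_{E_F},
\]
so that $\vol_n^J(P)=\int_{G(n,2n)}\sigma^J(E)^{1/2}\,d\mu_n(P)(E)$. The integrand is continuous ($\sigma^J$ is continuous by \cref{def:sigmaJ}, hence so is its square root), so it suffices to show that $P\mapsto\mu_n(P)$ is a translation invariant, weakly continuous, measure-valued valuation on $\cP(\C^n)$. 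Weak continuity is transparent: fixing the normal fan of $P$, each $\vol_n(F)$ varies continuously in the face-defining parameters while $\Theta_P(F)$ and $E_F$ remain constant.

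The main obstacle will be the measure-valued valuation identity $\mu_n(K\cup L)+\mu_n(K\cap L)=\mu_n(K)+\mu_n(L)$, which requires a combinatorial match of the $n$--dimensional faces of $K,L,K\cup L,K\cap L$ together with the bookkeeping of their normal angles. I would invoke McMullen's general framework~\cite{McMWeakVal} for weakly continuous translation invariant valuations on polytopes, of which this Grassmannian-weighted face measure is an instance; integrating against a continuous function on $G(n,2n)$ then yields a weakly continuous translation invariant real-valued valuation on $\cP(\C^n)$, completing (1).
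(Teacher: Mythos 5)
Your proposal is correct and follows essentially the same route as the paper: items (2) and (3) are the same direct inspections of \cref{def:volJonP} (with $\sigma^J(\R^n)=1$ and $\Theta_P(P)=1$), and item (1) is ultimately reduced, as in the paper, to McMullen's theorem on angular valuations \cite{McMWeakVal}. The only loose point is your remark that weak continuity is ``transparent'' because the normal fan is fixed --- as the support parameters $\eta_i$ vary, faces can degenerate and normal cones merge, so the face data is not locally constant --- but since you defer to McMullen's framework for the full package (valuation property and weak continuity of angular valuations), this does not leave a gap.
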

\begin{proof}
The first item follows from \cite[Theorem~1]{McMWeakVal}.
The second item follows from the $U(n)$-invariance of $\sigma^J$
and \cref{def:volJonP}. For the third item, if $P$ is of dimension less than~$n$, both volumes are zero and there is nothing to prove.
	If $\dim(P)=n$, its only face of dimension $n$ is $P$ itself and $E_P=\R^n$.
	Moreover $\sigma^J(\R^n)=1$. Finally since $N_P(P)=(\R^n)^\perp$ we have $\Theta_P(P)=1$.
	The claim follows with \cref{def:volJonP}.
\end{proof}

The valuation $\vol_{n}^{J}$ is a special case of a so called \emph{angular valuation}, see~\cite{Wannerer}.
Let $V$ be a Euclidean space of (real) dimension $m$ and let $\varphi : G(k, m)\to \R$ be a measurable function.
The {\em associated angular valuation} $\nu_\varphi$ on $\mathcal{P}(V)$ is defined by
\begin{equation}
		\nu_\varphi(P):=\sum_{F\in \mathcal{F}_k(P)}\vol_k(F) \cdot \Theta_P(F)\cdot \varphi(E_F) ,
\end{equation}
where $\mathcal{F}_k(P)$ denotes the set of $k$--dimensional faces of a polytope $P\in\mathcal{P}(V)$.
It is known~\cite{McMWeakVal} that
$\nu_\varphi\colon \mathcal{P}(V)\to \R$
is a weakly continuous valuation.

The possibility of continuously extending an angular valuation from polytopes to convex bodies
was studied by Wannerer. The following is~\cite[Theorem 1.2]{Wannerer}.
For its statement, we recall that $G(k,m)$ can be seen as a subset of $\mathbb P(\Lambda^k V)$ via the Plücker embedding.

\begin{proposition}\label{prop:wannerer}
The angular valuation $\nu_\varphi\colon \cP(V) \to \R$ can be extended to a continuous valuation
on $\KK(V)$, if and only if $\varphi$ is the restriction to $G(k,m)$
of a homogeneous quadratic polynomial on~$\Lambda^k V$. \qed
\end{proposition}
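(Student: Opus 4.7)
Since this is attributed to Wannerer, the genuine argument is in \cite{Wannerer}; I will only sketch the strategy I would take if I were attacking it independently. For the sufficiency direction, suppose $\varphi$ is the restriction to $G(k,m)$ of a homogeneous quadratic polynomial $q$ on $\Lambda^k V$. My plan is to build an explicit continuous valuation $\tilde{\nu}_q$ on $\KK(V)$ that agrees with $\nu_\varphi$ on polytopes. A quadratic form on $\Lambda^k V$ is the same data as a symmetric bilinear form $B$, and such a $B$ pairs naturally with the $k$-th area measure of a convex body (or, equivalently, with an iterated mixed-volume functional). This yields a continuous, translation-invariant, $k$-homogeneous valuation on $\KK(V)$; evaluating on a polytope, one recovers the sum over $k$-faces in \cref{def:volJonP} by combining Shephard's identity for the vectorial face with the observation that $q$ evaluated on the simple vector $e_1 \wedge \cdots \wedge e_k$ for an orthonormal basis of $E_F$ returns exactly $\varphi(E_F)$.

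For the necessity direction, suppose $\nu_\varphi$ admits such a continuous extension $\tilde{\nu}$. Then $\tilde{\nu}$ is an even, continuous, translation-invariant, $k$-homogeneous valuation on $\KK(V)$, and by Klain's theorem it is determined by its Klain function $\kappa\colon G(k,m) \to \R$, obtained by restriction to convex bodies lying in $k$-planes. Reading off the defining formula for $\nu_\varphi$ on a $k$-dimensional polytope $P \subset E$ (where $F = P$ and $\Theta_P(P) = 1$) identifies $\kappa = \varphi$. The task is then to pin down which $\varphi$ can arise as such a Klain function. My plan is to invoke the structure theory of translation-invariant valuations (Alesker's irreducibility theorem and the description of the image of the Klain embedding via the cosine transform on the Grassmannian) to conclude that $\varphi$ must be the restriction of a quadratic polynomial on $\Lambda^k V$, viewing $G(k,m) \subset \PP(\Lambda^k V)$ through Pl\"ucker.

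The main obstacle is the necessity direction: characterizing precisely which functions on $G(k,m)$ lie in the image of the Klain embedding. This is where Wannerer's original work does the serious labor, and one would need either to invoke Alesker irreducibility directly or to decompose $L^2(G(k,m))$ into $O(m)$-isotypic components and match the image of the cosine transform with the quadratic polynomials on $\Lambda^k V$. By contrast, the sufficiency direction is essentially a verification once the right integral-geometric extension is written down.
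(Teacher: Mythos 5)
The paper does not prove this proposition at all: it is quoted verbatim as \cite[Theorem 1.2]{Wannerer} and stated with a \qed, so the "paper's proof" is simply the citation. Your decision to defer to Wannerer therefore matches the paper's treatment exactly, and to that extent there is nothing to compare.

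Taken on its own terms, though, your sketch is not a proof and the gap sits exactly where you say it does. The sufficiency half is plausible but unexecuted: you would still need to exhibit the continuous valuation attached to a quadratic form $q$ on $\Lambda^k V$ and verify that its restriction to polytopes reproduces the angular formula $\sum_F \vol_k(F)\,\Theta_P(F)\,\varphi(E_F)$, which is a nontrivial computation (the pairing of a bilinear form on $\Lambda^k V$ with ``the $k$-th area measure'' is not literally well-defined, since area measures live on the sphere rather than on the Grassmannian; one needs the normal-cycle or flag-measure formulation to make this precise). The necessity half is where the theorem actually lives: after reducing to the Klain function $\kappa=\varphi$ (which is fine, modulo checking evenness so that Klain's injectivity theorem applies), the claim that the Klain functions of continuous extensions of \emph{angular} valuations are precisely the restrictions of quadratic polynomials is the entire content of Wannerer's work, and ``invoke Alesker irreducibility and the cosine transform'' is a research program, not an argument. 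Since the present paper only \emph{uses} the proposition (in \cref{cor:noextofJ}) and never reproves it, citing \cite{Wannerer} is the correct and intended resolution; if you wanted a self-contained proof you would have to supply the harmonic-analytic characterization of the Klain image that your last paragraph defers.
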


If $n=1$, then $\sigma^J$ is the function which is constant and equal to 1. The previous proposition implies that in this case we can extend $\vol_1^J$ to a continuous valuation
on $\KK(V)$. If $n\geq 2$, however, this is not possible as we will show next.

\begin{corollary}\label{cor:noextofJ}
If $n\geq 2$, there is no continuous valuation on $\KK(\C^n)$ that is equal to $\vol_n^J$ on~$\mathcal{P}(\C^n)$.
\end{corollary}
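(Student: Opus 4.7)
The plan is to apply Wannerer's theorem, \cref{prop:wannerer}. By \cref{def:volJonP}, the restriction of $\vol_n^J$ to polytopes is exactly the angular valuation $\nu_\varphi$ associated to the function $\varphi:=\sqrt{\sigma^J}\colon G(n,2n)\to\R$. Any continuous valuation on $\KK(\C^n)$ agreeing with $\vol_n^J$ on $\cP(\C^n)$ would extend $\nu_\varphi$, so \cref{prop:wannerer} would force $\sqrt{\sigma^J}$ to be the restriction to $G(n,2n)\subset\PP(\Lambda^n\C^n)$ (via the Pl\"ucker embedding) of a homogeneous quadratic polynomial on $\Lambda^n\C^n$. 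Hence the entire proof reduces to ruling this possibility out for $n\ge 2$.

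To this end I would exhibit a smooth one-parameter family $E(t)\in G(n,2n)$ whose unit simple Pl\"ucker vector $\omega_{E(t)}$ is a trigonometric polynomial of degree~$1$ in $t$, while $\sqrt{\sigma^J(E(t))}$ fails to be smooth in $t$. If $\sqrt{\sigma^J}=p\vert_{G(n,2n)}$ for a homogeneous quadratic polynomial $p$ on $\Lambda^n\C^n$, then $t\mapsto p(\omega_{E(t)})$ would be a trigonometric polynomial of degree at most $2$, in particular $C^\infty$ in $t$, producing the desired contradiction.

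The base case is $n=2$. With basis $e_1,e_2,Je_1,Je_2$ of $\C^2\simeq\R^4$, I take
$$
 E(t):=\Span\{e_1,\ \cos(t)\,e_2+\sin(t)\,Je_1\}\in G(2,4).
$$
The two spanning vectors are orthonormal, so $\omega_{E(t)}=\cos(t)\,e_1\wedge e_2+\sin(t)\,e_1\wedge Je_1$ is a unit simple $2$-vector and a degree-$1$ trigonometric polynomial in $t$. A direct expansion of \cref{def:sigmaJ}, using $Je_1\wedge Je_1=0$ and $e_1\wedge e_1=0$ to kill all cross terms, gives $\sigma^J(E(t))=\cos^2(t)$, hence $\sqrt{\sigma^J(E(t))}=|\cos(t)|$, which is not differentiable at $t=\pi/2$. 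This disposes of the case $n=2$.

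For $n\ge 3$, I reduce to $n=2$ via the orthogonal complex decomposition $\C^n=\C^2\oplus\C^{n-2}$. Let $E_0:=\R^{n-2}\subset\C^{n-2}$ be a Lagrangian plane, so that $\sigma^J(E_0)=1$ (immediate from \cref{def:sigmaJ}, since $e_1,\dots,e_{n-2},Je_1,\dots,Je_{n-2}$ is an orthonormal basis of $\C^{n-2}$). Setting $E(t):=E_1(t)\oplus E_0$ with $E_1(t)\subset\C^2$ as in the previous paragraph, the multiplicativity $\sigma^J(F_1\oplus F_2)=\sigma^J(F_1)\sigma^J(F_2)$ for orthogonal complex splittings $V=V_1\oplus V_2$ with $F_j\subset V_j$ follows from a routine reordering of wedge products, using that each $V_j$ is $J$-invariant so that $\omega_{F_j}\wedge J\omega_{F_j}\in\Lambda^{2\dim F_j}V_j$ and these factor orthogonally in the top form of $V$. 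This yields $\sigma^J(E(t))=\cos^2(t)$, while $\omega_{E(t)}=\omega_{E_1(t)}\wedge\omega_{E_0}$ remains a degree-$1$ trigonometric polynomial in $t$, so the same non-smoothness argument concludes.

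The only substantial computations are the expansion giving $\sigma^J(E(t))=\cos^2(t)$ and the multiplicativity of $\sigma^J$ under orthogonal complex direct sums; both are elementary unrollings of \cref{def:sigmaJ}, so no real obstacle remains beyond carrying them out.
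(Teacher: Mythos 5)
Your proposal is correct and follows essentially the same route as the paper: reduce via Wannerer's theorem (\cref{prop:wannerer}) to showing that $(\sigma^J)^{1/2}$ cannot be the restriction of a homogeneous quadratic polynomial on $\Lambda^n\R^{2n}$, and refute this on a one-parameter family of $n$-planes rotating a Lagrangian plane toward one containing a complex line, along which $(\sigma^J)^{1/2}(w(t))=\vert\cos t\vert$ while a quadratic polynomial evaluated on the degree-one trigonometric curve $w(t)$ would be a degree-two trigonometric polynomial. The paper writes a single such curve $w(\theta)=(\cos\theta\, e_1+\sin\theta\, Je_2)\wedge e_2\wedge\cdots\wedge e_n$ valid for all $n\geq 2$ and concludes by comparing $a\cos^2\theta+b\cos\theta\sin\theta+c\sin^2\theta$ with $\cos\theta$, whereas you reduce to $n=2$ via multiplicativity of $\sigma^J$ over orthogonal $J$-invariant splittings and conclude by non-differentiability of $\vert\cos t\vert$; these differences are cosmetic.
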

\begin{proof}
Using the notation of \cref{prop:wannerer}, we have $\vol_n^J=\nu_{(\sigma^J)^{1/2}}$.
We identify $\C^n\cong\R^{2n}$ and let $J$ be the standard complex structure on it.
Consider the homogeneous quadratic polynomial
$
 p\colon \Lambda^n\R^{2n}\to \Lambda^{2n}\R^{2n}, \, w \mapsto w\wedge J w .
$
From \cref{def:sigmaJ}, $\sigma^J(w) = \vert p(w)\vert $ for
$w\in G(n,2n)$ (in the Pl\"ucker embedding).
Suppose there were a homogeneous quadratic polynomial
$q\colon \Lambda^n\R^{2n}\to \R$ such that we have
$\vert p(w)\vert^\frac12 =q(w)$ for all $w\in G(n,2n)$.
Let us show that this leads to a contradiction,
which will complete the proof by~\cref{prop:wannerer}. First of all, we notice that $q(w)$ must be a nonnegative polynomial and that we have $\vert p(w)\vert  = q(w)^2$ on $G(n,2n)$.

Let $e_1,\ldots,e_n\in\mathbb C^n$ be the standard basis, so that $\vert e_1\wedge \cdots \wedge e_n\wedge Je_1\wedge \cdots \wedge Je_n\vert = 1$. We define the curve $w(\theta):=(\cos(\theta) e_1 + \sin(\theta) Je_2)\wedge e_2\wedge \cdots \wedge e_n$ in $G(n,2n)$ for $\theta \in [0,\pi]$.
This curve interpolates between a Lagrangian plane (for $\theta = 0$) and a plane, which contains a complex line (for $\theta = \pi$). We have that
\begin{align}
p(w(\theta)) &= (\cos(\theta) e_1 + \sin(\theta) Je_2)\wedge e_2\wedge \cdots \wedge e_n\wedge (\cos(\theta) Je_1 - \sin(\theta) e_2)\wedge e_2\wedge \cdots \wedge e_n\\
& = \cos(\theta)^2 (e_1\wedge\cdots \wedge e_n\wedge Je_1\wedge \cdots \wedge J e_n),
\end{align}
and so $\vert p(w(\theta))\vert =\cos(\theta)^2$. If we have $\vert p(w(\theta))\vert = q(w(\theta))^2$, then $q(w(\theta))= \cos (\theta)$, because $q$ is nonnegative.  Since $q$ is a quadratic form and by the definition of $w(\theta)$, there are $a,b,c\in\mathbb R$ such that $q(w(\theta)) = a\cos(\theta)^2 + b\cos(\theta)\sin(\theta) + c\sin(\theta)^2$  for all $\theta$. Thus, we have an equality of functions
$ a\cos(\theta)^2 + b\cos(\theta)\sin(\theta) + c\sin(\theta)^2=\cos(\theta)$. It can be checked that such an equality is not possible, so our assumption was wrong and $(\sigma^J)^{1/2}$ cannot be the restriction of the square of a quadratic form to $G(n,2n)$. \cref{prop:wannerer} implies the assertion.
\end{proof}

From the proof, we see that if we remove the square root in \cref{def:volJonP},
we could extend the valuation continuously to convex bodies.
This leads to the notion of \emph{Kazarnovskii's pseudovolume}~\cite{Ka1}.
We use the expression found in~\cite{alesker} in the proof of his Proposition~3.3.1.
The normalization constant can be determined using the fact that it agrees with the classical volume on
$\R^n\subset\C^n$ just like the $J$--volume.

\begin{definition}\label{def:Kaza}
The Kazarnovskii's pseudovolume $\vol_n^K$ is given for any polytope $P\subset \C^n$ by the formula
\begin{equation}
\vol_n^K(P)=\sum_{F\in \mathcal{F}_n(P)}\vol_n(F) \cdot \Theta_K(F)\cdot \frac{1}{(\omega_n)^2}\vol_{2n}(B(E_F)+JB(E_F)),
\end{equation}
where $\mathcal{F}_n(P)$ denotes the set of $n$-dimensional faces of $P$, and as before $B(E_F)$ denotes the unit ball of $E_F$, and $\omega_n:=\vol_n(B(\R^n))$.
\end{definition}

In our setting we prove the following, to be compared to \cref{def:volJonP}.

\begin{proposition}
	For any polytope $P\in\mathcal{P}(\C^n)$ the Kazarnovskii pseudovolume is given by
	\begin{equation}
	\vol_n^K(P)=\sum_{F\in \mathcal{F}_n(P)}\vol_n(F) \cdot \Theta_K(F)\cdot \sigma^J(E_F),
	\end{equation}
	where $\mathcal{F}_n(P)$ denotes the set of $n$-dimensional faces of $P$;
\end{proposition}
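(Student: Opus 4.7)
Comparing the formula in the proposition with \cref{def:Kaza}, it suffices to prove that for every $E\in G(n,2n)$,
\begin{equation}\label{eq:keyKaza}
\vol_{2n}\bigl(B(E)+JB(E)\bigr) \;=\; \sigma^J(E)\cdot (\omega_n)^2,
\end{equation}
since the sum runs over the same index set in both formulas and the factors $\vol_n(F)\cdot\Theta_K(F)$ are identical.

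The plan is to interpret $B(E)+JB(E)$ as the image of a product of balls under a linear map, then recognise its Jacobian as exactly $\sigma^J(E)$. Concretely, define the linear map
\begin{equation*}
\varphi_E\colon E\oplus E \longrightarrow \R^{2n},\qquad (x,y)\longmapsto x+Jy.
\end{equation*}
Since $J$ is a linear isomorphism, $JB(E)=B(JE)$, so $B(E)+JB(E)=\varphi_E(B(E)\times B(E))$. Choose an orthonormal basis $\{e_1,\ldots,e_n\}$ of $E$; then $\{e_1,\ldots,e_n\}\cup\{Je_1,\ldots,Je_n\}$ is a basis of the image of $\varphi_E$, and expressing $\varphi_E$ in an orthonormal basis of $\R^{2n}$ extending any orthonormal basis of its image, the absolute value of the determinant of $\varphi_E$ equals $\lVert e_1\wedge\cdots\wedge e_n\wedge Je_1\wedge\cdots\wedge Je_n\rVert = \sigma^J(E)$ by \cref{def:sigmaJ}.

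If $E\cap JE=\{0\}$, then $\varphi_E$ is injective and therefore a linear isomorphism onto $\R^{2n}$ (since the dimensions match), so the change-of-variables formula gives
\begin{equation*}
\vol_{2n}\bigl(\varphi_E(B(E)\times B(E))\bigr)=\sigma^J(E)\cdot\vol_n(B(E))\cdot\vol_n(B(E))=\sigma^J(E)\cdot(\omega_n)^2,
\end{equation*}
proving \cref{eq:keyKaza}. If $E$ contains a complex line, then $\sigma^J(E)=0$, and on the other hand $E+JE$ is a proper subspace of $\R^{2n}$, so $B(E)+JB(E)$ lies in a hyperplane and has vanishing $2n$-dimensional volume; both sides of \cref{eq:keyKaza} equal zero. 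Inserting \cref{eq:keyKaza} into \cref{def:Kaza} and comparing with the target formula completes the proof.

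The main technical point is the Jacobian computation identifying $|\det\varphi_E|$ with $\sigma^J(E)$; everything else is a change of variables plus a harmless case split.
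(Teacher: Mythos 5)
Your proof is correct, and it takes a genuinely different route from the paper. The paper stays inside its zonoid calculus: it writes $\vol_{2n}(B(E)+JB(E))=\frac{1}{(2n)!}\ell\bigl((B(E)+JB(E))^{\wedge 2n}\bigr)$ via \cref{thm:FTZCdet}, expands the $2n$-th wedge power binomially, uses \cref{wedge_is_zero} to kill every term except $j=n$ (since $\dim B(E)=\dim JB(E)=n$), and then evaluates $\ell\bigl(B(E)^{\wedge n}\wedge (JB(E))^{\wedge n}\bigr)$ by representing $B(E)$ with Gaussian vectors, factoring out $\sigma^J(E)$ and identifying $\EE\Vert X_1\wedge\cdots\wedge X_n\Vert=n!\,\omega_n$ via \cref{th:IVL}. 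Your argument bypasses all of this: you realize $B(E)+JB(E)$ as $\varphi_E(B(E)\times B(E))$ for the linear map $\varphi_E(x,y)=x+Jy$ and read off the Jacobian $\Vert e_1\wedge\cdots\wedge e_n\wedge Je_1\wedge\cdots\wedge Je_n\Vert=\sigma^J(E)$ directly from \cref{def:sigmaJ}, with a clean case split (note that $E\cap JE\neq\{0\}$ is in fact equivalent to $E$ containing a complex line, so your two cases are exhaustive). This is shorter, entirely elementary, and independent of the zonoid machinery; what the paper's version buys is a further illustration of how the length functional and the vanishing of high wedge powers interact, which is in keeping with the expository aims of that section. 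Both establish the key identity $\vol_{2n}(B(E)+JB(E))=(\omega_n)^2\sigma^J(E)$ and then conclude by comparison with \cref{def:Kaza}.
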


\begin{proof}
We need to prove that for any $E\in G(n,2n)$ we have
\begin{equation}\label{eq:volBsigma}
\vol_{2n}(B(E)+JB(E))=(\omega_n)^2 \sigma^J(E).
\end{equation}
Using \cref{thm:FTZCdet} we write
\begin{align}
\vol_{2n}(B(E)+JB(E))	&=\frac{1}{(2n)!}\ell\left(\big(B(E)+JB(E)\big)^{\wedge 2n} \right)	\\
&=\frac{1}{(2n)!}\sum_{j=0}^{2n}\binom{2n}{j} \ell\left((B(E))^{\wedge j}\wedge (JB(E))^{\wedge(2n-j)}\right)
\end{align}
where we used from \cref{th:lengthLF} that $\ell$ is linear. Since $\dim(B(E))=\dim(JB(E))=n$,
we see from \cref{wedge_is_zero} that $(B(E))^{\wedge j}=0$ whenever $j> n$
and that $(JB(E))^{\wedge(2n-j)}=0$ whenever $j< n$. In other words,
only the index $j=n$ contributes to the sum
and we get
\begin{equation}\label{eq:thisisalabel}
\vol_{2n}(B(E)+JB(E))=\frac{1}{(n!)^2}\, \ell\left((B(E))^{\wedge n}\wedge (JB(E))^{\wedge n}\right).
\end{equation}
Next let $X\in E$ be a random vector such that $K(X)=B(E)$ (for instance, \cref{eq:ball}
shows that we can take $X$ to be $\sqrt{2\pi}$ times a standard Gaussian vector in $E$) and
let $X_1,\ldots,X_{n}$ be i.i.d.\ copies of $X$. %}
Let $e_1,\ldots,e_n$ be an orthonormal basis of $E$.
Note that we have
\begin{equation}
X_1\wedge\cdots\wedge X_n=\pm \Vert X_1\wedge\cdots\wedge X_n \Vert\  e_1\wedge \cdots \wedge e_n.
\end{equation}
With this in mind, \cref{eq:thisisalabel} gives
\begin{align}
\vol_{2n}(B(E)+JB(E))	&=\frac{1}{(n!)^2}\; \EE |X_1\wedge\cdots\wedge X_n\wedge J X_{n+1}\wedge \cdots \wedge X_{2n}|	\\
	&=\frac{1}{(n!)^2}\left( \EE\Vert X_1\wedge\cdots\wedge X_n \Vert \right)^2\sigma^J(E)
\end{align}
Again, by \cref{th:IVL}, we have $\EE\Vert X_1\wedge\cdots\wedge X_n \Vert = n! \vol_n(B(E))=n!\omega_n$
and this gives~\cref{eq:volBsigma}, which concludes the proof.
\end{proof}

\bibliographystyle{alpha}
\bibliography{literature}

\end{document}